\pgfplotsset{compat=1.15}
\let\bbordermatrix\bordermatrix
\patchcmd{\bbordermatrix}{8.75}{4.75}{}{}
\patchcmd{\bbordermatrix}{\left(}{\left[}{}{}
\patchcmd{\bbordermatrix}{\right)}{\right]}{}{}
\newtheorem{thm}{Theorem}[section]
\newtheorem{theorem}[thm]{Theorem}
\newtheorem{lemma}[thm]{Lemma}
\newtheorem*{lemma*}{Lemma}
\newtheorem{claim}[thm]{Claim}
\newtheorem{corollary}[thm]{Corollary}
\newtheorem{conjecture}[thm]{Conjecture}
\newtheorem*{theorem*}{Theorem}
\DeclareMathOperator{\rank}{rank}
\newcommand{\cl}{\mathrm{cl}}
\newcommand{\sm}{\setminus}
\newcommand{\cA}{{\cal A}}
\newcommand{\cC}{{\cal C}}
\newcommand{\cL}{{\cal L}}
\newcommand{\cK}{{\cal K}}
\newcommand{\cT}{{\cal T}}
\newcommand{\cS}{{\cal S}}
\newcommand{\cU}{{\cal U}}
\newcommand{\cR}{{\cal R}}
\newcommand{\cV}{{\cal V}}
\newcommand{\cW}{{\cal W}}
\newcommand{\R}{{\mathbb R}}
\author[1]{James Cruickshank}
\author[2]{Bill Jackson}
\author[3]{Shin-ichi Tanigawa} 
\affil[1]{School of Mathematical and Statistical Sciences, University of Galway, H91 TK33, Ireland. Email address: james.cruickshank@universityofgalway.ie}
\affil[2]{School of Mathematical Sciences, Queen Mary University of London, Mile End Road, London, E1 4NS, United Kingdom. Email address: b.jackson@qmul.ac.uk}
\affil[3]{Department of Mathematical Informatics, University of Tokyo, Japan. Email address: tanigawa@mist.i.u-tokyo.ac.jp}
\begin{document}
\title{Global Rigidity of Triangulated Manifolds}
\date{}
\maketitle

\begin{abstract}
    We prove that if $G$ is the graph of a connected triangulated $(d-1)$-manifold, for $d\geq 3$, then $G$ is generically globally rigid 
    in $\mathbb R^d$ if and only if it is $(d+1)$-connected and, if $d=3$, $G$ is not planar. 
    The special case $d=3$ verifies a conjecture of Connelly. Our results actually apply to a much larger class of simplicial complexes, namely the circuits of the simplicial matroid. We also give two significant applications of our main theorems. We show that the characterisation of pseudomanifolds with extremal edge numbers given by the 
    Lower Bound Theorem extends to circuits of the simplicial matroid. We also prove the generic 
    case of a conjecture of Kalai concerning the reconstructability of a polytope from its 
    space of stresses. The proofs of our main results adapt earlier ideas of Fogelsanger and Whiteley to the setting of global rigidity. In particular we verify a special case of Whiteley's vertex splitting conjecture for global rigidity.

    \medskip \noindent {\bf Keywords:} global rigidity, triangulated manifold, simplicial complex, vertex splitting, lower bound theorem.
    
    \noindent {\bf MSC:} Primary: 52C25. Secondary: 05E45, 52B05, 05C10.

\end{abstract}

\section{Introduction}
The study of the rigidity of triangulated surfaces can be traced back to a claim made by Euler in 1776    that ``A closed spatial figure allows not changes, as long as it is not ripped apart." Giving a formal proof for Euler's claim turned out to be a challenging problem. A celebrated result of  Cauchy in 1813 implies that every triangulated convex spherical surface  in $\R^3$ is rigid.
Gluck~\cite{G} showed  that every generic triangulated spherical surface in $\R^3$ is rigid.
Rather surprisingly,  Connelly \cite{C77,C82} disproved Euler's original claim by constructing a flexible triangulated spherical surface in $\R^3$.
It is natural to also consider  the rigidity of triangulations of other closed surfaces.
Fogelsanger~\cite{F} was able to extend Gluck's theorem by showing that every generic triangulated surface in $\R^3$ is  rigid. 

In this paper we will relax the hypothesis at the end of Euler's claim and ask
whether a closed spacial figure can change if it is ripped apart and then re-assembled. We will consider the version of the problem in which the faces of a triangulated surface in $\R^3$ are re-assembled so that all adjacencies are preserved and we will allow the re-assembled faces  to pass through each other. 
The re-assembled surface may differ from the original  surface even when the original surface is generic. 
For example, if 
a triangulated surface has a vertex $v$ of degree three which does not lie
in the affine plane spanned by its three neighbours,
then we can obtain a non-congruent  surface 
by reflecting $v$ in this plane.
The same trick can be applied whenever the underlying graph of the triangulated surface has a vertex-separator of size three. This suggests that we need to add the condition that the underlying graph is  4-connected to ensure that a triangulated surface is uniquely defined by its faces and their adjacencies, but this is still not strong enough to guarantee uniqueness.
A result of 
Hendrickson \cite{H} (Theorem~\ref{thm:hendrickson} below) implies that every generic triangulated spherical surface can be  changed to a non-congruent triangulated surface by re-assembling its faces, no matter how high the connectivity of its underlying graph is. 
We will show that  spherical surfaces provide the only such examples:   
every generic triangulated non-spherical surface  in $\R^3$ is uniquely defined  by its faces and their adjacencies whenever  its underlying graph is 4-connected.
(Combining our result with a recent result by Gortler, Theran, and Thurston~\cite{GTT}, it further implies that  
every generic triangulated non-spherical surface  in $\R^3$ is uniquely determined  by its list of edge lengths and its number of vertices.)

We can use the language of rigidity theory to give a more precise  description of our results.
A $d$-dimensional {\em framework} is defined as a pair  $(G,p)$  consisting of a graph $G$ and a map $p:V(G)\rightarrow \R^d$.
We will also say that $(G,p)$ is a {\em realisation} of $G$ in $\R^d$.
Two realisations $(G, p)$ and $(G, q)$ 
of $G$ 
in $\R^d$ 
are {\em congruent} 
if $(G, p)$ can be obtained from $(G, q)$ by an isometry of $\R^d$, i.e., a combination of translations, rotations
and reflections. The framework $(G, p)$ is {\em globally rigid} if every framework which has the
same edge lengths as $(G, p)$ is congruent to $(G, p)$. It is {\em rigid} if every continuous motion of
the vertices of $(G, p)$ in $\R^d$ which preserves the edge lengths results in a framework which
is congruent to $(G, p)$.

A framework $(G, p)$ is {\em generic} if the multiset of coordinates of the points $p(v), v\in V (G)$, 
is algebraically independent over $\mathbb{Q}$. 
Gluck~\cite{G} showed that the rigidity of a generic framework 
depends only on its underlying graph.
This enables us to define
a graph $G$ as being {\em rigid in $\R^d$} if some (or equivalently every) generic realisation of $G$
in $\R^d$ is rigid.
Analogous, but much deeper, results of Connelly \cite{C} and Gortler, Healy and Thurston \cite{GHT} 
imply that the global rigidity of a generic framework depends only on its underlying graph and allows us to define a graph
$G$ as being {\em globally rigid} in $\R^d$ if some (or equivalently every) generic realisation of $G$ in $\R^d$ is globally rigid.
Finding a combinatorial characterisation of rigid or globally rigid graphs in $\mathbb R^d$ for $d \geq 3$ is a major open problem in discrete geometry.
One family of graphs for which generic rigidity in $\R^d$ is well understood is the graphs (1-skeletons) of triangulated $(d-1)$-manifolds. Whiteley \cite{W84} extended Gluck's Theorem by showing that the graphs of $(d-1)$-polytopes are rigid in $\R^d$ for all $d\geq 3$. This result was in turn extended to 
graphs of triangulated  $(d-1)$-manifolds for all $d\geq 4$ by Kalai \cite{K} and subsequently for all $d\geq 3$ by Fogelsanger \cite{F}.

A fundamental necessary condition for any non-complete graph $G$ to be rigid in $\R^d$ is that 
\begin{equation}\label{eq:count}
    |E(G)|\geq d|V(G)|-\binom{d+1}{2}.
\end{equation}
Thus Fogelsanger's Rigidity Theorem implies that the graph 
of any triangulated connected $(d-1)$-manifold 
must satisfy (\ref{eq:count}) whenever $d\geq 3$.

The  following result of Hendrickson~\cite{H} gives two necessary conditions for global rigidity.
\begin{theorem}[Hendrickson's necessary conditions]\label{thm:hendrickson}
Let $G$ be a globally rigid graph in $\R^d$.
Then $G$ is either a complete graph on at most $d+1$ vertices or $G$ is $(d+1)$-connected and $G-e$ is rigid in $\R^d$ for all $e\in E(G)$.
\end{theorem}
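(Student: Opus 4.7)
The plan is to prove the two necessary conditions separately, assuming $G$ is globally rigid in $\R^d$ and not complete on at most $d+1$ vertices (so $|V(G)|\geq d+2$ and global rigidity forces rigidity).

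For $(d+1)$-connectivity, suppose for contradiction that $G$ has a vertex cut $S$ with $|S|\leq d$, and write $V(G)\setminus S = A\sqcup B$ with $A,B$ nonempty and no edges between $A$ and $B$. Fix a generic realisation $p$; since $|S|\leq d$, there is a hyperplane $H\subseteq\R^d$ containing $p(S)$. Define $p'$ by reflecting $p|_A$ in $H$ and leaving $p|_{S\cup B}$ fixed. Every edge of $G$ lies in $S\cup A$ or in $S\cup B$, and reflection in $H$ fixes $p(S)$ pointwise, so $(G,p')$ has the same edge lengths as $(G,p)$. A genericity argument (choosing $H$ generically if $|S|<d$) shows $(G,p')$ is not congruent to $(G,p)$, contradicting global rigidity.

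For redundant rigidity, suppose some edge $e=uv$ has $G-e$ not rigid. If $e$ is a bridge then $G-e$ is disconnected, and rotating the component of $G-e$ containing $v$ about the point $u$ produces a non-trivial flex of $G$, contradicting its rigidity. Otherwise $G-e$ is connected, so the space $X$ of realisations of $G-e$ with the edge lengths of $p$, modulo isometry, is compact. For generic $p$, $X$ is locally a real algebraic manifold of dimension $k\geq 1$ at $p$. Consider the polynomial $g(q)=\|q(u)-q(v)\|^2$ on $X$. If $g$ is constant on the component $X_0$ containing $p$, then $(G,p)$ itself has a non-trivial flex, contradicting rigidity. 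Otherwise $g$ is non-constant on $X_0$, and the fibre $X_0\cap g^{-1}(g(p))$, which is the configuration space of $G$ inside $X_0$, has local dimension at least $k-1$ at $p$. If $k\geq 2$ this fibre is infinite, giving uncountably many non-congruent realisations of $G$ and contradicting global rigidity.

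The main obstacle is the remaining case $k=1$: the level set is $0$-dimensional, yet one must still exhibit a point other than $p$. Here $X_0$ is a compact connected $1$-dimensional real algebraic set. I would invoke the topological structure of such sets: the smooth locus is a finite union of arcs and circles, and $g$ has only finitely many critical values. Since $p$ is generic, $g(p)$ is a regular value lying in the interior of $g(X_0)$, and an intermediate-value argument applied to a circle of the smooth locus (or a suitable loop through singular points) produces a second preimage $q\in X_0$ with $q\neq p$ and $g(q)=g(p)$. This yields a non-congruent realisation of $G$ matching the edge lengths of $(G,p)$, contradicting global rigidity. The delicate point is the precise topological description of $X_0$ near its singularities.
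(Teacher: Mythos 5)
The paper does not prove this theorem; it is cited directly from Hendrickson~\cite{H}, so there is no proof of the paper's own to compare against. Your proposal is therefore an independent reconstruction of Hendrickson's original argument, and the structure is broadly right, but the final case contains a genuine gap that you have correctly flagged.

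The $(d+1)$-connectivity argument via reflection in a hyperplane through $p(S)$ is standard and correct: with $|V(G)|\ge d+2$ one of the two sides of the cut has at least two vertices off the hyperplane, and comparing a cross-distance $\|p(a)-p(b)\|$ with $\|p'(a)-p'(b)\|$ for $a\in A$, $b\in B$ shows the reflected realisation is not congruent to the original, since genericity places $p(A)$ and $p(B)$ strictly on one side of $H$.

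For redundant rigidity you correctly identify the structure of Hendrickson's topological argument: work in the configuration space $X$ of $(G-e,p)$ modulo isometry, observe it has positive local dimension $k$ at $p$, and study the edge-length function $g(q)=\|q(u)-q(v)\|^2$. The cases ``$g$ locally constant'' and ``$k\ge 2$'' are handled cleanly (both contradict rigidity of $G$, which you have already deduced from global rigidity). The difficulty you flag in the case $k=1$ is a real one, and it is in fact the crux of Hendrickson's theorem. Two things remain to be justified. First, you assert that $g(p)$ is a regular value and lies in the interior of $g(X_0)$ ``since $p$ is generic,'' but $g(p)$ is not a freely chosen generic number: it is determined by $p$, and one must argue separately that it cannot be a critical value of $g$ restricted to $X_0$, for instance by showing that $dg_p\neq 0$ along the flex (which does follow from rigidity of $G$ at the generic $p$, but needs to be stated). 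Second, and more seriously, you appeal to an intermediate-value argument ``applied to a circle of the smooth locus (or a suitable loop through singular points),'' but the smooth component of $X_0$ containing $p$ may be an open arc whose closure terminates at singular points of $X_0$; on such an arc a regular value need not be attained twice, and tracing through singular points requires a careful analysis of the local branching structure of a compact one-dimensional real analytic set. Hendrickson's own treatment establishes (using genericity and Sard-type arguments on the full realisation space) that the relevant component is in fact a smooth compact one-manifold, hence a circle, on which the even-parity of $g^{-1}(g(p))$ gives the second preimage. Without that smoothness input, the ``loop through singular points'' step does not yet go through. So the proposal is a correct high-level account of Hendrickson's proof, but the one-dimensional case as written is incomplete.
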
 
Hendrickson's second necessary condition and (\ref{eq:count}) tell us that a  
non-complete graph $G$ which is globally rigid in $\R^d$
must satisfy $|E(G)|\geq d|V(G)|-\binom{d+1}{2}+1$.
This implies, in particular, that a triangulation of the 2-sphere (equivalently, a plane triangulation) cannot be globally rigid in $\R^3$.
Since this argument  applies only to spherical surfaces, Connelly \cite{C14} asked if the global rigidity of  triangulations of non-spherical surfaces is characterised by their 4-connectivity. 
We will answer both this question and its generalisation to triangulated $(d-1)$-manifolds for $d \geq 3$.

\begin{theorem}\label{thm:globalrigid_3_surface}
Let $G$ be the graph of a triangulated $(d-1)$-manifold for some $d\geq 3$.
Then $G$ is globally rigid in $\R^d$ if and only if $G=K_d,K_{d+1}$ or $G$ is $(d+1)$-connected and, when $d=3$, $G$ is non-planar.
\end{theorem}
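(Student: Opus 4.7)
The necessity direction largely follows from the discussion already in the introduction. Hendrickson's theorem (Theorem~\ref{thm:hendrickson}) forces $(d+1)$-connectivity whenever $G$ is not complete on at most $d+1$ vertices. For the planar exclusion when $d=3$, a $4$-connected planar triangulated surface is a triangulation of the $2$-sphere; Euler's formula gives exactly $3|V(G)|-6$ edges, one short of the bound $d|V(G)|-\binom{d+1}{2}+1$ obtained by combining Hendrickson's edge-deletion condition with~(\ref{eq:count}). So no such graph can be globally rigid in $\R^3$.

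The substantive direction is sufficiency, and the plan is to adapt the Fogelsanger--Whiteley machinery to global rigidity. The main analytic tool is the Gortler--Healy--Thurston criterion~\cite{GHT}: for a non-complete graph $G$ on $n \geq d+2$ vertices, generic global rigidity in $\R^d$ is equivalent to the existence of a generic equilibrium stress whose stress matrix has rank $n-d-1$. The task thus reduces to producing such a stress. Since the class of graphs of triangulated manifolds is not closed under the reductions one wants to perform, I would work in the larger class of circuits of the simplicial matroid, following Fogelsanger, since this class is closed under vertex splits and edge contractions and provides the flexibility needed for a clean induction hypothesis. Within this enlarged setting I would induct on $|V(G)|$.

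The inductive step is driven by vertex splitting: one locates an edge $e$ such that the contraction $G/e$ remains a simplicial matroid circuit on a $(d+1)$-connected graph satisfying the inductive hypothesis, and then argues that global rigidity of $G/e$ lifts through the split to global rigidity of $G$. This lift is the main technical obstacle: given a stress matrix of corank $d+1$ for $G/e$, one must construct a stress for $G$ whose matrix still has corank exactly $d+1$. This is precisely the special case of Whiteley's vertex-splitting conjecture for global rigidity that the abstract advertises, and its careful rank analysis is where I would expect the heaviest work, since for global rigidity one needs to preserve an exact rank rather than merely a nonzero determinant.

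Completing the argument then requires (i) verifying that a suitable contractible edge can always be selected while preserving $(d+1)$-connectivity and, when $d=3$, the non-planarity hypothesis needed to keep the edge count above the threshold, and (ii) handling the base cases corresponding to irreducible circuits, which should be small enough to verify directly (the boundaries of small simplices and a few low-vertex-count manifolds). I expect the most delicate combinatorial point, beyond the vertex-splitting stress lemma itself, to be showing that such a contractible edge $e$ always exists under the connectivity and planarity hypotheses; Fogelsanger's admissible-edge arguments for rigidity should serve as a template, but must be refined to guarantee that no necessary Hendrickson-type condition is destroyed by the contraction.
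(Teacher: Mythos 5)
Your high-level architecture — reduce to simplicial matroid circuits via Fogelsanger, then induct with edge contraction and lift through a global-rigidity version of vertex splitting — matches the paper's. However, two genuine and substantial gaps remain.

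First, your inductive step assumes one can find an edge $e$ such that $\cS/e$ is again a simplicial $k$-circuit on a $(k+2)$-connected graph. Neither property is automatic. Contracting an edge in a simplicial $k$-circuit generally yields only a simplicial $k$-\emph{cycle}; Fogelsanger's decomposition of $\cS/e$ into circuits $\cS_1^+,\dots,\cS_m^+$ is precisely the device to handle this. But then a new obstacle appears: the graphs $G(\cS_i^+)$ need not be $(k+2)$-connected, so the induction hypothesis (a global rigidity statement requiring $(k+2)$-connectivity) cannot be applied to them directly. The paper resolves this with the ``strong $(k+1)$-cleavage property'' and a nontrivial gluing theorem (Theorem~\ref{thm:cleavage_globrigid}, built up through all of Section~\ref{sec:strong_cleavage}): it shows each $G(\cS_i^+)$ can be decomposed along $(k+1)$-clique separators into pieces that are either globally rigid or (when $k=2$) plane triangulations, and that this property is preserved under the gluings in the Fogelsanger decomposition and under adding edges. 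Your proposal, by asserting that a good contractible edge can always be found, skips the entire content of Section~\ref{sec:strong_cleavage}; in fact the paper also needs the separate structural Lemmas~\ref{lem:34sep_a} and~\ref{lem:34sep_b} to handle edges $e$ for which $G/e$ is not $(k+2)$-connected.

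Second, for the vertex-splitting lift you propose to take a stress matrix of corank $d+1$ for $G/e$ and extend it to one of the same corank for $G$. The paper does not do this and I am skeptical it is tractable: preserving exact corank of a stress matrix through a split is precisely the open part of the vertex-splitting conjecture, and no direct stress-matrix argument is known. Instead, the paper uses the ``coincident rigidity'' criterion of~\cite{J}: if $G/uv$ is globally rigid and $G$ has a generic $uv$-coincident realisation that is infinitesimally rigid, then $G$ is globally rigid (Theorem~\ref{thm:vsplit}). Verifying $uv$-coincident rigidity for simplicial circuit graphs is still substantial work (Theorems~\ref{thm:coin_k} and~\ref{thm:coin_2}, all of Section~\ref{sec:coincidence}), but it avoids any direct rank bookkeeping on stress matrices by reducing to an infinitesimal-rigidity computation. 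You should be aware that your stress-matrix route is a genuinely different and apparently harder plan, not merely a rephrasing of the paper's. Finally, the paper's base cases are simply $K_{k+1}$ and $K_{k+2}$ with induction on $|E|$, not a catalogue of irreducible triangulations; the latter approach (used in~\cite{JT,S}) is exactly what the paper is designed to circumvent.
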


 Theorem~\ref{thm:globalrigid_3_surface} had already been proved when $d=3$ for surfaces of small genus in~\cite{JT,S}.
Their approach, 
based on Whiteley's vertex splitting operation (defined in Section~\ref{sec:pre}), 
is to  show that any 4-connected triangulation of a fixed surface can be constructed  by a sequence of vertex splitting operations from a finite list of irreducible triangulations, and then check that every irreducible triangulation in the list is globally rigid. This approach was developed for the surfaces $S_1$ and $N_1$ in \cite{JT}.
Shinoki~\cite{S} subsequently implemented an algorithm for enumerating the irreducible triangulations on a fixed surface and confirmed Theorem~\ref{thm:globalrigid_3_surface} for the surfaces $S_2, N_2,N_3, N_4$.
This approach is inevitably limited by the fact that the number of irreducible triangulations of a fixed surface grows rapidly with respect to its genus.
We will instead adopt the approach taken by Fogelsanger in his PhD thesis~\cite{F}.
His ingenious idea is to consider a more general family 
of abstract simplicial complexes.
This family can be defined as follows.

A  (non-trivial) {\em simplicial $k$-circuit} is an abstract simplicial $k$-complex 
with the property that 
each of its $(k-1)$-faces belongs to an even number of $k$-simplices and is such that none of its sub-complexes have this property.\footnote{Fogelsanger proved his results in the more general context of  `minimal cycles' in chain complexes  over an arbitrary abelian group. Our ‘simplicial circuits’  correspond to the special case when  the abelian group is $\mathbb{Z}_2$. 
This simplification allows us to work directly with simplicial complexes instead of chain complexes.
Our terminology is motivated by the fact that simplicial $k$-circuits form the set of circuits in the $k$-simplicial matroid, 
see \cite[Chapter 10]{welsh}.}
Fogelsanger showed that, if  the graph $G$ of a simplicial $k$-circuit $\cS$ cannot be obtained from that of a smaller simplicial $k$-circuit by vertex splitting, then 
$G$ can be covered by graphs of simplicial $k$-circuits which do have this property.
(We will refer to the set of simplicial $k$-circuits which generate this cover as a  {\em Fogelsanger decomposition} of $\cS$.) This allowed him to apply induction to the  smaller circuits to show that
the graph of every simplicial $k$-circuit is rigid in $\R^{k+1}$. 
Since every  triangulated connected $k$-manifold is a simplicial $k$-circuit,\footnote{This follows from the fact that, if a simplicial $k$-complex is a triangulated connected $k$-manifold, then it is strongly connected and every $(k-1)$-face belongs to exactly two $k$-simplices, see Lemma~\ref{lem:pseudo} below.} his result implies, in particular, that 
the graph of any  triangulated connected $k$-manifold is rigid in $\R^{k+1}$.
We will give a detailed description of the Fogelsanger decomposition as well as  a self contained proof of his rigidity theorem in Section~\ref{sec:circuits}.

Our main results extend Fogelsanger's theorem to global rigidity. They immediately imply Theorem \ref{thm:globalrigid_3_surface}.
  
\begin{theorem}\label{thm:globrigid_3_main}
Let $G$ be the graph of a simplicial $2$-circuit.
Then $G$ is globally rigid  in $\R^{3}$ if and only if $G=K_3,K_4$ or $G$ is 4-connected and non-planar.
\end{theorem}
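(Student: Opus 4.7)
The necessity direction is relatively straightforward. The graphs $K_3$ and $K_4$ are trivially globally rigid in $\R^3$, so suppose $G$ is not one of these. Theorem~\ref{thm:hendrickson} forces $G$ to be $4$-connected with $G-e$ rigid in $\R^3$ for every edge $e$. If $G$ were planar then, as it is a simplicial $2$-circuit whose $1$-faces each lie in an even number of $2$-faces, a standard Euler/double-counting argument forces $|E(G)|=3|V(G)|-6$, so $|E(G-e)|$ violates the counting bound (\ref{eq:count}) and $G-e$ fails to be rigid, contradicting Hendrickson's second condition.

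For sufficiency, the plan is to induct on $|V(G)|$, with $G=K_4$ as the base case. The strategy adapts Fogelsanger's two-pronged approach to the global-rigidity setting. If $G$ can be obtained from the graph $G'$ of a strictly smaller simplicial $2$-circuit by a Whiteley vertex split, then one argues that $G'$ (after possibly discarding tame small cases) remains a $4$-connected, non-planar simplicial $2$-circuit, so the inductive hypothesis gives global rigidity of $G'$ in $\R^3$. One then invokes a lemma, to be proved separately, asserting that in this setting vertex splitting preserves global rigidity; this is precisely the special case of Whiteley's vertex splitting conjecture for global rigidity flagged in the abstract.

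When $G$ admits no such reduction, the plan is to appeal to the Fogelsanger decomposition from Section~\ref{sec:circuits} to cover $G$ by the graphs $G_1,\ldots,G_t$ of strictly smaller simplicial $2$-circuits, each of which \emph{does} admit a vertex-splitting reduction. The inductive hypothesis (in a suitably strengthened form that works for all simplicial $2$-circuits on fewer vertices, not just the $4$-connected non-planar ones) then yields global rigidity of each $G_i$, and a gluing lemma for global rigidity lets us conclude: if two globally rigid subgraphs in $\R^3$ share a set of at least four vertices spanning a generically rigid subframework, their union is again globally rigid. One must verify that the Fogelsanger cover always overlaps along sufficiently rich link-structures around a vertex of $G$ so that this gluing lemma applies at each merge.

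The main obstacle will be the vertex splitting step. Unlike plain rigidity, where Whiteley's theorem gives a clean and unconditional preservation statement, for global rigidity one must rule out the creation of \emph{new} non-congruent equivalent realisations when the split vertex is introduced, which requires carefully controlled hypotheses about the link of the split vertex and about redundant rigidity of the enlarged framework. A secondary difficulty is organising the induction so that the hypothesis is strong enough to feed back into both the vertex-splitting case and the Fogelsanger-covering case: in particular, the smaller circuits produced by a Fogelsanger decomposition need not themselves be $4$-connected, so the inductive statement almost certainly must be formulated in terms that permit lower connectivity provided the combinatorial/matroidal structure ensures sufficient redundancy — and threading this through while keeping the gluing lemma applicable is the delicate part of the argument.
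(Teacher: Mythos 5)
Your outline matches the paper's high-level strategy (Fogelsanger decomposition plus vertex splitting, with necessity from Hendrickson), and you correctly flag the vertex-splitting step as the piece that needs to be proved separately — this is Theorem~\ref{thm:vertex_splitting_k} in the paper, established via coincident rigidity in Section~\ref{sec:coincidence}. However, there is a concrete gap in the gluing step that your proposal does not close. Lemma~\ref{lem:3}\ref{en1:d} only guarantees that the graphs $G(\cS_i^+)$ in a Fogelsanger decomposition of a simplicial $2$-circuit overlap along a non-facial $3$-clique, i.e.\ a shared $K_3$. Your proposed gluing lemma requires the overlap to contain at least $d+1=4$ vertices spanning a rigid subframework, and that simply is not available: gluing two globally rigid graphs in $\R^3$ along only three common vertices produces a graph with a $3$-vertex separator, which is never globally rigid in $\R^3$. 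So the plan of ``directly gluing globally rigid pieces'' cannot go through as written, no matter how the inductive hypothesis is phrased.

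The paper's resolution is the strong $d$-cleavage property and the gluing theorem of Section~\ref{sec:strong_cleavage} (Theorem~\ref{thm:cleavage_globrigid} and Corollary~\ref{cor:cleavage_globrigid}). The point is that one should not try to track global rigidity itself through the union, but rather a weaker, block-tree-indexed property: every $4$-block of the (possibly not $4$-connected) union is either globally rigid or a plane triangulation. That property \emph{is} stable under gluing along $K_3$'s, and it upgrades to global rigidity once $4$-connectivity and non-planarity are restored. You anticipate that the inductive statement ``must be formulated in terms that permit lower connectivity,'' but without the strong cleavage machinery (and the non-trivial Theorem~\ref{thm:cleavage_globrigid} proving that this property is preserved under union), the Fogelsanger-covering step cannot be completed. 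Finally, the paper also needs a last technical wrinkle your sketch omits: after Claim~\ref{clm:glob2} shows $G/e$ is $4$-connected, one must still rule out $G/e$ being a plane triangulation for every edge $e$, which is done via Lemma~\ref{lem:planecontractions}; this is exactly where the special status of plane triangulations (which satisfy Fogelsanger rigidity but fail Hendrickson's necessary conditions) interferes with the induction and must be handled explicitly.
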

\begin{theorem}\label{thm:globrigid_d_main}
Let $G$ be the graph of a simplicial $k$-circuit for some $k\geq 3$.
Then $G$ is globally rigid  in $\R^{k+1}$ if and only if $G=K_{k+1},K_{k+2}$ or $G$ is $(k+2)$-connected.
\end{theorem}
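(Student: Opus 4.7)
The necessity direction is immediate from Hendrickson's theorem (Theorem~\ref{thm:hendrickson}): a non-complete globally rigid graph in $\R^{k+1}$ must be $(k+2)$-connected, and the only complete graphs arising as graphs of simplicial $k$-circuits on at most $k+2$ vertices are $K_{k+1}$ and $K_{k+2}$. For sufficiency, the plan is to mirror Fogelsanger's inductive proof of rigidity, elevating each ingredient from rigidity to global rigidity, with an induction on $|V(G)|$. The base cases $G=K_{k+1}$ and $G=K_{k+2}$ are trivially globally rigid in $\R^{k+1}$.

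For the inductive step, the plan rests on three ingredients. The first is a \emph{vertex splitting lemma for global rigidity}: if $\cS$ arises from a smaller simplicial $k$-circuit $\cS'$ by a Whiteley vertex split along a $(k-1)$-face, and the graph $G'$ of $\cS'$ is generically globally rigid in $\R^{k+1}$, then so is the graph $G$ of $\cS$. This is the special case of Whiteley's vertex splitting conjecture for global rigidity flagged in the abstract. The second is the \emph{Fogelsanger decomposition} already present in the rigidity proof: if $G$ cannot be obtained from a smaller simplicial $k$-circuit by vertex splitting, then $G$ is covered by graphs $G_1,\dots,G_t$ of strictly smaller simplicial $k$-circuits. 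The third is a \emph{gluing lemma}: if $H_1$ and $H_2$ are globally rigid in $\R^{d}$ and $|V(H_1)\cap V(H_2)|\geq d+1$, then $H_1\cup H_2$ is globally rigid in $\R^{d}$.

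Combining these, the inductive step splits into two cases. If $\cS$ admits a vertex splitting reduction to $\cS'$, I first check that $G'$ inherits either $(k+2)$-connectivity or one of the base cases from the $(k+2)$-connectivity of $G$, via standard arguments about the effect of vertex splits on connectivity; the inductive hypothesis then gives global rigidity of $G'$, and the vertex splitting lemma promotes it to $G$. Otherwise, I apply the Fogelsanger decomposition, use the inductive hypothesis on each $G_i$, and iteratively apply the gluing lemma to conclude that $G=\bigcup_i G_i$ is globally rigid.

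The main obstacle is establishing the vertex splitting lemma. Whiteley's classical rigidity analogue is already delicate, and the global rigidity version is an open conjecture in general. I expect the right route is via the Gortler--Healy--Thurston equivalence of generic global rigidity with the existence of an equilibrium stress matrix of corank $d+1$: given such a stress on a generic realisation of $G'$, one extends it explicitly to a stress on a generic realisation of $G$ of the correct corank, leveraging the fact that the split introduces a new edge between $v_1$ and $v_2$ together with $k$ common neighbours forming a $(k+1)$-simplex, and controlling both the equilibrium condition and the corank under perturbation. A secondary difficulty is that Fogelsanger's decomposition, as formulated for rigidity, only guarantees interfaces along $k$-simplices (with $k+1$ vertices), one short of the $k+2$ vertices needed for the global rigidity gluing lemma; one must therefore refine the decomposition, exploiting the $(k+2)$-connectivity of $G$, to ensure that consecutive pieces share at least $k+2$ vertices.
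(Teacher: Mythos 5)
Your proposal has the right broad shape—necessity via Hendrickson, sufficiency via a Fogelsanger-style induction combining a global rigidity analogue of vertex splitting with a gluing argument—and you correctly flag the two genuine obstacles (the vertex splitting conjecture, and the mismatch between the $(k+1)$-vertex interfaces guaranteed by the Fogelsanger decomposition and the $(k+2)$ needed for the naive global rigidity gluing lemma). But both of the fixes you sketch for these obstacles fail, and filling those gaps is precisely where the bulk of Sections~\ref{sec:cleavage} and~\ref{sec:strong_cleavage} of the paper lives.

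First, you claim that when a vertex split applies, "$G'$ inherits $\ldots$ $(k+2)$-connectivity $\ldots$ via standard arguments about the effect of vertex splits on connectivity." There is no such argument: contracting an edge of a $(k+2)$-connected graph need not yield a $(k+2)$-connected graph, and the paper explicitly has to handle the situation (recorded as~(\ref{eq:glob2_k}) in its proof) in which \emph{every} edge contraction $G/e$ destroys $(k+2)$-connectivity. Handling this requires the structural lemmas about subcomplexes whose boundary lies on a $(k+2)$-vertex separator (Lemmas~\ref{lem:34sep_a} and~\ref{lem:34sep_b}) and a case analysis on whether that separator is a clique. Second, and more fundamentally, your remedy for the interface problem—"refine the decomposition $\ldots$ to ensure that consecutive pieces share at least $k+2$ vertices"—is not achievable: the Fogelsanger pieces intersect along non-facial $(k+1)$-cliques (Lemma~\ref{lem:3}), and there is no refinement that enlarges those intersections. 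Worse, you have not named the deeper obstacle: the pieces $G_i$ themselves need not be $(k+2)$-connected (Lemma~\ref{lem:1.1.2} gives only $(k+1)$-connectivity), so your inductive hypothesis cannot be applied to them at all. The paper's actual solution is the \emph{strong $d$-cleavage property}: the induction is applied not to the $G_i$'s but to their $(k+2)$-blocks (which Lemma~\ref{lem:mincleavage} shows are again simplicial circuit graphs), giving each $G_i$ the strong cleavage property, after which the $G_i$'s are glued together via Theorem~\ref{thm:cleavage_globrigid} and Corollary~\ref{cor:cleavage_globrigid}—a nontrivial gluing theorem that works along shared copies of $K_{k+1}$ for graphs with the strong cleavage property. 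Some such machinery, going well beyond the naive gluing lemma, appears unavoidable. A smaller but real issue: your induction is on $|V(G)|$, but a Fogelsanger piece $\cS_i^+$ may span all of $V(\cS)$; the paper instead inducts on $|E(G)|$, since each $E(\cS_i^+)$ is a proper subset of $E(\cS)$ whenever the decomposition has more than one part.
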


There are two major difficulties in adapting Fogelsanger's approach to prove Theorems~\ref{thm:globrigid_3_main} and~\ref{thm:globrigid_d_main}.
The first  is that the graphs of the simplicial $k$-circuits in a Fogelsanger decomposition may not be $(k+2)$-connected.
This difficulty will be resolved by introducing the concept of the strong $(k+1)$-cleavage property for graphs and then proving a gluing-type theorem for graphs with  this property in Section~\ref{sec:strong_cleavage}.
The second  difficulty is that 
Whiteley's vertex splitting theorem for rigidity (Lemma~\ref{lem:split} below) is not known to hold for global rigidity.
Indeed,  a global rigidity analogue of Whiteley's vertex splitting theorem is a long standing conjecture (Conjecture~\ref{conj:vertex_splitting}) in rigidity theory.
We will  resolve this difficulty by verifying the conjecture for the graphs of simplicial $k$-circuits in (Section~\ref{sec:coincidence}). 

The paper is organised as follows.
In Section~\ref{sec:pre}, we review basic results on rigidity.
In Section~\ref{sec:circuits}, we will give a detailed discussion about simplicial circuits. In particular we give a self-contained exposition of Fogelsanger decompositions.
We then move to the proof of our main theorem.
In Section~\ref{sec:cleavage}, we first discuss the $d$-cleavage property of graphs, which captures the structure of minimum vertex separators in simplicial circuits and will provide a foundation for our graph theoretical proof.
We also give the proofs of Theorem~\ref{thm:LBT} and Theorem~\ref{thm:redundant} (given below).
In Section~\ref{sec:strong_cleavage}, we introduce the strong $d$-cleavage property, which combines the notion of global rigidity and the $d$-cleavage property, and prove a gluing-type theorem for graphs with the strong $d$-cleavage property. 
In Section~\ref{sec:globproof}, we give proofs of our main theorems, Theorems~\ref{thm:globrigid_3_main} and~\ref{thm:globrigid_d_main}, assuming that vertex splitting preserves global rigidity on simplicial circuits.
The proofs of our main theorems are completed in Section~\ref{sec:coincidence} by showing that vertex splitting does  indeed preserve global rigidity for simplicial circuits.
We remark that section~\ref{sec:coincidence} can be skipped if the truth of the vertex splitting conjecture is assumed.

\subsubsection*{Applications: The Lower Bound Theorem and Stresses of Polytopes}

Fogelsanger's Rigidity Theorem implies the edge-inequality part of the
celebrated Lower Bound Theorem for triangulated $(d-1)$-manifolds due to Barnette~\cite{B71,B73}. 
This asserts, in particular, that, if $G$ is the graph of a connected triangulated $(d-1)$-manifold, then 
(\ref{eq:count}) holds. The full theorem gives similar inequalities for the number of faces of any fixed size. 
Kalai~\cite{K} extended Barnette's theorem 
by characterising when equality can hold in the face counts. Both  results were subsequently extended to pseudomanifolds by Tay~\cite{T}.
We will show in Section \ref{sec:cleavage} that these extremal results  for edge counts can be further extended from (pseudo)manifolds to simplicial circuits by combining Theorems  \ref{thm:globrigid_3_main} and \ref{thm:globrigid_d_main} 
with the $(k+1)$-cleavage property of simplicial $k$-circuits. 

\begin{theorem}\label{thm:LBT}
Let $G$ be the graph of a simplicial $(d-1)$-circuit $\cS$ for some $d\geq 3$.
Then $G$ satisfies (\ref{eq:count}) and equality holds if and only if 
$\cS$ is a stacked $(d-1)$-sphere or $d=3$ and $G$ is planar.
\end{theorem}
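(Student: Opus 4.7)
The inequality is immediate from Fogelsanger's Rigidity Theorem (Section~\ref{sec:circuits}): since $\cS$ is a simplicial $(d-1)$-circuit, $G$ is rigid in $\R^d$, and hence $|E(G)|\geq d|V(G)|-\binom{d+1}{2}$. For sufficiency of the equality conditions, if $\cS$ is a stacked $(d-1)$-sphere I would induct on the number of stackings: the base case $\partial\Delta^d$ has $|V|=d+1$ and $|E|=\binom{d+1}{2}=d(d+1)-\binom{d+1}{2}$, and each stacking operation adds exactly one vertex and $d$ edges, preserving equality. If $d=3$ and $G$ is planar, then $\cS$ is a triangulation of the $2$-sphere, and Euler's formula gives $|E|=3|V|-6$.

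For necessity I would induct on $|V(G)|$, with base case $|V(G)|=d+1$ yielding $G=K_{d+1}=\partial\Delta^d$, a stacked sphere. For the induction step, assume $|V(G)|\geq d+2$, that equality holds in~(\ref{eq:count}), and, for contradiction, that $\cS$ is not a stacked $(d-1)$-sphere and, when $d=3$, that $G$ is non-planar. If $G$ is $(d+1)$-connected, Theorems~\ref{thm:globrigid_3_main} and~\ref{thm:globrigid_d_main} apply to give that $G$ is globally rigid in $\R^d$; Hendrickson's Theorem~\ref{thm:hendrickson} then forces $G-e$ to be rigid in $\R^d$ for every edge $e\in E(G)$, hence $|E(G)|\geq d|V(G)|-\binom{d+1}{2}+1$, contradicting equality.

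Otherwise $G$ has a vertex separator $S$ of size at most $d$, and here I would invoke the $(k+1)$-cleavage property of simplicial $k$-circuits (with $k=d-1$) developed in Section~\ref{sec:cleavage}. This should decompose $\cS$ along $S$ into two strictly smaller simplicial $(d-1)$-circuits $\cS_1, \cS_2$ with $V(G_1)\cap V(G_2)=S$, in which $S$ spans a $(|S|-1)$-simplex. Combining Fogelsanger's inequality applied to each $\cS_i$ with the relations $|E(G)|=|E(G_1)|+|E(G_2)|-\binom{|S|}{2}$ and $|V(G)|=|V(G_1)|+|V(G_2)|-|S|$, together with the assumed edge-equality for $\cS$, a short arithmetic computation reduces to $\binom{d-|S|+1}{2}\leq 0$, forcing $|S|=d$ and equality in each of $\cS_1,\cS_2$. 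By induction, each $\cS_i$ is then a stacked $(d-1)$-sphere or (when $d=3$) a planar triangulation of $S^2$, and gluing two such complexes along a common $(d-1)$-simplex produces a stacked sphere (respectively a planar triangulation), contradicting our assumption. The main obstacle is to extract exactly this cleavage statement for simplicial circuits in Section~\ref{sec:cleavage}: one must show that the decomposition really yields two honest simplicial circuits to which the inductive hypothesis applies, and that the reassembly respects the stacked/planar classification in the equality case.
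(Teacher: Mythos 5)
Your proposal follows essentially the same route as the paper: Fogelsanger's theorem for the inequality, a reduction to the $(d+1)$-connected case where Theorems~\ref{thm:globrigid_3_main}/\ref{thm:globrigid_d_main} plus Hendrickson contradict equality, and the $d$-cleavage property to split a non-$(d+1)$-connected $G$ along a $d$-clique separator into two smaller simplicial $(d-1)$-circuits. The paper does the decomposition in one shot over the entire $(d+1)$-block tree (via Lemma~\ref{lem:mincleavage}, which shows that each subtree of $(d+1)$-blocks gives the graph of a simplicial $(d-1)$-circuit, and then notes equality propagates to every block), whereas you peel off one separator at a time and induct on $|V(G)|$; that is a repackaging of the same idea rather than a different argument.

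One step as written is incorrect: the arithmetic does not give $\binom{d-|S|+1}{2}\le 0$. Setting $s=|S|$ and $G_i=G(\cS_i)$, the identity you get from $|E(G)|=|E(G_1)|+|E(G_2)|-\binom{s}{2}$, $|V(G)|=|V(G_1)|+|V(G_2)|-s$ and edge-equality for $\cS$ is
\[
\Bigl(|E(G_1)|-d|V(G_1)|+\tbinom{d+1}{2}\Bigr)+\Bigl(|E(G_2)|-d|V(G_2)|+\tbinom{d+1}{2}\Bigr)=\binom{d-s+1}{2},
\]
so Fogelsanger applied to each piece yields $\binom{d-s+1}{2}\ge 0$, which is vacuous and forces nothing. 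The constraint $|S|=d$ comes instead from Lemma~\ref{lem:1.1.2}(a): $G(\cS)$ is $d$-connected, so there simply is no separator of size $<d$. Once $|S|=d$ the right-hand side above is $0$ and both summands vanish, which is the equality for $\cS_1,\cS_2$ you need. The ``main obstacle'' you flag is handled directly by Lemma~\ref{lem:1.1.2}(b): a $d$-vertex separator $X$ of $G(\cS)$ yields simplicial $(d-1)$-circuits $\cS',\cS''$ with $\cS'\cap\cS''=\{X\}$ and $\cS=\cS'\triangle\cS''$, whose graphs are the cleavage graphs at $X$; and the reassembly works because $X$ is a non-facial $(d-1)$-simplex which is a facet of each piece, so gluing two stacked spheres (respectively plane triangulations, for $d=3$) along a shared facet leaves every $(d+1)$-block a copy of $K_{d+1}$ (respectively planar).
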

We refer the reader to~\cite{NN,Z} for other recent developments in the study of simplicial $d$-polytopes with few edges.
Motivated by the Lower Bound Theorem, Zheng~\cite{Z20} has recently shown that the graph of any $(d+1)$-connected homology $(d-1)$-sphere is redundantly rigid in $\mathbb{R}^d$ for $d\geq 4$.
Our global rigidity characterisation leads to the following complete characterisation of redundant edges in the larger family of simplicial $(d-1)$-circuits. 
\begin{theorem}\label{thm:redundant}
Let $G$ be the graph of a simplicial $(d-1)$-circuit for some $d\geq 3$
and let $e\in E(G)$.
If $d\geq 4$, $G-e$ is rigid in $\mathbb{R}^d$ if and only if $e$ is contained in a $(d+1)$-connected subgraph in $G$.
If $d=3$, $G-e$ is rigid in $\mathbb{R}^3$ if and only if $e$ is contained in a 4-connected non-planar subgraph in $G$.
\end{theorem}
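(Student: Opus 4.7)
The plan is to prove Theorem~\ref{thm:redundant} by induction on $|V(\cS)|$, combining the global rigidity characterisations (Theorems~\ref{thm:globrigid_3_main} and~\ref{thm:globrigid_d_main}), Hendrickson's Theorem~\ref{thm:hendrickson}, Fogelsanger's Rigidity Theorem, and the $d$-cleavage property of simplicial $(d-1)$-circuits that is established earlier in Section~\ref{sec:cleavage}. Throughout we use the standard gluing lemma: if two generically rigid graphs share exactly $d$ vertices in general position which induce a common $K_d$, then their union is rigid, and conversely if such a union is rigid and one part is rigid then so is the other part (the key point being that on a generic $K_d$ any infinitesimal flex is forced to be a rigid-body motion, so it uniquely lifts to, or descends from, the partner).

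For the sufficiency direction, suppose $e$ lies in a subgraph $H$ of $G$ that is $(d+1)$-connected (and non-planar if $d=3$). If $G$ itself has these properties then by Theorem~\ref{thm:globrigid_3_main} or~\ref{thm:globrigid_d_main} the graph $G$ is globally rigid in $\mathbb{R}^d$, and Hendrickson's Theorem yields rigidity of $G-e$. Otherwise Fogelsanger's theorem forces $G$ to be rigid and hence $d$-connected, so $G$ has a vertex-separator $S$ with $|S|=d$. The $d$-cleavage property decomposes $\cS=\cS_1\cup\cdots\cup\cS_m$ into strictly smaller simplicial $(d-1)$-circuits whose graphs $G_i$ pairwise meet in $S$, with $S$ spanning a $K_d$ that belongs to a $(d-1)$-simplex of $\cS$. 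Since $H$ is $(d+1)$-connected while $|S|=d$, the vertex set of $H$ cannot straddle the separator, so $H\subseteq G_i$ for some $i$. The induction hypothesis applied to $\cS_i$ yields rigidity of $G_i-e$, and iterated gluing of $G_i-e$ with each $G_j$ ($j\ne i$, rigid by Fogelsanger) along the shared $K_d$ establishes rigidity of $G-e=(G_i-e)\cup\bigcup_{j\neq i}G_j$.

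For the necessity direction, suppose $G-e$ is rigid in $\mathbb{R}^d$. When $d=3$ and $G$ is planar, Theorem~\ref{thm:LBT} forces $|E(G)|=3|V(G)|-6$, so $|E(G-e)|$ violates the count~(\ref{eq:count}) and $G-e$ cannot be rigid, contradicting the hypothesis; hence $G$ is non-planar in the $d=3$ case. If $G$ is $(d+1)$-connected (and non-planar if $d=3$), take $H=G$. Otherwise cleave $\cS$ along a size-$d$ separator inducing $K_d$ as above, and observe that $e$ lies in some $G_i$. Running the gluing argument in reverse---$G-e=(G_i-e)\cup\bigcup_{j\neq i}G_j$ is rigid, each $G_j$ ($j\ne i$) is rigid by Fogelsanger, and the pieces meet along a generic $K_d$---one extracts that $G_i-e$ itself is rigid. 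The induction hypothesis applied to $\cS_i$ then supplies a $(d+1)$-connected (non-planar if $d=3$) subgraph of $G_i\subseteq G$ containing $e$.

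The principal obstacles are (i) establishing the structural $d$-cleavage statement that a minimum separator of size $d$ in the graph of a simplicial $(d-1)$-circuit spans a $K_d$ coming from a $(d-1)$-simplex and cleaves $\cS$ into simplicial $(d-1)$-subcircuits---the heart of Section~\ref{sec:cleavage}; and (ii) the two-way gluing lemma for generic rigidity along a shared $K_d$. Crucially, this argument avoids the strong cleavage property and the vertex-splitting machinery needed for the main theorems: it uses the global rigidity characterisations only as black boxes at the base of the induction.
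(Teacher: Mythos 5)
Your plan shares the paper's basic ingredients (the $d$-cleavage property, the global rigidity characterisations, Hendrickson, Fogelsanger, gluing), but there is a genuine gap: you do not account for the case in which the deleted edge $e$ has both endpoints inside the chosen $d$-clique separator $S$. In that case $e$ is an edge of \emph{every} cleavage graph, so the identity $G-e=(G_i-e)\cup\bigcup_{j\ne i}G_j$ on which both of your directions rest is false --- the right-hand side still contains $e$ and equals $G$. Using the correct decomposition $G-e=\bigcup_j(G_j-e)$ does not repair the necessity step: the pieces now share $K_d$ with the edge $e$ deleted rather than a complete $K_d$, and your own statement of the two-way gluing lemma (correctly) requires a complete shared $K_d$ for the converse implication. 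Concretely, with $d=3$ take $\cS=\cS_1\triangle\cS_2$ where $\cS_1=\cK_2$ on $\{a,b,c,x\}$, $\cS_2$ is a $4$-connected non-planar simplicial $2$-circuit with $\{a,b,c\}$ as a facet, and $e=ab$; then $G_1-e=K_4-e$ is not rigid although $G-e$ is rigid, so the implication ``$G-e$ rigid $\Rightarrow$ $G_i-e$ rigid for a piece containing $e$'' cannot be extracted by the gluing lemma you invoke. Nor can you always choose $S$ so that $e\notin K(S)$: there are stacked $2$-spheres in which every $3$-clique separator contains a fixed edge. A minor further imprecision: by Lemma~\ref{lem:1.1.2} the separator $S$ is a \emph{non-facial} $d$-clique of $G(\cS)$, not a $(d-1)$-simplex of $\cS$.

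The paper's proof is organised precisely so as to sidestep this. For necessity it works with the whole $(d+1)$-block tree rather than a single separator: it sets $G'$ to be the union of \emph{all} $(d+1)$-blocks containing $e$, which is a subtree and hence a simplicial circuit graph by Lemma~\ref{lem:mincleavage}; under the contradiction hypothesis $G'$ is planar (for $d=3$) or a tree of $K_{d+1}$'s (for $d\ge4$), so $|E(G')|=d|V(G')|-\binom{d+1}{2}$, $G'-e$ is not rigid, and the remainder of $G$ is attached along separators that by construction do \emph{not} induce $e$, so the flex of $G'-e$ extends to $G-e$. Your sufficiency direction can in fact be repaired without any induction at all --- $e$ lies in a $(d+1)$-block $D$ of $G$, which by Lemma~\ref{lem:mincleavage} and Theorems~\ref{thm:globrigid_3_main}/\ref{thm:globrigid_d_main} is globally rigid, so $D-e$ is rigid by Theorem~\ref{thm:hendrickson} and hence $e$ is redundant in the rigid graph $G$. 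The necessity direction, however, genuinely needs the aggregated $G'$ (or else a careful kernel-dimension argument for frameworks glued along $K_d-e$ that is more than the gluing lemma you state).
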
 

The $d$-cleavage property discussed in Section~\ref{sec:cleavage} allows us to  enumerate all maximal (non-planar) $(d+1)$-connected subgraphs of a simplicial $(d-1)$-circuit   by repeated applications of a max flow/min cut algorithm. Thus the necessary and sufficient conditions for an edge to be redundant  in Theorem~\ref{thm:redundant} can be checked efficiently. 

As a final application of Theorem 1.4 in Section 8 we prove the following result which is the generic case of a conjecture of Kalai, 
see \cite[Conjecture 5.7]{NZ}.

\begin{theorem}\label{thm:kalai} 
Suppose that $d \geq 4$ and let $P$ be a simplicial $d$-polytope whose graph is $(d+1)$-connected and whose vertex set is generic. Then $P$ is uniquely determined up to affine equivalence by its graph and the space of stresses of the underlying framework.
\end{theorem}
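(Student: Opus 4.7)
The plan is to combine Theorem~\ref{thm:globrigid_d_main}, which delivers generic global rigidity of $G$, with the stress-matrix characterisation of generic global rigidity due to Gortler, Healy and Thurston~\cite{GHT}. The crux is that two frameworks sharing the same equilibrium stress space must share the kernel of any stress matrix belonging to that space, and that for a generic globally rigid framework this kernel is exactly the span of the constant function together with the $d$ coordinate functions.

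First I would observe that the boundary complex of a simplicial $d$-polytope is a triangulated $(d-1)$-sphere, hence a simplicial $(d-1)$-circuit. Setting $k:=d-1$, we have $k\geq 3$ since $d\geq 4$, and the hypothesis that $G$ is $(d+1)$-connected is precisely $(k+2)$-connectivity, so Theorem~\ref{thm:globrigid_d_main} gives that $G$ is globally rigid in $\R^d$. Writing $p\colon V(G)\to \R^d$ for the vertex embedding of $P$, the genericity of $p$ together with the necessity half of~\cite{GHT} then produces an equilibrium stress $\omega$ of $(G,p)$ whose stress matrix $M(\omega)$ has the maximum possible rank $|V(G)|-d-1$. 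Its kernel has dimension exactly $d+1$ and, by genericity, is spanned by the all-ones vector $\mathbf{1}$ together with the coordinate functions $p_1,\ldots,p_d$.

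Now I would take any simplicial $d$-polytope $P'$ whose graph is $G$ and whose space of equilibrium stresses agrees (as a subspace of $\R^{E(G)}$) with that of $P$, and let $q\colon V(G)\to \R^d$ be its vertex embedding. Since the stress spaces coincide, $\omega$ is also an equilibrium stress of $(G,q)$, hence each coordinate function $q_i$ lies in $\ker M(\omega)$. The vertex set of a $d$-polytope affinely spans $\R^d$, so $\mathbf{1},q_1,\ldots,q_d$ are linearly independent and thus form a second basis of the $(d+1)$-dimensional kernel. Comparing the two bases, each $q_i$ is an affine combination of $p_1,\ldots,p_d$, producing an affine map $T\colon \R^d\to\R^d$ with $q=T\circ p$; the full-dimensional affine span on each side forces $T$ to be invertible, so $P'=T(P)$ is affinely equivalent to $P$.

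The only genuinely new input is Theorem~\ref{thm:globrigid_d_main}; the rest is linear algebra together with the standard stress-matrix criterion. Consequently I expect no significant obstacle once the main theorem is in hand: the existence of a maximum-rank stress in the shared stress space is automatic for generic globally rigid frameworks, and the comparison of kernels is immediate from dimension counting. The main difficulty of the entire programme therefore lies upstream, in proving Theorem~\ref{thm:globrigid_d_main} itself.
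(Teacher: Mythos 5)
Your proposal is correct and follows essentially the same route as the paper: reduce to Theorem~\ref{thm:globrigid_d_main} to establish generic global rigidity, invoke the Gortler--Healy--Thurston characterisation to obtain a maximum-rank equilibrium stress $\omega$ of $(G,p)$, and then argue that any other realisation admitting $\omega$ as a stress must be an affine image of $p$. The only cosmetic difference is that the paper cites the last step directly as a known observation of Connelly (any realisation sharing a full-rank stress with $(G,p)$ is an affine image of $p$), whereas you unfold that observation explicitly by comparing the two bases $\{\mathbf{1},p_1,\dots,p_d\}$ and $\{\mathbf{1},q_1,\dots,q_d\}$ of $\ker\Omega$ and extracting an affine map; the paper also first proves a slightly more general statement (Theorem~\ref{thm:affine}, for arbitrary realisations $q$ and for graphs of simplicial $(d-1)$-circuits, including $d=3$) and then specialises to polytopes, while you argue directly in the polytope setting. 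Both are sound; your argument is a self-contained rederivation of the same ingredient.
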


\subsubsection*{Notation}

The symmetric difference of two sets $X$ and $Y$ is denoted by $X\triangle Y$. 
For a finite set $X$, $K(X)$ denotes the set of edges of the complete graph on $X$.
If $X=\{v_1,\dots, v_k\}$, we simply denote $K(\{v_1,\dots, v_k\})$ by $K(v_1,\dots, v_k)$.
The vertex and edge sets of a graph $G$   are denoted by $V(G)$ and $E(G)$, respectively.
For $v\in V(G)$, the set of neighbours of $v$ in $G$ is denoted by $N_G(v)$ and its degree is  $d_G(v)=|N_G(v)|$.
When the graph $G$ is clear from the context, we will often suppress reference to $G$ in these notations.
For $X\subseteq V(G)$, let $G[X]$ be the subgraph of $G$ induced by $X$.
We say that $X$ is a {\em  $k$-clique of $G$} if $G[X]$ is a complete graph on $k$ vertices.
For a connected graph $G$, a set $X\subseteq V(G)$ is called a {\em separator} if $G-X$ is not connected.
It is called a {\em $k$-vertex separator} if $|X|=k$, and called a {\em $k$-clique separator} if $X$ is a $k$-clique.   
All graphs considered will be {\em simple} i.e., without loops or multiple edges. We use $K_n$ to denote the complete graph on $n$ vertices and $C_n$ the cycle on $n$ vertices.

\section{Preliminary results on rigidity}\label{sec:pre}
Let $d$ be a positive integer.
A basic step in deciding whether  a given $d$-dimensional framework $(G,p)$  is rigid is to check its infinitesimal rigidity by calculating the rank of its {\em rigidity matrix}. This is the matrix of size $|E|\times d|V(G)|$ in which each row is indexed by an edge, sets of $d$ consecutive columns are indexed by the vertices, and the row indexed by the  edge $e=uv$ has the form:
\[
\bbordermatrix{
  & &  u & & v & \cr
 e=uv & 0 \dots 0 & p(u)-p(v) & 0\dots 0 & p(v)-p(u) & 0\dots 0
}.
\]
The space of {\em infinitesimal motions}  of $(G,p)$ is given by the right kernel of $R(G,p)$,  and  $(G,p)$ is  {\em infinitesimally rigid} if we have $\rank R(G,p)=d|V(G)|-\binom{d+1}{2}$ when $|V(G)|\geq d$ and $\rank R(G,p)=\binom{|V(G)|}{2}$ when $|V(G)|\leq d-1$, since this will imply that every vector in $\ker R(G,p)$ is an infinitesimal isometry of $\R^d$.
It is a fundamental fact in rigidity theory that rigidity and infinitesimal rigidity are equivalent properties for generic frameworks.
Since the rank of $R(G,p)$ will be maximised whenever $(G,p)$ is generic, 
this implies that a graph $G$ is rigid in $\R^d$ if and only if some  realisation of $G$ in $\R^d$ is infinitesimally rigid.

We say that a graph $G$ is {\em minimally rigid} if it is rigid and $G-e$ is not rigid for every $e\in E(G)$.
If $G$ is minimally rigid in $\R^d$ with $|V(G)|\geq d
$, then
$R(G,p)$ is row independent with $\rank R(G,p)=d|V(G)|-\binom{d+1}{2}$ for some $d$-dimensional realisation $(G,p)$. This in turn
implies that $|E(G)|=d|V(G)|-\binom{d+1}{2}$ and 
\begin{equation}\label{eq:3_6_sparse}
|E(H)|\leq d|V(H)|-\binom{d+1}{2} \quad \text{for all subgraphs $H$ of $G$ with $|V(H)|\geq d$}.
\end{equation}

We next introduce two well-known graph operations that preserve infinitesimal rigidity.
Let $G=(V,E)$ be a graph.
The $d$-dimensional {\em $0$-extension} operation creates a new graph from $G$ by adding a new vertex $v$ and $d$ new edges incident to $v$. It is well-known that the $d$-dimensional 0-extension operation preserves generic rigidity in $\mathbb R^d$.
This follows from the following more general statement for arbitrary frameworks.
\begin{lemma}\label{lem:0extension}
Suppose that $(G,p)$ is a framework in $\R^d$ and $v$ is a vertex in $G$.
If $(G-v, p|_{V-v})$ is infinitesimally rigid in $\R^d$ and $\{p(v)-p(u): u\in N_G(v)\}$ spans $\R^d$,
then $(G,p)$ is infinitesimally rigid in $\R^d$.
\end{lemma}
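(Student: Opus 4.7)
The plan is to work directly with the infinitesimal motion characterisation of infinitesimal rigidity. Recall that a map $m:V(G)\to \R^d$ is an infinitesimal motion of $(G,p)$ precisely when $(p(u)-p(w))\cdot(m(u)-m(w))=0$ for every edge $uw\in E(G)$, and the framework is infinitesimally rigid exactly when every infinitesimal motion is (the restriction of) an infinitesimal isometry of $\R^d$. So it suffices to start with an arbitrary infinitesimal motion $m$ of $(G,p)$ and show it is an infinitesimal isometry.

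First I would restrict $m$ to $V(G)\setminus\{v\}$. Every edge of $G-v$ is an edge of $G$, so $m|_{V\setminus v}$ is an infinitesimal motion of $(G-v, p|_{V\setminus v})$. By hypothesis this subframework is infinitesimally rigid, so $m|_{V\setminus v}$ extends uniquely to a global infinitesimal isometry $\tilde m$ of $\R^d$. Set $m' := m - \tilde m$. Because $\tilde m$, viewed on $V(G)$, is automatically an infinitesimal motion of $(G,p)$, the difference $m'$ is also an infinitesimal motion of $(G,p)$, and by construction $m'(u)=0$ for every $u\in V(G)\setminus\{v\}$.

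Next I would exploit the edges at $v$. For each $u\in N_G(v)$, the edge constraint reads $(p(u)-p(v))\cdot(m'(u)-m'(v))=0$, which, using $m'(u)=0$, simplifies to $(p(v)-p(u))\cdot m'(v)=0$. Since $\{p(v)-p(u):u\in N_G(v)\}$ spans $\R^d$ by hypothesis, this linear system forces $m'(v)=0$. Therefore $m'\equiv 0$ and $m=\tilde m$ on $V(G)$, so $m$ is an infinitesimal isometry of $\R^d$, as required.

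The argument is short and the only real point of care is making sure the ``extension'' step in the middle paragraph is well-defined: an infinitesimal motion of the infinitesimally rigid subframework $(G-v, p|_{V\setminus v})$ must agree with a genuine infinitesimal isometry of the ambient $\R^d$, not merely with an isometry of some lower-dimensional affine hull of $p(V\setminus v)$. This is built into the definition of infinitesimal rigidity used in Section~\ref{sec:pre} (which fixes the target rank of $R(G-v,p|_{V\setminus v})$ at $d|V\setminus v|-\binom{d+1}{2}$ once $|V\setminus v|\geq d$, and at $\binom{|V\setminus v|}{2}$ otherwise), so no additional work is needed, and the span condition on $\{p(v)-p(u):u\in N_G(v)\}$ takes care of the remaining degree of freedom at $v$ in either regime.
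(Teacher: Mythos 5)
The paper states Lemma~\ref{lem:0extension} without proof, citing it as a well-known fact about $0$-extensions, so there is no authorial argument to compare against. Your proof is the standard one for this statement and it is correct: restrict the motion to $V-v$, use infinitesimal rigidity of $(G-v,p|_{V-v})$ to replace it by the restriction of an infinitesimal isometry $\tilde m$ of $\R^d$, subtract, and use the span hypothesis at $v$ to kill the residual motion.

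Two small remarks on precision. First, your opening sentence treats ``every infinitesimal motion is the restriction of an infinitesimal isometry'' as a characterisation of infinitesimal rigidity, but the paper defines infinitesimal rigidity purely via the rank of $R(G,p)$, and the two conditions coincide only when $p(V)$ affinely spans a subspace of dimension at least $d-1$. That hypothesis is not free in general, but in the present lemma the assumption that $\{p(v)-p(u):u\in N_G(v)\}$ spans $\R^d$ forces $p(V)$ to affinely span all of $\R^d$ and forces $|V|\geq d+1$ (hence $|V\setminus\{v\}|\geq d$), so the equivalence holds and only the ``$|V\setminus\{v\}|\geq d$'' regime can occur; the reference to ``either regime'' in your last sentence is therefore vacuous. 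Second, you do not actually need uniqueness of the extension $\tilde m$; existence suffices.

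If one prefers to stay with the paper's rank formulation directly, there is a slightly shorter route: after permuting columns so the $d$ columns of $v$ come last, $R(G,p)$ is block lower triangular with diagonal blocks $R(G-v,p|_{V-v})$ and the $|N_G(v)|\times d$ matrix whose rows are $p(v)-p(u)$ for $u\in N_G(v)$. The first block has rank $d(|V|-1)-\binom{d+1}{2}$ by the rigidity hypothesis and the second has rank $d$ by the span hypothesis, so $\rank R(G,p)\geq d|V|-\binom{d+1}{2}$; the reverse inequality holds because the $\binom{d+1}{2}$-dimensional space of restricted infinitesimal isometries lies in $\ker R(G,p)$ (using that $p(V)$ affinely spans $\R^d$). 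Both arguments are fine; yours is the one usually seen in the literature.
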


Given a graph $G=(V,E)$ and $v\in V$ with neighbour set  $N_G(v)$ the {\em ($d$-dimensional) vertex splitting operation} constructs a new graph $G'$ by deleting $v$, adding two new vertices $v'$ and $v''$ with $N_G(v')\cup N_G(v'')=N_G(v)\cup \{v',v''\}$ and $|N_G(v')\cap N_G(v'')|\geq d-1$.  Whiteley \cite{Wsplit} showed that vertex splitting preserves generic rigidity in $\R^d$. More precisely he proved

\begin{lemma}\label{lem:split} 
Suppose that $(G,p)$ is an infinitesimally rigid framework  in $\R^d$ and that $G'$ is obtained from $G$ by a vertex splitting operation which splits a vertex $v$ of $G$ into two vertices $v',v''$. Suppose further that, for some $X\subseteq  N_{G'}(v')\cap N_{G'}(v'')$ with $|X|=d-1$, the points in $\{p(w)\,:\,w\in X+v\}$ are affinely independent in $\R^d$. 
Then $(G',p')$ is infinitesimally rigid for some $p'$ with $p'(v')=p(v)$ and $p'(w)=p(w)$ for all $w\in V(G)\setminus \{v\}$.
\end{lemma}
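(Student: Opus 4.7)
My plan is to follow Whiteley's strategy: choose a specific near-degenerate placement for $v''$ and verify infinitesimal rigidity of $(G',p')$ by a combination of a $0$-extension argument and a careful rigidity matrix analysis. Let $H^{\ast} \subseteq \R^d$ be the $(d-1)$-dimensional affine hyperplane spanned by the $d$ affinely independent points $\{p(w) : w \in X \cup \{v\}\}$, and let $n \in \R^d$ be a nonzero vector orthogonal to $H^{\ast} - p(v)$. I will set $p'(v') := p(v)$, $p'(v'') := p(v) + n$, and $p'(w) := p(w)$ for all $w \in V(G) \setminus \{v\}$, and aim to show $(G', p')$ is infinitesimally rigid.

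The first step is to construct a larger infinitesimally rigid framework. Let $H'$ be the graph on $V(G) \cup \{v''\}$ with edge set $E(G) \cup \{v''w : w \in N_{G'}(v'')\}$, under the identification $v = v'$. It arises from $G$ by first performing a $d$-dimensional $0$-extension adjoining $v''$ with edges to the $d$ vertices of $X \cup \{v\}$, and then by adding the remaining edges $v''w$ for $w \in N_{G'}(v'') \setminus (X \cup \{v\})$. Since $p'(v'') \notin H^{\ast}$, the vectors $\{p'(v'') - p(u) : u \in X \cup \{v\}\}$ span $\R^d$, so Lemma~\ref{lem:0extension} renders the $0$-extension infinitesimally rigid; the subsequent edge additions preserve this, so $(H', p')$ is infinitesimally rigid.

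The second step is a reduction. Using $N_G(v) = (N_{G'}(v') \cup N_{G'}(v'')) \setminus \{v', v''\}$ from the definition of vertex splitting, one sees that the only edges in $E(H') \setminus E(G')$ are $\{v'w : w \in N_{G'}(v'') \setminus N_{G'}(v')\}$. Thus $R(G', p')$ is obtained from the rigid $R(H', p')$ by deleting these rows, and proving that $(G', p')$ is infinitesimally rigid reduces to showing each such row lies in the row span of $R(G', p')$, or equivalently, that every $\dot p \in \ker R(G', p')$ satisfies $(p(v) - p(w)) \cdot (\dot p(v') - \dot p(w)) = 0$ for each $w \in N_{G'}(v'') \setminus N_{G'}(v')$.

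To establish this row dependence I use the direct sum decomposition $\R^d = \R n \oplus (H^{\ast} - p(v))$ together with the affine independence of $X \cup \{v\}$ to write $p(v) - p(w) = \alpha n + \sum_{u \in X} \beta_u (p(v) - p(u))$ for unique scalars $\alpha, \beta_u$. A direct computation with the rows indexed by $v'v''$, $v''w$, and $\{v'u, v''u\}_{u \in X}$ (exploiting the orthogonality $n \perp (p(v) - p(u))$ for $u \in X$) expresses the target $v'w$ row as a combination of these, up to residual correction terms supported on the columns of the vertices $u \in X$. The main obstacle is cancelling these residual corrections: this requires combining with further rows from $R(G - v, p) \subseteq R(G', p')$, and it is here that the infinitesimal rigidity of $(G, p)$ enters decisively, since it ensures that the needed corrective linear functional vanishes on every trivial motion of $(G, p)$ and hence lies in the row span of $R(G, p)$. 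Once this dependence is verified, $R(G', p')$ has the same rank as $R(H', p')$, completing the proof.
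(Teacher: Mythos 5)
The paper does not prove Lemma~\ref{lem:split}; it is cited directly from Whiteley~\cite{Wsplit}. So what you are really attempting is a reproof of Whiteley's vertex splitting theorem, and the question is whether your argument is correct on its own terms. The first two steps are sound and match the standard opening moves: placing $p'(v'')$ off the hyperplane $H^*$, building $H'$ by $0$-extension, observing $E(G')\subseteq E(H')$ so that it suffices to show the rows of $R(H',p')$ indexed by $v'w$ for $w\in N_{G'}(v'')\setminus N_{G'}(v')$ are redundant over $R(G',p')$.

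The third step has a genuine gap. After the announced row manipulations you are left with a ``residual correction'' functional $L$, and you argue that $L$ vanishes on trivial motions of $(G,p)$, hence lies in the row span of $R(G,p)$, hence can be cancelled. But lying in the row span of $R(G,p)$ is not enough: what you need is that $L$ lies in the row span of $R(G',p')$, since your goal is to conclude $L(\dot p)=0$ from $\dot p\in\ker R(G',p')$. Under the identification $v=v'$, the matrix $R(G,p)$ is \emph{not} a row-submatrix of $R(G',p')$: the rows of $R(G,p)$ for edges $vw$ with $w\in N_G(v)\setminus N_{G'}(v')$ are not present in $R(G',p')$ (these edges become $v'w\notin E(G')$). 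Those are precisely the rows your argument is trying to prove redundant, so invoking them here is circular. The claim that the residual terms are ``supported on the columns of the vertices $u\in X$'' also does not survive the computation: after combining $v'v''$, $v''w$, $\{v'u,v''u\}_{u\in X}$, the residual retains a nonzero component in the columns of $w$ (and generically in $v''$ as well), and the functional $L$ one ultimately needs to kill involves $\dot p(v')$, so it cannot lie in the row span of $R(G-v,p)$ alone, contradicting your ``combining with further rows from $R(G-v,p)\subseteq R(G',p')$''. A correct proof must break this circularity, typically by a change of coordinates or a limiting/rescaling argument on $R(G',p_t')$ with $p_t'(v'')=p(v)+tn$; the missing idea is exactly how to decouple the constraints on $v'$ and $v''$ without already assuming the edges you deleted.
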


Suppose that $(G_1,p_1)$ and $(G_2,p_2)$ are infinitesimally rigid frameworks in $\mathbb R^d$ such that, for $i=1,2$ the affine span of $p_i(V(G_i))$ has dimension at least $d-1$, and such that $p_1(v) = p_2(v)$ for $v\in V(G_1) \cap V(G_2)$. Define $p:V(G_1) \cup V(G_2)$ by $p(v) = p_i(v)$ for $v \in V(G_i)$. The {\em infinitesimal rigidity gluing property for frameworks} says that $(G_1 \cup G_2,p)$ is infinitesimally rigid in $\mathbb R^d$ if and only if the affine span of $p(V(G_1) \cap V(G_2))$ has dimension at least $d-1$.

In particular, the
{\em rigidity gluing property} states that,
for any two rigid graphs $G_1$ and $G_2$ in $\mathbb{R}^d$ such that $|V(G_i)| \geq d$ for $i=1,2$, $G_1\cup G_2$ is rigid in $\mathbb{R}^d$ if and only if
$|V(G_1)\cap V(G_2)|\geq d$.
This intuitively clear fact is one of the characteristic properties of graph rigidity. 
It is also a key tool in the proof of Fogelsanger's Rigidity Theorem.
The analogous {\em global rigidity gluing property} states that,
for any two globally rigid graphs $G_1$ and $G_2$ in $\mathbb{R}^d$ such that $|V(G_i)| \geq d+1$ for $i=1,2$, $G_1\cup G_2$ is globally rigid in $\mathbb{R}^d$ if and only if $|V(G_1)\cap V(G_2)|\geq d+1$.

The final result of this section, due to Tanigawa \cite[Theorem 4.11]{ST}, extends the global rigidity gluing property. Given two graphs $G,H$ and a set of vertices $X\subseteq V(G)\cap V(H)$, we say that {\em $(H,X)$ is a rooted minor of $(G,X)$} if $H$ can be obtained from $G$ by a sequence of edge deletions and contractions in such a way that each contracted edge $e$ is incident with at most one vertex in $X$ and that, if $e$ is incident with  $x\in X$, then we contract $e$ onto $x$ so that the set $X$ is preserved by each edge contraction. 

\begin{theorem}[{\cite[Theorem 4.11]{ST}}]\label{lem:globunion} Let  $G_1,G_2$ be graphs, $X=V(G_1) \cap V(G_2)$ and $H$ be a graph on $X$. Suppose that: $|X| \geq d+1$; $G_1$ is rigid in $\R^d$; $(H, X)$ is a rooted minor of $(G_2, X)$; either $G_1\cup H$ and $G_2\cup K(X)$ are globally rigid in $\R^d$, or $G_1\cup K(X)$ and $G_2\cup H$ are globally rigid in $\R^d$.
Then $G_1\cup G_2$ is globally rigid in $\R^d$.
\end{theorem}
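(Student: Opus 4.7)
The approach is to use Connelly's stress-matrix criterion for generic global rigidity: a generic framework $(G, p)$ in $\R^d$ with $|V(G)|\geq d+2$ is globally rigid iff it admits an equilibrium stress whose stress matrix has maximum rank $|V(G)|-d-1$. Working by symmetry, I assume the first alternative of the hypothesis, so that $G_1\cup H$ and $G_2\cup K(X)$ are both globally rigid in $\R^d$. Fix a generic realization $p$ of $G_1\cup G_2$; the induced realizations of all subgraphs remain generic. Since $V(G_1\cup H)\cap V(G_2\cup K(X))=X$ with $|X|\geq d+1$, the standard global rigidity gluing lemma yields an equilibrium stress $\omega^*$ on $G_1\cup G_2\cup H\cup K(X)$ whose stress matrix $\Omega^*$ has maximum rank $|V(G_1)\cup V(G_2)|-d-1$.

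The key technical step is to modify $\omega^*$ to a stress supported only on $E(G_1\cup G_2)$ while preserving the maximum rank of its stress matrix. Stresses on edges $xy\in K(X)\setminus E(G_1)$ can be absorbed into $E(G_1)$-stresses using the rigidity of $G_1$, which forces $xy$ to be dependent on $E(G_1)$ in the $d$-dimensional rigidity matroid and hence provides a compensating stress on $G_1+xy$ with nonzero coefficient at $xy$; taking suitable linear combinations of these kills the $K(X)\setminus E(G_1)$ coefficients of $\omega^*$ and only alters the $V(G_1)$-block of the stress matrix. Stresses on edges of $H\setminus E(G_1\cup G_2)$ should in turn be absorbed into $E(G_2)$-stresses by an inductive lift along the minor sequence defining $(H, X)\preceq(G_2, X)$; this modification only alters the $V(G_2)\setminus X$-block. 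The resulting equilibrium stress on $E(G_1\cup G_2)$ agrees with $\omega^*$ on the $X$-block of the stress matrix.

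The main obstacle is verifying that the modified stress matrix still has rank $|V(G_1)\cup V(G_2)|-d-1$. Its kernel automatically contains the $(d+1)$-dimensional span of $\mathbf{1}$ together with the affine coordinates of $p$, so the task is to rule out any extra kernel vector. A block-matrix kernel analysis, combined with a genericity argument (the rank depends rationally on $p$ and attains its generic maximum on a Zariski-open set), should deliver the conclusion. The most delicate piece is the rooted-minor lift of $H$-stresses to $G_2$-stresses: since edge contractions are not generally rigidity-preserving, the lift must be built inductively along the minor sequence, exploiting the generic independence of the non-$X$ vertices being ``uncontracted'' to ensure each step is well defined and does not inadvertently reduce the stress-matrix rank on the $X$-block.
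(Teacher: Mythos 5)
This theorem is cited in the paper as Theorem~4.11 of~\cite{ST} without proof, so the comparison is with Tanigawa's argument, not with something reproduced in this paper. Your general direction (GHT/Connelly stress-matrix criterion) is the natural tool, but there are two genuine gaps in the proposal that are not merely missing detail — they are the entire content of the theorem.

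First, the rank-preservation step is unjustified and cannot be patched by the genericity argument you invoke. You propose to take a maximum-rank stress $\omega^*$ on $G_1\cup G_2\cup H\cup K(X)$ and subtract off compensating stresses (supported on $G_1+xy$, respectively ``lifted'' from $H$ to $G_2$) to zero out the coefficients on edges outside $E(G_1\cup G_2)$, then claim the result still has rank $|V|-d-1$. But adding a rank-one perturbation (or any multiple of another stress) to a maximum-rank stress matrix can drop the rank; there is no general reason the particular linear combination you construct stays full rank. Your appeal to ``the rank depends rationally on $p$'' governs how rank varies as $p$ varies over configurations, not how rank varies as you take different linear combinations in a fixed stress space $S(G,p)$. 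Existence of a max-rank stress on $G_1\cup G_2$ is exactly what needs proving, and it does not follow from the existence of one on the supergraph together with an ad hoc modification.

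Second, the ``inductive lift of $H$-stresses along the minor sequence'' does not make sense as stated, and this is where the rooted-minor hypothesis actually earns its keep. If $H$ is obtained from $G_2$ by contracting an edge $uw$ onto $u$, there is no natural map taking an equilibrium stress of $(H,p|_{V(H)})$ to an equilibrium stress of $(G_2,p|_{V(G_2)})$: separating $u$ and $w$ destroys the equilibrium condition at both, and the new edges incident to $w$ must carry stresses chosen to restore it, which is a nontrivial constraint with no guarantee of solvability for a prescribed stress on the $X$-edges. The role of $(H,X)\preceq(G_2,X)$ in Tanigawa's proof is not a direct stress lift; it is used together with the rigidity of $G_1$ to control the ``shared stress kernel'' on $X$ — i.e.\ to show that the part of the stress matrix one needs to kill on $K(X)\setminus E(G_1\cup G_2)$ can indeed be absorbed because the relevant pairs in $X$ are already forced into agreement by the two globally rigid pieces. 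Your sketch inverts the logical direction, treating the minor as a machine for manufacturing stresses on $G_2$ rather than as a combinatorial certificate for a kernel condition.

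A smaller point: the claim that absorbing $K(X)$-stresses ``only alters the $V(G_1)$-block'' and the minor lift ``only alters the $V(G_2)\setminus X$-block'' is inaccurate — the stress matrix is a weighted Laplacian, so modifying the weight on any edge with both endpoints in $X$ changes the $X\times X$ block, and modifying weights on edges of $G_2$ incident to $X$ changes both the $X\times(V(G_2)\setminus X)$ and $X\times X$ blocks through the diagonal. The block-disjointness you are relying on to control the kernel analysis is not actually there.
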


\section{Simplicial circuits}\label{sec:circuits}

\subsection{Definitions and elementary properties}

Throughout the remainder of this paper we will be interested in non-trivial simplicial $k$-circuits as defined in the introduction. However, our analysis will focus mostly on either the set of facets (i.e., faces of maximal dimension) or on the graph of these objects. 
Thus we will identify a pure abstract simplicial $k$-complex with its set of $k$-simplices. Furthermore, it will also be useful for technical reasons explained in Section \ref{sec:contractions} to allow our objects to have multiple copies of the same $k$-simplex.
Thus for an integer $k\geq 0$ we will refer to a multiset ${\cal S}$ of $(k+1)$-sets
as a {\em simplicial $k$-multicomplex}, and
its elements as {\em $k$-simplices}. 
In situations where the elements of $\cS$ are known to be distinct, we will refer to $\cS$ as a {\em simplicial $k$-complex}. We emphasise here that all of our main results concern objects which are simplicial $k$-complexes and are thus the set of facets of an abstract simplicial $k$-complex in the standard meaning of this term.

For $0\leq r\leq k$, an {\em $r$-face} of a simplicial $k$-multicomplex $\cS$ is a set $R$ with $|R|=r+1$ and $R\subseteq S$ for some $S\in \cS$. 
We also define the empty set to be the unique $(-1)$-face of $\cS$. 
We will often refer to the $0$-faces of $\cS$ as {\em vertices} and the $1$-faces as {\em edges}.
The {\em graph} $G({\cal S})$ of ${\cal S}$ is the simple graph 
with vertex set $V(\cS)=\bigcup_{S\in \cS} S$  and edge set $E(\cS)=\{uv\,:\,\{u,v\}\subseteq S\in \cS \}$. 
We will refer to a $(k+1)$-clique of $G(\cS)$ which is not a $k$-simplex in $\cS$ as a {\em non-facial $(k+1)$-clique} of $G(\cS)$. 
Given a vertex $v\in V(\cS)$, we use $\cS_v$ to denote the multiset of $k$-simplices of $\cS$ which contain $v$. 
More generally, given a subset $F\subseteq V(\cS)$ of $\cS$, define $\cS_F$ to be the multiset of $k$-simplices of $\cS$ which contain $F$ . We  set $\cS_F = \emptyset$ in cases where $F \not\subseteq V(\cS)$. We also define the {\em link} of $F$ in $\cS$ to be the multiset $lk_\cS(F) = \{ U\setminus F: U \in \cS_F\}$.

For $k\geq 1$ two $k$-simplices $S,S'\in \cS$ are {\em adjacent} if $|S\cap S'|\geq k-1$.
The simplicial $k$-complex $\cS$ is {\em strongly connected} if any two $k$-simplices in $\cS$ are joined by a {\em strong path} i.e., a sequence of $k$-simplices in which consecutive pairs are adjacent. It is a {\em $k$-pseudomanifold} if it is strongly connected and every $(k-1)$-face belongs to exactly two $k$-simplices. 
It is a  {\em triangulated $k$-manifold} if its geometric realisation is a topological $k$-manifold.
These definitions imply that every triangulated connected $k$-manifold is a $k$-pseudomanifold, see for example \cite{K}.

For 
$k\geq 0$
we define the boundary $\partial \cS$ 
of a simplicial $k$-complex $\cS$
to be the set of all $(k-1)$-faces of $\cS$
which are contained in an odd number of $k$-simplices of $\cS$ i.e., $\partial S = \{F \subseteq V(\cS): |F| = k, |\cS_F| \equiv 1 \mod 2\}$. Note that our definition excludes $\partial \cS$ from containing multiple copies of the same $(k-1)$-simplex. 
Thus, for $k\geq 1$,
$\partial S$ is a simplicial $(k-1)$-complex.
For $k=0$, $\partial\cS = \{\emptyset\}$ if $|\cS|$ is odd and $\partial \cS = \emptyset$ is $|S|$ if even.
For example, if ${\cal S}_1=\{\{v_1,v_2,v_3\},\{v_1,v_3,v_4\}\}$ and ${\cal S}_2=\{\{v_1,v_2,v_3\},\{v_1,v_2,v_3\}\}$,
then $\partial \cS_1=\{\{v_1,v_2\}, \{v_2,v_3\}, \{v_3,v_4\},\{v_4,v_1\}\}$ and $\partial \cS_2=\emptyset$.

 The following elementary observations will be used repeatedly throughout the remainder of the paper. 

\begin{lemma}
    \label{lem_symm_diff}
    Suppose that $\cS$ and $\cT$ are simplicial $k$-complexes. 
    \begin{itemize}
        \item[(a)] For any $F \subseteq V(\cS) \cup V(\cT)$, $(\cS \triangle \cT)_F = \cS_F \triangle \cT_F$.
        \item[(b)] $\partial (\cS \triangle \cT)
        = \partial \cS \triangle \partial \cT$.
    \end{itemize}
\end{lemma}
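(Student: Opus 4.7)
The plan is to treat both parts as bookkeeping exercises that follow directly from unwinding the definitions; the only mild subtlety is keeping track of the distinction between multiset cardinalities and set-theoretic membership, which is eliminated by the hypothesis that $\cS$ and $\cT$ are simplicial $k$-complexes (not multicomplexes), so that $\cS \triangle \cT$ is the standard symmetric difference of sets.

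For part (a), I would fix a $(k+1)$-set $S$ and check $S \in (\cS \triangle \cT)_F \iff S \in \cS_F \triangle \cT_F$ by separating the conditions ``$F \subseteq S$'' and ``$S$ lies in exactly one of $\cS, \cT$.'' The former depends only on $S$ (not on $\cS, \cT$), so it factors out of both sides, reducing the equivalence to the tautology $(A \oplus B) \wedge C \iff (A \wedge C) \oplus (B \wedge C)$ where $A, B$ are the membership predicates for $\cS, \cT$ and $C$ is ``$F \subseteq S$''. This immediately yields the claimed equality of sets.

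For part (b), I would combine (a) with a parity computation. By definition, a $k$-set $F$ lies in $\partial(\cS \triangle \cT)$ precisely when $|(\cS \triangle \cT)_F|$ is odd; substituting (a) and using the standard identity
\[
|\cS_F \triangle \cT_F| \;=\; |\cS_F| + |\cT_F| - 2|\cS_F \cap \cT_F|
\]
shows that $|(\cS \triangle \cT)_F| \equiv |\cS_F| + |\cT_F| \pmod{2}$. Hence $F \in \partial(\cS \triangle \cT)$ if and only if exactly one of $|\cS_F|$, $|\cT_F|$ is odd, i.e.\ $F \in \partial\cS \triangle \partial\cT$. The boundary cases ($k=0$, or $F \not\subseteq V(\cS)\cup V(\cT)$) cause no trouble, since the paper's definitions of $\partial$ and $\cS_F$ are consistent with the convention $\cS_F = \emptyset$ when $F$ is not supported, making both sides simultaneously empty. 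There is no real obstacle here beyond ensuring notation is interpreted uniformly.
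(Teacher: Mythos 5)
Your proof is correct and takes essentially the same route as the paper: part (a) by observing the condition $F\subseteq S$ is independent of which complex $S$ lies in, and part (b) by combining (a) with the parity identity $|\cS_F\triangle\cT_F|\equiv|\cS_F|+|\cT_F|\pmod 2$. The only difference is that you explicitly flag the $k=0$ and unsupported-$F$ edge cases, which the paper leaves implicit.
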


\begin{proof}
    Suppose that $U \in \cS \cup \cT$ and $F \subseteq U$. Then 
    $U \in (\cS \triangle \cT)_F \Leftrightarrow U \in \cS \triangle \cT \Leftrightarrow U \in \cS_F \triangle \cT_F$ which proves (a). 
    For (b), let $F$ be a $(k-1)$-face of  $\cS \cup \cT$. Then, using (a), $F \in \partial (\cS \triangle \cT) \Leftrightarrow |\cS_F \triangle \cT_F| \equiv 1 \mod 2 \Leftrightarrow |\cS_F| + |\cT_F| \equiv 1 \mod2 \Leftrightarrow F \in \partial \cS \triangle \partial \cT_F$.
\end{proof}

For $k\geq 0$ a simplicial $k$-multicomplex ${\cal S}$ is said to be a {\em simplicial $k$-cycle} if 
$\partial {\cal S}=\emptyset$ i.e., $\cS$ is 
a $k$-cycle in the  corresponding chain complex. 
For example a simplicial 1-cycle is the edge set of an even multigraph (without loops) i.e., a multigraph in which each vertex has even degree.
We say that $\cS$ is a {\em simplicial $k$-circuit} if it is a non-empty simplicial $k$-cycle and no proper nonempty subfamily of $\cS$ is a simplicial $k$-cycle. 
Note that a simplicial $k$-multicomplex consisting of two copies of the same $k$-simplex is a simplicial $k$-circuit. Such a simplicial $k$-circuit is called {\em trivial}.
Hence a non-trivial simplicial $k$-circuit cannot contain two copies of the same $k$-simplex. 
We emphasise in particular that a non-trivial simplicial $k$-circuit is a simplicial $k$-complex.
For example, every non-trivial simplicial 1-circuit is the edge set of a `cycle graph' i.e., a connected 2-regular graph with at least three vertices.
The set of facial triangles in a triangulated connected 2-manifold is an example of a simplicial 2-circuit.

Simplicial $k$-cycles have the nice property that, 
if $\cS$ and $\cS'$ are simplicial $k$-cycles with $\cS'\subseteq \cS$, then $\cS\sm \cS'$ is a simplicial $k$-cycle. This implies that every nonempty simplicial $k$-cycle can be partitioned into simplicial $k$-circuits. We can use this observation to show that every pseudomanifold is a simplicial circuit.

\begin{lemma}\label{lem:pseudo}
Let ${\cal S}$ be a $k$-pseudomanifold for some $k \geq 1$. 
Then   ${\cal S}$  is a simplicial $k$-circuit.
\end{lemma}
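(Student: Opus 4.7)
The plan is to verify the two conditions in the definition of a simplicial $k$-circuit: that $\cS$ is a simplicial $k$-cycle, and that no proper nonempty subfamily of $\cS$ is a $k$-cycle.

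First I would observe that, since every $(k-1)$-face of $\cS$ belongs to exactly two $k$-simplices of $\cS$, we have $\partial \cS = \emptyset$ directly from the definition, so $\cS$ is a nonempty simplicial $k$-cycle. The real content is to show minimality.

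Next, for the circuit property, I would argue by contradiction: suppose $\cS' \subseteq \cS$ is a proper nonempty $k$-cycle, so that both $\cS'$ and $\cS \setminus \cS' = \cS \triangle \cS'$ are nonempty. By Lemma~\ref{lem_symm_diff}(b), $\partial(\cS \setminus \cS') = \partial \cS \triangle \partial \cS' = \emptyset$, so $\cS \setminus \cS'$ is also a $k$-cycle. Now fix any $(k-1)$-face $F$ of $\cS$; the pseudomanifold condition gives $|\cS_F| = 2$, and Lemma~\ref{lem_symm_diff}(a) gives $|\cS'_F| + |(\cS \setminus \cS')_F| = 2$ with each summand even. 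Hence for each $(k-1)$-face $F$, the two $k$-simplices containing $F$ lie entirely in $\cS'$ or entirely in $\cS \setminus \cS'$.

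Finally, I would use this to contradict strong connectivity. Pick $S \in \cS'$ and $S'' \in \cS \setminus \cS'$ and take a strong path from $S$ to $S''$ in $\cS$. Somewhere along this path consecutive adjacent $k$-simplices $T, T'$ lie on opposite sides of the partition $\cS' \sqcup (\cS \setminus \cS')$. But $T$ and $T'$ share a $(k-1)$-face, contradicting the conclusion of the previous step. This completes the argument. The only mildly subtle point is ensuring the boundary-transfer step is made via the symmetric difference identity rather than by a naive inclusion–exclusion; otherwise the proof is entirely routine and should fit in a few lines.
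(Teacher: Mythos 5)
Your proof is correct and takes essentially the same approach as the paper's: both reduce to the observation that, since every $(k-1)$-face lies in exactly two $k$-simplices, any partition of $\cS$ into two nonempty $k$-cycles would force adjacent $k$-simplices to lie in the same part, which contradicts strong connectivity via a strong path between the two parts. The only cosmetic difference is that the paper first extracts a simplicial $k$-circuit $\cC\subseteq\cS$ and argues $\cT\subseteq\cC$ for the set $\cT$ reachable by strong paths from some $S\in\cC$, whereas you work directly with an arbitrary proper nonempty $k$-cycle $\cS'$ and spell out the parity step via the counting identity $|\cS'_F|+|(\cS\setminus\cS')_F|=2$; this is the same key fact, just made explicit.
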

\begin{proof}
Since $\cS$ is a {$k$-pseudomanifold}, every $(k-1)$-face belongs to exactly two $k$-simplices of $\cS$. This implies that $\partial \cS=\emptyset$ so $\cS$ is a simplicial $k$-cycle and hence $\cS$ can be partitioned into 
simplicial $k$-circuits. In particular, $\cS$ contains a 
simplicial $k$-circuit $\cC$. 

Suppose, for a contradiction, that $\cS\neq\cC$. 
Then we can choose two $k$-simplices $S\in \cC$ and $S'\in \cS\sm \cC$. Let $\cT$ be the set of all $k$-simplices in $\cS$ which can be reached by a strong path beginning at $S$. The facts that $\cC$ is a simplicial $k$-circuit and every $(k-1)$-face of $\cS$ belongs to exactly two $k$-simplices of $\cS$
imply that $\cT\subseteq \cC$. This in turn implies that $S'\not\in \cT$ and contradicts the hypothesis that $\cS$ is a pseudomanifold.
\end{proof} 

Note that for $k\geq 2$ the class of simplicial $k$-circuits is strictly bigger than the class of $k$-pseudomanifolds. For example, suppose that $\cT$ is a triangulation of the 2-sphere and suppose that $u,v \in V(\cT)$ are distinct vertices with exactly one common neighbour $w$. We can construct a non-planar simplicial 2-circuit by identifying $u$ and $v$. More precisely, let $\cT'$ be obtained from $\cT$ by replacing every 2-simplex $S$ that contains $v$ with $S-v+u$. Then, one readily checks that $\cT'$ is a simplicial 2-circuit and that the 1-face $\{u,w\}$ is contained in four $2$-simplices. Thus $\cT'$ is not a 2-pseudomanifold.  

Our next two lemmas are elementary observations that we will use repeatedly in the rest of the paper. The first
corresponds to the standard result in algebraic topology that says that the square of the boundary homomorphism in the chain complex of a simplicial set is zero. We give an elementary proof for completeness.
\begin{lemma}\label{lem:1}
Let ${\cal S}$ be a 
simplicial $k$-multicomplex with $k \geq 1$. 
Then   $\partial {\cal S}$  is a simplicial $(k-1)$-cycle.
\end{lemma}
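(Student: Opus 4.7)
The plan is to verify that $\partial(\partial \cS) = \emptyset$ by a single double-counting argument applied uniformly to each $(k-2)$-face of $\cS$. (When $k=1$ the unique $(-1)$-face is $\emptyset$, and the same argument will show that $|\partial \cS|$ is even, so $\partial(\partial \cS) = \emptyset$ by the author's $k=0$ convention.) This is essentially the combinatorial content of the identity $\partial^2 = 0$ in the $\mathbb{Z}_2$ chain complex associated to $\cS$, so no heavy machinery is needed; the only mild care required is in handling the multiset nature of $\cS$ and the low-dimensional boundary case.

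Fix a $(k-2)$-face $F$ of $\cS$ and consider the set $\cP_F$ of pairs $(S,T)$ with $S \in \cS_F$, $T$ a $k$-subset of $V(\cS)$, and $F \subseteq T \subseteq S$. Count $\cP_F$ first by $S$: for each $S \in \cS_F$ the set $S \setminus F$ has exactly two elements, each giving one valid $T = F \cup \{v\}$, so $|\cP_F| = 2|\cS_F|$, which is even. Count $\cP_F$ next by $T$: for each $(k-1)$-face $T$ with $F \subseteq T$, the number of $S \in \cS$ with $T \subseteq S$ is $|\cS_T|$, so
\[
|\cP_F| \;=\; \sum_{T \supseteq F,\, |T|=k} |\cS_T|.
\]
Reducing modulo $2$ and using that the left side is even, we conclude that the number of $(k-1)$-faces $T \supseteq F$ with $|\cS_T|$ odd is even. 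By the definition of $\partial \cS$, these are exactly the $(k-1)$-simplices of $\partial \cS$ containing $F$.

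Hence every $(k-2)$-face of $\cS$, and in particular every $(k-2)$-face of $\partial \cS$, lies in an even number of $(k-1)$-simplices of $\partial \cS$. For $k \geq 2$ this is precisely the statement that $\partial(\partial \cS) = \emptyset$, so $\partial \cS$ is a simplicial $(k-1)$-cycle. For $k = 1$, the only $(-1)$-face is $\emptyset$, and the argument above (with $\cS_\emptyset = \cS$ and each edge $S$ containing two vertices $T$) shows $|\partial \cS|$ is even, which by the $k=0$ convention gives $\partial(\partial \cS) = \emptyset$ as required. There is no real obstacle here; the only point to be careful about is distinguishing the multiset cardinality $|\cS_F|$ from the simple-complex condition on $\partial \cS$, which is what lets us pass from the parity of $\sum_T |\cS_T|$ to the parity of the count of $(k-1)$-simplices of $\partial \cS$ through $F$.
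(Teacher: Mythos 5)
Your proof is correct and is essentially the same double-counting argument as the paper's: you count pairs $(S,T)$ with $F\subseteq T\subseteq S$ and $|T|=k$ once by $S$ (always two choices of $T$) and once by $T$ (odd number of $S$ iff $T\in\partial\cS$), exactly as the paper counts the set $X_A=\{(B,C):C\in\cS,\ A\subseteq B\subseteq C,\ |B|=k\}$. The only (welcome) difference is that you make the $k=1$ case explicit, which the paper leaves implicit.
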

\begin{proof}
    Let $A$ be a $(k-2)$-face of $\partial \cS$ and let 
$X_A = \{(B,C): C \in \cS, A \subseteq B \subseteq C, |B| = k\}$. Given $C$ there are exactly two choices for $B$, so $|X_A|$ is even. On the other hand, given $B$ there are an odd number of choices for $C$ if and only if $B \in \partial \cS$. Therefore $|\{B \in \partial \cS: A \subseteq B\}|  \equiv |X_A| \equiv 0 \mod 2$ as required.
\end{proof}

\begin{lemma}
    \label{lem:links}
    Suppose that $\cS$ is a simplicial $k$-cycle and $F\subseteq V(\cS)$ is an $i$-face of $\cS$ for some $0\leq i \leq k-1$. Then $lk_\cS(F)$ is a simplicial $(k-i-1)$-cycle.
\end{lemma}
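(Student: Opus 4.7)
The plan is to unpack the definition of $\partial$ on $lk_\cS(F)$ and reduce the parity count to one on $\cS$ itself, where the hypothesis $\partial \cS = \emptyset$ can be applied directly.

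First, I will verify that $lk_\cS(F)$ really is a simplicial $(k-i-1)$-multicomplex: every $U \in \cS_F$ has size $k+1$ and contains $F$ (of size $i+1$), so $U \setminus F$ has size $k-i$. To show it is a cycle I must check that $\partial \,lk_\cS(F) = \emptyset$, i.e., every $(k-i-2)$-face $G$ of $lk_\cS(F)$ is contained in an even number of $(k-i-1)$-simplices of $lk_\cS(F)$ (counted with multiplicity).

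The key step is to set up the obvious bijection between $(lk_\cS(F))_G$ and $\cS_{F \cup G}$. By definition of $G$ being a face of $lk_\cS(F)$, there exists $U \in \cS_F$ with $G \subseteq U \setminus F$; in particular $F \cap G = \emptyset$, so $|F \cup G| = (i+1) + (k-i-1) = k$, and $F \cup G$ is a $(k-1)$-face of $\cS$. The map $U \mapsto U \setminus F$ establishes a multiset bijection from $\cS_{F \cup G}$ onto $(lk_\cS(F))_G$, since any element of $(lk_\cS(F))_G$ is of the form $U \setminus F$ for some $U \in \cS_F$ containing $G$, which is precisely an element of $\cS_{F \cup G}$. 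Hence
\[
\bigl|(lk_\cS(F))_G\bigr| \;=\; |\cS_{F \cup G}|.
\]
Since $\cS$ is a simplicial $k$-cycle and $F \cup G$ is a $(k-1)$-face of $\cS$, the right-hand side is even, so $G \notin \partial\, lk_\cS(F)$.

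There is one edge case worth checking separately: when $i = k - 1$, the link is a simplicial $0$-multicomplex and the relevant face $G$ is the empty set. Here the same argument applies with $F \cup G = F$, and the revised definition of $\partial$ for $k=0$ given in the paper (that $\partial \cT = \{\emptyset\}$ iff $|\cT|$ is odd) yields $\partial\, lk_\cS(F) = \emptyset$ because $|lk_\cS(F)| = |\cS_F|$ is even. I do not foresee any real obstacle; the only subtle point is keeping the multiset bookkeeping honest, in particular ensuring that multiplicities on both sides of the bijection match, which they do because every $U \in \cS_F$ gives a distinct occurrence of $U \setminus F$ in $lk_\cS(F)$.
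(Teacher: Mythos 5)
Your proof is correct and follows essentially the same approach as the paper: the paper establishes the bijection $Y \mapsto Y \cup F$ between $(k-i-1)$-simplices of $lk_\cS(F)$ containing a given $(k-i-2)$-face $W$ and $k$-simplices of $\cS$ containing $W \cup F$, which is the inverse of your map $U \mapsto U \setminus F$. Your treatment of the $i=k-1$ edge case is a welcome additional detail, but the core argument is identical.
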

\begin{proof}
    Let $W$ be a $(k-i-2)$-face of $lk_\cS(F)$. 
    The mapping $Y \mapsto Y \cup F$ is a bijection between the $(k-i-1)$-simplices of $lk_\cS(F)$ that contain $W$ and the $k$-simplices of $\cS$ that contain $W \cup F$. 
    The lemma follows immediately from this.
\end{proof}

Let $\cK_k$ be the simplicial $k$-complex obtained by taking every $(k+1)$-subset of a set of size $k+2$. For example,
$\cK_1$ is the set of edges of a triangle and $\cK_2$ is the set of faces of a tetrahedron.
It is not difficult to see that $\cK_k$ is a pseudomanifold and hence  
$\cK_k$ is  a simplicial $k$-circuit by Lemma~\ref{lem:pseudo}.
We will continue to use $K_k$ for the complete graph on $k$ vertices. Thus $G(\cK_k) = K_{k+1}$.

\begin{lemma}\label{lem:1.1.1}
For $k\geq 0$, let ${\cal S}$ be a nonempty simplicial $k$-cycle without repeated $k$-simplices. 
Then $|V(\cS)|\geq k+2$, and $|V(\cS)|=k+2$ if and only if 
$\cS$ is isomorphic to $\cK_k$.
\end{lemma}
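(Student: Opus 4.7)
The plan is to obtain the lower bound by a short counting argument on the unique potential $k$-simplex in a $(k+1)$-vertex complex, and then to characterise the equality case by showing that any proper subcomplex of $\cK_k$ on $k+2$ vertices must have a $(k-1)$-face of odd multiplicity.

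First I would establish $|V(\cS)|\geq k+2$. Since $\cS$ is nonempty it contains a $k$-simplex, which has $k+1$ vertices, so trivially $|V(\cS)|\geq k+1$. Suppose for contradiction that $|V(\cS)|=k+1$. Then the only $(k+1)$-subset of $V(\cS)$ is $V(\cS)$ itself, so the no-repeats hypothesis forces $\cS=\{V(\cS)\}$ and in particular $|\cS|=1$. If $k\geq 1$, then any of the $k+1$ subsets of $V(\cS)$ of cardinality $k$ is a $(k-1)$-face contained in exactly one $k$-simplex of $\cS$, hence lies in $\partial \cS$, contradicting $\partial \cS=\emptyset$. If $k=0$, then $|\cS|=1$ is odd so $\emptyset\in\partial \cS$, again a contradiction. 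Thus $|V(\cS)|\geq k+2$.

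Now assume $|V(\cS)|=k+2$, and identify $\cK_k$ with the simplicial $k$-complex on $V(\cS)$ whose $k$-simplices are all $\binom{k+2}{k+1}=k+2$ distinct $(k+1)$-subsets of $V(\cS)$. Every $k$-simplex of $\cS$ is a $(k+1)$-subset of the $(k+2)$-set $V(\cS)$, so $\cS\subseteq \cK_k$. To show equality, suppose for contradiction that there exist $S\in\cS$ and $S'\in\cK_k\setminus\cS$. For $k\geq 1$, put $F=S\cap S'$. Since $S$ and $S'$ are distinct $(k+1)$-subsets of the $(k+2)$-set $V(\cS)$, we have $|F|=k$, so $F$ is a $(k-1)$-face of $\cS$ (it is contained in $S\in\cS$). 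The only $(k+1)$-supersets of $F$ within $V(\cS)$ are $S$ and $S'$, so $|\cS_F|=1$ is odd, placing $F$ in $\partial \cS$ and contradicting $\partial \cS=\emptyset$. For $k=0$, the inclusion $\cS\subsetneq\cK_0$ combined with $|\cK_0|=2$ forces $|\cS|=1$, so again $\partial \cS=\{\emptyset\}\neq\emptyset$. In all cases we conclude $\cS=\cK_k$.

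I do not anticipate any real obstacle: the argument is essentially a two-line counting observation once the inclusion $\cS\subseteq \cK_k$ is noted. The only point that needs care is the bookkeeping around $k=0$, where $(k-1)$-faces are interpreted as the empty set and the parity condition on $|\cS|$ replaces the face-containment argument; treating this as a minor separate case keeps the exposition clean. The no-repeated-simplices hypothesis is used crucially twice, once to force $|\cS|=1$ when $|V(\cS)|=k+1$ and once implicitly to identify $\cS$ as a subset (rather than a submultiset) of $\cK_k$ when $|V(\cS)|=k+2$.
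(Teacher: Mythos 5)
Your proof is correct and uses essentially the same idea as the paper's: on $k+2$ vertices any $(k-1)$-face lies in exactly two $(k+1)$-subsets, so the parity constraint $\partial\cS=\emptyset$ forces all of $\cK_k$ to be present. The only cosmetic difference is that the paper builds $\cK_k\subseteq\cS$ directly (fix $U\in\cS$ and show each $U-u+w\in\cS$), whereas you argue by contradiction from a missing simplex $S'$ via the face $S\cap S'$; these are two phrasings of the same counting step.
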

\begin{proof}
If $|V(\cS)|\leq k+1$ then $\cS$ has at most one $k$-simplex and so cannot be a nonempty simplicial $k$-cycle, contradicting our hypothesis. So $|V(\cS)|\geq k+2$.
Now suppose that $|V(\cS)| = k+2$. Pick any $U \in  \cS$ and let $w$ be the unique vertex in  $V(\cS)\setminus U$. Since $\cS$ is a $k$-cycle that does not contain any repeated $k$-simplex, it follows that, for each $u \in U$, there must be a $k$-simplex of $\cS$, different from $U$, that contains $U-u$. In other words $U-u+w \in \cS$. Therefore every $(k+1)$-subset of $V(\cS)$ is a $k$-simplex of $\cS$ and so $\cS$ is isomorphic to $\cK_k$. 
\end{proof}

Let $\cL_k$ be the simplicial $k$-complex obtained from  the union of two copies of $\cK_k$ with exactly one $k$-simplex in common by  deleting their common $k$-simplex.
For example, $\cL_1$ is the edge set of a 4-cycle, and $\cL_2$ is the set of facets of a hexahedron. 
It is not difficult to see that $\cL_k$ is a pseudomanifold and hence  
is  a simplicial $k$-circuit by Lemma~\ref{lem:pseudo}.
We will show that $\cL_k$ is the unique simplicial $k$-circuit on $k+3$ vertices whose graph is not complete.

\begin{lemma}\label{lem:1.1.3}
For $k\geq 1$, let ${\cal S}$ be a non-trivial simplicial 
$k$-cycle without repeated $k$-simplices.
Suppose that $|V(\cS)|=k+3$.
If $k\leq 2$, then $\cS$ is isomorphic to $\cL_k$.
If $k\geq 3$, then either $G(\cS)\cong K_{k+3}$ or $\cS$ is isomorphic to $\cL_k$.
\end{lemma}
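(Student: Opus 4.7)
My plan is to classify all simplicial $k$-cycles on a $(k+3)$-element vertex set $V$ by encoding them as graphs and then reading off the required dichotomy. Since every $k$-simplex $\sigma$ on $V$ satisfies $|V\setminus\sigma|=2$, I identify $\cS$ with the simple graph $H$ on vertex set $V$ whose edge set is $\{V\setminus\sigma:\sigma\in\cS\}$. The simplicial $k$-cycle condition $\partial\cS=\emptyset$ then translates to an \emph{even triangle} condition on $H$: every $3$-element subset $\{a,b,c\}$ of $V$ contains an even number of edges of $H$. Indeed, each $(k-1)$-face of $\cS$ has the form $V\setminus\{a,b,c\}$ for some triple, and it is contained in exactly those $k$-simplices $V\setminus\{x,y\}$ with $\{x,y\}\subseteq\{a,b,c\}$ and $\{x,y\}\in E(H)$.

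Next I would show that any graph $H$ on $V$ with the even-triangle property is a complete bipartite graph $K_{A,V\setminus A}$ for some $A\subseteq V$. The standard argument is to fix $v_0\in V$, set $f(v_0)=0$ and $f(v)=1$ iff $v_0v\in E(H)$ for $v\neq v_0$; the triangle condition applied to $\{v_0,u,w\}$ then forces $uw\in E(H)$ iff $f(u)\neq f(w)$. Translating back, every simplicial $k$-cycle on $V$ without repeated simplices has the form
\[
\cS_A := \{V\setminus\{a,b\}:a\in A,\ b\in V\setminus A\},
\]
with $\cS_A=\cS_{V\setminus A}$. A direct inspection shows that $V(\cS_A)=V$ if and only if $\min(|A|,|V\setminus A|)\geq 2$, so the hypothesis $|V(\cS)|=k+3$ forces this minimum to be at least $2$.

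The dichotomy in the statement follows immediately. If $\min(|A|,|V\setminus A|)=2$, say $A=\{v,v'\}$, then $\cS_A$ is the disjoint union of $\{V\setminus\{v,b\}:b\in V\setminus A\}$ and $\{V\setminus\{v',b\}:b\in V\setminus A\}$, which is exactly the symmetric difference of $\cK_k$ on $V\setminus\{v\}$ and $\cK_k$ on $V\setminus\{v'\}$ (these two copies of $\cK_k$ share precisely the single $k$-simplex $V\setminus\{v,v'\}$), so $\cS\cong\cL_k$ by definition. Otherwise $\min(|A|,|V\setminus A|)\geq 3$, forcing $|V|\geq 6$ and hence $k\geq 3$; moreover, for any pair $p,q\in V$ both sets $A\setminus\{p,q\}$ and $(V\setminus A)\setminus\{p,q\}$ have at least one element, so we can pick $a$ from the former and $b$ from the latter, giving $V\setminus\{a,b\}\in\cS_A$ containing both $p$ and $q$. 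Hence $G(\cS)\cong K_{k+3}$. The only non-trivial step is the classification of even-triangle graphs, which is standard; the rest is direct combinatorial reading-off, and the argument handles all $k\geq 1$ uniformly.
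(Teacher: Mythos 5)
Your proof is correct and takes a genuinely different route from the paper's. The paper proves the lemma by picking a non-adjacent pair $u,v$ (assuming $G(\cS)\not\cong K_{k+3}$), observing that $\partial(\cS_u)$ is a simplicial $(k-1)$-cycle living on the $(k+1)$-set $X=V(\cS)\setminus\{u,v\}$, invoking Lemma~\ref{lem:1.1.1} to conclude $\partial(\cS_u)\cong\cK_{k-1}$, hence $\cS_u\triangle\{X\}\cong\cK_k$, symmetrically for $v$, and then handling $k\le 2$ by a separate (largely unstated) check. You instead exploit the codimension: because $|V|=k+3$, complementing each $k$-simplex produces a simple graph $H$ on $V$, and $\partial\cS=\emptyset$ translates exactly into the even-triangle condition on $H$. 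Your classification of even-triangle graphs as complete bipartite $K_{A,V\setminus A}$ is the one genuinely new ingredient; it is elementary, self-contained, and yields a complete list of all simplicial $k$-cycles on $k+3$ vertices without repeated $k$-simplices, from which the two alternatives ($\min(|A|,|V\setminus A|)=2$ giving $\cL_k$, $\min\ge 3$ giving $G(\cS)\cong K_{k+3}$, automatically forcing $k\ge 3$) read off directly. One advantage of your route is that all $k\ge 1$ are handled uniformly, whereas the paper's link/boundary argument implicitly relies on the $k\le 2$ case being finished by hand; the paper's route, on the other hand, stays closer to the boundary-operator toolkit (\repl{Lemmas}~\ref{lem:1}, \ref{lem:1.1.1}, and symmetric-difference manipulations) that is reused heavily elsewhere in the paper, and does not require the codimension-2 coincidence $|V|-(k+1)=2$ in any essential way beyond invoking Lemma~\ref{lem:1.1.1}.
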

\begin{proof}
Suppose that $G(\cS)\not\cong K_{k+3}$.
Choose two non-adjacent vertices $u$ and $v$ in $G(\cS)$ and put $X=V(\cS)\sm\{u,v\}$.
Lemma~\ref{lem:1} tells us that $\partial (\cS_u)$ is a simplicial $(k-1)$-cycle. 
Note that since $\cS$ is a simplicial $k$-cycle that does not have any repeated $k$-simplex, it follows that $\partial (\cS_u) = \{U-u:  U \in \cS_u\}$. Also $v \not\in V(\partial (\cS_u))$ since $uv \not\in E(\cS)$. Therefore $V(\partial (\cS_u)) \subseteq X$.
Lemma~\ref{lem:1.1.1} now implies that equality holds and $\partial (\cS_u)$ is isomorphic to $\cK_{k-1}$. This in turn gives $\partial (\cS_u)=\partial \{X\}$ and hence 
$\cS_u\triangle\{ X\}$ 
is a simplicial $k$-cycle.
Lemma~\ref{lem:1.1.1} now implies that $\cS_u\triangle\{ X\}$ is isomorphic to $\cK_k$. By symmetry we also have that $\cS_v\triangle\{ X\}$ is isomorphic to $\cK_k$. Hence $\cS$ is isomorphic to $\cL_k$.

It is straightforward to check that $\cL_k$ is the unique simplicial $k$-circuit with $k+3$ vertices when $k=1,2$.
\end{proof}

A fundamental  property of graphs of triangulated connected 2-manifolds is that  
they are $3$-connected and every 3-vertex-separator is a non-facial 3-clique.
Our next lemma shows that this property extends to non-trivial simplicial $k$-circuits.
\begin{lemma}\label{lem:1.1.2}
Let ${\cal S}$ be a non-trivial simplicial $k$-circuit with $k\geq 2$. 
\begin{enumerate}
    \item[(a)]  $G(\cS)$ is $(k+1)$-connected.
    \item[(b)] If $X$ is a $(k+1)$-vertex separator of $G(\cS)$ then $X$ is a non-facial $(k+1)$-clique of $G(\cS)$ and there are simplicial $k$-circuits $\cS'$ and $\cS''$ such that $\cS' \cap \cS'' = \{X\}$ and $\cS = \cS' \triangle \cS''$.
\end{enumerate}

\end{lemma}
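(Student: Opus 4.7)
The plan is to assume that $X$ is a vertex separator of $G(\cS)$ and analyze the partition of $\cS$ induced by $X$, deriving a contradiction in part (a) and the claimed structural decomposition in part (b). Write $V(\cS)=A\sqcup X\sqcup B$ with $A,B$ nonempty and no edge of $G(\cS)$ between $A$ and $B$, and define
\[
\cS_1=\{U\in\cS:U\subseteq A\cup X,\ U\cap A\neq\emptyset\},\qquad \cS_2=\{U\in\cS:U\subseteq B\cup X,\ U\cap B\neq\emptyset\}.
\]
The separation condition forces every $U\in\cS$ to lie in $A\cup X$ or in $B\cup X$, and since $|U|=k+1$, such a $U$ avoids both $A$ and $B$ only when $|X|=k+1$ and $U=X$. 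Hence $\cS=\cS_1\sqcup\cS_2$, with an extra term $\{X\}$ when $|X|=k+1$ and $X\in\cS$; both $\cS_1,\cS_2$ are nonempty because every vertex of $\cS$ lies in some simplex of $\cS$, which must be confined to one side. A direct case check on a $(k-1)$-face $F$ shows $(\cS_1)_F=\emptyset$ when $F$ meets $B$ and $|(\cS_1)_F|=|\cS_F|$ is even when $F$ meets $A$ but not $B$; hence the only $(k-1)$-faces that can contribute to $\partial\cS_1$ are subsets of $X$, and by Lemma~\ref{lem:1} the set $\partial\cS_1$ is itself a $(k-1)$-cycle.

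For part (a), $|X|\leq k$, so the only possible element of $\partial\cS_1$ is $X$ itself (and only when $|X|=k$). Since for $k\geq 2$ the simplicial complex $\{X\}$ has nonempty boundary and therefore is not a $(k-1)$-cycle, we conclude $\partial\cS_1=\emptyset$, so $\cS_1$ is a nonempty proper subcycle of $\cS$, contradicting circuithood. Combined with $|V(\cS)|\geq k+2$ from Lemma~\ref{lem:1.1.1}, this proves the $(k+1)$-connectivity.

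For part (b), $|X|=k+1$ gives $\partial\cS_1\subseteq\partial\{X\}\cong\cK_{k-1}$, a $(k-1)$-circuit by Lemma~\ref{lem:pseudo}; so $\partial\cS_1\in\{\emptyset,\partial\{X\}\}$ and similarly $\partial\cS_2$. If $X\in\cS$, then for $F=X-x$ the identity $|\cS_F|=|(\cS_1)_F|+|(\cS_2)_F|+1$ together with $\partial\cS=\emptyset$ forces exactly one of $|(\cS_1)_F|,|(\cS_2)_F|$ to be odd, so $\partial\cS_1$ and $\partial\cS_2$ partition $\partial\{X\}$; combined with the dichotomy above, exactly one must be empty, producing a proper subcycle of $\cS$ and hence a contradiction. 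So $X\notin\cS$, and ruling out $\partial\cS_1=\emptyset$ exactly as in (a) yields $\partial\cS_1=\partial\{X\}$. This identity means that for each $x\in X$ there is some $a\in A$ with $(X-x)+a\in\cS$; picking $x\notin\{y,z\}$ for any $y,z\in X$ (possible because $|X|=k+1\geq 3$) shows that every pair in $X$ is an edge of $G(\cS)$, so $X$ is a $(k+1)$-clique, non-facial because $X\notin\cS$. Setting $\cS'=\cS_1\cup\{X\}$ and $\cS''=\cS_2\cup\{X\}$, the computation $\partial\cS'=\partial\cS_1\triangle\partial\{X\}=\emptyset$ makes $\cS'$ a cycle (similarly $\cS''$), while $\cS'\cap\cS''=\{X\}$ and $\cS'\triangle\cS''=\cS$ are immediate. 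To upgrade $\cS'$ (and $\cS''$) from a cycle to a circuit, any proper nonempty subcycle $\cT'\subsetneq\cS'$ produces the complementary cycle $\cS'\setminus\cT'$ (also a cycle since $\partial$ is additive under symmetric difference); exactly one of $\cT',\cS'\setminus\cT'$ omits $X$ and therefore sits inside $\cS$ as a proper nonempty subcycle, contradicting circuithood.

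The main obstacle is the tight case analysis in part (b): one must simultaneously rule out $X\in\cS$ via a parity argument, extract the clique structure of $X$ from the boundary identity $\partial\cS_1=\partial\{X\}$, and promote the pieces $\cS',\cS''$ from mere cycles to circuits using a symmetric-difference trick. The decisive structural input in all three steps is that $\partial\{X\}\cong\cK_{k-1}$ is itself a $(k-1)$-circuit, which is what pins down the possible values of $\partial\cS_1$.
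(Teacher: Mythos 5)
Your proof is correct and takes essentially the same route as the paper: partition $\cS$ according to the separation, observe via Lemma~\ref{lem:1} that the boundary of one piece is a $(k-1)$-cycle supported in $X$ which must be nonempty by circuithood, deduce $|X|\geq k+1$ (the paper quotes Lemma~\ref{lem:1.1.1} where you note directly that $\{X\}$ alone cannot be a cycle), and for $|X|=k+1$ identify this boundary with $\partial\{X\}\cong\cK_{k-1}$ to extract the clique and the decomposition $\cS'=\cS_1\cup\{X\}$, $\cS''=\cS_2\cup\{X\}$. The only stylistic difference is that you carve $X$ out of both $\cS_1$ and $\cS_2$ up front and dispose of the $X\in\cS$ case by an explicit parity count, whereas the paper absorbs $X$ into $\cS_1$ and gets the same contradiction from the proper subcycle $\cS_1\triangle\{X\}$; both are sound.
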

\begin{proof}
Suppose $G(\cS)=G_1\cup G_2$ with $V(G_1)\sm  V(G_2)\neq \emptyset\neq  V(G_2)\sm  V(G_1)$. Let $X=V(G_1)\cap V(G_2)$. Let $\cS_1=\{S\in \cS:S\subseteq V(G_1)\}$ and $\cS_2=\cS\triangle \cS_1$ and note that $\cS_1,\cS_2$ are both proper subsets of $\cS$. Then $\cS=\cS_1\triangle \cS_2$ and $V(\cS_1)\cap V(\cS_2)\subseteq X$.
Since $\cS$ is a simplicial $k$-circuit, we have $\partial \cS=\emptyset\neq \partial \cS_1$. This implies that  $\partial \cS_1=\partial \cS_2\neq \emptyset$ and $V(\partial \cS_1)\subseteq X$. Since $\partial \cS_1$ is a  simplicial $(k-1)$-cycle without repeated $(k-1)$-simplices by  Lemma~\ref{lem:1}, we can  use Lemma~\ref{lem:1.1.1} and the fact that every simplicial $(k-1)$-cycle contains a simplicial $(k-1)$-circuit to deduce that $|X|\geq |V(\partial \cS_1)|\geq k+1$ and hence $G(\cS)$ is $(k+1)$-connected. This proves (a).

Suppose $|X|=k+1$. Then Lemma~\ref{lem:1.1.1} tells us that  $\partial \cS_1\cong\cK_{k-1}$ and, since $k \geq 2$, $X$ is a clique in $G(\cS)$. In addition, since  $\partial \{X\}\cong\cK_{k-1}$, we have $\partial \cS_1=\partial \{X\}$ and hence   $\partial (\cS_1\triangle \{X\})=\emptyset$. The hypothesis that ${\cal S}$ is simplicial $k$-circuit now gives $X\not\in\cS$ so $X$ is a  non-facial $k$-clique of $G(\cS)$. Finally Let $S' = \cS_1 \cup \{X\}$ and $\cS'' = \cS_2 \cup \{X\}$. Then $\cS'$ and $\cS''$ are non-empty simplicial $k$-cycles, $\cS = \cS' \triangle \cS''$ and $\cS'\cap \cS'' = \{X\}$. Suppose, for a contradiction that  $\cR \subsetneq \cS'$ is a nonempty simplicial $k$-cycle.  Then $\cS'\sm \cR'$ is also nonempty simplicial $k$-cycle and one of $\cR$ or $\cS'\sm \cR$ is contained in $\cS_1$, contradicting the
fact that $\cS$ is a simplicial $k$-circuit. Therefore $\cS'$, and similarly $\cS''$, is a simplicial $k$-circuit.
\end{proof}

Since our main aim is to characterise global rigidity for graphs of simplicial $k$-circuits when $k\geq 2$, henceforth we will assume (unless otherwise stated) that this is the case.  We observe in passing that the only graphs of simplicial $1$-circuits that are (globally) rigid in $\R^2$ are $K_2$ and $K_3$.


\subsection{Contractions}
\label{sec:contractions}
We define a contraction operation for two vertices in a simplicial multicomplex.
This operation will be used in inductive proofs throughout this paper. We will see that simplicial cycles are closed under this operation. The same is not true for  simplicial circuits but Fogelsanger showed that  this problem can be circumvented  by choosing a partition of the contracted complex into simplicial circuits and then using the pre-images of these simplicial circuits  to construct the Fogelsanger decomposition of the original circuit. We will obtain some preliminary results on these pre-images in this subsection and use them to define the Fogelsanger decomposition in the next subsection. Although the presentation is simplified, the results in both subsections are essentially due to Fogelsanger~\cite{F}.

Given  a simplicial $k$-multicomplex 
$\cS$ and distinct vertices $u,v\in V(\cS)$, 
we can obtain a new 
simplicial $k$-multicomplex
$\cS/uv$  from $\cS$ by deleting all $k$-simplices in $\cS_{\{u,v\}}$, and replacing  every $k$-simplex $S\in \cS$ which contains $v$ but not $u$ by a new $k$-simplex $S'=S-v+u$.  We say that $\cS/uv$ is obtained from $\cS$ by {\em contracting $v$ onto $u$}. 
For each $k$-simplex $S\in \cS$ with $\{u,v\}\not\subseteq \cS$, we  denote the $k$-simplex in $\cS/uv$ corresponding to $S$ by $\gamma_{uv}(S)$.
Thus $\gamma_{uv}$  is a  bijection between the $k$-simplices of $\cS$ which do not contain $\{u,v\}$ and the $k$-simplices of $\cS/uv$.
Note that if  $S_1,S_2\in \cS$ with $u\in S_1$, $v\in S_2$ and $S_1-u=S_2-v$ then 
$\gamma_{uv}(S_2)$ and $\gamma_{uv}(S_1)$ are two distinct copies of the same $k$-simplex in  $\cS/uv$. 
Thus our contraction operation may transform a simplicial $k$-complex into a simplicial $k$-multicomplex -- this is precisely the reason why we allow repetitions of $k$-simplices. We could of course delete repetitions from our new family but we prefer not to do this because of the following lemma and corollary which we will use repeatedly.

\begin{lemma}
    \label{lem_cont_k-1}
    Let $\cS$ be a simplicial $k$-multicomplex, $u,v \in V(\cS)$ be distinct vertices  and $F \subseteq V(\cS)-v$, $|F| = k$. Then
    \begin{equation}
    \label{eqn_mult_cont}
        |(\cS/uv)_F| = \left\{\begin{array}{ll}
             |\cS_F| & \text{ if }u \not\in F, \\
             |\cS_F| + |\cS_{F-u+v}| -2|\cS_{F+v}| & \text{ if }u \in F.
            \end{array}\right.
    \end{equation}
\end{lemma}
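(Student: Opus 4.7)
The proof will be a direct counting argument, exploiting the explicit description of the contraction operation via the bijection $\gamma_{uv}$. The plan is to partition the $k$-simplices of $\cS/uv$ into two ``types'' according to the nature of their preimage under $\gamma_{uv}$, count in each type how many contain $F$, and then sum.

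First I would recall that $\gamma_{uv}$ restricts to a bijection between the $k$-simplices $S \in \cS$ with $\{u,v\}\not\subseteq S$ and the $k$-simplices of $\cS/uv$, and that these preimages split into
\emph{Type A} ($v\notin S$, in which case $\gamma_{uv}(S) = S$), and
\emph{Type B} ($v\in S$ and $u\notin S$, in which case $\gamma_{uv}(S) = S-v+u$).
For Type A the condition $F \subseteq \gamma_{uv}(S) = S$ is simply $F\subseteq S$, so inclusion–exclusion (using $v\notin F$) gives a contribution of $|\cS_F| - |\cS_{F+v}|$ regardless of whether $u\in F$.

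For Type B, I would split on whether $u\in F$. If $u\notin F$, then since $v\notin F$ we have $F \subseteq S-v+u$ iff $F\subseteq S$, and combining with ``$v\in S$, $u\notin S$'' yields a contribution of $|\cS_{F+v}| - |\cS_{F+u+v}|$. If $u\in F$, then $F\subseteq S-v+u$ iff $F-u\subseteq S$, and together with $v\in S$ and $u\notin S$ this gives $|\cS_{F-u+v}| - |\cS_{F+v}|$ (using $F-u+v+u = F+v$). Adding the Type A and Type B contributions yields
\[
|(\cS/uv)_F| = \begin{cases} |\cS_F| - |\cS_{F+u+v}| & \text{if } u\notin F,\\ |\cS_F| + |\cS_{F-u+v}| - 2|\cS_{F+v}| & \text{if } u\in F. \end{cases}
\]
In the first case $|F+u+v| = k+2$, which is strictly larger than the size of any $k$-simplex, so $|\cS_{F+u+v}| = 0$, matching the claimed formula.

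There is no real obstacle here; the whole argument is careful bookkeeping of ``contains'' and ``avoids'' constraints on $u$ and $v$, and the only point requiring a moment's care is making sure that in Type B the hypothesis $v\in S$ is already absorbed by requiring $F-u+v\subseteq S$ (so one only needs to subtract the extra condition $u\notin S$), and symmetrically in Type A that $v\notin S$ is enforced by subtracting $|\cS_{F+v}|$. The hypothesis $v\notin F$ is used throughout to justify that $F\subseteq S-v+u \Leftrightarrow F-u\subseteq S$.
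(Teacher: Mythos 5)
Your proof is correct and takes essentially the same approach as the paper's: a direct count of preimages under $\gamma_{uv}$. The paper's bookkeeping is slightly terser (for $u\notin F$ it notes that $\gamma_{uv}$ restricts to a bijection $\cS_F\to(\cS/uv)_F$, and for $u\in F$ it identifies $(\cS/uv)_F$ with the multiset of $k$-simplices of $\cS$ containing exactly one of $F$ and $F-u+v$), whereas you arrive at the same totals via an explicit Type~A/Type~B partition, but the underlying argument is the same.
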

\begin{proof}
    If $u\not\in F$, then $\cS_F$ is contained in $\gamma_{uv}^{-1}(\cS/uv)$ and it follows that $\gamma_{uv}$ induces a bijection between $\cS_F$ and $(\cS/uv)_F$. 
    On the other hand, if $u \in F$ then 
    $(\cS/uv)_F$ is in one-to-one correspondence with the multiset of $k$-faces of $\cS$ that contain exactly one of $F$ or $F-u+v$.
\end{proof}

\begin{corollary}
    \label{cor:cont_preserve}
    Suppose that $\cS$ is a simplicial $k$-cycle for some $k \geq 1$  and that $u,v \in V(\cS)$ are distinct vertices. Then $\cS/uv$ is a simplicial $k$-cycle.
\end{corollary}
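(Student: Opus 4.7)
The plan is to verify directly that $\partial(\cS/uv)=\emptyset$ by using the formula from Lemma~\ref{lem_cont_k-1}. That is, for every $k$-subset $F\subseteq V(\cS/uv)\subseteq V(\cS)-v$, I will show that $|(\cS/uv)_F|$ is even. Since $\cS$ is a simplicial $k$-cycle, every $k$-subset $F'$ of $V(\cS)$ satisfies $|\cS_{F'}|\equiv 0\pmod 2$: this is trivially true when $F'$ is not a $(k-1)$-face of $\cS$, and holds by the defining condition $\partial\cS=\emptyset$ otherwise. This is the only property of $\cS$ I will use.

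First I would split into the two cases of Lemma~\ref{lem_cont_k-1}. If $u\notin F$, then $|(\cS/uv)_F|=|\cS_F|$, which is even as noted above. If $u\in F$, then
\[
|(\cS/uv)_F| \;=\; |\cS_F| \,+\, |\cS_{F-u+v}| \,-\, 2|\cS_{F+v}|,
\]
and both $|\cS_F|$ and $|\cS_{F-u+v}|$ are even (by the same observation applied to the two $k$-subsets $F$ and $F-u+v$ of $V(\cS)$), while the last term is obviously even. Hence $|(\cS/uv)_F|$ is even in both cases, so $\partial(\cS/uv)=\emptyset$ and $\cS/uv$ is a simplicial $k$-cycle.

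There is no real obstacle here: the result is essentially an immediate corollary of Lemma~\ref{lem_cont_k-1}, where the term $2|\cS_{F+v}|$ that accounts for cancellation of simplices containing both $u$ and $v$ has been conveniently packaged with an even coefficient. The only minor subtlety to be careful about is that some of the subsets $F$, $F-u+v$ appearing above may fail to be $(k-1)$-faces of $\cS$, in which case the corresponding link is empty; this causes no issue since $0$ is even.
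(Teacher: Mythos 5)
Your proof is correct and takes essentially the same approach as the paper: apply Lemma~\ref{lem_cont_k-1} and note that $|\cS_F|$ and $|\cS_{F-u+v}|$ are both even because $\cS$ is a simplicial $k$-cycle. The case split on $u\in F$ versus $u\notin F$ and the remark about non-faces giving $0$ are just minor elaborations of the same argument.
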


\begin{proof}
    Pick any $F\subseteq V(\cS/uv)$ with $|F|=k$. 
    Since $\cS$ is simplicial $k$-cycle the terms $|\cS_F|$ and $|\cS_{F-u+v}|$ on the right hand side of (\ref{eqn_mult_cont}) are even. Hence, $|(\cS/uv)_F|$ is even. 
\end{proof}


We will also consider the analogous contraction operation for two vertices $u,v$ in a graph $G$:
we construct a new simple graph $G/uv$  from $G-v$ by  adding  a new edge from $u$ to every vertex in $N_G(v)\sm (N_G(u)\cup\{u\})$. 

We often denote $G/uv$ by $G/e$ when $e=uv$ is an edge of $G$ and say that $G/e$ is obtained from $G$ by {\em contracting $e$ onto $u$}.
These definitions are illustrated in Figure \ref{fig:1}.
Note that our definition of edge contraction for graphs does not create multiple edges.

\begin{figure}
\begin{center}
\begin{minipage}{0.45\textwidth}
\centering
\includegraphics[scale=1]{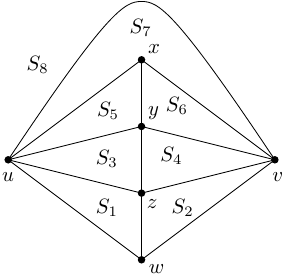}
\par
(a)
\end{minipage}
\begin{minipage}{0.45\textwidth}
\centering
\includegraphics[scale=1]{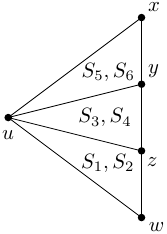}
\par
(b)
\end{minipage}
\end{center}
\caption{Figure (a) shows a simplicial 2-circuit $\cT=\{S_1, S_2, S_3, S_4, S_5, S_6, S_7, S_8\}$ defined by a triangulation of the 2-sphere. 
    Figure (b) shows the simplicial 2-cycle $\cT/uv=\{S_1, S_2, S_3, S_4, S_5, S_6\}$ obtained by contracting $uv$ onto $u$. It contains three distinct triangles, each of multiplicity two, and can be viewed   as the family of faces of the  triangulation of the `pinched surface' consisting of three triangulated spheres $\cT_1=\{S_1,S_2\}, \cT_2=\{S_3,S_4\}, \cT_3=\{S_5,S_6\}$ in which $\cT_1$ and $\cT_3$ are attached to $\cT_2$ along the edges $uy$ and $uz$, respectively. We have $G(\cT)/uv=G(\cT/uv)$.
}
\label{fig:1}
\end{figure}

The edge-contraction operations for complexes and graphs are compatible in the sense that $G(\cS/e)=G(\cS)/e$ whenever $\cS$ has the property that every $f\in E(\cS)-e$ lies in at least one $k$-simplex in $\cS$ which does not contain $e$. 
Our next result implies that this property always holds when $\cS$ is  a nonempty simplicial $k$-cycle without repeated $k$-simplices.

\begin{lemma}\label{lem:1.1.4}
For $k\geq 2$ let $\cS$ be a nonempty simplicial $k$-cycle that does not contain any repeated $k$-simplex. Let $uv \in E(\cS)$ and $x,y \in V(\cS)$ be  distinct vertices such that $\{x,y\} \neq \{u,v\}$. 
Then 
\begin{itemize}
    \item[(a)] $\cS$ contains a $k$-simplex which contains $\{u,v\}$ but not $\{x,y\}$.
    \item[(b)] $|N_{G(\cS)}(u)\cap N_{G(\cS)}(v)|\geq k$, with equality if and only if $lk_\cS(uv) \cong \cK_{k-2}$.
\end{itemize}
\end{lemma}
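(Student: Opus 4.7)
The natural tool for both parts is the link $\cL := lk_\cS(uv)$. By Lemma~\ref{lem:links} applied to $F = \{u,v\}$, $\cL$ is a simplicial $(k-2)$-cycle, and it is nonempty since $uv \in E(\cS)$. Its facets are in bijection with the $k$-simplices of $\cS$ that contain $\{u,v\}$, so the no-repeat hypothesis on $\cS$ passes to $\cL$.

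For part (a), I would argue by contradiction. Assume every $k$-simplex of $\cS$ containing $\{u,v\}$ also contains $\{x,y\}$. Then every facet of $\cL$ contains the nonempty set $F := \{x,y\} \sm \{u,v\}$; pick any $w \in F$. For $k \geq 3$, fix a facet $T_0$ of $\cL$ and set $A := T_0 \sm \{w\}$, a $(k-3)$-face of $\cL$. Any facet $T$ of $\cL$ containing $A$ has the form $A \cup \{z\}$ for a unique $z$, and the requirement $w \in T$ forces $z = w$, so $T = T_0$. Hence $|\cL_A| = 1$, contradicting $\partial \cL = \emptyset$. For $k = 2$, the facets of $\cL$ are singletons; the no-repeat hypothesis together with ``every facet contains $w$'' forces $|\cL| \leq 1$, which contradicts the fact that $|\cL|$ is a positive even integer for a nonempty $0$-cycle.

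For part (b), the key observation is that $W := V(\cL) \subseteq N_G(u) \cap N_G(v)$, since each vertex of $\cL$ lies in a $k$-simplex of $\cS$ alongside $u$ and $v$. Applying Lemma~\ref{lem:1.1.1} to the nonempty simplicial $(k-2)$-cycle $\cL$ without repeated simplices yields $|W| \geq k$, with equality exactly when $\cL \cong \cK_{k-2}$. Chaining these estimates gives the inequality $|N_G(u) \cap N_G(v)| \geq |W| \geq k$, and if equality holds then the squeeze $k \leq |W| \leq k$ forces $|W| = k$ and hence $\cL \cong \cK_{k-2}$.

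The main obstacle is the remaining direction of the biconditional in (b) --- deducing $|N_G(u) \cap N_G(v)| = k$ from $\cL \cong \cK_{k-2}$. This reduces to showing $W = N_G(u) \cap N_G(v)$, i.e., ruling out the existence of a common neighbour $z$ of $u$ and $v$ for which $\{u,v,z\}$ is contained in no $k$-simplex of $\cS$. My plan here is to assume such a $z$ exists, apply part~(a) to produce $k$-simplices containing $\{u,z\}$ (respectively $\{v,z\}$) but not $v$ (respectively $u$), and then attempt to combine these with $\cS_{\{u,v\}}$ to exhibit a proper nonempty subfamily of $\cS$ with empty boundary, contradicting the circuit property. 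Setting up this combination cleanly --- and exploiting the additional information that $\cL$ is the complete link $\cK_{k-2}$ on its vertex set --- is the delicate step where the combinatorial care will be concentrated.
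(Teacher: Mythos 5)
For part~(a) and for the inequality together with the ``only if'' direction of part~(b), your argument is correct and is essentially the paper's. The paper proves (a) directly in $\cS$: pick a $k$-simplex $U$ containing $\{u,v\}$; if $\{x,y\}\subseteq U$, take $x\notin\{u,v\}$ and use the fact that the $(k-1)$-face $U\setminus\{x\}$ lies in an even number of $k$-simplices to find some $U'\neq U$ containing it. Your parity argument inside the link $\cL$ is the same count, carried out one dimension down. For (b), the paper's proof is exactly your observation $V(\cL)\subseteq N_G(u)\cap N_G(v)$ together with Lemma~\ref{lem:1.1.1}.

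The ``if'' direction that you defer as the delicate step cannot actually be closed, and it is worth being clear about why. The paper's proof of (b) says only ``Now statement (b) follows from Lemma~\ref{lem:1.1.1}''; that argument yields $|N_G(u)\cap N_G(v)|\geq|V(\cL)|\geq k$ and the implication $|N_G(u)\cap N_G(v)|=k\Rightarrow\cL\cong\cK_{k-2}$ but nothing about the converse, and the converse is in fact \emph{false}. In the standard $3\times3$ triangulation of the torus (vertex set $\Z_3\times\Z_3$, triangles $\{(i,j),(i+1,j),(i+1,j+1)\}$ and $\{(i,j),(i,j+1),(i+1,j+1)\}$), take $u=(0,0)$, $v=(1,0)$: then $lk_\cS(uv)=\{\{(1,1)\},\{(0,2)\}\}\cong\cK_0$, yet $(2,0)$ is a third common neighbour of $u$ and $v$, so $|N_G(u)\cap N_G(v)|=3>k=2$. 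This $\cS$ is a triangulated surface and hence a simplicial $2$-circuit, so your plan of deriving a contradiction from the circuit property would have to fail; and in any case the hypothesis of the lemma only provides a cycle, so that property is unavailable from the start. Fortunately this direction is never invoked in the paper: every application starts from the hypothesis $|N_G(u)\cap N_G(v)|=k$ and uses only the inequality, the forward implication, and the resulting squeeze $V(\cL)=N_G(u)\cap N_G(v)$. The lemma is simply stated a touch too strongly; the part you proved cleanly is the part that is both true and needed.
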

\begin{proof}

Let $U$ be a $k$-simplex of $\cS$ that contains $\{u,v\}$. We may assume that $U$ also contains $\{x,y\}$.
Since $\{u,v\}\neq \{x,y\}$, we may assume, by symmetry, that
$x \not\in \{u,v\}$. Since $\cS$ is a cycle without repeated $k$-simplices there must be a $k$-simplex $U'$, distinct from $U$ that contains the $(k-1)$-face $U-x$. This proves (a).

For (b), we observe that $lk_\cS(uv)$ is a nonempty simplicial $(k-2)$-cycle without repeated $(k-2)$-simplices and $V(lk_\cS(uv)) \subseteq N_{G(\cS)}(u) \cap N_{G(\cS)}(v)$. Now statement (b) follows from Lemma~\ref{lem:1.1.1}

\end{proof}

The idea behind the Fogelsanger decomposition of a simplicial $k$-circuit $\cS$ is that, if $\cS/e$ is not a simplicial $k$-circuit for $e=uv\in E(\cS)$, we choose a  partition $\{\cS_1', \dots, \cS_m'\}$ of $\cS/e$ into simplicial $k$-circuits
and consider the pre-image $\cS_i:=\gamma_e^{-1}(\cS_i')$.
Although each $\cS_i$ may not be a simplicial $k$-circuit, Fogelsanger found an explicit way to transform it into a simplicial circuit by constructing a set of $k$-simplices $\cS_i^*$, each containing $\{u,v\}$,  such that $\cS_i  \triangle \cS^*_{i}$ is a simplicial $k$-circuit.
Lemma~\ref{lem:1.5} describes this transformation.

First we record an elementary observation that will be used in the proof. 

\begin{lemma}
    \label{lem:stars}
Suppose that $\cT$ is a simplicial $k$-complex and $\emptyset \neq Z \subseteq V(\cT)$ such that $\cT = \cT_Z$. Then, for any $K \subseteq V(\cT)$ such that $Z \subseteq K$,  
\begin{equation*}
\label{eqn_Kchar}
K \in \cT 
\Leftrightarrow K-z_0 \in \partial \cT\text{ for some }z_0 \in Z
\Leftrightarrow K-z \in \partial \cT\text{ for every }z \in Z.
\end{equation*}
\end{lemma}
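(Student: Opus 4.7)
The plan is to compute $\cT_{K-z}$ directly for each $z \in Z$ and each $K \supseteq Z$ with $|K| = k+1$, using nothing but the defining hypothesis $\cT = \cT_Z$ (every $k$-simplex of $\cT$ contains $Z$). Once we know $|\cT_{K-z}|$, the equivalences follow from the definition of $\partial \cT$.

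Fix $z \in Z$ and suppose $S \in \cT_{K-z}$, so $K - z \subseteq S$. Since $|K-z| = k$ and $|S| = k+1$, we can write $S = (K - z) \cup \{w\}$ for a unique vertex $w \notin K - z$. The hypothesis $\cT = \cT_Z$ forces $Z \subseteq S$; since $z \in Z$ and $z \notin K - z$, this requires $w = z$, giving $S = K$. Consequently $\cT_{K-z} = \{K\}$ when $K \in \cT$, and $\cT_{K-z} = \emptyset$ otherwise. In particular $|\cT_{K-z}|$ is odd precisely when $K \in \cT$, and by the definition of the boundary this is equivalent to $K - z \in \partial \cT$.

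Since the argument above works uniformly for every $z \in Z$, the three conditions in the statement are mutually equivalent: $K \in \cT$ implies $K-z \in \partial \cT$ for every $z \in Z$ (which trivially implies existence of some such $z_0$), and conversely the existence of a single $z_0 \in Z$ with $K - z_0 \in \partial \cT$ already forces $K \in \cT$ by the computation above.

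There is no real obstacle here; the only point that needs attention is to notice that once $Z \subseteq K$ and $K$ is a candidate $k$-simplex, $K - z$ is automatically a $k$-subset of $V(\cT)$, so it is meaningful to ask whether it lies in $\partial \cT$. The argument is essentially a one-line counting observation made possible by the strong hypothesis $\cT = \cT_Z$, and does not require any of the earlier lemmas on symmetric differences or links.
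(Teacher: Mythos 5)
The proposal is correct and takes essentially the same route as the paper: both hinge on the observation that $\cT = \cT_Z$ forces any $S \in \cT$ containing $K-z$ to contain $z$ as well, hence $S = K$, so that $\cT_{K-z}$ is $\{K\}$ or $\emptyset$ according as $K\in\cT$ or not. The student merely packages the two implications into a single computation of $|\cT_{K-z}|$, which is a cosmetic rather than substantive difference.
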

\begin{proof}
    If $K \in \cT$ and $z \in Z \subseteq K$ then, since $\cT = \cT_Z$, $K$ is the unique element of $\cT$ than contains $K-z$ and so $K-z \in \partial \cT$. On the other hand, if $K \subseteq V(\cT)$ satisfies $Z \subseteq K$ and $K -z$ is a $(k-1)$-face of $\cT$ then, since $\cT = \cT_Z$, it follows that $K \in \cT$.
\end{proof}

\begin{lemma}\label{lem:1.5}
Suppose ${\cal S}$ is a simplicial $k$-complex and $u,v\in V(\cS)$. 
Let  
$${\cal S}^\dagger=\{ K\subseteq V(\cS): \{u,v\}\subseteq  K\mbox{ and $K-u,K-v$ are $(k-1)$-faces of $\cS$}\}$$
and put ${\cal S}^*=\{ K\in {\cal S}^\dagger: K-u,K-v\in  \partial \cS\}$.
\begin{itemize}
\item[(a)] If $\cS\triangle \cS'$ is a  simplicial $k$-cycle for some $\cS'\subseteq \cS^\dagger$ then  $\cS'=\cS^*$.
\item[(b)] $\cS/uv$ is a simplicial $k$-cycle  if and only if $\cS\triangle \cS^*$ is a  simplicial $k$-cycle.
\item[(c)] If $\cS/uv$ is a simplicial $k$-circuit  then $\cS\triangle \cS^*$ is a simplicial $k$-circuit.
\end{itemize}
\end{lemma}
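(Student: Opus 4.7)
My plan for part (a) is to exploit the $k$-cycle condition $|(\cS\triangle \cS')_F|\equiv 0 \pmod 2$ at the specific $(k-1)$-faces $F=K-u$ as $K$ ranges over $\cS^\dagger$. Because every $L\in\cS'\subseteq\cS^\dagger$ contains $\{u,v\}$, the only candidate $L$ with $K-u\subseteq L$ must equal $K$ itself (the one extra element of $L$ beyond $K-u$ is forced to be $u$, since $v\in K-u$ already). Hence $|\cS'_{K-u}|\in\{0,1\}$ and the cycle condition gives $K\in\cS' \Leftrightarrow K-u\in\partial\cS$. Running the same argument at $F=K-v$ yields $K\in\cS'\Leftrightarrow K-v\in\partial\cS$, and combining these two characterisations shows $\cS'=\cS^*$.

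For part (b), my first step is to record the identity $(\cS\triangle\cS^*)/uv=\cS/uv$: since every element of $\cS^*$ contains $\{u,v\}$, the contraction deletes all of $\cS^*$ and all $\{u,v\}$-containing elements of $\cS$, leaving only the contracted images of $\cS\setminus\cS_{\{u,v\}}$. Combined with Corollary \ref{cor:cont_preserve}, this handles the forward direction. For the converse I rewrite $\partial(\cS\triangle\cS^*)=\partial\cS\triangle\partial\cS^*$ via Lemma \ref{lem_symm_diff}(b), so it suffices to show $\partial\cS=\partial\cS^*$ on every $(k-1)$-subset $F$. I split into cases on $|F\cap\{u,v\}|$. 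When $F$ meets $\{u,v\}$ in at most one vertex, the required equality falls out of Lemma \ref{lem_cont_k-1} combined with the explicit description of $\cS^*_F$. The delicate case is $\{u,v\}\subseteq F$: here Lemma \ref{lem:1} applied to the $(k-1)$-subset $F-u$ gives
\[
[F\in\partial\cS]+|\{w\in V(\cS)\setminus F: F-u+w\in\partial\cS\}|\equiv 0 \pmod 2,
\]
and the previously handled $|F\cap\{u,v\}|=1$ case lets me collapse the two-condition definition $|\cS^*_F|=|\{w: F+w-u, F+w-v\in\partial\cS\}|$ down to the single-condition count appearing above, matching the parities.

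For part (c), assume $\cS/uv$ is a simplicial $k$-circuit; part (b) already gives that $\cS\triangle\cS^*$ is a non-empty cycle. Suppose for contradiction that $\cR\subsetneq\cS\triangle\cS^*$ is a non-empty cycle. By Corollary \ref{cor:cont_preserve} $\cR/uv$ is a cycle, and $\cR\subseteq\cS\triangle\cS^*$ translates (as multisets) into $\cR/uv\subseteq\cS/uv$; minimality of $\cS/uv$ forces $\cR/uv=\emptyset$ or $\cR/uv=\cS/uv$. In the empty case every $K\in\cR$ contains $\{u,v\}$, and the argument from (a) shows $\cR_{K-u}=\{K\}$, so $K-u\in\partial\cR\neq\emptyset$, contradicting that $\cR$ is a cycle. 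In the full case, a multiplicity-counting argument on the restriction of $\gamma_{uv}$ to simplices avoiding $\{u,v\}$ forces $\cR\setminus\cR_{\{u,v\}}=\cS\setminus\cS_{\{u,v\}}$, so $\cS\triangle\cR$ consists only of simplices containing $\{u,v\}$ and a case check puts it inside $\cS^\dagger$ (elements in $\cS$ automatically, elements in $\cR\setminus\cS$ via $\cS^*\subseteq\cS^\dagger$). Since $\cS\triangle(\cS\triangle\cR)=\cR$ is a cycle with $\cS\triangle\cR\subseteq\cS^\dagger$, part (a) yields $\cS\triangle\cR=\cS^*$, i.e., $\cR=\cS\triangle\cS^*$, contradicting strict inclusion. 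The step I anticipate as the main obstacle is the $\{u,v\}\subseteq F$ sub-case of (b), where the $\cS/uv$-cycle condition does not directly constrain faces meeting $\{u,v\}$ in two vertices, and one must combine the $\partial\circ\partial=0$ identity of Lemma \ref{lem:1} with the already-handled $|F\cap\{u,v\}|=1$ case to close the argument.
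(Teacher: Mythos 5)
Your proof is correct and shares the paper's core machinery (parity counting on $(k-1)$-faces, the $\partial\circ\partial=0$ identity of Lemma~\ref{lem:1}, and the use of (a) inside (c)), but it takes a somewhat more direct route in two places. For (a) you re-derive the content of Lemma~\ref{lem:stars} inline via the observation that $\cS'_{K-u}\subseteq\{K\}$ whenever $K\in\cS^\dagger$ and $\cS'\subseteq\cS^\dagger$, rather than invoking the auxiliary lemma. For (b), where the paper handles $(k-1)$-faces $F\supseteq\{u,v\}$ indirectly (first noting every face of $\partial(\cS\triangle\cS^*)$ would have to contain $\{u,v\}$, then applying Lemma~\ref{lem:stars} to force $\partial\partial(\cS\triangle\cS^*)\neq\emptyset$), you prove the honest face-by-face equality $\partial\cS=\partial\cS^*$ at such $F$ by combining the $\partial\partial=0$ identity at $F-u$ with the already-settled $|F\cap\{u,v\}|=1$ subcase to collapse the two boundary conditions in the definition of $\cS^*$ to a single count; this is a clean alternative with the same ingredients. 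In (c) you also make explicit the sub-case $\cR/uv=\emptyset$, which the paper's assertion that $\cR''=\cS/uv$ silently presupposes cannot occur; your observation that any nonempty $\cR$ all of whose simplices contain $\{u,v\}$ must have $K-u\in\partial\cR$ for $K\in\cR$ closes that small gap cleanly. The one step worth writing out in full is the multiplicity-counting claim in the $\cR/uv=\cS/uv$ sub-case: since $\cR\setminus\cR_{\{u,v\}}\subseteq\cS\setminus\cS_{\{u,v\}}$ and $\gamma_{uv}$ is not injective (a fibre can have size two), one should say explicitly that equality of the image multisets forces equality of fibre-intersections, hence $\cR\setminus\cR_{\{u,v\}}=\cS\setminus\cS_{\{u,v\}}$.
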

\begin{proof}

\smallskip
\noindent  
(a) Suppose $\cS\triangle \cS'$ is a  simplicial $k$-cycle for some $\cS'\subseteq \cS^\dagger$.  Then $\partial(\cS\triangle \cS')=\emptyset$ so $\partial \cS=\partial \cS'$. 
By definition $\cS^\dagger$ is a simplicial $k$-complex, so $\cS^*$ and $\cS'$ must also be simplicial $k$-complexes. Now, applying Lemma~\ref{lem:stars}, with $\cT = \cS'$ and $Z = \{u,v\}$, 
we have that $K\in \cS'$ if and only if $K-u, K-v\in \partial \cS'=\partial \cS$. Hence, by the definition of $\cS^*$, $K \in \cS'$ if and only if $K\in \cS^*$, and $\cS'=\cS^*$ follows.

\smallskip

\noindent 
(b) We first suppose that $\cS\triangle \cS^*$ is a simplicial $k$-cycle. Then $\cS/uv$ is a simplicial $k$-cycle since $\cS/uv=(\cS\triangle \cS^*)/uv$ and contraction preserves the property of being a simplicial $k$-cycle.

We next suppose that $\cS/uv$ is a simplicial $k$-cycle. 
 We will show that $\partial (\cS \triangle \cS^*)$ is empty.
Let $F$ be a $(k-1)$-face of $\cS\triangle \cS^*$.   

 First consider the case $\{u,v\}\cap F = \emptyset$. Then, using Lemma~\ref{lem:stars} we see that $F \not\in \partial \cS^*$. Using Lemma~\ref{lem_cont_k-1} and the fact that $\cS/uv$ is a simplicial $k$-cycle, we have 
$|\cS_F| = |(\cS/uv)_F| \equiv 0 \mod 2$. So $F \not\in \partial \cS$ and so $F \not\in\partial(\cS \triangle \cS^*)$.

Next suppose that $\{u,v\} \cap F = \{u\}$.
Using Lemma~\ref{lem_cont_k-1} and the fact that 
$\cS/uv$ is a simplicial $k$-cycle we see that 
$|\cS_{F-u+v}|+|\cS_F| \equiv 0 \mod 2$. 
Therefore, $F \in \partial \cS \Leftrightarrow F-u+v \in \partial \cS$.
By the definition of $\cS^*$, this is equivalent to  $F+v\in \cS^*$.
By Lemma 3.11, this is equivalent to saying that $F+v\in \partial \cS^*$ and $F-u+v\in \partial \cS^*$.
It follows that 
$F \not\in \partial (\cS \triangle \cS^*)$
and $F-u+v \not\in \partial (\cS \triangle \cS^*)$.

Now suppose that $\{u,v\} \cap F =\{v\}$. Then applying the argument of the previous paragraph to $F-v+u$ shows that $F \not\in \partial (\cS \triangle \cS^*)$.

The previous three paragraphs tell us that every $(k-1)$-face in 
$\partial(\cS\triangle \cS^*)$ contains $\{u,v\}$. 
Now applying Lemma~\ref{lem:stars} to $\partial (\cS \triangle \cS^*)$ we see that $\partial (\partial (\cS \triangle \cS^*))$ is nonempty, contradicting Lemma~\ref{lem:1}. 
\smallskip

\noindent
(c) Suppose $\cS/uv$ is a simplicial $k$-circuit.
Then $\cS\triangle \cS^*$ is a simplicial $k$-cycle by (b). Let $\cR'\subseteq \cS\triangle \cS^*$ be a simplicial $k$-circuit. 
Let $\cR'' = \cR/uv $ if $\{u,v\} \subseteq V(\cR')$, or $\cR'' = \gamma_{uv}(\cR')$ if $\{u,v\} \not\subseteq V(\cR')$. Then 
$\cR''\subseteq \cS/uv$
is a simplicial $k$-cycle  and, since $\cS/uv$ is a simplicial $k$-circuit, 
$\cR''=\cS/uv$. 
This tells us that the sets of $k$-simplices of $\cR'$ and $\cS$ which do not contain $\{u,v\}$ are the same and hence that $\cR'=\cS\triangle \cS'$ for some $\cS'\subseteq \cS^\dagger$. We can now use (a) to deduce that $\cS'=\cS^*$ and hence $\cR'=\cS\triangle \cS^*$. 
\end{proof}

We will frequently apply Lemma~\ref{lem:1.5} to a subcomplex of a larger simplicial complex. Consider, for example, the simplicial $2$-circuit $\cT$ in Figure 
\ref{fig:1} and let $\cS=\{S_1,S_2\}$. Then $\cS/uv$ is a (trivial) simplicial 2-circuit. We have $\cS^*=\{S_8,K_1\}$ where $K_1=\{u,v,z\}$ and 
$\cS\triangle \cS^* =\{S_1, S_2, S_8, K_1\}$ is a simplicial $k$-circuit.

\subsection{Fogelsanger decompositions}\label{sec:fog}
We can now give Fogelsanger's  remarkable decomposition 
result for simplicial $k$-circuits.
Since this is a key tool in our analysis, we will provide a detailed description of its properties. 
We will see that Fogelsanger's Rigidity Theorem follows easily from  these properties.

\paragraph{Definition.} Let ${\cal S}$ be a non-trivial simplicial $k$-circuit and  
$uv \in E(\cS)$.
Then $\cS/uv$ is a simplicial $k$-cycle and we may choose a partition $\{\cS_1',\ldots,\cS_m'\}$ of $\cS/uv$ into simplicial $k$-circuits. 

For all $1\leq i\leq m$, let 
$\cS_i=\gamma_{uv}^{-1}(\cS_i')$. 
Then $\{u,v\} \subseteq V(\cS_i)$ since, if not, $\cS_i$ would be a simplicial $k$-circuit properly contained in $\cS$, contradicting the fact that $\cS$ is a simplicial $k$-circuit.
Let 
\begin{equation*}
\cS_{i}^*=\{ K\subseteq V(\cS_i): \{u,v\}\subseteq K \mbox{ and }K-u,K-v\in  \partial \cS_i\} \mbox{ and }\cS_i^+=\cS_i\triangle \cS_{i}^*.
\end{equation*}
Then $\cS_i^+=\cS_i\cup \cS_{i}^*  $, since $\cS_i\cap \cS_{i}^*=\emptyset$, and  $\cS_i^+$ is a 
simplicial $k$-circuit  by Lemma~\ref{lem:1.5}(c).
We say that $\{\cS_1^+,  \dots, \cS_m^+\}$ is a {\em Fogelsanger decomposition} for $\cS$ with respect to $uv$. 

\medskip

The definition above can also be used for nonadjacent vertices $u,v$ but we will not need this case.
We also note that, in general, the Fogelsanger decomposition of $\cS$ need not be unique since the partition $\{\cS_1',\ldots,\cS_m'\}$ of $\cS/uv$ into simplicial $k$-circuits need not be unique.

\paragraph{Example 1.} Consider the simplicial 2-circuit $\cT$ on the left of Figure \ref{fig:1}. The simplicial 2-cycle $\cT/e$ on the right can be decomposed into  three (trivial) simplicial 2-circuits: 
$\cT_1'=\{S_1,S_2\}$; $\cT_2'=\{S_3,S_4\}$; $\cT_3'=\{S_5,S_6\}$. 
Then
 $\cT_1=\{S_1,S_2\}$; $\cT_2=\{S_3,S_4\}$;
$\cT_3=\{S_5,S_6\}$. This gives rise to a  Fogelsanger decomposition for $\cT$
 with respect to  $\{u,v\}$ consisting of three simplicial 2-circuits: $\cT_1^+=\{S_1,S_2,S_8,K_1\}$ where $K_1=\{u,v,z\}$; $\cT_2^+=\{S_3,S_4,K_1,K_2\}$ where $K_2=\{u,v,y\}$;
$\cT_3^+=\{S_5,S_6,S_7,K_2\}$. 

\medskip

\paragraph{Example 2.} Figure~\ref{fig:fog} is an example of a Fogelsanger decomposition taken from \cite{F}.
(a) shows a simplicial 2-circuit $\cS$ given by the set of faces in a triangulation of the torus with a pinched singular point.
(b) shows the simplicial 2-cycle $\cS/uv$ obtained by contracting $uv$ onto $u$. It is given by the  multiset of faces  in a triangulation of the torus with a pinched singular point and two pinched singular edges, $ux$ and $uz$.
The face $\{u,x,z\}$ has multiplicity two in $\cS/uv$ and all other faces have multiplicity one.
A partition of $\cS/uv$ into three simplicial 2-circuits is shown in (c), and
(d) shows the corresponding Fogelsanger decomposition of $\cS$.

\begin{figure}[t]
\centering
\begin{minipage}{0.45\textwidth}
\centering
\includegraphics[scale=0.8]{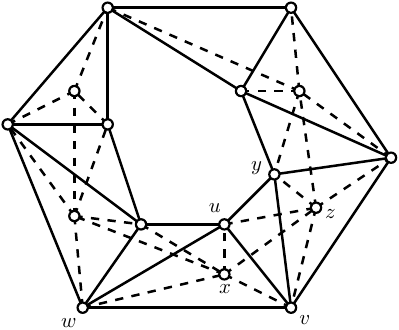}
\par(a)
\end{minipage}
\begin{minipage}{0.45\textwidth}
\centering
\includegraphics[scale=0.8]{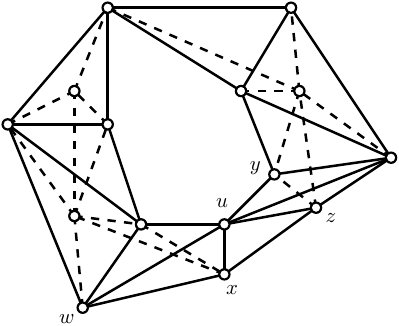}
\par(b)
\end{minipage}

\medskip

\begin{minipage}{0.45\textwidth}
\centering
\includegraphics[scale=0.8]{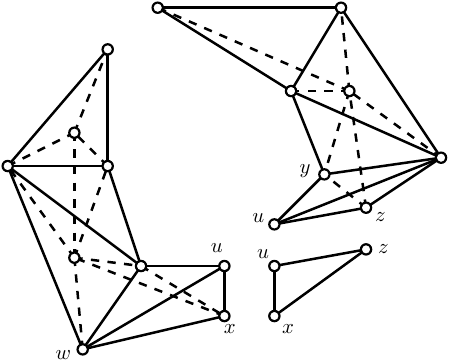}
\par(c)
\end{minipage}
\begin{minipage}{0.45\textwidth}
\centering
\includegraphics[scale=0.8]{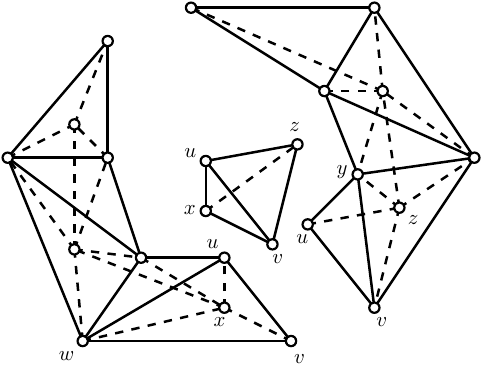}
\par(d)
\end{minipage}
\caption{An example of a Fogelsanger decomposition from~\cite{F}.}
\label{fig:fog}
\end{figure}
\medskip

Our next lemma gives some key properties of Fogelsanger decompositions.

\begin{lemma}\label{lem:3}
Suppose  $\cS$ is a non-trivial simplicial $k$-circuit, $uv \in E(\cS)$. Let $\{\cS_1^+, \dots, \cS_m^+\}$  be a Fogelsanger decomposition of $\cS$ with respect to $uv$.
Then: 
\begin{enumerate}[label=(\alph*)]\setlength{\parskip}{0.03cm}\setlength{\itemsep}{0.03cm}
\item \label{en1:0} $\cS_i^+/uv$ is a simplicial $k$-circuit  for all $1\leq i\leq m$;
\item \label{en1:a} $\cS_i^+$ is a non-trivial simplicial $k$-circuit for all $1\leq i\leq m$ and each $K\in \cS_i^+\sm\cS$ is a clique of  $G(\cS)$ which contains $\{u,v\}$;
\item \label{en1:b} 
each $k$-simplex of $\cS$ that does not contain $\{u,v\}$ lies in a unique $\cS_i^+$. Moreover $\cS = \triangle_{i=1}^m \cS_i^+$;
\item \label{en1:c} $uv \in E(\cS_i^+)$  for all $1\leq i\leq m$ and $\bigcup_{i=1}^m E(\cS_i^+)=E(\cS)$;
\item \label{en1:d} for all non-empty $ I\subsetneq \{1,2,\ldots,m\}$, there exists $j\in \{1,2,\ldots,m\}\sm I$  and a non-facial $(k+1)$-clique $K$ of $G(\cS)$ such that $K\in (\triangle_{i\in I}\cS_i ^+)\cap \cS_{j}^+$ and $\{u,v\}\subseteq K$.
\end{enumerate}
\end{lemma}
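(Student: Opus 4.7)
The plan is to establish the five parts in order. Parts (a) and (b) follow by unfolding the definitions and applying Lemma~\ref{lem:1.5}; part (c) is proved by a cycle-boundary argument; part (d) combines the circuit property of $\cS$ with Lemma~\ref{lem:1.1.4}(a); and part (e), which I expect to be the main obstacle, will again exploit the circuit property of $\cS$, together with a parity argument.

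For (a), since $\gamma_{uv}$ deletes every $k$-simplex containing $\{u,v\}$ and $\cS_i^*$ consists entirely of such simplices, $\cS_i^+/uv = \cS_i/uv = \cS_i'$, which is a simplicial $k$-circuit by the choice of partition. For (b), Lemma~\ref{lem:1.5}(c) applied to $\cS_i$ shows that $\cS_i^+ = \cS_i \triangle \cS_i^*$ is a simplicial $k$-circuit, and it is non-trivial because $\cS_i \subseteq \cS$ and $\cS_i^*$ are disjoint collections of distinct $k$-simplices. For any $K \in \cS_i^+ \sm \cS$ we must have $K \in \cS_i^*$ (since $\cS_i \subseteq \cS$), so $\{u,v\} \subseteq K$ and both $K-u$ and $K-v$ are $(k-1)$-faces of $k$-simplices of $\cS$; together with the edge $uv$, every pair of vertices of $K$ is an edge of $G(\cS)$, so $K$ is a $(k+1)$-clique.

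For (c), let $\cR = \cS \triangle (\triangle_i \cS_i^+)$, a simplicial $k$-cycle. Since contraction commutes with mod-$2$ symmetric difference and the $\cS_i'$ partition $\cS/uv$, we get $\cR/uv = \emptyset$, so every $k$-simplex of $\cR$ contains $\{u,v\}$. If $\cR$ were nonempty, then for any $K \in \cR$ the $(k-1)$-face $F = K-u$ would be contained only in $K$ among the $k$-simplices of $\cR$ (since any such simplex must also contain $u$), making $F$ appear in an odd number of simplices of $\cR$ and contradicting that $\cR$ is a cycle. Hence $\cS = \triangle_i \cS_i^+$. Part (d) then follows: $\cS_i^* \neq \emptyset$, for otherwise $\cS_i^+ = \cS_i$ would be a nonempty proper sub-cycle of $\cS$, which is impossible since $\cS$ is a circuit; so $uv \in E(\cS_i^+)$. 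For any $xy \in E(\cS)$ with $\{x,y\} \neq \{u,v\}$, Lemma~\ref{lem:1.1.4}(a) supplies a $k$-simplex of $\cS$ containing $\{x,y\}$ but not $\{u,v\}$, which lies in some $\cS_i \subseteq \cS_i^+$.

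For (e), given $\emptyset \neq I \subsetneq \{1,\dots,m\}$, set $\cA = \triangle_{i \in I} \cS_i^+$. The crucial claim is $\cA \not\subseteq \cS$: if $\cA \subseteq \cS$ then $\cA$ is a simplicial $k$-sub-cycle of the circuit $\cS$, hence $\cA \in \{\emptyset, \cS\}$; each option, after contracting by $uv$, would force $\bigcup_{i \in I} \cS_i'$ to equal either $\emptyset$ or $\cS/uv$, contradicting that $\{\cS_1', \dots, \cS_m'\}$ is a partition into nonempty circuits with both $I$ and $\{1,\dots,m\} \sm I$ nonempty. Any $K \in \cA \sm \cS$ must lie in some $\cS_i^*$, so by (b) it is a non-facial $(k+1)$-clique containing $\{u,v\}$. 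Finally, since $K \notin \cS = \triangle_i \cS_i^+$ by (c), $K$ lies in an even total number of $\cS_i^+$'s; combined with $K \in \cA$ (odd number for $i \in I$), $K$ must lie in an odd, and hence positive, number of $\cS_j^+$ with $j \notin I$, yielding the required $j$.
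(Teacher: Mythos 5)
Your proof is correct and follows the same overall strategy as the paper, but there is one imprecision in part (c) worth flagging. The assertion that ``contraction commutes with mod-$2$ symmetric difference'' is false in general: if $A = \{\{u,a_1,\dots,a_k\}\}$ and $B = \{\{v,a_1,\dots,a_k\}\}$ (with the $a_i$ distinct from $u,v$), then $(A\triangle B)/uv$ is a multicomplex containing $\{u,a_1,\dots,a_k\}$ with multiplicity two, whereas $(A/uv)\triangle(B/uv) = \emptyset$. What actually gives $\cR/uv = \emptyset$ (equivalently, that every simplex of $\cR$ contains $\{u,v\}$) is the fact that the sets $\cS_i = \gamma_{uv}^{-1}(\cS_i')$ partition the $k$-simplices of $\cS$ that do not contain $\{u,v\}$, so that each such simplex lies in exactly one $\cS_i \subseteq \cS_i^+$, and no simplex outside $\cS$ and lacking $\{u,v\}$ can lie in any $\cS_i^+$. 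That partition fact is also precisely the first half of the statement of (c), which you rely on implicitly but never state; you should spell it out rather than appeal to a general commutation principle, since the paper's whole treatment of multicomplexes exists exactly because $\gamma_{uv}$ can create coincidences of the kind in the counterexample above.

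Apart from this, your argument for (d) is actually a little cleaner than the paper's: you observe directly that $\cS_i^* = \emptyset$ would make $\cS_i^+ = \cS_i$ a nonempty proper subcycle of the circuit $\cS$, which is immediately impossible, whereas the paper reaches the same conclusion by a longer route through Lemma~\ref{lem:1.1.4}(a). Parts (a), (b) and (e) are essentially the paper's argument; in particular, your parity argument in (e) is identical, and your justification that $\cA \not\subseteq \cS$ via the partition of $\cS/uv$ is a correct variant of the paper's observation that $\cS_j^+\sm\bigcup_{l\neq j}\cS_l^+\neq\emptyset$ for every $j$.
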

\begin{proof}
We adopt the notation used in the definition of a Fogelsanger decomposition. \\[1mm]
\ref{en1:0} By construction, $\cS_i^+/uv = \cS_i/uv = \cS_i'$ which is a simplicial $k$-circuit.
\\[1mm]
\ref{en1:a} The facts that each $\cS_i^+$ is a simplicial $k$-circuit and each $K\in \cS_i^+\sm\cS$ is a non-facial clique of $G(\cS)$ which contains $\{u,v\}$ follow from the definition of a Fogelsanger decomposition. It only remains to show each $\cS_i^+$ is non-trivial. We have $|\cS_i|=|\cS_i'|\geq 2$ since $\cS_i'$ is a simplicial $k$-circuit, and $\cS_i$ does not contain multiple copies of the same $k$-simplex since $\cS_i\subseteq \cS$. Since 
$\cS_i^+=\cS_i\triangle \cS_i^*=\cS_i\cup \cS_i^*$ by definition, we have $\cS_i\subseteq \cS_i^+$. Hence $\cS_i^+$ is a simplicial $k$-circuit with at least two distinct $k$-simplices so is non-trivial.
\\[1mm] 
\ref{en1:b} Choose $S\in \cS$ with $\{u,v\}\not\subseteq S$. Then $\gamma_{uv}(S)$ 
belongs to a unique simplicial $k$-circuit $\cS_i'$ in the partition of $\cS/e$ into simplicial $k$-circuits. Hence $S\in \cS_i$ and  $S\not\in \cS_j$ for all $i\neq j$.  Furthermore $S\not \in \cS_j^*$ for all $1\leq j\leq m$ since each $k$-simplex of $\cS_j^*$ contains $\{u,v\}$. Hence $S\in \cS_i^+$ and  $S\not\in \cS_j^+$ for all $i\neq j$.

For the second part of \ref{en1:b} let $\cR=\cS\triangle (\triangle_{i=1}^m \cS_i^+)$. Each $k$-simplex in $\cR$ contains $\{u,v\}$ by the previous paragraph. Also $\cR$ is a simplicial $k$-cycle without repeated $k$-simplices. If $\cR \neq \emptyset$ then, by Lemma~\ref{lem:stars}, $\partial \cR \neq \emptyset$ contradicting the fact that $\cR$ is a simplicial $k$-cycle. 
Therefore $\cR = \emptyset$ and $\cS =  \triangle_{i=1}^m \cS_i^+$.
\\[1mm]
\ref{en1:c} 
If $m=1$ then  ${\cal S}={\cal S}_1^+$ by part (c)  and \ref{en1:c} holds trivially. Hence we may assume that $m\geq 2$.
We first show that $uv \in E(\cS_i^+)$. Since $\cS_i^+$ is non-trivial for all $1\leq i\leq m$ by \ref{en1:a}, Lemma~\ref{lem:1.1.4} implies that each $\cS_i^+$ contains a $k$-simplex which does not contain $\{u,v\}$. We can now use the fact that  $m\geq 2$ and the first part of \ref{en1:b} to deduce that $\cS_i^+\not\subseteq \cS$. The second part of \ref{en1:a} now gives $uv\in E(\cS_i^+)$. 

The assertion that $\bigcup_{i=1}^m E(\cS_i^+)=E(\cS)$ follows immediately from the fact that each $S\in\cS$ belongs to at least one $\cS_i^+$ by \ref{en1:b}.
\\[1mm] 
\ref{en1:d} Let  $\cS_I=\triangle_{i\in I}\cS_i^+$. 
Since $\cS_j^+\setminus \bigcup_{k\neq j} \cS_k^+\neq \emptyset$ for all $1\leq j\leq m$, 
$\emptyset \neq \cS_I\neq \cS$.
Also, since $\cS_I$ is a  non-empty simplicial $k$-cycle and $\cS$ is a simplicial $k$-circuit, 
$\cS_I\not\subseteq \cS$.  Hence we can choose $K\in \cS_I\setminus \cS$.
Then 
$K$ is a non-facial clique of $G(\cS)$ which contains $\{u,v\}$ by \ref{en1:a}. We can now use 
\ref{en1:b} to deduce that $K$ belongs to an even number of the sets in the Fogelsanger decomposition of $\cS$. On the other hand, the fact that
$K\in \cS_I$ implies that $K$ belongs to an odd number of the sets $\cS_i^+$ for $i\in I$. Hence $K$ belongs to an odd number of sets  $\cS_j^+$ for $j\not\in I$. 
\end{proof}

We can now see that Fogelsanger's Rigidity Theorem is a corollary of Lemma~\ref{lem:3}.
\begin{theorem}[Fogelsanger's Rigidity Theorem~\cite{F}]\label{thm:fogelsanger} 
Let $\cS$ be a simplicial $k$-circuit with $k\geq 2$. Then $G(\cS)$ is rigid in $\mathbb{R}^{k+1}$.
\end{theorem}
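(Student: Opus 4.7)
The plan is to induct on $|V(\cS)|$, using the Fogelsanger decomposition (Lemma~\ref{lem:3}), Whiteley's vertex splitting lemma (Lemma~\ref{lem:split}), and the rigidity gluing property. For the base case, if $|V(\cS)|\leq k+2$ then either $\cS$ is trivial with $G(\cS)\cong K_{k+1}$, or $\cS$ is non-trivial and Lemma~\ref{lem:1.1.1} forces $\cS\cong\cK_k$ and $G(\cS)\cong K_{k+2}$; in either case $G(\cS)$ is complete on enough vertices to be rigid in $\R^{k+1}$.

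For the inductive step, assume $|V(\cS)|>k+2$, so that $\cS$ is non-trivial. I will pick any edge $uv\in E(\cS)$ and fix a Fogelsanger decomposition $\{\cS_1^+,\ldots,\cS_m^+\}$ of $\cS$ with respect to $uv$. By Lemma~\ref{lem:3}(a), each $\cS_i^+/uv$ is a simplicial $k$-circuit with at most $|V(\cS)|-1$ vertices, so the inductive hypothesis gives that $G(\cS_i^+/uv)$ is rigid in $\R^{k+1}$. Since $\cS_i^+$ is a non-trivial simplicial $k$-circuit containing $uv$ by Lemma~\ref{lem:3}(a) and (c), it has no repeated $k$-simplices, so Lemma~\ref{lem:1.1.4}(a) yields $G(\cS_i^+/uv)=G(\cS_i^+)/uv$, while Lemma~\ref{lem:1.1.4}(b) gives $|N_{G(\cS_i^+)}(u)\cap N_{G(\cS_i^+)}(v)|\geq k$. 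Hence $G(\cS_i^+)$ is obtained from the rigid graph $G(\cS_i^+/uv)$ by a vertex split in $\R^{k+1}$ whose common-neighbour and generic affine-independence hypotheses match those of Lemma~\ref{lem:split}, so each $G(\cS_i^+)$ is rigid in $\R^{k+1}$.

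Finally, since $G(\cS)=\bigcup_{i=1}^m G(\cS_i^+)$ by Lemma~\ref{lem:3}(c), I will assemble the pieces greedily using the rigidity gluing property. Starting from any $\cS_{i_1}^+$, at each stage with current index set $\emptyset\neq I\subsetneq\{1,\ldots,m\}$, Lemma~\ref{lem:3}(d) supplies some $j\notin I$ together with a non-facial $(k+1)$-clique $K\in\bigl(\triangle_{i\in I}\cS_i^+\bigr)\cap\cS_j^+$. The $k+1$ vertices of $K$ lie in both $V\bigl(\bigcup_{i\in I}G(\cS_i^+)\bigr)$ and $V(G(\cS_j^+))$, so the gluing property extends rigidity from $I$ to $I\cup\{j\}$. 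After $m-1$ such extensions, $G(\cS)$ itself is rigid in $\R^{k+1}$. The key structural point — and the reason the argument succeeds even though we only have access to the non-global vertex-splitting lemma — is that Lemma~\ref{lem:3}(d) delivers precisely the $(k+1)$-vertex overlap required at each gluing step.
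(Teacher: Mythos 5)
Your proof is correct and follows essentially the same route as the paper's: induction on $|V(\cS)|$, passing to a Fogelsanger decomposition with respect to an edge $uv$, applying the inductive hypothesis to each $\cS_i^+/uv$, recovering rigidity of each $G(\cS_i^+)$ via Whiteley's vertex-splitting lemma (using the $\geq k$ common neighbours supplied by Lemma~\ref{lem:1.1.4}), and then gluing the pieces together along the $(k+1)$-cliques guaranteed by Lemma~\ref{lem:3}\ref{en1:d}. The only discrepancies are cosmetic: your citations to parts of Lemma~\ref{lem:3} are off by one in a few places (what you call (c) and (d) are the paper's \ref{en1:c} and \ref{en1:d}, i.e.~(d) and (e)), and you spell out the base case $|V(\cS)|\leq k+2$ slightly more explicitly than the paper, which simply starts from the trivial circuit.
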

\begin{proof}
The proof is by induction on $|V(\cS)|$. The theorem holds for the trivial simplicial $k$-circuit so we may suppose that $\cS$ is non-trivial.
Take any edge $uv\in E(\cS)$, and consider a Fogelsanger decomposition $\{\cS_1^+, \dots, \cS_m^+\}$  with respect to $uv$.
Each $\cS_i^+/uv$ is a simplicial $k$-circuit by Lemma~\ref{lem:3}\ref{en1:0} and hence $G(\cS_i^+/uv)$ is rigid by induction. 
We have $S_i^+$ is a non-trivial simplicial $k$-circuit and $uv\in E(S_i^+)$ by Lemma~\ref{lem:3}\ref{en1:a} and  \ref{en1:c}. We can now apply 
the first part of Lemma~\ref{lem:1.1.4} to deduce that $u$ and $v$ have at least $k$ common neighbours in $S_i^+$, and then use Lemma~\ref{lem:split} and the fact that $G(\cS_i^+/uv)=G(\cS_i^+)/uv$ to deduce that  each $G(\cS_i^+)$ is rigid.
Lemma~\ref{lem:3}\ref{en1:d} implies that  we can reorder the parts of the Fogelsanger decomposition so that, for all $2 \leq i \leq m$, 
    \begin{equation}\label{eq:fogint}
    \left|V(\cS_i) \cap \mbox{$\bigcup_{j=1}^{i-1}$} V(\cS_j)\right| \geq k+1.\end{equation}
The rigidity of $G(\cS)$ now follows from the gluing property of rigidity and a simple induction argument together with the fact that $\bigcup_{i=1}^m G(\cS_i^+)=G(\cS)$ by Lemma~\ref{lem:3}\ref{en1:c}.
\end{proof}

\subsection{Subcomplexes with small boundary} 
In this short subsection we derive some  structural properties of a subcomplex of a simplicial circuit which has a small boundary. 
Lemma~\ref{lem:1.1.2} implies that non-trivial simplicial $k$-circuits are $(k+1)$-connected. Since Theorems~\ref{thm:globrigid_3_main} and~\ref{thm:globrigid_d_main} both require $(k+2)$-connectivity, we will need a tool to analyse the case when a simplicial $k$-circuit in a Fogelsanger's decomposition is not $(k+2)$-connected. 
In such a case, our approach will be to decompose the simplicial circuit into smaller circuits. Lemmas~\ref{lem:34sep_a} and \ref{lem:34sep_b} below will enable us to do this.

\begin{lemma}\label{lem:34sep_a}
Let $\cS$ be a simplicial $k$-circuit for some $k \geq 1$, $\cS_1\subsetneq \cS$, and $X=V(\partial \cS_1)$ with $|X|=k+1$. Then either $\cS_1=\{X\}$, or, $X \not\in \cS_1$ and $\cS_1\cup \{X\}$ is a simplicial $k$-circuit.

\end{lemma}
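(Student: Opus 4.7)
My plan is to use Lemma~\ref{lem:1.1.1} to identify $\partial\cS_1$ as the boundary of the single $k$-simplex $X$, and then to exploit the $k$-circuit property of $\cS$ via symmetric differences with $\{X\}$.

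First, since $|X| = k+1$, the boundary $\partial\cS_1$ is a nonempty simplicial $(k-1)$-cycle (by Lemma~\ref{lem:1}) without repeated simplices, whose vertex set has exactly $(k-1)+2$ elements. Lemma~\ref{lem:1.1.1} will then force $\partial\cS_1 \cong \cK_{k-1}$, so $\partial\cS_1$ consists of all $k$-subsets of $X$, which is precisely $\partial\{X\}$. Lemma~\ref{lem_symm_diff}(b) then yields $\partial(\cS_1 \triangle \{X\}) = \partial\cS_1 \triangle \partial\{X\} = \emptyset$, showing that $\cS_1 \triangle \{X\}$ is a simplicial $k$-cycle.

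I would then split on whether $X \in \cS_1$. If $X \in \cS_1$, then $\cS_1 \triangle \{X\} = \cS_1 \setminus \{X\}$ is a simplicial $k$-cycle strictly contained in $\cS$, so the $k$-circuit property of $\cS$ will force $\cS_1 \setminus \{X\} = \emptyset$, i.e., $\cS_1 = \{X\}$. If $X \not\in \cS_1$, then $\cS_1 \cup \{X\}$ is a simplicial $k$-cycle, and it will remain only to show that it is actually a circuit.

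For this final verification, I would suppose for contradiction that some $\cR \subsetneq \cS_1 \cup \{X\}$ is a nonempty simplicial $k$-cycle. If $X \not\in \cR$ then $\cR \subseteq \cS_1 \subsetneq \cS$ contradicts $\cS$ being a circuit. If $X \in \cR$, I would write $\cR = \cR' \cup \{X\}$ with $\cR' \subseteq \cS_1$; then $\partial\cR' = \partial\{X\} = \partial\cS_1$, so by Lemma~\ref{lem_symm_diff}(b), $\cS_1 \setminus \cR' = \cS_1 \triangle \cR'$ is a simplicial $k$-cycle contained in $\cS$, hence empty by the circuit property. This will force $\cR' = \cS_1$ and hence $\cR = \cS_1 \cup \{X\}$, contradicting properness. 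I do not anticipate any technical obstacle: the whole proof rests on the symmetric-difference identity of Lemma~\ref{lem_symm_diff}(b), the identification of $\partial\cS_1$ afforded by Lemma~\ref{lem:1.1.1}, and the definition of a $k$-circuit.
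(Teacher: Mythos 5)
Your proof is correct and follows essentially the same path as the paper: identify $\partial\cS_1 \cong \cK_{k-1}$ via Lemma~\ref{lem:1.1.1}, deduce $\partial\cS_1 = \partial\{X\}$ so that $\cS_1 \triangle \{X\}$ is a cycle, dispose of the case $X\in\cS_1$ via minimality, and verify the circuit property of $\cS_1\cup\{X\}$ by finding a nonempty cycle inside $\cS_1\subsetneq\cS$. Your final step is slightly less economical than the paper's (which simply observes that exactly one of $\cR$ and $(\cS_1\cup\{X\})\setminus\cR$ avoids $X$ and is therefore a nonempty cycle in $\cS_1$), but your computation of $\partial\cR'$ arrives at the same set and the same contradiction.
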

\begin{proof} Suppose that $\cS_1 \neq \{X\}$.
Since  $\partial \cS_1$ is a simplicial $(k-1)$-cycle by Lemma~\ref{lem:1} and $|V(\partial \cS_1)|=|X|=(k-1)+2$,
Lemma~\ref{lem:1.1.1} tells us that $\partial S_1$ is the set of all subsets of $X$ of size $k$. 
This implies that $\partial S_1=\partial \{X\}$ so $\cS_1\triangle \{X\}$ is a 
simplicial $k$-cycle, which is nonempty since $\cS_1 \neq \{X\}$.  
If $X \in \cS_1$ then $\cS_1 \triangle \{X\} \subsetneq \cS$, contradicting the fact that $\cS$ is a simplicial circuit. So $X \not\in \cS_1$ and $\cS_1 \triangle \{X\} = \cS_1 \cup \{X\}$ is a nonempty simplicial $k$-cycle.

To see that $\cS_1\triangle \{X\}$ is a simplicial $k$-circuit, 
suppose, for a contradiction,  that $\cR$ is a nonempty simplicial $k$-cycle with $\cR\subsetneq \cS_1\cup \{X\}$. Let $\cR' = (\cS_1 \cup \{X\}) \sm \cR$. Then $\cR,\cR'$ are both nonempty 
simplicial $k$-cycles, one of which is contained in $\cS_1$, contradicting the fact that $\cS$ is a simplicial $k$-circuit.
\end{proof}

Our next result will be used to handle the situation when $G(\cS)$ is $(k+2)$-connected and $G(\cS)/xy$ is not for some $xy\in E(\cS)$.

\begin{lemma}\label{lem:34sep_b}
Let $\cS$ be a simplicial $k$-circuit with $k\geq2$, $G=G(\cS)$, $\cS_1\subsetneq \cS$, and $X=V(\partial \cS_1)$. Suppose that $|X|=k+2$ and $V(\cS_1)\neq V(\cS)  \neq V(\cS\sm \cS_1)$.
\begin{itemize}
    \item[(a)] If either $X$ is not a clique in $G$ or $k=2$, then $\partial \cS_1=\cL_{k-1}$.
    \item[(b)] Suppose $\partial \cS_1=\cL_{k-1}$, $w$ and $z$ are two non-adjacent vertices of $G(\partial \cS_1)$ and $\cS/xy$ is a simplicial $k$-circuit for some $x,y\in X$.  Then $\{x,y\}\neq \{w,z\}$  and  $\cS_1\triangle \{X-w,X-z\}$ is a simplicial $k$-circuit. 
\end{itemize}
\end{lemma}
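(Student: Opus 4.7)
My plan. For (a), I would apply Lemma~\ref{lem:1.1.3} to the simplicial $(k-1)$-cycle $\partial \cS_1$, which by Lemma~\ref{lem:1} is nonempty and has $|V(\partial \cS_1)|=|X| = k+2=(k-1)+3$ vertices. This forces either $\partial \cS_1 \cong \cL_{k-1}$ or $G(\partial \cS_1)\cong K_{k+2}$, with the second alternative requiring $k-1 \geq 3$. When $k=2$ the second alternative is unavailable, and when $X$ is not a clique in $G$, the fact that every edge of $G(\partial \cS_1)$ is an edge of $G(\cS_1)\subseteq G$ again rules out the second alternative. Either way $\partial \cS_1 \cong \cL_{k-1}$.

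For (b), set $\cS_2 = \cS \sm \cS_1$, $\cS_1' = \cS_1 \triangle \{X-w, X-z\}$, and $\cS_2' = \cS_2 \triangle \{X-w, X-z\}$. A direct computation shows that $\partial\{X-w\}\triangle\partial\{X-z\}$ is the set of $k$-subsets of $X$ meeting $\{w,z\}$ in exactly one element, and this coincides with $\cL_{k-1}$ because $w,z$ are the unique pair of non-adjacent vertices of $G(\cL_{k-1})$. Hence $\partial\{X-w, X-z\} = \cL_{k-1} = \partial \cS_1 = \partial \cS_2$, so $\cS_1'$ and $\cS_2'$ are simplicial $k$-cycles. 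Both are nonempty: if, say, $\cS_1'=\emptyset$ then $\cS_1 = \{X-w, X-z\}$, giving $V(\cS_1)=X$; but $V(\cS_2) \supseteq V(\partial \cS_2) = X$, forcing $V(\cS_2) = V(\cS)$ and contradicting the hypothesis.

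To show $\{x,y\}\neq\{w,z\}$, suppose otherwise. By Corollary~\ref{cor:cont_preserve}, $\cS_1/wz$ and $\cS_2/wz$ are simplicial $k$-cycles whose multi-set sum is $\cS/wz$. Since $\cS$ has no repeated $k$-simplices, for each $T \in \mathrm{supp}(\cS/wz)$ exactly one of $\cS_1/wz, \cS_2/wz$ assigns $T$ an odd multiplicity, so their mod-$2$ reductions are disjoint sub-cycles of $\cS/wz$. For $\cS/wz$ to be a circuit one of these reductions must vanish, which forces every $k$-simplex of the corresponding side to contain $\{w,z\}$; combined with $\partial \cS_i = \cL_{k-1}$, this forces, say, $\cS_1$ to be the set of all $(k+1)$-subsets of $X$ containing $\{w,z\}$, giving $V(\cS_1)=X$ and leading to the same vertex-set contradiction as above.

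For $\cS_1'$ being a circuit, first note that if both $X-w, X-z \in \cS$---whether both in $\cS_1$, both in $\cS_2$, or one on each side---a direct check gives $\cS = \cS_1' \sqcup \cS_2'$, a decomposition of $\cS$ into two nonempty sub-cycles, contradicting the circuit hypothesis on $\cS$. So at most one of $X-w, X-z$ lies in $\cS$. Suppose, for contradiction, that $\cR \subsetneq \cS_1'$ is a proper nonempty sub-cycle; write $\cR = \cR_0 \cup \cR_1$ with $\cR_0 = \cR \cap \cS_1$ and $\cR_1 = \cR \cap \{X-w,X-z\}$. If $\cR_1 = \emptyset$ then $\cR = \cR_0 \subsetneq \cS$ is a proper nonempty sub-cycle of $\cS$; if $\cR_1 = \{X-w, X-z\}$ then $\partial \cR_0 = \cL_{k-1}$ and $\cR_0 \cup \cS_2 \subsetneq \cS$ is a proper nonempty sub-cycle of $\cS$; both outcomes contradict $\cS$ being a circuit. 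The main obstacle is the remaining case $|\cR_1|=1$: here the virtual simplex in $\cR$ obstructs the direct approach, and one must use the hypothesis $\{x,y\}\neq\{w,z\}$---which ensures $\gamma_{xy}$ deletes at least one of $X-w, X-z$---together with an analogous multiplicity argument to show that $\cR/xy$ is a proper nonempty sub-cycle of $\cS/xy$, contradicting the circuit hypothesis on $\cS/xy$.
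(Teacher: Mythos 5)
Your part (a) is fine and matches the paper's argument. In part (b), however, there are two substantive problems. First, the argument for $\{x,y\}\neq\{w,z\}$ does not stand: you invoke Corollary~\ref{cor:cont_preserve} to conclude that $\cS_1/wz$ and $\cS_2/wz$ are simplicial $k$-cycles, but that corollary requires the complex being contracted to be a $k$-\emph{cycle}, and $\partial\cS_1=\cL_{k-1}\neq\emptyset$, so neither $\cS_i$ is a cycle. (That $\cS_i/wz$ is in fact a cycle is true, but needs a separate argument; the paper proves it by writing $\cS_1/wz=(\cS_1\triangle\cT_{\{w,z\}})/wz$ where $\cT$ is the copy of $\cK_k$ on $X$, so $\cS_1\triangle\cT_{\{w,z\}}$ is a genuine $k$-cycle whose contraction is a $k$-cycle.) More seriously, the claim that ``for each $T\in\mathrm{supp}(\cS/wz)$ exactly one of $\cS_1/wz,\cS_2/wz$ assigns $T$ an odd multiplicity'' is false: a $k$-simplex $S_1\in\cS_1$ containing $z$ and a $k$-simplex $S_2\in\cS_2$ containing $w$ can satisfy $S_1-z+w=S_2$, so $\gamma_{wz}(S_1)=\gamma_{wz}(S_2)$, giving odd multiplicity on both sides. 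The mod-2 reductions of $\cS_1/wz$ and $\cS_2/wz$ are therefore not disjoint in general, and the decomposition you build from them is not available. The detour through mod-2 reductions is also unnecessary: in the paper's definition a circuit may not contain any proper nonempty \emph{multiset} sub-cycle, so it is enough to observe that $\cS_1/wz$ is a nonempty $k$-cycle that is a proper sub-multiset of $\cS/wz$ (nonemptiness and properness both follow from the hypothesis $V(\cS_1)\neq V(\cS)\neq V(\cS\sm\cS_1)$).

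Second, for the proof that $\cS_1\triangle\{X-w,X-z\}$ is a circuit you correctly reduce to the case $|\cR_1|=1$, but you explicitly leave this case as ``the main obstacle'' and only gesture at ``an analogous multiplicity argument'', which is the flawed argument above. This case is precisely what the paper's proof does, and it is short once set up correctly: after possibly replacing $\cR$ by its complement and relabelling $w,z$ one arranges that $\cR$ contains $X-z$, does not contain $X-w$, and $\{x,y\}\subseteq X-z$ (the relabelling is possible because $\{x,y\}\neq\{w,z\}$ forces $\{x,y\}\subseteq X-w$ or $\{x,y\}\subseteq X-z$). Then every element of $\cR$ other than $X-z$ lies in $\cS_1\subseteq\cS$, and $X-z$ contains $\{x,y\}$, so $\cR/xy$ is a sub-multiset of $\cS/xy$; it is a $k$-cycle by Corollary~\ref{cor:cont_preserve} (now legitimately applied, since $\cR$ \emph{is} a cycle), it is nonempty because $\cR=\cR_{\{x,y\}}$ would contradict Lemma~\ref{lem:stars}, and it is proper, contradicting the hypothesis that $\cS/xy$ is a circuit. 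Your preliminary observations (that at most one of $X-w,X-z$ lies in $\cS$, and the handling of $\cR_1=\emptyset$ and $\cR_1=\{X-w,X-z\}$) are correct, but without the $|\cR_1|=1$ case the proof of (b) is incomplete.
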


\begin{proof} 
Let $\cS_2=\cS\setminus \cS_1$. Since $\partial \cS=\emptyset$, we have 
$\partial \cS_1=\partial \cS_2$.
\\[1mm]
(a) Since  $\partial \cS_1$ is a simplicial $(k-1)$-cycle by Lemma~\ref{lem:1} and $|V(\partial \cS_1)|=|X|=(k-1)+3$,
Lemma~\ref{lem:1.1.3} tells us that $\partial \cS_1=\cL_{k-1}$ unless $k\geq 3$ and $X$ is a clique in $G$. 
\\[1mm]
(b)
Let $\cT$ be the copy of $\cK_{k}$ on the vertex set $X$. Then $\partial (\cT_{\{w,z\}}) = \partial \{X-w,X-z\} = \partial \cS_1$. Therefore  $\cS_1\triangle \cT_{\{w,z\}}= \cS_1 \triangle \{X-w,X-z\}$ is a simplicial $k$-cycle. Since $\cS_1/wz$ = $\cS_1\triangle (\cT_{\{w,z\}})/wz$, it follows that $\cS_1/wz$ is a simplicial $k$-cycle, which is properly contained in $\cS/wz$ by $V(\cS)\sm V(\cS_1) \neq \emptyset$. Therefore $\{x,y\} \neq \{w,z\}$ since $\cS/xy$ is a simplicial $k$-circuit.

It remains to  show that $\cS_1\triangle \{X-w,X-z\}$ is a simplicial $k$-circuit.
Suppose, for a contradiction, that $\cR$ is  a  non-empty simplicial $k$-cycle with $\cR\subsetneq \cS_1\triangle \{X-w,X-z\}$. Then $(\cS_1\triangle \{X-w,X-z\})\setminus \cR$ is also a nonempty simplicial $k$-cycle.
Since $\cS$ is a simplicial $k$-circuit, 
both $\cR$ and $(\cS_1\triangle \{X-w,X-z\})\setminus \cR$ 
must contain one of $X-w$, $X-z$.
Replacing $\cR$ by $(\cS_1\triangle \{X-w,X-z\}) \sm \cR$  and relabelling $w,z$ if necessary, we may suppose that $\cR$ contains $X-z$ and not $X-w$, and that $\{x,y\}\subseteq X-z$.
Then $\cR/xy$ is a simplicial $k$-cycle properly contained in $\cS/xy$. 
This  contradicts the hypothesis that $\cS/xy$ is a simplicial $k$-circuit.
\end{proof}

\subsection{Simplicial 2-circuits with few edges} 
As noted in the introduction,  Fogelsanger's Rigidity Theorem (Theorem \ref{thm:fogelsanger}) implies that every simplicial $k$-circuit $\cT$  has $|E(\cT)|\geq (k+1)|V(\cT)|-\binom{k+2}{2}$ whereas 
 Hendrickson's necessary condition for global rigidity (Theorem~\ref{thm:hendrickson}) asserts that, if 
 $G(\cT)$ is globally rigid in $\R^3$ and non-complete, then $|E(\cT)|\geq (k+1)|V(\cT)|-\binom{k+2}{2}+1$.
 To prove Theorem \ref{thm:globrigid_3_main} using an inductive argument, it will be  useful to know which simplicial $k$-circuits $\cT$  satisfy  $|E(\cT)|=(k+1)|V(\cT)|-\binom{k+2}{2}$ when $k=2$. Our next result does this and hence verifies Theorem \ref{thm:LBT} when $k=2$, i.e., $d=3$. 
  (A similar result for $k\geq 3$ is not needed to prove Theorem \ref{thm:globrigid_d_main}  since  global rigidity is completely characterised by $(k+2)$-connectivity in this case. Once we have proved  Theorem \ref{thm:globrigid_d_main}, we can use it to show that Theorem \ref{thm:LBT} holds when $k\geq 3$, i.e., $d\geq 4$.)
 
\begin{theorem}\label{thm:plane}
Suppose $\cT$ is a simplicial 2-circuit with $|E(\cT)|= 3|V(\cT)|-6$. Then $G(\cT)$ is a plane triangulation and 
 $\cT$ is the set of faces in a plane embedding of $G(\cT)$.
 \end{theorem}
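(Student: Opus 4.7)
The plan is a direct Euler-characteristic argument. We may assume $\cT$ is non-trivial (the trivial 2-circuit $\{T,T\}$ has $n=3$, $e=3$, and $G(\cT)=K_3$, a plane triangulation whose two triangular faces both equal $T$). Set $n=|V(\cT)|$, $e=|E(\cT)|=3n-6$, $f=|\cT|$, and $\chi:=n-e+f$. Because $\cT$ is a nonempty simplicial $2$-cycle without repeated simplices, every edge lies in an even, hence $\geq 2$, number of triangles, so $3f=\sum_{uv\in E(\cT)}|\cT_{uv}|\geq 2e$. Combined with $e=3n-6$ this gives $f\geq 2n-4$, hence $\chi\geq 2$.

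For $\chi\leq 2$, consider the $\Z_2$-linear map $\Phi\colon\Z_2^{\cT}\to Z^1(G(\cT))$ sending each triangle $T\in\cT$ to its boundary cycle $\partial T$ in the cycle space $Z^1(G(\cT))$ of $G(\cT)$. If $\Phi$ vanishes on a nonzero subfamily $\cR\subseteq\cT$, then $\cR$ is itself a nonempty sub-$2$-cycle of $\cT$, so by circuit-minimality $\cR=\cT$; thus $\ker\Phi$ is one-dimensional and $\dim(\mathrm{im}\,\Phi)=f-1$. Since $G(\cT)$ is connected by Lemma~\ref{lem:1.1.2}, $\dim Z^1(G(\cT))=e-n+1$, and $f-1\leq e-n+1$ yields $\chi\leq 2$. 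Hence $\chi=2$, $f=2n-4$, and every edge of $\cT$ lies in exactly two triangles.

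I now claim $\cT$ is a $2$-pseudomanifold. The edge condition was just established; strong connectedness holds because, were the triangle-adjacency graph disconnected, $\cT$ would decompose into proper sub-$2$-cycles, contradicting circuit-minimality. Each link $lk_\cT(v)$ is therefore a $2$-regular simple graph, i.e.\ a disjoint union of $c_v\geq 1$ cycles. Resolve each $v$ into $c_v$ copies, one per cycle in $lk_\cT(v)$, re-routing incident $2$-simplices accordingly, to obtain a simplicial complex $\tilde\cT$ with the same edges and triangles as $\cT$ but $n+\sum_v(c_v-1)$ vertices. By construction $\tilde\cT$ is a closed $2$-manifold whose triangle-adjacency graph agrees with that of $\cT$, so it is connected. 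The classification of closed surfaces gives $\chi(\tilde\cT)\leq 2$, while directly $\chi(\tilde\cT)=\chi(\cT)+\sum_v(c_v-1)=2+\sum_v(c_v-1)\geq 2$. Both inequalities must be equalities, forcing $c_v=1$ for every $v$ and $\tilde\cT=\cT\cong S^2$. Therefore $G(\cT)$ is a plane triangulation and the elements of $\cT$ are its facial triangles.

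I expect the main obstacle to be the upper bound $\chi\leq 2$: this is the only step using circuit-minimality rather than merely the $2$-cycle condition, and without it examples like the pinched sphere (two tetrahedra glued at a vertex, which is a $2$-cycle with $\chi=3$ but not a $2$-circuit) could not be ruled out.
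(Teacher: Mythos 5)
Your proof is correct, and it takes a genuinely different route from the paper's. The paper argues by induction on $|V(\cT)|$: it handles a $3$-vertex separator by splitting $\cT$ into two sub-circuits along the separating clique (using Lemma~\ref{lem:1.1.2}) and then glues the resulting plane embeddings; in the $4$-connected case it takes a minimum-degree vertex $v$ (degree $4$ or $5$), performs a careful case analysis of $lk_\cT(v)$ using $(3,6)$-sparsity (which in turn comes from minimal rigidity via Fogelsanger's theorem), and re-triangulates to reduce to a smaller circuit. Your argument instead bounds the Euler characteristic $\chi = n - e + f$ from both sides: the lower bound $\chi \geq 2$ comes from the double counting $3f \geq 2e$ (every edge of a $2$-cycle lies in at least two triangles), and the upper bound $\chi \leq 2$ comes from the rank--nullity computation applied to the $\Z_2$-boundary map $\Phi\colon \Z_2^{\cT}\to C_1(G(\cT))$, where circuit-minimality forces $\ker\Phi$ to be exactly one-dimensional and the image lies in the cycle space of the graph (dimension $e-n+1$ since $G(\cT)$ is connected by Lemma~\ref{lem:1.1.2}). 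Equality forces $\cT$ to be a $2$-pseudomanifold, and the standard pseudomanifold normalization (splitting each vertex into one copy per cycle of its link) produces a closed surface $\tilde\cT$ with $\chi(\tilde\cT) = 2 + \sum_v (c_v - 1)$, which the classification of surfaces caps at $2$, forcing all $c_v = 1$ and $\tilde\cT = \cT \cong S^2$. Your route is shorter, more conceptual, and — notably — does not rely on Fogelsanger's rigidity theorem, so it is independent of the rigidity-theoretic machinery that pervades the paper's proof; the trade-off is that it imports the classification of closed surfaces (or at least the bound $\chi \leq 2$ for closed connected $2$-manifolds), whereas the paper's argument is purely combinatorial. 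The one place you should flag in a polished write-up is the normalization step: it deserves a short verification that resolving every vertex simultaneously gives a well-defined simplicial complex in which edges and triangles are in bijection with those of $\cT$, and that the link of each new vertex is a single cycle — this uses $2$-regularity of the links at neighbouring vertices and is straightforward but not entirely immediate.
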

 \begin{proof} We proceed by induction on $|V(\cT)|$.
The theorem holds when $|V(\cT)|=3$, since, in that case, $\cT$ is a trivial simplicial $2$-circuit and hence $G(\cT)\cong K_3$. It also holds
when $|V(\cT)|=4$, since  $\cT\cong \cK_2$ in that case, by Lemma~\ref{lem:1.1.1}. Hence we may suppose that
$|V(\cT)|\geq 5$. 

Suppose that $G(\cT)$ has a 3-vertex separator $N$.
Then Lemma~\ref{lem:1.1.2}  implies that $N$ is a non-facial $3$-clique in $G(\cT)$ and there exists a partition $(\cT_1,\cT_2)$ of $\cT$ such that $V(\cT_1)\cap V(\cT_2)=N$ and $\cT_1\cup\{N\},\cT_2\cup\{N\}$ are both simplicial 2-circuits. Since $G(\cT_i\cup \{N\})$
is rigid in $\R^3$ by  Theorem \ref{thm:fogelsanger}, we have
$|E(\cT_i\cup \{N\})|=|E(\cT_i)|\geq 3|V(\cT_i)|-6$ for $i=1,2$. Thus
$$|E(\cT)|=|E(\cT_1)|+|E(\cT_2)|-3\geq 3|V(\cT_1)|-6+3|V(\cT_2)|-6-3=3|V(\cT)|-6.$$
Equality must hold throughout and hence $|E(\cT_i)|= 3|V(\cT_i)|-6$ for $i=1,2$. By induction, $\cT_i$ is the set of faces in a plane embedding of $G(\cT_i)$. We can now construct the required embedding of $G(\cT)$ by choosing an embedding of $G(\cT_1)$ with $N$ on its outer face and substituting this into the face of an embedding of $G(\cT_2)$ which is labelled by $N$. Hence we may assume that $G(\cT)$ is 4-connected.

Let $v$ be a vertex of minimum degree in $G(\cT)$. The hypothesis that 
$|E(\cT)|= 3|V(\cT)|-6$ and the fact that $G(\cT)$ is 4-connected imply that $4\leq d(v)\leq 5$. In addition, Theorem  
\ref{thm:fogelsanger} implies that $G(\cT)$ is minimally rigid so 
$G(\cT)$ satisfies (\ref{eq:3_6_sparse}) for $d=3$, i.e., $G(\cT)$ is $(3,6)$-sparse.

Recall that $\cT_v$ denotes the set of 2-simplices in $\cT$ that contain $v$.
 Note that since $\cT$ is a nontrivial simplicial $2$-circuit, and using Lemma~\ref{lem:stars}, we have 
\begin{equation}
    \label{eqn_bdTv1}
    \partial \cT_v = \{ U -v: U \in \cT_v\} = lk_\cT(v).  
\end{equation}
Also, by Lemma~\ref{lem:1}, 
\begin{equation}
    \label{eqn_bdTv2}
    \partial G(\cT_v) \text{ is an even subgraph of $G(\cT)$.}  
\end{equation}

The proof now splits  into two cases depending on whether $d(v)=4$ or $5$.

\begin{figure}
    \centering 
\begin{tikzpicture}[line cap=round,line join=round,>=triangle 45,x=1cm,y=1cm]
\draw [line width=1pt] (-4.44,0.3)-- (-4.24,2);
\draw [line width=1pt] (-4.24,2)-- (-2.92,0.46);
\draw [line width=1pt] (-2.92,0.46)-- (-4.44,0.3);
\draw [line width=1pt] (-2.92,0.46)-- (-4.78,-1.1);
\draw [line width=1pt] (-4.78,-1.1)-- (-6.26,0.24);
\draw [line width=1pt] (-6.26,0.24)-- (-4.24,2);
\draw [line width=1pt] (-6.26,0.24)-- (-4.44,0.3);
\draw [line width=1pt] (-4.44,0.3)-- (-4.78,-1.1);
\draw [line width=1pt] (0.68,0.4)-- (2.7,2.16);
\draw [line width=1pt] (2.7,2.16)-- (4.04,0.6);
\draw [line width=1pt] (4.04,0.6)-- (2,-0.96);
\draw [line width=1pt] (2,-0.96)-- (0.68,0.4);
\draw [line width=1pt] (2,-0.96)-- (2.7,2.16);
\begin{scriptsize}
\draw [fill=black](-4.44,0.3) circle (2pt);
\draw (-4.58,0.53) node {$v$};
\draw [fill=black] (-4.24,2) circle (2pt);
\draw (-4.28,2.23) node {$w$};
\draw  [fill=black](-2.92,0.46) circle (2pt);
\draw (-2.76,0.69) node {$x$};
\draw [fill=black](-4.78,-1.1) circle (2pt);
\draw (-4.85,-0.77) node {$y$};
\draw [fill=black](-6.26,0.24) circle (2pt);
\draw (-6.3,0.57) node {$z$};
\draw [fill=black](0.68,0.4) circle (2pt);
\draw (0.74,0.63) node {$z$};
\draw [fill=black](2.7,2.16) circle (2pt);
\draw (2.86,2.39) node {$w$};
\draw [fill=black](4.04,0.6) circle (2pt);
\draw (4.2,0.8) node {$x$};
\draw [fill=black](2,-0.96) circle (2pt);
\draw (1.9,-0.63) node {$y$};

\draw (-3.9,0.73) node {$T_1$};
\draw (-4.0,-0.13) node {$T_2$};
\draw (-4.96,-0.33) node {$T_3$};
\draw (-4.96,0.73) node {$T_4$};
\draw (3.16,0.5) node {$T_5$};
\draw (1.56,0.53) node {$T_6$};
\end{scriptsize}
\end{tikzpicture}
    
    \caption{Case 1 in the proof of Theorem \ref{thm:plane}. The figure on the left represents $\cT_v$ in this case and the figure on the right represents $\{T_5,T_6\}$.}
    \label{fig:case1}
\end{figure}
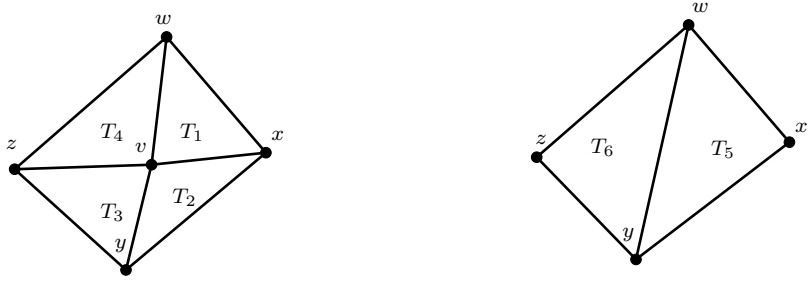

\paragraph{Case 1: $d_{G(\cT)}(v)=4$.} Let $N_{G(\cT)}(v)=\{w,x,y,z\}$.

Using (\ref{eqn_bdTv1}) and (\ref{eqn_bdTv2}) we see that 
$G(\partial \cT_v)$ is an even graph whose vertex set is $\{w,x,y,z\}$ and so must be a cycle graph of length 4. Using (\ref{eqn_bdTv1}) again and relabelling vertices if necessary it follows that $\cT_v = \{T_i: i = 1,2,3,4\}$, 
where $T_1=\{v,w,x\}$, $T_2=\{v,x,y\}$, $T_3=\{v,y,z\}$ and $T_4=\{v,z,w\}$.
See Figure \ref{fig:case1}.

Since $G(\cT)$ is $(3,6)$-sparse, it does not contain a subgraph isomorphic to $K_5$.
Hence, without loss of generality, we may assume $wy\notin E(\cT)$.
Let $T_5=\{w,x,y\}$,  $T_6=\{w,y,z\}$ and $\cS=\cT_v\cup \{T_5,T_6\}$. Then $\cS$ is a simplicial 2-cycle. 
Since $wy\notin E(\cT)$, $T_5$ and $T_6$ are missing in $\cT$.

Let $\cU=\cT\triangle \cS$. Then $\cU$ is a simplicial 2-cycle since both $\cT$ and $\cS$ are simplicial $2$-cycles. Since $wy \not\in \cT$, it follows that $\{T_5,T_6\} \cap \cT = \emptyset$, $\cU = (\cT \sm \cT_v) \cup \{T_5,T_6\}$ and $V(\cU) = V(\cT) \sm \{v\}$.
Suppose  $\cU$ is a simplicial 2-circuit. Then $|E(\cU)|\geq 3|V(\cU)|-6$ by Theorem \ref{thm:fogelsanger}, 
and, since $E(\cU)\setminus \{wy\}\subseteq E(\cT\setminus \cT_v)$, we have
 $$|E(\cT)|=|E(\cT\setminus \cT_v)|+4\geq |E(\cU)|+3\geq 3|V(\cU)|-3=3|V(\cT)|-6.$$
Equality must hold throughout and hence $|E(\cU)|=3|V(\cU)|-6$. By induction $\cU$ is the set of faces of a plane embedding of $G(\cU)$. We can extend this embedding to the required plane embedding of $G(\cT)$ by deleting $wy$ and then putting $v$ and its incident edges in the face of $G(\cU)-wy$ which corresponds to $T_5\cup T_6$.  
 
It remains to consider the subcase when  $\cU$ is not a simplicial 2-circuit. Let $\cU_1 \subsetneq \cU$ be a simplicial 2-circuit. Then
$\cU\sm \cU_1$ is a simplicial 2-cycle
and hence we may choose another simplicial 2-circuit $\cU_2$ with 
$\cU_2\subseteq \cU\sm \cU_1$. Since $\cT$ is a simplicial 2-circuit, $\cU_i\not\subseteq \cT$ and hence
$\cU_i\cap\{T_5,T_6\}\neq \emptyset$ for $i=1,2$.
Relabelling if necessary and using the fact that $\cU_1,\cU_2$ are disjoint, we may assume that  
$\cU_1\cap\{T_5,T_6\}=\{T_5\}$ and 
$\cU_2\cap\{T_5,T_6\}=\{T_6\}$.
The facts that $\cU_1$ is a simplicial 2-cycle, $T_5\in \cU_1$ and $\cU_1-T_5\subseteq \cT$, imply that $wy$ is contained in some 2-simplex of $\cT$. This contradicts $wy\notin E(\cT)$.

\begin{figure}
    \centering

\begin{tikzpicture}[line cap=round,line join=round,>=triangle 45,x=1cm,y=1cm]
\draw [line width=1pt] (-4.7,4.68)-- (-6.6,3.24);
\draw [line width=1pt] (-6.6,3.24)-- (-5.68,1.74);
\draw [line width=1pt] (-5.68,1.74)-- (-3.84,1.78);
\draw [line width=1pt] (-3.84,1.78)-- (-3.08,3.56);
\draw [line width=1pt] (-3.08,3.56)-- (-4.7,4.68);
\draw [line width=1pt] (-4.8,3.1)-- (-4.7,4.68);
\draw [line width=1pt] (-4.8,3.1)-- (-3.08,3.56);
\draw [line width=1pt] (-4.8,3.1)-- (-3.84,1.78);
\draw [line width=1pt] (-4.8,3.1)-- (-5.68,1.74);
\draw [line width=1pt] (-4.8,3.1)-- (-6.6,3.24);
\draw [line width=1pt] (2.28,4.82)-- (0.6,3.58);
\draw [line width=1pt] (0.6,3.58)-- (1.22,1.66);
\draw [line width=1pt] (1.22,1.66)-- (3.64,1.72);
\draw [line width=1pt] (3.64,1.72)-- (4.28,3.66);
\draw [line width=1pt] (4.28,3.66)-- (2.28,4.82);
\draw [line width=1pt] (2.28,4.82)-- (1.22,1.66);
\draw [line width=1pt] (2.28,4.82)-- (3.64,1.72);
\begin{scriptsize}
\draw [fill=black] (-4.7,4.68) circle (2pt);
\draw[color=black] (-4.68,5.05) node {$x_{1}$};
\draw [fill=black] (-6.6,3.24) circle (2pt);
\draw[color=black] (-6.86,3.41) node {$x_{2}$};
\draw [fill=black] (-5.68,1.74) circle (2pt);
\draw[color=black] (-6,1.73) node {$x_{3}$};
\draw [fill=black] (-3.84,1.78) circle (2pt);
\draw[color=black] (-3.5,1.89) node {$x_{4}$};
\draw [fill=black] (-3.08,3.56) circle (2pt);
\draw[color=black] (-2.82,3.97) node {$x_{5}$};
\draw [fill=black] (-4.8,3.1) circle (2pt);
\draw[color=black] (-4.64,3.33) node {$v$};
\draw [fill=black] (2.28,4.82) circle (2pt);
\draw[color=black] (2.3,5.35) node {$x_1$};
\draw [fill=black] (0.6,3.58) circle (2pt);
\draw[color=black] (0.32,3.79) node {$x_2$};
\draw [fill=black] (1.22,1.66) circle (2pt);
\draw[color=black] (1.0,1.57) node {$x_3$};
\draw [fill=black] (3.64,1.72) circle (2pt);
\draw[color=black] (3.9,1.85) node {$x_4$};
\draw [fill=black] (4.28,3.66) circle (2pt);
\draw[color=black] (4.58,3.95) node {$x_5$};
\draw[color=black] (1.2,3.29) node {$T_1$};
\draw[color=black] (2.5,2.9) node {$T_2$};
\draw[color=black] (3.6,3.29) node {$T_3$};
\end{scriptsize}
\end{tikzpicture}

    \caption{Case 2 in the proof of Theorem \ref{thm:plane}. The figure on the left represents $\cT_v$ in the case when $G(\partial \cT_v) \cong C_5$, and the figure on the right represents $\{T_1,T_2,T_3\}$.}
    \label{fig:case2}
\end{figure}
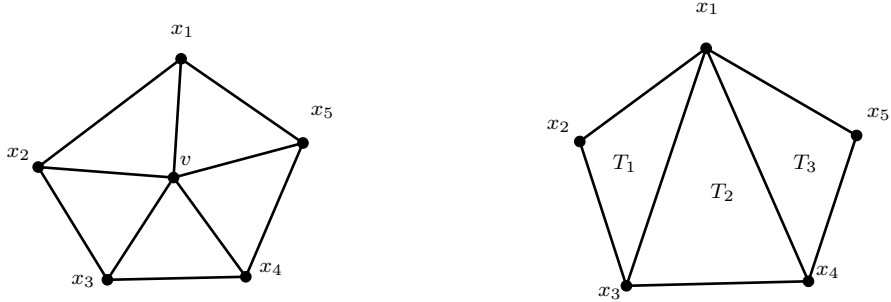
\paragraph{Case 2: $d_{G(\cT)}(v)=5$.}  
Since  $G(\partial \cT_v)$ is an even subgraph of $G(\cT)$ with vertex set contained in $N_{G(\cT)}(v)$,
$G(\partial \cT_v$) is isomorphic to one of $K_5$, $K_5 \sm E(C_4)$, $K_5 \sm E( C_3)$ or $C_5$.
However, since $G(\cT)$ is $(3,6)$-tight it cannot contain a subgraph isomorphic to $K_5$.
 
Suppose $G(\partial \cT_v) \cong K_5\sm E(C_4)$.
Then $\partial \cT_v$ can be decomposed into two disjoint edge-sets $L_1,L_2$ such that 
$G(L_1) \cong G(L_2) \cong C_3$ and $|V(L_1)\cap V(L_2)| = 1$.
Let $\cT_1 = \{\{v,x,y\} \in \cT: xy \in L_1\}$.
We introduce a new vertex $v'$ and put $\cT_1'=\{ \{v',x,y\}: xy \in L_1\}$.
Then $\cT' = (\cT\sm \cT_1) \cup \cT_1'$ is a simplicial 2-cycle since both $G(L_1)$ and $G(L_2)$ are even graphs. (The graph $G(\cT')$ is obtained from $G(\cT)$ by `splitting' the vertex $v$ into two non-adjacent vertices of degree three with one common neighbour.)
Moreover, since any simplicial $2$-cycle contained in 
$\cT'$ induces a simplicial $2$-cycle contained in $\cT$, $\cT'$ is a simplicial 2-circuit. Now, {since $|V(L_1)\cap V(L_2)| = 1$}, we have $|E(\cT')| = |E(\cT)|+1 = 3|V(\cT)|-5 = 3|V(\cT')|-8$,
contradicting the fact that $G(\cT')$ is rigid by Theorem \ref{thm:fogelsanger}. 
Thus $G(\partial \cT_v) \not\cong K_5\sm E(C_4)$.

We  obtain a similar contradiction 
if we suppose that 
$G(\partial \cT_v) \cong K_5\sm E(C_3)$ 
(using a decomposition of $K_5 \sm E(C_3)$ into  a $3$-cycle and a $4$-cycle).
Hence we may assume that  
$G(\partial \cT_v)  \cong C_5$.

Label the neighbours of $v$ as $x_1,x_2,\ldots, x_5$ according to the cyclic ordering given by the 5-cycle
$G(\partial \cT_v)$.
Using (\ref{eqn_bdTv2}) we see that 
\begin{equation}
\notag
\cT_v
= \{\{v,x_1,x_2\},\{v,x_2,x_3\},\{v,v_3,x_4\},\{v,x_4,x_5\},\{v,x_5,x_1\}\}.\end{equation}
See Figure \ref{fig:case2}
Since $G(\cT)$ is $(3,6)$-sparse, $N_{G(\cT)}(v)\cup \{v\}$ induces at most 12 edges, so $N(v)$ induces at most 7 edges in $G(\cT)$.
Hence, the 
subgraph of $G(\cT)$ induced by $N_{G(\cT)}(v)$
has at least one degree-two vertex.
Without loss of generality we assume $x_1$ is such a vertex.
Then $x_1x_3, x_1x_4\notin E(\cT)$.

Let $T_1=\{x_1,x_2,x_3\}$, $T_2=\{x_1,x_3,x_4\}$,  $T_3=\{x_1,x_4,x_5\}$ and $\cS = \cT_v \cup \{T_1,T_2,T_3\}$.
Then  $\cU = \cT \triangle \cS$  is a simplicial cycle
since both $\cT$ and $\cS$ are. 
Since $x_1x_3, x_1x_4 \not\in E(\cT)$, it follows that $\{T_1,T_2,T_3\} \cap \cT = \emptyset$, $\cU = (\cT \sm\cT_v) \cup \{T_1,T_2,T_3\}$ and that $V(\cU)= V(\cT)\sm \{v\}$.

Suppose that $\cU$ is a simplicial 2-circuit. 
Then $|E(\cU)| \geq 3|V(\cU)| -6$  by Theorem \ref{thm:fogelsanger}
and, since $E(\cU)\setminus \{x_1x_3, x_1x_4\}\subseteq E(\cT\setminus \cT_v)$, we have
\[
    |E(\cT)|\geq |E(\cT\setminus \cT_v)|+5 \geq |E(\cU)| + 3 \geq 3|V(\cU)| -6+3  = 3|V(\cT)|-6.
\]
Equality must hold throughout so $|E(\cU)|  = 3|V(\cU)| -6$.
By induction, $\cU$ is the set of faces in a plane embedding of $G(\cU)$.
We can now obtain the required plane embedding of $\cT$ by deleting the edges 
$x_1x_3, x_1x_4$ from the embedding of $G(\cU)$ 
and putting $v$ and its incident edges in the 
resulting pentagonal face.

It remains to consider the subcase when  $\cU$ is not a simplicial 2-circuit. 
Let $\cU_1 \subsetneq \cU$ be a 
simplicial 2-circuit. Then
$\cU\sm \cU_1$ is a simplicial 2-cycle and hence we may choose another simplicial 2-circuit
$\cU_2\subseteq \cU\sm \cU_1$.
The fact that  $\cT$ is a simplicial 2-circuit implies that  $\cU_i\not\subseteq \cT$ and hence 
$\cU_i\cap\{T_1,T_2,T_3\}\neq \emptyset$ for $i=1,2$.
Relabelling if necessary, and using the fact that $\cU_1,\cU_2$ are disjoint, we may suppose that $|\cU_1\cap \{T_1,T_2,T_3\}|=1$. 
The facts that $\cU_1$ is a simplicial 2-circuit and $\cU_1\setminus \{T_1, T_2, T_3\} \subseteq \cT$ imply that $x_1x_3$ or $x_1x_4$ is contained in some 2-simplex of $\cT$, contradicting $x_1x_3, x_1x_4\notin E(\cT)$.
\end{proof}

\section{The \texorpdfstring{$d$}{d}-cleavage property}\label{sec:cleavage}

Lemma~\ref{lem:1.1.2} implies that, if $G$ is the graph of a non-trivial simplicial $k$-circuit, then $G$ is $(k+1)$-connected and every $(k+1)$-vertex-separator of $G$ is a clique. It will be useful for our purposes to study graphs with these two properties.

Let $t\geq 1$ be an integer.  We say that a graph $G$ has the {\em $t$-cleavage property} if $G$ is $t$-connected and every $t$-vertex separator of $G$ is a clique in $G$.
A  subgraph $D$ of a graph $G$ with the $t$-cleavage property is  said to be a {\em $(t+1)$-block} of 
$G$ if   either $D\cong K_{t+1}$ or $D$ is $(t+1)$-connected and, subject to this condition, $D$ is maximal with respect to inclusion.   Let $X$ be a $t$-vertex-separator of $G$, let $H_1,H_2,\ldots,H_m$ be the components of $G-X$ and put $G_i=G[V(H_i)\cup X]$ for  all $1\leq i\leq m$. Then we say that $G_1,G_2,\ldots,G_m$ are the {\em $t$-cleavage graphs of $G$ at $X$}.

\begin{lemma}\label{lem:cleave2}
Suppose $G$ is a graph with the $t$-cleavage property.
Let $X$ be a $t$-vertex-separator of $G$ and  $G_1,G_2,\ldots,G_m$ be the {\em $t$-cleavage graphs} of $G$ at $X$. Then $G_i$ has the $t$-cleavage property for all $1\leq i\leq m$. Moreover each $(t+1)$-block of $G$ is a $(t+1)$-block of $G_i$ for exactly one $1\leq i\leq m$, and each $t$-vertex separator of $G$ other than $X$ is a $t$-vertex separator of $G_i$ for exactly one $1\leq i\leq m$.
\end{lemma}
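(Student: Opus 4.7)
The plan is to verify the three assertions in turn: $G_i$ has the $t$-cleavage property, the $(t+1)$-blocks of $G$ assign uniquely to the $G_i$, and likewise for the non-$X$ separators. The key observation used repeatedly is that any path in $G$ between $V(H_i)$ and $V(H_j)$ with $j\neq i$ must cross $X$, so the first such crossing vertex lies in $X$ and the initial segment is confined to $V(G_i)$.

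First, to show $G_i$ is $t$-connected, let $S\subseteq V(G_i)$ with $|S|<t$. Since $|X|=t$, the set $X\setminus S$ is nonempty, and since $X$ is a clique in $G$, it is a clique in $G_i-S$. For any $v\in V(G_i)\setminus S$ and any $x\in X\setminus S$, the $t$-connectivity of $G$ gives a path from $v$ to $x$ in $G-S$; the initial segment up to the first vertex of the path in $X$ lies entirely in $V(H_i)\cup X=V(G_i)$ and ends at a vertex of $X\setminus S$. Hence $G_i-S$ is connected. To show every $t$-vertex separator $Y$ of $G_i$ is a clique, I will show that $Y$ is a $t$-vertex separator of $G$ and appeal to the $t$-cleavage property of $G$. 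If $X\subseteq Y$ then $X=Y$, but $G_i-X=H_i$ is connected, a contradiction; so $X\setminus Y\neq\emptyset$, and being a subset of the clique $X$ it lies in a single component $A$ of $G_i-Y$. Any other component $B$ satisfies $B\cap X=\emptyset$ (else $B$ would contain a vertex of $X\setminus Y\subseteq A$), so $B\subseteq V(H_i)$. A connecting path in $G-Y$ from $B$ to $V(G)\setminus V(G_i)$ would first exit $V(G_i)$ through some $x\in X\setminus Y$, and the initial segment from $B$ to $x$ would lie in $V(G_i)\setminus Y$ and thus exhibit a path in $G_i-Y$ from $B$ to $X\setminus Y\subseteq A$, contradicting $B\neq A$. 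Hence $Y$ separates $G$, so $Y$ is a clique of $G$ (and of $G_i$).

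For the $(t+1)$-block assertion, let $D$ be a $(t+1)$-block of $G$. Since $|V(D)|\geq t+1>|X|$, $V(D)$ contains a vertex in some $V(H_i)$. If $V(D)$ also met $V(H_j)$ for some $j\neq i$, then either $D\cong K_{t+1}$ would contain an edge between $V(H_i)$ and $V(H_j)$ (impossible, as $X$ separates them in $G$), or removing $V(D)\cap X$ (of size at most $t$) from $D$ would disconnect it, contradicting $(t+1)$-connectivity. Hence $V(D)\subseteq V(G_i)$ for exactly one $i$. Since $D$ has the same edges in $G$ and $G_i$, it is $K_{t+1}$ or $(t+1)$-connected in $G_i$; and in the latter case, any strictly larger $(t+1)$-connected subgraph of $G_i$ would be a strictly larger $(t+1)$-connected subgraph of $G$, contradicting the maximality of $D$ in $G$. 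For the $t$-vertex separator assertion, let $Y\neq X$ be a $t$-vertex separator of $G$. By the $t$-cleavage property of $G$, $Y$ is a clique; since $|Y|=t=|X|$ and $Y\neq X$ we cannot have $Y\subseteq X$, so $Y$ contains a vertex outside $X$, and because a clique cannot contain vertices of two distinct $V(H_i)$, there is exactly one index $i$ with $Y\subseteq V(G_i)$. For $j\neq i$, $Y\not\subseteq V(G_j)$, so $Y$ is not a separator of $G_j$. Finally, $Y$ is a separator of $G_i$ by a symmetric argument to the one above: for each $j\neq i$, the minimality of $X$ as a separator (since $G$ is $t$-connected with $|X|=t$) implies that every vertex of $X$ has a neighbor in $V(H_j)$, so $G_j-(Y\cap X)$ is connected; hence in $G-Y$ the set $\bigl(\bigcup_{j\neq i}V(H_j)\bigr)\cup(X\setminus Y)$ lies in one connected piece, and the remaining vertices, all in $V(H_i)\setminus Y$, contain a vertex unreachable from $X\setminus Y$ in $G-Y$, and hence (by the path-confinement argument) also in $G_i-Y$.

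The main obstacle is the careful treatment of paths that may leave and re-enter $V(G_i)$; the path-confinement principle stated at the start handles all such issues uniformly. Once this is internalized, the three parts follow from essentially the same argument applied in different directions.
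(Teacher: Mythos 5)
Your proof is correct and follows essentially the same route as the paper's, just worked out in much greater detail: you confirm $t$-connectivity of $G_i$ via the path-confinement observation, show every $t$-vertex separator of $G_i$ is a separator of $G$ (hence a clique), use the confinement of $D-X$ to a single $H_i$ to assign $(t+1)$-blocks, and use that a $t$-clique separator $Y\neq X$ meets some $H_i$ to assign separators. One cosmetic point: when arguing that $D$ remains maximal in $G_i$, you only treat ``the latter case'' ($D$ $(t+1)$-connected); the $K_{t+1}$ case needs the same sentence (a strictly larger qualifying subgraph of $G_i$ would also be one of $G$), but since the argument is verbatim the same this is a phrasing gap, not a mathematical one.
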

\begin{proof} The hypothesis  that $G$ has the $t$-cleavage property implies that $G[X]\cong K_t$ and hence each $G_i$ is $t$-connected. In addition,  each $t$-vertex-separator of $G_i$ is a $t$-vertex-separator of $G$ so $G_i$ has the $t$-cleavage property. If $D$ is a $(t+1)$-block of $G$ then $D-X$ is connected so $D-X$ is a subgraph of $G_i-X$ for exactly one $G_i$. Hence $D$ is a $(t+1)$-block of exactly one $G_i$. It remains to show that each $t$-vertex separator $X'$ of $G$ other than $X$ is a $t$-vertex separator of some $G_i$. This follows because the $t$-connectivity of each $G_j$ gives $G-X'$ is connected whenever $X'$ is not contained in some $G_i$.
\end{proof} 

We can apply Lemma~\ref{lem:cleave2} to each $t$-cleavage graph recursively
to obtain the following decomposition result which extends the well known block/cut vertex decomposition of a connected graph to graphs with the $t$-cleavage property.

\begin{theorem} \label{thm:cleavagedec}
Suppose $G$ is a graph with the $t$-cleavage property.  
Let $T$ be the bipartite graph with vertex classes given by the sets of $(t+1)$-blocks and $t$-vertex separators of $G$, in which a $(t+1)$-block $D$ and a $t$-vertex separator $X$ are adjacent if $X\subseteq V(D)$. Then $T$ is a tree.  \hfill \qed
\end{theorem}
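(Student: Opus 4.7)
My plan is to proceed by induction on $|V(G)|$, recursively applying Lemma~\ref{lem:cleave2}. In the base case $G$ has no $t$-vertex separator, so $G$ is either $(t+1)$-connected or isomorphic to $K_{t+1}$; then $G$ itself is the unique $(t+1)$-block and $T$ consists of a single vertex. For the inductive step, suppose $G$ has a $t$-vertex separator $X$ and let $G_1,\dots,G_m$ be the $t$-cleavage graphs of $G$ at $X$, with $m\geq 2$. Each $G_i$ has the $t$-cleavage property by Lemma~\ref{lem:cleave2} and has strictly fewer vertices than $G$, so the inductive hypothesis yields that each $T(G_i)$ is a tree.

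I will then show that $T(G)$ is obtained from the disjoint union $T(G_1)\cup\dots\cup T(G_m)$ by introducing a new vertex labelled $X$ and joining it, for each $i$, by a single edge to the unique $(t+1)$-block of $G_i$ containing $X$. Since gluing $m$ disjoint trees together by attaching a new common vertex adjacent to exactly one vertex of each tree produces a tree, this will finish the proof. To justify the construction I must verify that (i) the $(t+1)$-blocks of $G$ are exactly the disjoint union of the $(t+1)$-blocks of the $G_i$'s, (ii) the $t$-vertex separators of $G$ are exactly $\{X\}$ together with the disjoint union of the $t$-vertex separators of the $G_i$'s, and (iii) $X$ lies in a unique $(t+1)$-block of each $G_i$. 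For (i) and (ii), Lemma~\ref{lem:cleave2} already distributes blocks and non-$X$ separators of $G$ across the $G_i$'s; the converses follow by noting that a $(t+1)$-connected subgraph of $G$ remains connected after deleting $X$ and hence is confined to a single $V(G_i)$ (so maximality transfers), and that if $X'$ is a $t$-separator of $G_i$ distinct from $X$, then the fact that $X$ is a clique in $G$ forces $X\sm X'$ to sit in one component of $G_i-X'$, so all other components of $G_i-X'$ are isolated in $G-X'$ and $X'$ separates $G$.

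The main obstacle is establishing (iii). Existence is handled by a recursive descent inside $G_i$: either $G_i$ is $(t+1)$-connected or $K_{t+1}$ and is itself the block containing $X$, or $G_i$ has a $t$-separator $X''$, and because $X$ is a clique the vertices of $X\sm X''$ lie in a single component of $G_i-X''$, so $X$ is contained in the corresponding cleavage graph of $G_i$ at $X''$, on which we iterate. For uniqueness, suppose $D\neq D'$ are two $(t+1)$-blocks of $G_i$ both containing $X$. Maximality forces $V(D)\cap V(D')=X$, since any larger intersection would make $D\cup D'$ a $(t+1)$-connected subgraph strictly containing $D$. If some edge of $G_i$ joins $V(D)\sm X$ to $V(D')\sm X$, then combining $t$ internally disjoint paths from $V(D)\sm X$ to $V(D')\sm X$ passing through the vertices of $X$ (one per vertex, available by the $(t+1)$-connectivity of $D$ and $D'$ together with Menger's theorem) with the additional path afforded by that edge shows $G_i[V(D)\cup V(D')]$ is $(t+1)$-connected and strictly contains $D$, contradicting maximality. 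If no such edge exists, then because $G_i-X=H_i$ is connected there must be a path in $G_i-X$ bridging $V(D)\sm X$ and $V(D')\sm X$ through vertices outside $V(D)\cup V(D')$, and I expect to close the case by an augmentation along this path, using the $t$-cleavage property of $G_i$ to control the structure of the intermediate vertices, and again producing a $(t+1)$-connected subgraph of $G_i$ that properly contains $D$. Once uniqueness is in hand, $T(G)$ is exactly the tree obtained by joining the $T(G_i)$'s at the new vertex $X$.
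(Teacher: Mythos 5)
Your overall plan — induct on $|V(G)|$, apply Lemma~\ref{lem:cleave2} at one $t$-separator $X$, and glue the trees $T(G_1),\dots,T(G_m)$ at a new vertex $X$ — is precisely the recursion the paper has in mind (the paper gives no written proof, only the remark preceding the statement). Your verifications of (i) and (ii), and the existence half of (iii), are correct, modulo the small remark that a clean way to see that $D\cup D'+uu'$ is $(t+1)$-connected in Case~1 is to delete an arbitrary set $S$ of $t$ vertices and observe that $D-S$, $D'-S$ are connected and meet whenever $S\neq X$, while $(D-X)\cup(D'-X)+uu'$ is connected when $S=X$ (exhibiting $t+1$ disjoint paths between one particular pair, as you propose, does not by itself establish $(t+1)$-connectedness of the whole graph).

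The genuine gap is Case~2 of the uniqueness argument in (iii). There you have two distinct $(t+1)$-blocks $D,D'$ of $G_i$ with $V(D)\cap V(D')=X$ and no edge of $G_i$ joining $V(D)\sm X$ to $V(D')\sm X$, and you propose to augment along a path $P$ in $G_i-X$ from $V(D)\sm X$ to $V(D')\sm X$ whose internal vertices avoid $V(D)\cup V(D')$. But $G_i\bigl[V(D)\cup V(D')\cup V(P)\bigr]$ is in general not $(t+1)$-connected: if $P$ has three or more internal vertices, deleting two of them can isolate a third, since an internal path vertex may have all of its $G_i$-neighbours outside the subgraph you are building. The $t$-cleavage property constrains minimal $t$-separators of $G_i$, not local neighbourhoods of arbitrary vertices, so it does not repair this. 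I do not see how to close this case by a purely local argument, and you yourself flag it as unfinished (``I expect to close the case\ldots'').

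The clean way to obtain uniqueness in (iii) is a separate induction on $|V(H)|$ for graphs $H$ with the $t$-cleavage property: if $D,D'$ are distinct $(t+1)$-blocks of $H$ with $|V(D)\cap V(D')|=t$, then $V(D)\cap V(D')$ is a $t$-separator of $H$. If $H$ is $(t+1)$-connected or $K_{t+1}$ this is vacuous. Otherwise pick any $t$-separator $Y$ of $H$ with cleavage graphs $H_1,\dots,H_r$. Since $D-Y$ and $D'-Y$ are connected, $D\subseteq H_j$ and $D'\subseteq H_{j'}$ for some $j,j'$. If $j\neq j'$ then $V(D)\cap V(D')\subseteq V(H_j)\cap V(H_{j'})=Y$, forcing $V(D)\cap V(D')=Y$, a $t$-separator. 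If $j=j'$ then $D,D'$ are $(t+1)$-blocks of $H_j$ by the maximality-transfer argument you already used, and the inductive hypothesis gives that $V(D)\cap V(D')$ is a $t$-separator of $H_j$, hence of $H$ by the argument you gave for (ii). Applying this with $H=G_i$ shows that two $(t+1)$-blocks of $G_i$ both containing $X$ would force $X$ to be a $t$-separator of $G_i$, which it is not; uniqueness follows and your gluing step goes through.
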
 

We will refer to the tree $T$ described in Theorem \ref{thm:cleavagedec} as the {\em $(t+1)$-block tree} of $G$. 
Note that Lemma~\ref{lem:1.1.2} implies that if $\cS$ is a simplicial $k$-circuit for some $k\geq 1$, then $G(\cS)$ has the $(k+1)$-cleavage property and that the vertices of the $(k+2)$-block tree of $G(\cS)$ that correspond to $(k+1)$-vertex separators have degree 2.

It is easy to see that the cleavage property is preserved by edge addition i.e., if $G$ has the $t$-cleavage property and $e$ is an edge between two non-adjacent vertices of $G$ then $G+e$ has the $t$-cleavage property. We next give a similar result for the union of two graphs with the $t$-cleavage property.  

\begin{lemma}\label{lem:cleave1}
Let $H_1=(V_1,E_1)$ and $H_2=(V_2,E_2)$ be graphs with the $t$-cleavage property. Suppose that $|V_1\cap V_2|\geq t$ and that, if equality holds, then $V_1 \cap V_2$ is a clique of $G=H_1 \cup H_2$. Then $G$ has the $t$-cleavage property and
each $(t+1)$-block $D$ of $G$  which is not a $(t+1)$-block of $H_1$ or $H_2$, is a union of $(t+1)$-blocks contained in two subtrees  of the $(t+1)$-block trees of  $H_1$ and $H_2$, respectively. 
\end{lemma}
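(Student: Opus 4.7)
The plan is to first establish that $G=H_1\cup H_2$ has the $t$-cleavage property, and then verify the structural description of its $(t+1)$-blocks. Let $A=V_1\sm V_2$, $B=V_2\sm V_1$ and $M=V_1\cap V_2$, and note the crucial fact that $G$ has no edges between $A$ and $B$, since each edge of $G$ lies in some $H_i$ whose vertex set is $V_i$.

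For the cleavage property I would take any $X\subseteq V(G)$ with $|X|\leq t$ such that $G-X$ is disconnected, and aim to show $|X|=t$ with $X$ a clique of $G$; this simultaneously gives the $t$-connectivity of $G$ and the cleavage condition. Fixing two components $C,C'$ of $G-X$, there are two cases. In case (I), some $v\in V(C)$ and $v'\in V(C')$ lie in a common $V_i$; then $X\cap V_i$ separates $v$ from $v'$ in $H_i$, so $|X\cap V_i|\geq t$ by the $t$-connectivity of $H_i$, which combined with $|X|\leq t$ forces $X\subseteq V_i$, and the $t$-cleavage property of $H_i$ then shows $X$ is a clique. In case (II) no such pair exists; then any vertex of a component lying in $M$ would lie in both $V_1$ and $V_2$ and thus share a $V_i$ with any vertex of any other component, a contradiction. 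Hence every component of $G-X$ misses $M$, forcing $M\subseteq X$. Combined with $|M|\geq t\geq|X|$ this gives $X=M$, and the hypothesis that $M$ is a clique in $G$ when $|M|=t$ completes the argument.

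For the structural claim, let $D$ be a $(t+1)$-block of $G$ which is not a $(t+1)$-block of $H_1$ or $H_2$, and focus on the main case in which $V(D)$ meets both $A$ and $B$. Since any $A$-$B$ path in $D$ passes through $M$, the set $V(D)\cap M$ separates $D$, so $(t+1)$-connectivity of $D$ yields $|V(D)\cap M|\geq t+1$. Let $\cB_i$ be the set of $(t+1)$-blocks of $H_i$ whose vertex set is contained in $V(D)$. I would then show (i) $D=\bigcup_i\bigcup_{B\in\cB_i}B$ and (ii) $\cB_i$ spans a subtree of the $(t+1)$-block tree of $H_i$. For (i) the key tool is the gluing property for $(t+1)$-connected subgraphs: any $(t+1)$-block $B$ of $H_i$ with $|V(B)\cap V(D)|\geq t+1$ satisfies that $B\cup D$ is $(t+1)$-connected in $G$, and maximality of $D$ then forces $V(B)\subseteq V(D)$. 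For (ii), two blocks $B,B'\in\cB_i$ joined in the block tree through a $t$-separator $X$ satisfy $X\subseteq V(B)\cap V(B')\subseteq V(D)$, and an inductive argument along the tree path together with (i) places the intermediate blocks of the path in $\cB_i$. The residual case in which $V(D)$ lies entirely in one $V_i$ but $D$ uses edges of $H_{3-i}$ within $M$ is handled by the same method applied on the opposite side.

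The principal obstacle is part (i): one must show that every $(t+1)$-block $B$ of $H_i$ sharing even a single edge with $D$ satisfies $V(B)\subseteq V(D)$. The delicate regime is when $2\leq|V(B)\cap V(D)|\leq t$, where the gluing lemma does not apply directly. The anticipated resolution is to exploit the large common set $V(D)\cap M$: any $t$-separator of $H_i$ contained in $V(D)$ is a clique that interfaces $B$ with the portion of $D$ inside $V_i$, and by tracing the block tree of $H_i$ through the $t$-separators inside $V(D)\cap M$ one iteratively absorbs vertices of $B$ into $V(D)$ until the intersection exceeds the threshold $t+1$, at which point the gluing property completes the argument.
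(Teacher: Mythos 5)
Your argument for the $t$-cleavage property of $G$ is sound, though more circuitous than the paper's, which simply observes that a $t$-vertex separator $X$ of $G$ is either also a separator of some $H_i$ (hence a clique by the cleavage property of $H_i$) or else equals $V_1 \cap V_2$ (hence a clique by hypothesis).

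The structural claim is where your proposal breaks down. The intermediate statement you call the \emph{principal obstacle} --- that every $(t+1)$-block $B$ of $H_i$ sharing an edge with $D$ must satisfy $V(B) \subseteq V(D)$ --- is false, so the sketched ``iterative absorption'' is aimed at proving something unprovable. Take $t=3$, let $H_1$ be the octahedron $K_{2,2,2}$ on $\{1,\ldots,6\}$ with parts $\{1,2\},\{3,4\},\{5,6\}$, and let $H_2$ be the union of $K_5$ on $\{1,2,3,4,7\}$ and $K_4$ on $\{2,4,7,8\}$, glued along the $3$-clique $\{2,4,7\}$. Both $H_i$ have the $3$-cleavage property and $V_1\cap V_2 = \{1,2,3,4\}$ has size $4$. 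In $G = H_1 \cup H_2$, vertex $8$ has degree $3$, so $\{2,4,7\}$ is a $3$-separator and one checks that $D = G[\{1,\ldots,7\}]$ is a $4$-connected $4$-block of $G$ which is a block of neither $H_1$ nor $H_2$. But the block $B = K_4(2,4,7,8)$ of $H_2$ shares the edges $24,27,47$ with $D$, while $V(B) = \{2,4,7,8\} \not\subseteq V(D)$. What your step (i) actually requires is the weaker statement that each edge of $D$ lies in \emph{some} block $B$ with $V(B)\subseteq V(D)$ --- true in this example via $K_5(1,2,3,4,7)$ --- but this is not what you argue and it is not a routine consequence of the gluing lemma. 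The ``residual case'' where $V(D)\subseteq V_i$ but $D$ uses edges of $H_{3-i}$ inside $M$ is likewise left unaddressed, and step (ii) is asserted rather than proved.

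The paper avoids these difficulties by an entirely different route: induction. It picks a $t$-vertex separator $X$ of $G$ and forms the $t$-cleavage graphs $G_1,\ldots,G_m$ of $G$ at $X$. If $V(G_1)\cap V(H_2)\subseteq X$, every $(t+1)$-block of $G$ lying in $G_1$ is already a block of $H_1$, and induction applies to $G-(G_1-X)=(H_1-(G_1-X))\cup H_2$. Otherwise induction applies to each $G_j=(H_1\cap G_j)\cup(H_2\cap G_j)$ separately, and Lemma 4.1 ties the $(t+1)$-blocks of $G$ to those of the $G_j$. This sidesteps any direct covering argument, which is exactly where your proposal runs into trouble.
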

\begin{proof} The fact that $G$ is $t$-connected follows from the hypotheses that $H_1,H_2$ are $t$-connected and $|V_1\cap V_2|\geq t$. The lemma is trivially true if $G$ is $(t+1)$-connected so we may assume that $G$ has a $t$-vertex separator $X$. If $X$ is a separator of either $H_1$ or $H_2$ then $G[X]\cong K_t$ since $H_1,H_2$ have the $t$-cleavage property. On the other hand, if $X$ is a separator of neither $H_1$ nor $H_2$ then $X = V_1\cap V_2$ and we again have $G[X]\cong K_t$ since $|V_1\cap V_2|=t$. Hence $G$ has the $t$-cleavage property.   

Let $G_1,G_2,\ldots,G_m$ be the $t$-cleavage graphs of $G$ at a $t$-vertex separator $X$. Then each $(t+1)$-block of $G$ is a $(t+1)$-block of $G_j$ for some $1\leq j\leq m$. Put $H_{i,j}=H_i\cap G_j$ for all $1\leq i\leq 2$ and $1\leq j\leq m$. 
If $V(G_1)\cap V(H_2)\subseteq X$, then every $(t+1)$-block of $G$ which is contained in  $G_1$ is a $(t+1)$-block of $H_1$ and the lemma follows by applying induction to $G-(G_1-X)=(H_1-(G_1-X))\cup H_2$.
Hence we may assume that $V(H_{i,j})\not \subseteq X$ for all $1\leq i\leq 2$ and all $1\leq j\leq m$.  The lemma now follows by applying induction to each $G_j=H_{1,j}\cup H_{2,j}$.  
\end{proof}

We have seen that the graphs of simplicial $k$-circuits have the $(k+1)$-cleavage property. We close this section with a result which gives useful structural properties of their $(k+2)$-block trees.


\begin{lemma}\label{lem:mincleavage}
Let $G$ be the graph of a  simplicial $k$-circuit and $H$ be a union of $(k+2)$-blocks in a subtree of the $(k+2)$-block tree of $G$. Then $H$ is the graph of a simplicial $k$-circuit. Furthermore, if $G/uv$ is the graph of a simplicial $k$-circuit for some 
$u$ and $v$ belonging to the same $(k+2)$-block of $H$, then $H/uv$ is the graph of a simplicial $k$-circuit.
\end{lemma}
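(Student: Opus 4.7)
The plan is to prove both statements simultaneously by induction on $|V(T)|-|V(T')|$, where $T$ is the $(k+2)$-block tree of $G$. In the base case $T=T'$ we have $H=G$, so the first claim holds with $\cT=\cS$ and the second is precisely the given hypothesis.

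For the inductive step with $T'\subsetneq T$, note that every leaf of $T$ is a $(k+2)$-block (by the remark following Theorem~\ref{thm:cleavagedec}), and since $T'$ is a proper subtree, we may pick a leaf block $B$ of $T$ with $B\notin T'$ whose unique neighbour $X$ is a $(k+1)$-separator of $G$. Applying Lemma~\ref{lem:1.1.2}(b) at $X$ gives simplicial $k$-circuits $\cT_B$ and $\cT_{\rm rest}$ with $\cS=\cT_B\triangle \cT_{\rm rest}$, $\cT_B\cap \cT_{\rm rest}=\{X\}$, $V(\cT_B)=V(B)$, and $V(\cT_{\rm rest})=V(G)\setminus(V(B)\setminus X)$. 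A verification using Lemma~\ref{lem:cleave2} shows that $G(\cT_{\rm rest})$ has $(k+2)$-block tree $T\setminus\{B,X\}$, and that $T'$ induces a subtree $T''$ of it (setting $T''=T'\setminus\{X\}$ if $X\in T'$, and $T''=T'$ otherwise) with the same set of $(k+2)$-blocks as $T'$. Since the induction parameter strictly decreases, the inductive hypothesis applied to $(\cT_{\rm rest},T'')$ yields a simplicial $k$-circuit $\cT$ with $G(\cT)=H$, establishing the first claim.

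For the second claim, let $B_0$ be the $(k+2)$-block of $H$ containing both $u$ and $v$. Then $B_0\in T'$, $B_0\neq B$, and $B_0$ is also a $(k+2)$-block of $G(\cT_{\rm rest})$, so in particular $u,v\in V(\cT_{\rm rest})$ and belong to a common $(k+2)$-block there. To invoke induction, we must show that $G(\cT_{\rm rest})/uv$ is the graph of a simplicial $k$-circuit. Pick any simplicial $k$-circuit $\cS'$ with $G(\cS')=G/uv$. The key observation is that $u$ and $v$ cannot both lie in $X$: if they did, the image of $X$ in $G/uv$ would be a set of only $k$ vertices still separating $V(B)\setminus X$ from the rest of $G/uv$, contradicting the $(k+1)$-connectivity of $G(\cS')$ guaranteed by Lemma~\ref{lem:1.1.2}(a). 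Consequently $X$ remains a $(k+1)$-separator of $G/uv$, and applying Lemma~\ref{lem:1.1.2}(b) to $\cS'$ at $X$ yields a simplicial $k$-circuit whose graph coincides with $G(\cT_{\rm rest})/uv$. The inductive hypothesis then gives that $H/uv$ is the graph of a simplicial $k$-circuit.

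The main obstacle will be the structural claim about the $(k+2)$-block tree of $G(\cT_{\rm rest})$: that deleting $V(B)\setminus X$ from $G$ collapses exactly the branch of the block tree rooted at $B$, demotes $X$ from a separator to an ordinary clique inside the remaining block across $X$, and introduces no new $(k+2)$-blocks or $(k+1)$-separators. This must be argued by a careful case analysis combining Lemma~\ref{lem:cleave2} with the fact that $X$ is already a clique in $G(\cT_{\rm rest})$ (so no small separator can arise that genuinely splits $X$). A secondary subtlety is the identification of the cleavage graph of $\cS'$ at $X$ with $G(\cT_{\rm rest})/uv$ as abstract graphs; once $u,v\in X$ is ruled out, this reduces to checking that edge contraction of $uv$ commutes with deletion of the vertex set $V(B)\setminus X$, which is straightforward because neither $u$ nor $v$ belongs to $V(B)\setminus X$.
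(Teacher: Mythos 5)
Your proof is correct and is essentially the paper's argument: both proceed by peeling a degree-one block off the $(k+2)$-block tree and inducting, using Lemma~\ref{lem:1.1.2}(b) (you) or equivalently Lemma~\ref{lem:34sep_a} (the paper) to see that the restricted complex is again a simplicial $k$-circuit. The only organisational difference is in the second assertion: the paper uses the $(k+1)$-connectivity of $G/uv$ to conclude that $\{u,v\}$ avoids every $(k+1)$-separator of $G$ and then directly re-applies the first assertion to $G/uv$, whereas you run a parallel induction on $G$ and $G/uv$ simultaneously; both routes hinge on the same compatibility between cleavage and contraction that you flag at the end, and the paper also leaves that verification implicit.
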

\begin{proof} Since $G$ is the graph of a simplicial $k$-circuit, we have $G=G(\cS)$ for some simplicial $k$-circuit $\cS$.

Suppose first that $H$ is a union of all but one $(k+2)$-block of $G$. Suppose that the missing block is $D$. By assumption $D$ has degree one in the $(k+2)$-block tree of $G$. Let $X = V(D) \cap V(H)$.

Let 
$\cT$ be the set of $k$-simplices $S$ such that $S\subseteq V(H) \cup\{X\}$.
Then $V(\partial \cT) \subseteq X$ since, if $F$ is a $(k-1)$-face of $\cT$ and $F \not\subseteq X$ then $\cT_F = \cS_F$ and so $|\cT_F|$ is even. Since $\partial \cT \neq \emptyset$ and $|X| = k+1$ it follows that $V(\partial \cT) = X$. 
Then $H=G(\cT)$  and $\cT$ is a simplicial $k$-circuit by Lemma~\ref{lem:34sep_a}. Now an easy induction argument gives the first part of the lemma for arbitrary subtrees of the $(k+2)$-block tree of $G$.

Suppose $G/uv$ is the graph of a simplicial $k$-circuit for some 
$u$ and $v$ belonging to the same $(k+2)$-block of $H$. 
Then $G/uv$ is $(k+1)$-connected by Lemma~\ref{lem:1.1.2} and hence  $\{u,v\}$ is contained in no $(k+1)$-vertex separator of $G$. This implies that 
$H/uv$ is the union of $(k+2)$-blocks in a subtree of the $(k+2)$-block tree of $G/uv$
and we can now use the first part of the lemma to deduce that $H/uv$ is the graph of a simplicial $k$-circuit.
\end{proof}

We close this section by showing how Theorems \ref{thm:LBT} and \ref{thm:redundant} follow from Theorems \ref{thm:globrigid_3_main} and \ref{thm:globrigid_d_main}. 
Given a simplicial $k$-multicomplex $\cS$ and a $k$-simplex $U \in \cS$, let $\cV$ be the copy of $\cK_k$ obtained by considering all $(k+1)$-element subsets of $U \cup \{v\}$ where $v$ is a new vertex (i.e., $ v \not\in V(\cS)$). The {\em barycentric subdivision} of $\cS$ at $U$ is the simplicial $k$-multicomplex $\cS \triangle \cV$.
A simplicial $k$-complex is a {\em stacked $k$-sphere} if it can be obtained from a copy of $\cK_k$ by a sequence of barycentric subdivisions at $k$-simplices. We can use 
Lemma~\ref{lem:mincleavage} and an easy induction to show that a
simplicial $k$-multicomplex is a {stacked $k$-sphere} if and only if it is a non-trivial simplicial $k$-circuit and every $(k+2)$-block of its underlying graph is a copy of $K_{k+1}$.

\paragraph{Proof of Theorem \ref{thm:LBT}.}
We already noted in the introduction that $G$ is rigid in $\R^d$ by Fogelsanger's theorem and hence satisfies (\ref{eq:count}). If $\cS$ is a stacked $(d-1)$-sphere or $d=3$ and $G$ is planar then it is easy to check that equality holds in (\ref{eq:count}). To prove the reverse implication we suppose that equality holds in (\ref{eq:count}). Then equality also holds for each $(d+1)$-block of $G$. Theorems \ref{thm:hendrickson}, \ref{thm:globrigid_3_main} and \ref{thm:globrigid_d_main} now imply that
every $(d+1)$-block of $G$ is a copy of $K_{d}$ or $d=3$ and every
$4$-block of $G$  is planar. By Lemma~\ref{lem:1.1.2} this implies that $\cS$ is a stacked $(d-1)$-sphere or $d=3$ and $G$ is planar. 
\qed  

\paragraph{Proof of Theorem \ref{thm:redundant}.}
Suppose $e$ is contained in a (non-planar when $d=3$) $(d+1)$-connected subgraph of $G$.
Then $e$ belongs to a (non-planar when $d=3$) $(d+1)$-block of $G$, which is the graph of a simplicial $(d-1)$-circuit by Lemma~\ref{lem:mincleavage}.
Hence, by Theorems~\ref{thm:hendrickson}, \ref{thm:globrigid_3_main} and \ref{thm:globrigid_d_main}, $G-e$ is rigid in $\mathbb{R}^d$.

Conversely, suppose $e$ is not contained in  a (non-planar when $d=3)$ $(d+1)$-connected subgraph.
We first consider the case when $d=3$.
Let $D_1,\dots, D_k$ be the $4$-blocks of $G$ that contain $e$.
Then, each $D_i$ is planar and, using Lemma~\ref{lem:1.1.2}, $G':=\bigcup_{i=1}^k D_i$ is also planar.
Hence, $G'-e$ is not rigid. Since $G-e$ is obtained from $G'-e$ by gluing subgraphs of $G$ along 3-clique separators and those 3-clique separators do not induce  $e$,
$G-e$ cannot be rigid.
For $d\geq 4$, we can apply the same argument by using copies of $K_{d+1}$ instead of planar graphs.
\qed
 
\section{The strong cleavage property}\label{sec:strong_cleavage}
Our proofs of Theorems \ref{thm:globrigid_3_main} and \ref{thm:globrigid_d_main} use an induction based on taking a Fogelsanger decomposition.
As noted in the introduction, the fact that the graphs of the simplicial $k$-circuits in a
Fogelsanger decomposition need not be $(k+2)$-connected means that we cannot apply the induction hypothesis directly to these graphs. Instead, we apply it to their $(k+2)$-blocks. This leads us to our definition of the 
strong $d$-cleavage property. The fact that the characterisations of global rigidity given by Theorems \ref{thm:globrigid_3_main} and \ref{thm:globrigid_d_main} are different means we have to work with slightly different definitions for $k=2$ and $k\geq 3$.

Let $G$ be a graph with the $d$-cleavage property.
When $d=3$, we say that  $G$ has the {\em strong $3$-cleavage property} 
if every $4$-block of $G$ is a plane triangulation or is globally rigid in $\R^3$. 
When $d\geq 4$,  we say that  $G$ has the {\em strong $d$-cleavage property} if every $(d+1)$-block of $G$ is globally rigid in $\R^d$.

Our goal in this section is to prove a `gluing result' for graphs with the  strong $d$-cleavage property, Corollary \ref{cor:cleavage_globrigid} below, which will enable us to combine the graphs of the simplicial circuits in a Fogelsanger decomposition in such a way that we preserve the strong $d$-cleavage property.

\subsection{Edge addition lemma}
To obtain our gluing result for graphs with the strong $d$-cleavage property,
we first  show that the strong $d$-cleavage property is closed under edge addition. We will need the following concepts. Two vertices $u,v$ in a graph $G$ are {\em globally linked in $\R^d$} 
if, for every generic $(G,p)$, $\|q(u) - q(v)\| = \|p(u) - p(v)\|$ for every $(G,q)$ that is equivalent to $(G,p)$.
It is easy to see that, if $u,v$  are globally linked, then $G$ is globally rigid in $\R^d$
if and only if $G+uv$ is globally rigid in $\R^d$. 
We define the {\em globally linked closure of $G$ in $\R^d$}, $\cl_d^*(G)$, to be the graph obtained from $G$ by adding an edge between all pairs of globally linked vertices of $G$. Thus $G$ is globally rigid in $\R^d$ if and only if $\cl^*_d(G)$ is a complete graph. 
Also, using the fact that a plane triangulation with at least 4 vertices is not globally rigid in $\mathbb R^3$ and Theorem \ref{thm:globplane} below, it follows that if $G$ is a 4-connected plane triangulation then $\cl_3^*(G) = G$.  

Now if $G$ is a graph with the  $d$-cleavage property and $D_1,D_2,\ldots,D_t$ are the $(d+1)$-blocks of $G$, then 
$\cl_{d}^*(D_1),\cl_{d}^*(D_2),\ldots,\cl_{d}^*(D_t)$ are the $(d+1)$-blocks of $\cl_d^*(G)$.
(Note that if $u$ and $v$ are vertices in  distinct components in $G-X$ for some $d$-vertex separator $X$ then $u$ and $v$ are not globally linked in $G$.)
Hence, if $G$ has the strong $d$-cleavage property, then for all $1\leq i\leq t$, either $\cl_d^*(D_i)$ is a complete graph or $d=3$ and $\cl_d^*(D_i)$ is a plane triangulation.  
We will use this observation to  show that the strong $d$-cleavage property is closed under edge addition. 
The case when $d=3$, $G$ is a plane triangulation and $G+e$ is 4-connected follows from the following global rigidity theorem for braced triangulations due to  Jord\'an and Tanigawa \cite{JT}. We will use this result as a base case in our inductive proof when $d=3$.

\begin{theorem}[\cite{JT}] \label{thm:globplane}  
Let $G=(V,E)$ be a plane triangulation and  $B$ be a set of  edges of $K(V)$ such that  $B\not\subseteq  E$ and  $G+B$ is 4-connected. Then $G+B$ is globally rigid in $\R^3$.
\end{theorem}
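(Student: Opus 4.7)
The plan is to prove Theorem~\ref{thm:globplane} by induction on $|V(G)|$, with a secondary reduction on $|B|$. First I would reduce the brace set as much as possible: if some $e\in B$ has $G+(B\setminus\{e\})$ still 4-connected, apply induction on $|B|$, obtain global rigidity of $G+(B\setminus\{e\})$ in $\mathbb R^3$, and conclude that its supergraph $G+B$ is also globally rigid. Hence I may assume every brace is critical for 4-connectivity. The base cases $|V(G)|\in\{5,6\}$ are then handled directly: for $|V(G)|=5$ the hypothesis forces $G+B=K_5$, which is globally rigid; for $|V(G)|=6$ the only 4-connected plane triangulation is the octahedron $K_{2,2,2}$, and one checks by direct computation that adding any single brace (an antipodal pair) admits an equilibrium stress with stress matrix of rank $|V|-4=2$, so Connelly's sufficient condition applies.

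For the inductive step $|V(G)|\geq 7$, I would invoke the classical fact that every sufficiently large 4-connected plane triangulation contains a contractible edge, i.e.\ an edge $uv\in E(G)$ whose contraction yields another 4-connected plane triangulation $G/uv$. Choose this edge so that $(G+B)/uv$ (equipped with the image of $B$ as brace set) remains 4-connected, which is possible by a counting argument bounding the number of ``non-contractible'' edges in the plane triangulation. By the inductive hypothesis, $(G+B)/uv$ is globally rigid in $\mathbb R^3$.

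The main obstacle is to lift this global rigidity back to $G+B$, since Whiteley's vertex-splitting theorem (Lemma~\ref{lem:split}) only guarantees preservation of rigidity, not of global rigidity (the latter being a longstanding conjecture in rigidity theory). To circumvent this, I would apply Theorem~\ref{lem:globunion} after decomposing $G+B=G_1\cup G_2$, where $G_1$ and $G_2$ are the two pieces obtained by splitting $G$ along the set $X=\{u,v\}\cup(N_G(u)\cap N_G(v))$; this set has exactly four vertices because the two facial triangles at $uv$ in the planar embedding of $G$ provide exactly two common neighbors of $u$ and $v$. One then applies induction (or the base case) to conclude global rigidity of $G_i\cup K(X)$ for each piece. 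The hardest step will be verifying the rooted-minor hypothesis of Theorem~\ref{lem:globunion}, which requires a careful initial choice of the contractible edge $uv$, guided by planarity of $G$ and the criticality of each edge in $B$, so that at least one of the two pieces contains sufficient brace edges to supply the required rooted minor connecting the four vertices of $X$.
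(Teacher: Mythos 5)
This theorem is quoted from Jord\'an and Tanigawa~\cite{JT}; the paper does not prove it, so there is no in-paper proof to compare against. Evaluating your blind attempt on its own merits, there is a genuine gap at the central step.

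The decomposition at the heart of your inductive step does not exist. You want to split $G+B$ as $G_1\cup G_2$ with $V(G_1)\cap V(G_2)=X$, where $X=\{u,v\}\cup(N_G(u)\cap N_G(v))$ and $uv$ is a contractible edge. In a $4$-connected plane triangulation, $u$ and $v$ have exactly two common neighbours $a,b$ (any third common neighbour would give a separating triangle), so $X=\{u,v,a,b\}$. But $X$ is not a separator of $G$ (nor of $G+B$): in the planar embedding, the $4$-cycle $uavb$ bounds two regions, and the one containing the chord $uv$ consists precisely of the two facial triangles $uva$ and $uvb$ and contains no further vertices. All remaining vertices lie in the other region, so $G-X$ is connected. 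There is therefore nothing to glue with Theorem~\ref{lem:globunion}, and the argument cannot get off the ground.

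Even setting that aside, two further points would need to be filled in. First, you assert that a contractible edge $uv$ of $G$ can be chosen so that $(G+B)/uv$ is again $4$-connected; this is plausible for plane triangulations but requires an argument, and one must additionally ensure the contracted graph is still a \emph{braced} plane triangulation, i.e.\ the image of $B$ does not fall inside $E(G/uv)$ (otherwise the inductive hypothesis gives nothing, since a bare plane triangulation is not globally rigid). Second, even if a suitable $4$-separator $X$ were available, the pieces $G_i$ obtained from the planar side are bounded by a $4$-cycle of $X$ on their outer face; they are not themselves plane triangulations, so $G_i\cup K(X)$ is not a braced plane triangulation in the sense of the theorem, and the induction does not directly apply to it. The actual proof in~\cite{JT} takes a quite different route and does not rely on a cut along the two faces incident to a contractible edge.
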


We also need the following technical lemma when $d=3$.
\begin{lemma}\label{lem:cleavagepath} 
Let $G$ be a graph with the strong $3$-cleavage property.
Suppose that the 4-block tree of $G$ is a path.
Then $\cl_3^*(G)$ contains a spanning plane triangulation $\overline{G}$ whose 4-block tree is also a path.  
Moreover, if $D$ is a 4-block of $\cl_3^*(G)$ and $|V(D)| \neq 5$, then $\overline{D} = D \cap \overline G$ is a 
4-block of $\overline G$.
\end{lemma}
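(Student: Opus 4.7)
My plan is to build $\overline G$ by constructing a plane triangulation $T_i$ on the vertex set of each $4$-block $D_i$ of $\cl_3^*(G)$ and then gluing these triangulations along the $3$-clique separators $X_i = V(D_i)\cap V(D_{i+1})$ in the $4$-block tree of $\cl_3^*(G)$. By the strong $3$-cleavage hypothesis together with the remark preceding the lemma, each $\cl_3^*(D_i)$ is either $D_i$ itself (when $D_i$ is a $4$-connected plane triangulation, so $\cl_3^*(D_i)=D_i$ by Theorem~\ref{thm:globplane}) or the complete graph $K_{|V(D_i)|}$ (when $D_i$ is globally rigid in $\mathbb{R}^3$). I will arrange for each $T_i \subseteq \cl_3^*(D_i)$ to have $X_{i-1}$ and $X_i$ (whichever of these exist) as facial triangles of its planar embedding, so that the standard planar gluing of successive plane triangulations along a common face triangle yields the desired spanning plane triangulation $\overline G$ of $\cl_3^*(G)$.

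The choice of $T_i$ splits into four cases. If $D_i$ is a plane triangulation, then setting $T_i = D_i$ suffices, because a $4$-connected plane triangulation has only facial $3$-cycles and hence every $X_j \subseteq V(D_i)$ is already a face. If $\cl_3^*(D_i)=K_4$, then I take $T_i = K_4$, choosing an embedding which realises both $X_{i-1}$ and $X_i$ as two of its four facial triangles. If $\cl_3^*(D_i)=K_n$ with $n\geq 6$, I let $T_i$ be the bipyramid on $V(D_i)$ whose two apex vertices are selected so that each of $X_{i-1}, X_i$ contains exactly one apex, and whose base cycle of length $n-2\geq 4$ is ordered so that the two non-apex vertices of each specified $X_j$ are consecutive; such a bipyramid is $4$-connected and is therefore a single $4$-block. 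Finally, if $\cl_3^*(D_i)=K_5$, I take $T_i$ to be the $5$-vertex bipyramid, whose two $K_4$ blocks are glued along the base triangle, and use a short case analysis on $|X_{i-1}\cap X_i|\in\{0,1,2\}$ to select the apex pair so that $X_{i-1}$ and $X_i$ are both faces lying in \emph{different} $K_4$ blocks of the bipyramid; this is exactly the condition required for the $4$-block tree of $\overline G$ to remain a path rather than branch at this position.

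With the $T_i$ chosen, one checks that $\overline G = T_1\cup\cdots\cup T_k$ is a spanning plane triangulation of $\cl_3^*(G)$, that each $X_i$ is a $3$-clique separator of $\overline G$, and that the $4$-block tree of $\overline G$ is the concatenation along the $X_i$ of the (path) $4$-block trees of the individual $T_i$, and is therefore itself a path. When $|V(D)|\neq 5$, the chosen $T_i$ is either $4$-connected or isomorphic to $K_4$, so it is the unique $4$-block of $\overline G$ containing $V(D)$; this gives $\overline D = D\cap\overline G = T_i$, establishing the moreover clause. The main obstacle is the bipyramid case analysis, and in particular the $|V(D_i)|=5$ subcase: one must run through the possible overlap patterns of two prescribed face triangles on a $5$-element vertex set and verify in each case that an apex pair placing $X_{i-1}, X_i$ in distinct $K_4$ blocks exists. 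That no $4$-connected plane triangulation exists on $5$ vertices (by a degree-sum argument) is precisely what necessitates the exception $|V(D)|\neq 5$ in the moreover statement.
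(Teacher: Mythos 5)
Your proof is correct and follows essentially the same strategy as the paper: iterate over the $4$-blocks of $\cl_3^*(G)$ (which are either $4$-connected plane triangulations or complete graphs), choose a spanning plane triangulation of each having the relevant $3$-vertex separators as faces, and take the union. The paper leaves the choice of plane triangulation inside a complete $4$-block abstract (``choose a spanning subgraph\ldots that is a $4$-connected plane triangulation such that each of the $3$-vertex-separators\ldots induces a face''), while you instantiate it with an explicit bipyramid, and the paper handles the $K_5$ case by deleting a single well-chosen edge $xy$, which produces exactly your $5$-vertex bipyramid. So the arguments are the same; your version is just more concrete. One small inaccuracy: in the $|V(D)|=5$ subcase you list $|X_{i-1}\cap X_i|\in\{0,1,2\}$, but $0$ is impossible (two $3$-subsets of a $5$-set must meet), so only the values $1$ and $2$ need to be treated.
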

\begin{proof}
    Since the 4-block tree of $\cl_3^*(G)$ is a path, each 4-block of $\cl_3^*(G)$ contains at most two 3-vertex separators of $\cl_3^*(G)$. 
    For each $4$-block $D$ of $G$ we will define 
    a spanning plane triangulation $\overline{D} \subseteq D$ with the property that each of the $3$-vertex separators of $\cl_3^*(G)$ contained in $D$ is a face of $\overline{D}$,  and then put
    $$\overline G = \bigcup_{D\text{ a 4-block of }G}\overline{D}.$$
    If $D$ is a planar 4-block of $\cl_3^*(G)$, let $\overline D = D$.
    If $D$ is a non-planar 4-block of $\cl_d^*(G)$ with $|V(D)| \geq 6$ then $D$ is a complete graph and we can choose a spanning subgraph 
    $\overline D$ of $D$ that is a 4-connected plane triangulation such that each of the 3-vertex-separators of 
    $\cl_3^*(G)$ contained in $V(D)$ induces a face of $\overline D$. 

    It remains to consider the case where $D$ is non-planar and $|V(D)| = 5$. Then $D \cong K_5$ and we can choose an edge $xy$ in $E(D)$ as follows. If $D$ contains a unique 3-vertex-separator, $X$, of $\cl_3^*(G)$ then choose $ x\in X$ and $y \not\in X$. If $X,Y$ are distinct 3-vertex-separators of
    $\cl_3^*(G)$ contained in $D$ then choose $x \in X \sm Y$ and $y \in Y \sm X$.
    Set $\overline D = D - xy$ and observe that $\overline D$ is a 3-connected plane triangulation such that each separator of $\cl_3^*(G)$ contained in $D$ induces a face of $\overline D$.
    Also observe that there is a new 3-vertex separator, $Z = V(D)\sm \{x,y\}$, of $\overline G$, which is not a separator of $\cl_3^*(G)$ and that $x$ and $y$ lie in distinct $3$-cleavage graphs of $\cl_3^*(\overline G)$ at $Z$. 
    It follows that the 4-block tree of $\overline G$ is a path.
\end{proof}

\begin{lemma}\label{lem:braced} 
Let $G=(V,E)$ be a graph with the strong $d$-cleavage property 
for some $d\geq 3$ and $e=uv$ be an  edge of $K(V)$ with $e\notin E(G)$. 
Then $G+e$ has the strong $d$-cleavage property.
In particular, if $G+e$ is $(d+1)$-connected, 
then $G+e$ is globally rigid in $\R^d$. 
\end{lemma}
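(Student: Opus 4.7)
The plan is to verify that $G+e$ inherits the $d$-cleavage property and that each $(d+1)$-block of $G+e$ is globally rigid (or, when $d=3$, a plane triangulation); the ``in particular'' clause will then follow at once. The $d$-cleavage property is inherited immediately, since any $d$-vertex separator of $G+e$ is also one of $G$, hence a $d$-clique. Let $T$ denote the $(d+1)$-block tree of $G$. If $u,v$ lie in a common $(d+1)$-block $D$ of $G$, then only $D$ changes to $D+e$; since $e\not\in E(G)$ rules out $D\cong K_{d+1}$, the block $D$ is $(d+1)$-connected and so is $D+e$, and the conclusion then follows directly from global rigidity of $D$, or, if $d=3$ and $D$ is a plane triangulation, from Theorem~\ref{thm:globplane} applied to the non-planar $4$-connected graph $D+e$. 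If instead $u,v$ lie in distinct $(d+1)$-blocks, let $B_0,S_1,B_1,\ldots,S_k,B_k$ be the unique path in $T$ with $u\in V(B_0)\setminus S_1$ and $v\in V(B_k)\setminus S_k$. Iterating Lemma~\ref{lem:cleave1} and using the Helly property of subtrees of $T$, the $(d+1)$-block of $G+e$ containing $e$ is exactly $H:=B_0\cup\cdots\cup B_k+e$, and every other block of $G$ is unchanged.

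The core of the proof is showing that $H$ is globally rigid. For $d\geq 4$ each $B_i$ is globally rigid, so $\cl_d^*(B_i)=K(V(B_i))$, and it suffices to show that $G^*:=\bigcup_{i=0}^k K(V(B_i))+e$ is globally rigid. I would argue this by induction on $k$. In the base case $k=1$, apply Theorem~\ref{lem:globunion} with $G_1=K(V(B_1))$, $G_2=K(V(B_0))+v+e$, and $X=S_1\cup\{v\}$ (of size $d+1$): contracting $uv$ onto $v$ makes $v$ adjacent to every vertex of $S_1$ inside $K(V(B_0))$, so $K(X)$ is a rooted minor of $(G_2,X)$; moreover $G_1\cup K(X)=K(V(B_1))$ is globally rigid and $G_2\cup K(X)$ is obtained from $K(V(B_0))$ by adjoining a vertex of degree $d+1$, hence also globally rigid. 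The inductive step glues $K(V(B_k))$ onto $G^*_{k-1}$ via the same recipe, producing the needed rooted $K_{d+1}$ minor at $S_k\cup\{v\}$ by contracting a path in $\bigcup_{i<k}K(V(B_i))-S_k$ from $v$ to a vertex of $V(B_{k-1})\setminus S_k$ (such a path exists because the $S_i$ are pairwise distinct $d$-subsets, so removing $S_k$ leaves the chain connected).

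For $d=3$ the clique-closure argument fails when some $B_i$ is a plane triangulation, since then $\cl_3^*(B_i)=B_i$. Instead I would apply Lemma~\ref{lem:cleavagepath}: the $4$-block tree of $H-e$ is the path $B_0,\ldots,B_k$, so $\cl_3^*(H-e)$ contains a spanning plane triangulation $\overline{H-e}$ whose $4$-block tree is also a path. Choosing the auxiliary edge $xy$ of Lemma~\ref{lem:cleavagepath} with $y=u$ in $B_0$ and $y=v$ in $B_k$ (always possible in the $|V(B_0)|=5$ and $|V(B_k)|=5$ sub-cases, since the Lemma only requires $y$ to lie outside the unique $3$-separator of the endpoint block) ensures that $u$ and $v$ lie in the two distinct endpoint $4$-blocks of $\overline{H-e}$. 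Therefore $\overline{H-e}+e$ is $4$-connected, hence globally rigid by Theorem~\ref{thm:globplane}. Since $\overline{H-e}+e\subseteq \cl_3^*(H)$, and since any graph whose globally-linked closure contains a globally rigid spanning subgraph is itself globally rigid, $H$ is globally rigid.

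The main obstacle I anticipate is the delicate bookkeeping in Case B. For $d\geq 4$, producing the rooted $K_{d+1}$ minor in the inductive step requires a careful connectivity argument in the chain of cliques. For $d=3$, the $|V(B_i)|=5$ sub-cases of Lemma~\ref{lem:cleavagepath} introduce auxiliary $3$-separators that must be navigated so that $u$ and $v$ land in the endpoint $4$-blocks of $\overline{H-e}$. The ``in particular'' clause is then immediate: a $(d+1)$-connected $G+e$ is its own unique $(d+1)$-block, hence globally rigid by the strong cleavage property, and the plane-triangulation alternative for $d=3$ is ruled out by an elementary edge count.
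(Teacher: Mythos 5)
Your proposal is correct and, for the $d=3$ case, follows essentially the paper's route (Lemma~\ref{lem:cleavagepath} followed by Theorem~\ref{thm:globplane}), with the welcome extra detail that the auxiliary edge $xy$ in the $|V(B_i)|=5$ sub-cases should be chosen so that $u$ (or $v$) plays the role of $y$ — a point the paper leaves implicit but which is needed to guarantee $u,v$ land in the two endpoint $4$-blocks of $\overline{H-e}$, so that $\overline{H-e}+e$ is $4$-connected. Where you genuinely diverge from the paper is the case $d\geq 4$. After reducing to the path $D_0,X_1,\ldots,X_s,D_s$ of complete blocks, the paper does not glue: it counts congruence classes directly. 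Since each $D_i$ is complete and each $X_i$ has exactly $d$ vertices, the set of frameworks equivalent to a generic $(G',p)$ falls into exactly $2^s$ congruence classes, obtained by recursively reflecting the tail $\bigcup_{j\geq i}V(D_j)$ in the hyperplane spanned by $p(X_i)$ for a chosen $I\subseteq\{1,\ldots,s\}$; because $u,v\notin\bigcup_j X_j$, every non-identity choice of $I$ alters $\|p(u)-p(v)\|$, so adding $e=uv$ kills all $2^s-1$ spurious classes in one stroke (citing \cite[Lemma~3.10]{DGTJ}). Your inductive gluing via Theorem~\ref{lem:globunion} — attaching $K(V(B_k))$ to $\bigcup_{i<k}K(V(B_i))+v+e$ along $S_k\cup\{v\}$ and exhibiting a rooted $K(S_k\cup\{v\})$-minor by contracting the $v$-to-$B_{k-1}$ path avoiding $S_k$ — is also sound (the separators $S_i$ are distinct $d$-sets, so $S_i\setminus S_k\neq\emptyset$ and the chain stays connected after deleting $S_k$, and $u\notin S_k$ since $u\in V(B_0)\setminus S_1$), but it buys you nothing the reflection-count does not, at the cost of more bookkeeping. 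The paper's $2^s$ argument is the cleaner choice here, and in fact your gluing technique is precisely what the paper reserves for the harder situations in Claims~\ref{claim:glob3}--\ref{claim:glob1} where the blocks are not all complete and the path structure is not given.
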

\begin{proof}
Since the $d$-cleavage property is preserved by edge addition,  $G+e$ has the $d$-cleavage property.
We proceed  by induction on the number of $(d+1)$-blocks in $G$.
Let $P=D_0X_1D_1\ldots X_sD_s$ be a  shortest path in the $(d+1)$-block tree $T$ of $G$ from a $(d+1)$-block $D_0$  which contains $u$ to a $(d+1)$-block $D_k$  which contains $v$.

Suppose $s=0$. Then $u$ and $v$ both belong to $D_0$.
If $D_0$ is globally rigid then $D_0+e$ is globally rigid.
On the other hand, if $D_0$ is a plane triangulation then $d=3$ and 
we can use Theorem~\ref{thm:globplane} and the hypothesis that  $e\notin E(G)$, to again deduce that
$D_0+e$ is globally rigid.  This in turn implies that $G+e$ has the strong $d$-cleavage property 
since its $(d+1)$-block tree can be obtained from that of $G$ by replacing $D_0$ with $D_0+e$.

Thus we may assume $s\geq 1$ and hence $e$ is not an edge of $\cl_d^*(G)$. Replacing $G$ by $\cl_d^*(G)$ and using the fact that  $\cl_d^*(G)\subseteq \cl_d^*(G+e)$, we may assume that each $(d+1)$-block of $G$ is  either a   complete graph or a plane triangulation (and that the second alternative is only possible when $d=3$). 

Let $G'=\bigcup_{i=0}^s D_i$. Then
$G'+e$ is a $(d+1)$-block of $G+e$ and all other $(d+1)$-blocks of $G+e$ are $(d+1)$-blocks of $G$. Hence it will suffice to show that $G'+e$ is globally rigid in $\R^d$.
If  $d=3$ then this follows from
Lemma~\ref{lem:cleavagepath} and Theorem~\ref{thm:globplane}. Hence we may assume that $d\geq 4$.
Let $p$ be a generic realisation of $G'$ in $\R^d$. 
Since $D_i$ is complete for $0\leq i\leq s$ and $|X_i|=d$ for $1\leq i\leq s$, 
the set of equivalent realisations of $(G',p)$ has exactly $2^s$ congruence classes, 
each of which can be  obtained from $p$ by choosing $I\subseteq \{1,2,\ldots,s\}$ and then recursively reflecting the vertices  of $\bigcup_{j\geq i} V(D_j)$ 
in the hyperplanes containing the vertices of $X_i$ for each $i\in I$.  
Since $u,v\notin \bigcup_{j=1}^s X_j$, the composition of these reflections does not preserve the length of the edge $e$ (unless $I= \emptyset$), and hence 
$(G'+e,p)$ is globally rigid in $\R^d$ 
(see \cite[Lemma 3.10]{DGTJ}). 
Thus $G+e$ has the strong $d$-cleavage property.
\end{proof}

\subsection{Gluing results for graphs with the strong \texorpdfstring{$d$}{d}-cleavage property}

We first derive a result for the special case of plane triangulations.

\begin{theorem}\label{thm:planeunion} Suppose $G_1,G_2$ are  distinct plane triangulations and both $G_2$ and $G_1\cup G_2$  are   $4$-connected. Then $G_1\cup G_2$ is globally rigid in $\R^3$. 
\end{theorem}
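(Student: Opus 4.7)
The plan is to reduce the theorem to a direct application of Theorem~\ref{thm:globplane}, which says that any 4-connected plane triangulation becomes globally rigid in $\R^3$ after adding a nonempty set of ``brace'' edges drawn from $K(V)\sm E$. Concretely, I would set $G=G_2$ and $B=E(G_1)\sm E(G_2)$, and verify the hypotheses of Theorem~\ref{thm:globplane} for this choice.

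First I would argue that $B$ is nonempty. If $E(G_1)\subseteq E(G_2)$ then $G_1\cup G_2=G_2$, but a 4-connected plane triangulation on $\geq 5$ vertices has $|E|=3|V|-6$ and so fails Hendrickson's edge-count condition (Theorem~\ref{thm:hendrickson}) for global rigidity in $\R^3$; the corner case $G_2=K_4$ is easily checked directly since then $G_1$, being a distinct plane triangulation, must add either a new vertex or a new edge. In the natural case $V(G_1)\subseteq V(G_2)$ every edge of $B$ lies in $K(V(G_2))$, and the hypothesis that $G_1\cup G_2=G_2+B$ is 4-connected is precisely the remaining hypothesis of Theorem~\ref{thm:globplane}. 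The conclusion is then immediate.

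The only subtlety is the case $V(G_1)\not\subseteq V(G_2)$. Here $X:=V(G_1)\cap V(G_2)$ satisfies $|X|\geq 4$, since otherwise $X$ would be a vertex separator of $G_1\cup G_2$ of size at most $3$, contradicting 4-connectivity. Both $G_1$ and $G_2$ are graphs of simplicial 2-circuits (triangulations of the 2-sphere), so each is rigid in $\R^3$ by Fogelsanger's Theorem~\ref{thm:fogelsanger}. I would then invoke Theorem~\ref{lem:globunion} with a suitable rooted minor $H$ on $X$, verifying global rigidity of the brace-enhanced subgraphs $G_i\cup H$ and $G_{3-i}\cup K(X)$ by applying Theorem~\ref{thm:globplane} separately inside $G_1$ and $G_2$.

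The main obstacle is identifying the correct rooted minor in the general case and verifying the hypotheses of Theorem~\ref{lem:globunion}; in the primary case $V(G_1)\subseteq V(G_2)$ the whole argument is essentially a one-line reduction to Theorem~\ref{thm:globplane}.
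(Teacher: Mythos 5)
Your base case is correct and matches the paper: when $V(G_1)\subseteq V(G_2)$, take $G=G_2$ and $B=E(G_1)\setminus E(G_2)$ and apply Theorem~\ref{thm:globplane}. But the argument you give for $B\neq\emptyset$ is circular: observing that $G_2$ alone fails Hendrickson's edge count does not establish that $E(G_1)\not\subseteq E(G_2)$, it merely observes that the conclusion you want would fail if $B$ were empty. The correct justification (used implicitly in the paper) is that a $4$-connected plane triangulation cannot properly contain a plane triangulation on at least $4$ vertices as a subgraph, so $G_1\subsetneq G_2$ is impossible.

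The genuine gap is in the remaining case, $4\leq |X|<\min(|V(G_1)|,|V(G_2)|)$, where you write that the obstacle is ``identifying the correct rooted minor'' --- but that identification is precisely the content of the paper's proof, and you have not supplied it. The key combinatorial observation is this: take any component $G'$ of $G_2-X$ and let $Y\subseteq X$ be its neighbourhood in $X$; then $4$-connectivity of $G_1\cup G_2$ forces $|Y|\geq 4$, and since a $4$-connected plane triangulation contains no $K_4$-subgraph, $G_2[Y]$ has two non-adjacent vertices $x_1,x_2$, which (being both adjacent to $G'$) are joined by a path in $G_2$ internally disjoint from $X$. This furnishes the rooted minor $H=(X,\{x_1x_2\})$ of $(G_2,X)$; Theorem~\ref{thm:globplane} then gives global rigidity of $G_2\cup H$ and of $G_1\cup K(X)$ (the latter again using that $G_1\cup K(X)\neq G_1$, by the same no-$K_4$/no-contained-triangulation facts), and Theorem~\ref{lem:globunion} closes the argument. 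Without the $K_4$-free observation, there is no reason an arbitrary choice of $x_1,x_2\in X$ would yield a non-edge with the needed path.

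You also overlook that the rooted-minor argument cannot even be started when $X=V(G_2)\subsetneq V(G_1)$, since $G_2-X$ is then empty; this requires a separate base case in which Theorem~\ref{thm:globplane} is applied with $G=G_1$ and $B=E(G_2)\setminus E(G_1)$ instead, as the paper does.
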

\begin{proof}
Let $|X|=V(G_1)\cap V(G_2)$. 
The statement follows from Theorem~\ref{thm:globplane} if $X=V(G_1)$.
So we may suppose that $4\leq |X|<|V(G_1)|$.
If $V(G_2) = X$ then, using the fact that no 4-connected plane triangulation can properly contain a plane triangulation with at least 4 vertices, it follows that $G_1 \cup G_2 \neq G_1$ and then, by Theorem \ref{thm:globplane}, that $G_1 \cup G_2$ is globally rigid. Therefore we may assume that $|X| < |V(G_2)|$.

We prove the global rigidity of $G_1\cup G_2$ by applying Theorem \ref{lem:globunion}.
For this, 
we need to show that 
\begin{equation}
\label{eq:rooted_minor}
\begin{split}
&\text{$G_2$ has an $x_1x_2$-path which is internally disjoint from $X$} \\
&\text{for two vertices  $x_1,x_2\in X$ with $x_1x_2\notin E(G_2)$.}
\end{split}
\end{equation}
To see this, let $G'$ be a connected component of $G_2-X$ 
and $Y$ be the set of vertices in $X$ which are adjacent to $G'$. 
The hypothesis that  $G_1\cup G_2$ is $4$-connected implies that $|Y|\geq 4$. 
Since $G_2$ is a  $4$-connected plane triangulation, 
$G_2$ has no $K_4$-subgraph, and hence $G_2[Y]$ is not complete.  Thus we may choose two non-adjacent vertices  $x_1,x_2\in Y$. Then $G_2$ has an $x_1x_2$-path with all its internal vertices disjoint from $X$.

By (\ref{eq:rooted_minor}), 
$(G_2,X)$ has a rooted $(H,X)$-minor where $H$ is the graph with $V(H)=X$ and $E(H)=\{x_1x_2\}$.  
In addition, the 4-connectivity of $G_1\cup G_2$ implies that $G_1\cup K(X)$ is 4-connected and we can now use the facts that $|X|\geq 4$ and  that no 4-connected plane triangulation can properly contain a plane triangulation with at least 4 vertices to deduce that $G_1\cup K(X)\neq G_1$. Then $G_1\cup K(X)$ and $G_2\cup H$ are both globally rigid by Theorem \ref{thm:globplane}.  Hence $G_1\cup G_2$ is globally rigid by Theorem \ref{lem:globunion}. 
\end{proof}

We can now prove the main result of this section.

\begin{theorem} \label{thm:cleavage_globrigid} Let  $G_1$ and $G_2$ be distinct graphs with the strong $d$-cleavage property
for some $d\geq 3$.
Suppose that 
$K_{d}\subseteq G_1\cap G_2$ and  $G=G_1\cup G_2$ is  $(d+1)$-connected.
Then $G$ is globally rigid  in $\R^d$.
\end{theorem}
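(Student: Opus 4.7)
The plan is to argue by induction on the total number of $(d+1)$-blocks in $G_1$ and $G_2$, combining Lemma~\ref{lem:cleave1} to track how the cleavage structures merge with Theorem~\ref{lem:globunion} to glue globally rigid pieces along the common attachment sets.

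First I would apply Lemma~\ref{lem:cleave1}, using the hypothesis $K_d\subseteq G_1\cap G_2$, to deduce that $G$ has the $d$-cleavage property; together with $(d+1)$-connectivity this means $G$ is its own unique $(d+1)$-block. Writing $X=V(G_1)\cap V(G_2)$, a short case analysis then shows that either $|X|\geq d+1$, or one of $V(G_1), V(G_2)$ is contained in the other. In the latter case, say $V(G_1)\subseteq V(G_2)$, the distinctness of $G_1$ and $G_2$ forces some edge of $G_1$ to lie outside $G_2$, so $G$ is $G_2$ plus at least one edge and the result follows from Lemma~\ref{lem:braced} (or from Theorem~\ref{thm:globplane} when the relevant $(d+1)$-block of $G_2$ is a plane triangulation).

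In the base case both $G_1$ and $G_2$ consist of a single $(d+1)$-block, so each is either globally rigid in $\mathbb{R}^d$ (possibly $K_{d+1}$) or, when $d=3$, a $4$-connected plane triangulation. If both are globally rigid, the global rigidity gluing property applies, since $|X|\geq d+1$. If both are plane triangulations then Theorem~\ref{thm:planeunion} applies directly. In the mixed case, with $G_1$ a plane triangulation and $G_2$ globally rigid, I would imitate the argument in the proof of Theorem~\ref{thm:planeunion}: the $(d+1)$-connectivity of $G$ and the fact that a $4$-connected plane triangulation contains no $K_4$ supply two non-adjacent vertices $x_1,x_2\in X$ joined by a path of $G_1$ internally disjoint from $X$, which gives a rooted $(\{x_1x_2\},X)$-minor; Theorem~\ref{thm:globplane} applied to $G_1\cup K(X)$ together with the global rigidity of $G_2+x_1x_2$ then yields the conclusion via Theorem~\ref{lem:globunion}.

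For the inductive step, assume without loss of generality that $G_2$ has more than one $(d+1)$-block and pick a leaf $(d+1)$-block $D$ of $G_2$, attached at a $d$-clique separator $X'$. Set $G_2'=G_2-(V(D)\setminus X')$. Then $G_2'$ still has the strong $d$-cleavage property, still shares $K_d$ with $G_1$, and--possibly after adjoining a few edges via Lemma~\ref{lem:braced} to restore $(d+1)$-connectivity of the smaller union--the induction hypothesis gives that $G_1\cup G_2'$ is globally rigid. I would then glue $D$ back onto $G_1\cup G_2'$ along $X'$ using Theorem~\ref{lem:globunion}, or, when $|V(D)\cap V(G_1\cup G_2')|\geq d+1$, directly via the global rigidity gluing property. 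The main obstacle I anticipate is maintaining $(d+1)$-connectivity of the intermediate union $G_1\cup G_2'$: some choices of leaf $D$ may destroy it (because $X'$ could become a $d$-separator of the remaining union), and handling this requires either a careful choice of leaf or a preliminary edge-addition via Lemma~\ref{lem:braced}; a secondary difficulty, particular to $d=3$, is ensuring the rooted-minor hypothesis of Theorem~\ref{lem:globunion} is genuinely available in the mixed planar/globally-rigid case, since the path needed inside a plane triangulation block has to avoid the attachment clique.
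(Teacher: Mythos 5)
Your outline correctly identifies that the proof should be an induction that peels off block-tree structure and glues pieces back with Theorem~\ref{lem:globunion}, but the specific reduction — remove a leaf $(d+1)$-block $D$ of $G_2$ and set $G_2'=G_2-(V(D)\setminus X')$ — does not work, and the obstacles you flag are not incidental technicalities but the heart of the problem.

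The crucial failure is in maintaining the hypotheses for the inductive call on $(G_1,G_2')$. If $V(D)\setminus X'$ meets $V(G_1)$, those vertices survive in $G_1\cup G_2'$ but lose all their $D$-edges, so a vertex $w\in (V(D)\setminus X')\cap V(G_1)$ may have degree exactly $d$ in $G_1\cup G_2'$; then the union is not $(d+1)$-connected and the induction hypothesis does not apply. You suggest repairing this by edge addition via Lemma~\ref{lem:braced}, but that lemma only tells you the strong $d$-cleavage property survives edge addition within one $G_i$; it does not tell you the added edges lie in $G$. Proving global rigidity of $G_1\cup G_2'\cup F$ for some set $F$ of edges outside $G$ is useless for concluding anything about $G$, unless you first prove $F\subseteq \cl_d^*(G)$, which is precisely the kind of nontrivial statement the inductive step is supposed to supply. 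There is also a secondary gap in the reattachment step: when $D$ is a $4$-connected plane triangulation (or $K_{d+1}$), the global rigidity gluing property cannot be applied directly, and the needed rooted minor of $(G_1\cup G_2',\,V(D)\cap V(G_1\cup G_2'))$ has to be produced, which you only gesture at.

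The paper's proof avoids these problems by not deleting vertices at all. Instead of shrinking $G_2$, it works with a minimal counterexample measured by a lexicographic triple — the total number of $(d+1)$-blocks, the number of planar $4$-blocks, and $|V(G_1)\triangle V(G_2)|$ — and repeatedly \emph{replaces} a sub-union of blocks of one $G_i$ by a complete graph on the same vertex set, once that sub-union has been shown globally rigid (so the replacement is inside $\cl_d^*(G)$). Because this keeps the vertex set unchanged, $(d+1)$-connectivity and the shared $K_d$ never become issues. Establishing that the right sub-unions are globally rigid is where the real work lies, and that is what the notions of linking block pairs, linking paths, and gates are for. Your proposal has no analogue of this step and, as written, does not close the induction.
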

\begin{proof}
Suppose, for a contradiction, that the theorem is false and that $(G_1,G_2)$ is a counterexample chosen so that the ordered triple defined by the following three parameters is as small as possible in the lexicographic order. 
\\[1mm] 
(A1) The total number of $(d+1)$-blocks in $G_1$ and $G_2$.\\ 
(A2) The total number of planar 4-blocks in $G_1$ and $G_2$ when $d=3$. (This parameter is set to zero when $d\geq 4$.)\\ 
(A3) $|V(G_1)\triangle V(G_2)|$.\\[1mm]

Since $\cl_d^*(G_1)\cup \cl_d^*(G_2)\subseteq \cl_d^*(G_1\cup G_2)$, we may assume that each $(d+1)$-block of $G_i$ is   a complete graph or a planar triangulation for both $i=1,2$, and that the second alternative can only occur when $d=3$.
We refer to each vertex of  $V(G_1)\cap V(G_2)$ as a {\em shared vertex},
and each copy of $K_{d}$  in $G_1\cap G_2$  as a {\em shared $K_{d}$}.
A $(d+1)$-block $D$ in $G_1$ (respectively $G_2$) is said to be a {\em linking block} if 
$D$ contains a shared $K_d$ and $|V(D)\cap V(G_2)|\geq d+1$ (respectively $|V(D)\cap V(G_1)|\geq d+1$). 
A {\em linking block pair of $G$} is a pair $(D_1, D_2)$ where $D_i$ is a linking block in $G_i$ for both $i=1,2$,  
$|V(D_1)\cap V(D_2)|\geq d+1$ and $D_1\cap D_2$ contains a shared $K_d$. (Note that the third condition follows from the second condition when $d\geq 4$ since each $D_i$ is complete in that case.)

We first investigate properties of linking block pairs.
\begin{claim}\label{claim:glob3}
Suppose that $G$ has a linking block pair  $(D_1, D_2)$. 
Then $V(D_2)\cap V(G_1)\subseteq V(D_1)$ and $V(D_1)\cap V(G_2)\subseteq V(D_2)$.
\end{claim}
\begin{proof}
Suppose, for a contradiction, that $V(D_2)\cap V(G_1)\not\subseteq V(D_1)$. 
Choose a shortest path  $P=D_{1,0}X_{1,1}D_{1,1}\dots X_{1,k_1} D_{1,k_1}$  in the $(d+1)$-block tree of $G_1$ from $D_{1,0}:=D_1$ to a $(d+1)$-block  $D_{1,k_1}$ of $G_1$ that contains a vertex $x$ in $(V(D_2)\cap V(G_1))\setminus  V(D_1)$.
Let $G_1'=\bigcup_{j=0}^{k_1} D_{1,j}$ and  $X=V(G_1')\cap V(D_2)$.
Then $k_1\geq 1$ and $G_1'\cup D_2$ is $(d+1)$-connected.

We show that $G_1'\cup D_2$ is globally rigid in $\R^d$.
When $d=3$, this follows from the 4-connectivity of $G_1'\cup D_2$, Lemma~\ref{lem:cleavagepath}, and Theorem~\ref{thm:planeunion},
so we may assume that $d\geq 4$. Then, since $|V(D_1)\cap V(D_2)|\geq d+1$ by the definition of linking block pairs, 
we can choose a vertex $y\in( V(D_1)\cap V(D_2))\setminus X_{1,1}$.
Then $G_1'+xy$ is $(d+1)$-connected and hence is globally rigid in $\R^d$ by Lemma~\ref{lem:braced}.
Moreover, since $D_2$ is complete, $xy\in E(D_2)$. 
Since  $|V(G_1')\cap V(D_2)|\geq d+1$, 
the global rigidity gluing property implies that $G_1'\cup D_2$ is globally rigid in $\R^d$.

Construct $G_1''$ from $G_1$ by replacing $G_1'$ with a complete graph on $V(G_1')$. Then $G_1''$ has the strong $d$-cleavage property and
$G_1''$  has fewer  $(d+1)$-blocks than  $G_1$ since $k_1\geq 1$. Hence $G''=G''_1\cup G_2$ is globally rigid by  induction with respect to the parameter (A1). 
Since $\cl_d^*(G)=\cl_d^*(G_1\cup G_2)=\cl_d^*(G_1''\cup G_2)=\cl_d^*(G'')$, 
the global rigidity of $G''$ implies the global rigidity of $G$, and contradicts our initial choice of $(G_1,G_2)$ as a counterexample to the theorem. A similar argument leads to a contradiction if we suppose that $V(D_1)\cap V(G_2) \not\subseteq V(D_2)$.
\end{proof}

\begin{claim}\label{claim:glob4}
Suppose that $G$ has a linking block pair  $(D_1, D_2)$. 
Then $D_1=D_2$.
\end{claim}
\begin{proof}
Suppose $D_1\neq D_2$.
The definition of a linking block pair implies that $D_1\cup D_2$ is globally rigid
by the global rigidity gluing property when $d\geq 4$ and by Theorem~\ref{thm:planeunion} when $d=3$.
Let $G_i^*$ be obtained from $G_i$ by replacing $D_i$ by a clique on 
$V(D_1)\cup V(D_2)$ for both $i=1,2$. 
Since $D_1\cup D_2$ is globally rigid, the global rigidity of $G_1^*\cup G_2^*$ would imply the global rigidity of $G$.
Hence  $G_1^*\cup G_2^*$  is not globally rigid.

By Claim~\ref{claim:glob3},  
$G_i^*$ has the strong $d$-cleavage property and has the same number of $(d+1)$-blocks as $G_i$.
If either of $D_1$ or $D_2$ is not complete, then parameter (A2) would be decreased in $(G_1^*, G_2^*)$, 
and we could use induction to deduce that $G_1^*\cup G_2^*$ is globally rigid. 
Hence  $D_1$ and $D_2$ are both complete graphs.    
Since $D_1\neq  D_2$,  we have $V(D_1)\neq V(D_2)$, and we can now  apply induction with respect to parameter (A3) to deduce that $G_1^*\cup G_2^*$ is globally rigid, a contradiction.  Thus $D_1=D_2$.
\end{proof}

A path $P_1=D_{1,0}X_{1,1} \dots, X_{1,k_1} D_{1,k_1}$ in the $(d+1)$-block tree of $G_1$ is  said to be a {\em linking path in $G_1$} if 
\begin{enumerate}
\item $k_1\geq 1$, 
\item $D_{1,k_1}$ contains a shared vertex, and 
\item $G_2$ has a $(d+1)$-block $D_2$ with $K_d\subseteq D_{1,0}\cap D_2$ and $X_{1,1}\not\subseteq V(D_2)$.
\end{enumerate}
The block $D_{1,0}$ is said to be the {\em initial block} of $P_1$.
The $(d+1)$-block $D_2$ in the third property of the  above definition is said to be  a {\em certificate} for $P_1$.
A linking path in $G_2$ is defined analogously.

\begin{claim}\label{claim:glob5}
Let $i, j\in \{1,2\}$ with $i\neq j$.
Suppose $G_i$ has a linking path $P_i=D_{i,0}X_{i,1} \dots, X_{i,k_i} D_{i,k_i}$
and $D_j$ is a certificate for $P_i$.
Then $\left(\bigcup_{s=0}^{k_i} D_{i,s}\right)\cap D_j=D_{i,0}\cap D_j\cong K_d$.
\end{claim}
\begin{proof}
We may assume, without loss of generality, that $i=1$ and $j=2$.
By the definition of a linking path, $K_d\subseteq D_{1,0}\cap D_2$.
If $D_{1,0}$ and $D_2$ share more than $d$ vertices, 
then  $(D_{1,0}, D_2)$ would be a linking block pair and 
Claim~\ref{claim:glob4} would give $D_{1,0}=D_{2}$. 
This would imply that $D_2$ contains $X_{1,1}$ and contradict the fact that $D_2$ is a certificate for $P_1$.
Hence $D_{1,0}\cap D_2\cong K_d$.

Choose $s$ with $ 1\leq s\leq k_1$ such that
$(V(D_{1,s})\setminus V(D_{1,0}))\cap V(D_2)\neq \emptyset$ and such that $s$ has been chosen to be as small as possible with this property.
So $(V(D_{1,r})\setminus V(D_{1,0}))\cap V(D_2) =  \emptyset$ for $1 \leq r \leq s-1$. Let
$G_1'=\bigcup_{t=0}^s D_{1,t}$.
We will show that $G_1'\cup D_2$ is globally rigid. 

Let $X=V(G_1')\cap V(D_2)$. 

As a first step we show that $D_2+K(X)$ is globally rigid. If $d\geq 4$ then $D_2=D_2+K(X)$ is a complete graph.
In the case that $d=3$ and $D_2$ is not globally rigid then it is a $4$-connected plane triangulation and therefore cannot contain $K(X)$ since $|X|\geq 4$. By Theorem \ref{thm:globplane} $D_2 + K(X)$ is globally rigid in this case also.

We next consider $G_1'$.
Suppose that $z,u_1,\cdots,u_l,x$ is a path in $G_1'$ of minimal length such that $z \in (V(D_{1,0})\sm X_{1,1}) \cap V(D_2)$ and $x \in (V(D_{1,s})\sm V(D_{1,0})) \cap V(D_2)$. Such a path exists since $V(D_2)$ does not contain $X_{1,1}$. By the minimal choice of $s$ it follows that $x \not\in V(D_{1,r})$ for $1\leq r \leq s-1$. Also, by the minimal choice of $s$ and the choice of $u_1,\cdots, u_l$, it follows that $u_1,\cdots,u_l \not\in X$. So, letting $H=(X,\{xz\})$, we have that $(H,X)$ is a rooted minor of $(G_1',X)$.
Since $G_1'+xz$ is $(d+1)$-connected, $G_1'+xz$ is globally rigid by Lemma~\ref{lem:braced}. 
Hence we can  apply Theorem \ref{lem:globunion} to $G_1'$ and $D_2$ with  
$H=(X,\{xz\})$ to deduce that $G_1'\cup D_2$ is globally rigid. 

We complete the proof of the claim by constructing $G_1''$ from $G_1$ by 
replacing $G_1'$ by a complete graph on $V(G_1')$. 
Then $G_1''$ has the strong $d$-cleavage property by Lemma~\ref{lem:braced} and  $G_1''$  has fewer  $(d+1)$-blocks than $G_1$ since  $s\geq 1$. Hence $G''=G''_1\cup G_2$ is globally rigid by  induction with respect to the parameter (A1). 
Since $G_1'\cup D_2\subseteq G$, the global rigidity of $G_1'\cup D_2$ implies  $\cl_d^*(G)=\cl_d^*(G'')$
and hence $G$ is globally rigid, a contradiction.
\end{proof}

\begin{claim}\label{claim:glob1}
Let $i, j\in \{1,2\}$ with $i\neq j$.
Suppose that $G_i$ has a linking path with initial block $D_{i,0}$.
Then  $D_{i,0}$  is a $(d+1)$-block of $G_j$.
\end{claim}
\begin{proof}
We may assume, without loss of generality, that $i=1$ and $j=2$.
Suppose for a contradiction that $D_{i,0}$ is not a $(d+1)$-block of $G_2$. Let $P_1=D_{1,0}, X_{1,1}D_{1,1} \dots X_{1,k_1} D_{1,k_1}$  be a linking path of $G_1$. 
By the definition of a linking path, $G_2$ has a certificate $D_2$ for $P_1$ and
$D_{1, k_1}$ contains a shared vertex $x$.
Let $P_{2}=D_{2,0} X_{2,1} \dots D_{2,k_{2}}$ be a path in the $(d+1)$-block tree of $G_{2}$ from $D_{2,0}:=D_2$ to  a $(d+1)$-block $D_{2,k_{2}}$ that contains $x$. 
We may assume that the  pair $(P_1,P_2)$ has been chosen so that $k_1+k_2$ is as small as possible.
By Claim~\ref{claim:glob5}, $D_2$ does not contain $x$, so $k_2\geq 1$.
Let $G_i'=\bigcup_{j=0}^{k_i} D_{i,j}$ for $i=1,2$. Consider the following two cases.
\paragraph{\boldmath Case 1: $\left(\bigcup_{j=0}^{k_1-1} D_{1,j} \right)\cap G_2'\neq D_{1,0}\cap D_{2,0}$.} 
Let $y$ be a vertex in  $\bigcup_{j=0}^{k_1-1} D_{1,j} \cap G_2'$ which does not belong to $D_{1,0}\cap D_{2,0}$.
If $y\notin V(D_{1,0})$, then we could replace $P_1$ by a shorter path, since $y$ is a shared vertex, and contradict the minimality of $k_1+k_2$.
Hence $y\in V(D_{1,0})\sm V(D_{2,0})$.  
Since $y$ is in $G_2'$, we can choose  a shortest subpath $P_2'$   
of $P_2$ 
 which starts at $D_{2,0}$ and ends at a $(d+1)$-block that contains $y$. Since $y\notin V(D_{1,0})\cap V(D_{2,0})$, $P_2'$ consists of more than one block.
If $X_{2,1}\not\subseteq V(D_{1,0})$, then $D_{1,0}$ is a certificate for $P_2'$ to be a linking path.
This would contradict Claim~\ref{claim:glob5} since $D_{1,0}$ and $P_2'$ intersect at the vertex $y\notin V(D_{2,0})$.
Hence  $X_{2,1}\subseteq V(D_{1,0})$.

Suppose $D_{2,1}$ contains $X_{1,1}$. Then we have 
$X_{1,1}\cup X_{2,1}\subseteq V(D_{1,0})\cap V(D_{2,1})$.
Since $X_{1,1}\not\subseteq V(D_{2,0})$ (as $D_{2,0}$ is a certificate for $P_1$),  
we have $|X_{1,1}\cup X_{2,1}|\geq d+1$, which means that 
$(D_{1,0}, D_{2,1}) $ is a linking block pair.
By Claim~\ref{claim:glob4}, $D_{1,0}=D_{2,1}$, which contradicts our initial assumption that $D_{1,0}$ is not a $(d+1)$-block of $G_2$.

Hence, $D_{2,1}$ does not contain $X_{1,1}$.
Since $D_{2,1}$ shares  $X_{2,1}$ with $D_{1,0}$ by $X_{2,1}\subseteq V(D_{1,0})$, $D_{2,1}$ can also serve as a certificate for $P_1$.
This contradicts the minimality of $k_1+k_2$ since we could replace $P_2$ by the shorter path $P_2''=D_{2,1} X_{2,2} \dots D_{2,k_{2}}$.
This completes the discussion of Case 1.

\paragraph{\boldmath Case 2: $\left(\bigcup_{j=0}^{k_1-1} D_{1,j} \right)\cap G_2'= D_{1,0}\cap D_{2,0}$.} 
The proof for this case is similar to that of Claim~\ref{claim:glob5}, but we give the details for the sake of completeness.
 We first 
show that $G_1'\cup G_2'$ is globally rigid. 
Let $X=V(G_1')\cap V(G_2')$. 
Since $G_2'$ has the strong $d$-cleavage property and is not $(d+1)$-connected, and $G_2'+K(X)$ is $(d+1)$-connected, $G_2'+K(X)$ is globally rigid by Lemma~\ref{lem:braced}.
We next consider $G_1'$.
By the definition of linking paths, there is a vertex $z$ of $D_{1,0}\cap D_{2,0}$ not contained in $X_{1,1}$.
By the assumption of Case 2, we can also choose a  vertex $x$ with $x\in V(D_{1,k_1})\cap V(G_2')$ and $x\notin \bigcup_{j=0}^{k_1-1} V(D_{1,j})$ such that 
$H=(X,\{xz\})$ is a rooted-minor of $G_1'$.
Since $G_1'+xz$ is $(d+1)$-connected, $G_1'+xz$ is globally rigid by Lemma~\ref{lem:braced}. 
Hence we can  apply Theorem \ref{lem:globunion} with  
$H=(X,\{xz\})$ to deduce that $G_1'\cup G_2'$ is globally rigid. 

Construct $G_1''$ from $G_1$ by 
replacing $G_1'$ by a complete graph on $V(G_1')$. Then $G_1''$ has the strong $d$-cleavage property and  $G_1''$  has fewer  $(d+1)$-blocks than $G_1$ since  $k_1\geq 1$. Hence $G''=G''_1\cup G_2$ is globally rigid by  induction with respect to the parameter (A1). Since $G_1'\cup G_2'$ is globally rigid, $\cl_d^*(G)=\cl_d^*(G'')$ 
and hence $G$ is globally rigid. This contradicts our choice of $(G_1,G_2)$ as a counterexample to the theorem and completes the discussion of Case 2 and the proof of the claim.
\end{proof}

\begin{claim}\label{claim:glob2}
Let $i,j \in \{1,2\}$ with $i\neq j$.
Suppose that $G_i$ has  a linking block $D_i$.
Then $G_j$ has a linking block $D_j$ such that $(D_i,D_j)$ is a linking block pair.
\end{claim}
\begin{proof}
Without loss of generality, assume $i=1$ and $j=2$.
Since $D_1$ is a linking block in $G_1$, we may choose a shortest  path $P=D_{2,0}X_{2,1} \dots D_{2,k_2}$ in the $(d+1)$-block tree of $G_2$  such that $H\subseteq D_1\cap D_{2,0}$  for some  $H\cong K_d$ and $D_1\cap D_{2,k_2}$ contains a shared vertex $x$ with $x\not\in V(H)$. 
If $k_2\geq 1$, then the minimality of $P$ would imply that $P$ is a linking path and $D_1$ is a certificate for $P$.
This would contradict Claim~\ref{claim:glob5} since $D_1\cap D_{2,k_2}$ is non-empty.
Thus, $k_2=0$ and  $(D_1, D_{2,0})$ is a linking block pair.
\end{proof}

We next show:
\begin{claim}\label{claim:glob6}
$G_1$ contains a linking block.
\end{claim}
\begin{proof}
By a hypothesis of the theorem, $G_1$ contains a shared $K_d$.
Let $P=D_{1,0} X_{1,1},\dots D_{1,k_1}$ be a shortest path in the $(d+1)$-block tree of $G_1$ such that 
$H\subseteq D_{1,0}\cap G_2$ for some  $H\cong K_d$,  
and $D_{1,k_1}$ contains a shared vertex $x$ with $x\not\in V(H)$.
Since $G_1\cup G_2$ is $(d+1)$-connected, such a path $P$ exists.

Suppose $k_1\geq 1$. Then the  minimality of $P$ implies that $V(H)\neq X_{1,1}$ and 
$G_1[X_{1,1}]$ is not a shared $K_d$ (since otherwise we could replace $P$ by its subpath from $D_{1,1}$ to $D_{1,k_1}$).
This in turn implies that $P$ is a linking path, and hence $D_{1,0}$ is a 
$(d+1)$-block of $G_2$ by  Claim~\ref{claim:glob1}. This contradicts the fact that $G_1[X_{1,1}]$ is not a shared $K_d$.

Hence $k_1=0$ and $D_{1,0}$ is the desired linking block in $G_1$.
\end{proof}

Claims~\ref{claim:glob6},~\ref{claim:glob2} and~\ref{claim:glob4} imply that there exists   a linking block pair $(D_1, D_2)$ with $D_1=D_2=:D_0$. Let $T_i$ be the $(d+1)$-block tree of $G_i$ for $i=1,2$. Then $D_0$ is a vertex of both $T_1$ and $T_2$. 
Let ${\cal D}$ be the set of all common $(d+1)$-blocks of $G_1$ and $G_2$ which belong to the same connected component of $T_1\cap T_2$ as $D_0$.
A shared vertex $v$ is said to be a {\em gate} of $G$ if $v\notin V(D)$ for all $D\in {\cal D}$.
If no shared vertex is a gate, then the $(d+1)$-connectivity of $G$ would imply that 
$V(D_0)=V(G_1)=V(G_2)=V(G)$ and we could now use Lemma~\ref{lem:braced} to deduce that  $G$ is globally rigid, contradicting the choice of $(G_1,G_2)$.
Thus $G$ has a gate. 

Let $P'=D_{2,0} X_{2,1}D_{2,1},\dots D_{2,k_2}$ be a shortest path in $T_2$ between a $(d+1)$-block $D_{2,0}$ in ${\cal D}$ and a $(d+1)$-block $D_{2,k_2}$ that contains a gate $v$.
Then, since $v\notin V(D_{2,0})$, $k_2\geq 1$. Also, since $D_{2,0}\in {\cal D}$, $D_{2,0}$ belongs to both $T_1$ and $T_2$ and  contains $X_{2,1}$.   In addition,  the choice of $P'$ as a shortest path implies that $D_{2,0}$  does not contain $X_{2,2}$  and $D_{2,1}$ is not in $T_1$.
If $k_2\geq 2$, then $D_{2,1}X_{2,1}\dots D_{2,k_2}$ would be a linking path with  certificate $D_{2,0}$ 
(since $D_{2,0}$ belongs to $T_1$).
We could now use  Claim~\ref{claim:glob1} to deduce that $D_{2,1}$ belongs to $T_1$, which is a contradiction. 
Hence $k_2=1$ and $D_{2,1}$ is a linking block.
Claims~\ref{claim:glob2} and \ref{claim:glob4} now give the same contradiction that
$D_{2,1}$ is in $T_1$. 
This completes the proof of the theorem.
\end{proof}

Combining Theorem \ref{thm:cleavage_globrigid} with Lemma~\ref{lem:cleave1}, we immediately obtain:

\begin{corollary} \label{cor:cleavage_globrigid} Let $G_1$ and $G_2$ be graphs with the strong $d$-cleavage property and with 
$K_d\subseteq G_1\cap G_2$. Then $G_1\cup G_2$ has the strong $d$-cleavage property.
\end{corollary}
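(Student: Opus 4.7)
The plan is to combine the cleavage structure of $G := G_1 \cup G_2$ supplied by Lemma~\ref{lem:cleave1} with the global rigidity statement of Theorem~\ref{thm:cleavage_globrigid} for any newly-created $(d+1)$-block.

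First, the hypothesis $K_d \subseteq G_1 \cap G_2$ forces $|V(G_1) \cap V(G_2)| \geq d$, and the intersection is a $d$-clique of $G$ when equality holds; so Lemma~\ref{lem:cleave1} applied with $t=d$ gives that $G$ has the $d$-cleavage property and that every $(d+1)$-block $D$ of $G$ either (i) is a $(d+1)$-block of some $G_i$, or (ii) is the union of the $(d+1)$-blocks lying in a subtree $T_i$ of the $(d+1)$-block tree of $G_i$ for $i=1,2$.

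Next I would check, for each $(d+1)$-block $D$ of $G$, that $D$ is globally rigid in $\R^d$ (or a plane triangulation when $d=3$). Case (i) is immediate from the strong $d$-cleavage property of the relevant $G_i$. For case (ii), let $H_i$ denote the subgraph of $D$ formed by the blocks in $T_i$, so that $D = H_1 \cup H_2$ and $H_1 \neq H_2$. Every $(d+1)$-block of $H_i$ is a $(d+1)$-block of $G_i$, hence is globally rigid or a plane triangulation; moreover, adjacent blocks of $T_i$ are joined along a $d$-clique separator of $G_i$. Iteratively applying Lemma~\ref{lem:cleave1} to these blocks one at a time therefore shows that $H_i$ itself inherits the strong $d$-cleavage property. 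Since $D$ is $(d+1)$-connected, Theorem~\ref{thm:cleavage_globrigid} applied to $(H_1, H_2)$ then yields that $D$ is globally rigid in $\R^d$, completing the proof.

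The main obstacle is verifying the hypothesis $K_d \subseteq H_1 \cap H_2$ demanded by Theorem~\ref{thm:cleavage_globrigid}. Note that $V(H_1) \cap V(H_2) \subseteq V(G_1) \cap V(G_2)$, and because $D$ is $(d+1)$-connected while $V(H_i) \sm V(H_{3-i}) \neq \emptyset$, the set $V(H_1) \cap V(H_2)$ is a vertex separator of $D$ of size at least $d+1$. A $d$-subset $X$ of this intersection that lies inside some minimum separator of $D$ is then a $d$-vertex separator of $G$, hence a $d$-clique of $G$ by the $d$-cleavage property already established; since $X \subseteq V(G_i)$ and each $G_i$ has the $d$-cleavage property, $X$ induces a $K_d$ in both $G_1$ and $G_2$ and so in $H_1 \cap H_2$. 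The delicate point is confirming the existence of such an $X$ from the combined block-tree picture provided by Lemma~\ref{lem:cleave1}; once this structural observation is pinned down, the corollary follows exactly as described.
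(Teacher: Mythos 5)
Your overall strategy -- applying Lemma~\ref{lem:cleave1} to get the $d$-cleavage property of $G=G_1\cup G_2$ and the decomposition $D=H_1\cup H_2$ of each new $(d+1)$-block, and then invoking Theorem~\ref{thm:cleavage_globrigid} on each such $D$ -- is the natural route, and you correctly isolate the one non-trivial point: producing a shared $K_d$ inside $H_1\cap H_2$ so that Theorem~\ref{thm:cleavage_globrigid} can be applied.

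However, the argument you sketch for that point does not hold up. First, a $d$-element subset $X$ of a minimum separator of $D$ is a \emph{proper} subset of that separator (as $D$ is $(d+1)$-connected, its minimum separators have size at least $d+1$), so $X$ does not separate $D$, and there is no reason for $X$ to be a $d$-vertex separator of the ambient graph $G$; that assertion is simply unjustified. Second, even granting that $X$ is a $d$-clique of $G=G_1\cup G_2$, the $\binom{d}{2}$ edges of $K(X)$ need only lie in $E(G_1)\cup E(G_2)$; they need not all lie in $E(G_1)$, nor all in $E(G_2)$. The sentence ``since $X\subseteq V(G_i)$ and each $G_i$ has the $d$-cleavage property, $X$ induces a $K_d$ in both $G_1$ and $G_2$'' is incorrect: the $d$-cleavage property of $G_i$ only forces $d$-vertex \emph{separators of $G_i$} to be cliques, not arbitrary $d$-element subsets of $V(G_i)$, and $X$ has not been shown to be a separator of either $G_i$. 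You flag this as ``the delicate point'' and leave it unresolved; since it is precisely the step that carries the content of the corollary (establishing the $K_d\subseteq H_1\cap H_2$ hypothesis needed by Theorem~\ref{thm:cleavage_globrigid}), the proof has a genuine gap there. A correct treatment requires a different argument for locating a $K_d$ common to $H_1$ and $H_2$ -- for instance by building $D$ up block by block from the block trees of $G_1$ and $G_2$ and tracking the $d$-clique separators that get absorbed -- rather than the separator-subset argument you propose.
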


\section{Global rigidity of simplicial \texorpdfstring{$k$}{k}-circuits}
\label{sec:globproof}
In this section we prove  Theorems~\ref{thm:globrigid_3_main} and \ref{thm:globrigid_d_main}.
To simplify terminology, we will say that {\em a graph $G$ is a  simplicial $k$-circuit graph} if $G=G(\cS)$ for some simplicial $k$-circuit $\cS$. 

\subsection{Vertex splitting and global rigidity}

The overall strategy in our proof of Theorems~\ref{thm:globrigid_3_main} and \ref{thm:globrigid_d_main} will be to contract an edge $e\in E(G)$, apply induction to $G/e$ and then use vertex splitting to recover $G$.
The main difficulty in this strategy is the lack of a global rigidity analogue of Whiteley's vertex splitting theorem (Lemma~\ref{lem:split}).
Theorem \ref{thm:hendrickson} implies that $(d+1)$-connectivity is necessary for global rigidity in $\R^d$,
so vertex splitting should preserve the $(d+1)$-connectivity. 
The question of whether this condition is sufficient is a long-standing open problem in rigidity theory. More precisely we have the following.
\footnote{Two recent developments concerning this conjecture are given in \cite{J,JT}.}

\begin{conjecture}[\cite{CW08,CW10}]\label{conj:vertex_splitting}
Suppose that $G$ is a $(d+1)$-connected graph and $uv \in E(G)$ such that $|N_G(u) \cap N_G(v)| = d-1$ and $G/uv$ is globally rigid in $\mathbb R^d$. Then $G$ is globally rigid in $\mathbb R^d$.
\end{conjecture}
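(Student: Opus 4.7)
The plan is to work via the stress matrix characterisation of generic global rigidity of Connelly and Gortler--Healy--Thurston: a graph $H$ with $|V(H)|\geq d+2$ is generically globally rigid in $\R^d$ if and only if some generic realisation admits a positive semidefinite (PSD) equilibrium stress matrix of rank $|V(H)|-d-1$. Write $n=|V(G)|$, $X=N_G(u)\cap N_G(v)$, $A=N_G(u)\sm(X\cup\{v\})$ and $B=N_G(v)\sm(X\cup\{u\})$, so $|X|=d-1$. By hypothesis, some generic realisation $(G/uv,p')$ admits a PSD equilibrium stress matrix $\Omega'$ on $V(G/uv)$ of rank $n-d-2$. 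My aim is to produce a generic extension $(G,p)$, placing $p(v)$ generically, together with an $n\times n$ PSD equilibrium stress matrix $\Omega$ of rank $n-d-1$.

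The construction proceeds by a one-parameter degeneration. Set $p_t(w)=p(w)$ for $w\neq v$ and $p_t(v)=p'(u)+t(p(v)-p'(u))$, so $p_0(v)=p_0(u)$ and $p_1=p$. A stress $\omega_t$ on $(G,p_t)$ can be sought by requiring $\omega_t(e)=\omega'(e)$ on every edge $e$ disjoint from $\{u,v\}$, $\omega_t(uw)=\omega'(uw)$ for $w\in A$, $\omega_t(vw)=\omega'(uw)$ for $w\in B$, and then solving the equilibrium equations at $u$, $v$ and the vertices of $X$ for the remaining unknowns $\omega_t(uw),\omega_t(vw)$ for $w\in X$ and $\omega_t(uv)$. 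A direct computation shows that if one insists on leaving the stresses on edges $xw$ ($x\in X$, $w\notin\{u,v\}$) unchanged, then the equilibrium at each $x\in X$ forces $\omega_t(xu)=\omega'(xu)$ and $\omega_t(xv)=0$ for generic $p(v)$, which in turn renders the $v$-equilibrium inconsistent; thus one must allow $\omega_t$ to propagate adjustments through further edges. The correct count, using the equality $\dim S(G,p)=\dim S(G/uv,p')$ provided by $|E(G)|=|E(G/uv)|+d$, shows that the space of such lifts has the same dimension as $S(G/uv,p')$, and parameters can be chosen so that $t\,\omega_t(uv)$ remains bounded as $t\to 0$. The resulting $n\times n$ matrices $\Omega_t$ then limit to a matrix $\Omega_0$ which reproduces $\Omega'$ after summing the $u$- and $v$-rows and columns, is PSD of rank $n-d-2$, and has an additional kernel direction $\mathbf{e}_u-\mathbf{e}_v$ corresponding to the degeneration.

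The main step is to show that $\Omega_t$ is PSD of rank $n-d-1$ for all $t\in(0,1]$. Short-term PSD persistence, together with the rank increase at $t=0^+$, follows from first-order perturbation theory once $\left.\tfrac{d}{dt}\Omega_t\right|_{t=0}$ is shown to be non-degenerate on the direction $\mathbf{e}_u-\mathbf{e}_v$; the non-degeneracy can be read off from explicit formulas for the free parameters and should hold for generic $p(v)$. The hard part will be ruling out eigenvalue crossings through zero for $0<t<1$, which is likely the principal obstacle to the entire programme. Candidate routes include a convexity/invariance argument exploiting the fact that equilibrium stresses form a vector space intersected with the PSD cone, a Schur complement reformulation $\Omega=\bigl(\begin{smallmatrix}M&c\\c^T&\beta\end{smallmatrix}\bigr)$ in which $c$ and the modifications of $M$ are determined by the equilibria at $u,v,X$ and $\beta$ is tuned so that $\Omega\succeq 0$ holds as an explicit algebraic identity, or a dimension-theoretic argument showing that the locus of $p(v)$ for which PSD fails along $[0,1]$ lies in a proper subvariety of $\R^d$. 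Since the conjecture has resisted proof for more than a decade, the persistence step will almost certainly require a genuinely new ingredient beyond the formal perturbation picture above, and this is where I expect the main difficulty to lie.
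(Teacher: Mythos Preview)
The statement you are attempting to prove is labelled as a \emph{conjecture} in the paper, not a theorem; the authors explicitly describe it as ``a long-standing open problem in rigidity theory'' and only establish the special case (Theorem~\ref{thm:vertex_splitting_k}) in which $G$ is the graph of a simplicial $k$-circuit. There is therefore no proof in the paper to compare against.

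Your proposal contains a substantive error at the outset: the Connelly/Gortler--Healy--Thurston characterisation does \emph{not} require the stress matrix to be positive semidefinite. The combined result states that a graph $G$ with $|V(G)|\geq d+2$ is generically globally rigid in $\R^d$ if and only if some (equivalently every) generic realisation admits an equilibrium stress matrix of rank $|V(G)|-d-1$; there is no sign constraint. A PSD stress of that rank is Connelly's criterion for \emph{super stability} (hence universal rigidity), which is strictly stronger than generic global rigidity and not implied by it. This mis-statement infects the whole programme: the ``main step'' about PSD persistence and ruling out eigenvalue crossings through zero is aimed at the wrong target.

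With the correct (signature-free) characterisation, the natural version of your approach is precisely the one underlying Theorem~\ref{thm:vsplit} (proved in \cite{J}): one lifts a maximum-rank stress on $(G/uv,p')$ to a stress on a $uv$-coincident realisation $(G,p_0)$, obtaining a stress matrix of rank $n-d-2$ with the extra kernel direction $\mathbf e_u-\mathbf e_v$, and then perturbs $p_0(v)$ generically. The obstacle is showing that some stress on the perturbed framework attains rank $n-d-1$; this is exactly where \cite{J} needs the hypothesis of $uv$-coincident \emph{rigidity} of $G$, and it is this hypothesis that is missing from the bare conjecture. The paper's contribution is to verify $uv$-coincident rigidity for simplicial circuit graphs (Theorems~\ref{thm:coin_k} and~\ref{thm:coin_2}) via Fogelsanger decompositions, thereby proving the conjecture in that special case. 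Your self-identified gap (``the persistence step will almost certainly require a genuinely new ingredient'') is thus accurate, but it is not a step in a proof---it is the entire content of the open problem.
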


We will prove the following. 
\begin{theorem}\label{thm:vertex_splitting_k}
Let $G$ be a  $(k+2)$-connected simplicial $k$-circuit graph for some $k\geq 2$ and $uv \in E(G)$ such that $G/uv$ is globally rigid in $\R^{k+1}$. Then  $G$ is  globally rigid  in $\R^{k+1}$.
\end{theorem}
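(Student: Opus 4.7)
My plan is to apply the Gortler--Healy--Thurston characterisation of generic global rigidity: $G$ is globally rigid in $\R^{k+1}$ if and only if some generic realisation of $G$ admits an equilibrium stress whose stress matrix has rank $|V(G)|-(k+2)$. Combined with lower semicontinuity of rank, this allows me to exhibit such a stress at a convenient (non-generic) realisation and then perturb. The convenient realisation will be one in which $u$ and $v$ are placed at a common point -- a ``coincidence''.

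Setup: write $w$ for the vertex of $G':=G/uv$ obtained by identifying $u$ and $v$. Since $G'$ is globally rigid, pick a generic realisation $(G',\bar p)$ in $\R^{k+1}$ with an equilibrium stress $\bar\omega$ whose stress matrix $\bar\Omega$ has rank $|V(G)|-(k+3)$. Define the \emph{$uv$-coincident} realisation $(G,p_0)$ by $p_0(u)=p_0(v)=\bar p(w)$ and $p_0(x)=\bar p(x)$ otherwise. By Lemma~\ref{lem:1.1.4}, $u$ and $v$ share at least $k$ common neighbours in $G$, and the link $lk_\cS(uv)$ is itself a simplicial $(k-2)$-cycle. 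Using this structure, lift $\bar\omega$ to a stress $\omega_0$ on $(G,p_0)$ as follows: for each neighbour $x$ of $w$ in $G'$ that splits as $ux, vx \in E(G)$, apportion $\bar\omega(wx)=\omega_0(ux)+\omega_0(vx)$; for edges in $E(G)\cap E(G')$, keep the same stress; and assign $\omega_0(uv)$ freely, since $p_0(u)-p_0(v)=0$ makes it contribute nothing to equilibrium. The equilibrium system at $\{u,v\}$ decouples into the sum (recovering $\bar\omega$'s equilibrium at $w$) and a difference; the latter is a linear system whose solvability follows from the fact that the common neighbours of $u,v$ span sufficiently many directions at $p_0(u)=p_0(v)$.

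Next I would analyse $\rank\Omega(\omega_0)$. Any null vector $(x_z)_{z\in V(G)}$ of $\Omega(\omega_0)$ with $x_u=x_v$ descends to a null vector of $\bar\Omega$, so (since $\bar\Omega$ has null space of dimension $k+3$) it suffices to rule out null vectors with $x_u\neq x_v$. Granted this, $\rank\Omega(\omega_0)=|V(G)|-(k+2)$. I would then perturb $(G,p_0)$ to a nearby generic realisation $(G,p)$ by separating $u$ and $v$, using the implicit function theorem to deform $\omega_0$ to a stress $\omega$ on $(G,p)$ satisfying the equilibrium condition. Lower semicontinuity of rank gives $\rank\Omega(\omega)\geq\rank\Omega(\omega_0)=|V(G)|-(k+2)$, with equality forced by the standard upper bound, and the Gortler--Healy--Thurston theorem then concludes that $G$ is globally rigid in $\R^{k+1}$.

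The main obstacle will be the rank analysis at the coincident configuration: ruling out null vectors of $\Omega(\omega_0)$ with $x_u\neq x_v$. Because $p_0$ is highly non-generic, standard general-position arguments do not apply directly. I expect both hypotheses to enter essentially here -- the $(k+2)$-connectivity of $G$ to prevent null vectors supported on proper vertex subsets (via an argument analogous to Hendrickson's), and the simplicial circuit structure (specifically the $(k-2)$-cyclic nature of $lk_\cS(uv)$) to extract, from the equilibrium equations at common neighbours of $u$ and $v$, enough constraints on $x_u-x_v$ to force it to vanish. A secondary difficulty is that the apportionment $\bar\omega(wx)=\omega_0(ux)+\omega_0(vx)$ contains a free parameter for each common neighbour $x$, and this freedom must be exercised compatibly with the rank analysis; a genericity argument over the choice of apportionment may be needed to avoid degenerate cases.
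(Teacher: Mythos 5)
Your outline is essentially a re-derivation of the black-box result the paper cites as Theorem~\ref{thm:vsplit} (from~\cite{J}): that vertex splitting preserves global rigidity whenever the split graph is $uv$-coincident rigid. The paper's proof of Theorem~\ref{thm:vertex_splitting_k} simply invokes that result and then devotes the entire remainder of Section~\ref{sec:coincidence} (Theorems~\ref{thm:coin_k} and~\ref{thm:coin_2}) to establishing $uv$-coincident rigidity, via an intricate induction on $|E(\cS)|$ using Fogelsanger decompositions, the notion of normal edges, the $(k+1)$-cleavage structure, and several case analyses. Your ``main obstacle'' paragraph — ruling out null vectors of $\Omega(\omega_0)$ with $x_u \neq x_v$ — is precisely where this content has to go, and as written it is a statement of intent rather than a proof. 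Moreover, that rank condition is not quite a free-standing goal: your final perturbation step (``deform $\omega_0$ to a stress on $(G,p)$'' by the implicit function theorem) presupposes that the dimension of the stress space does not drop as $p_0$ is perturbed to a generic $p$; since the rigidity-matrix rank is lower semicontinuous, the stress space can only shrink, so stress persistence requires that $(G,p_0)$ already achieve the generic stress dimension — i.e., that $(G,p_0)$ be infinitesimally rigid, which is $uv$-coincident rigidity. So the perturbation step circularly depends on the very fact being hand-waved, rather than being a separate, easier piece.

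Two further local issues. First, the lifting step is not a matter of ``apportionment'' alone: for $\omega_0$ to be an equilibrium stress at the coincident configuration, the row-$u$ and row-$v$ equilibrium equations must hold separately (the $uv$ term contributes nothing because $p_0(u)=p_0(v)$), and this is a genuine system of $2(k+1)$ linear constraints on the apportioned weights which need not be solvable without using the $(k+2)$-connectivity and the circuit structure — this is exactly the kind of analysis that Theorems~\ref{thm:coin_k} and~\ref{thm:coin_2} provide in disguise. Second, a small arithmetic slip: $\bar\Omega$ has rank $|V(G')|-(k+2)=|V(G)|-(k+3)$, hence nullity $k+2$, not $k+3$; fortuitously the conclusion you drew from it is still consistent (you want $\dim\ker\Omega(\omega_0)=k+2$), but the stated intermediate count is off. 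In short, the proposal takes a recognisably correct high-level route — it is morally the proof of Theorem~\ref{thm:vsplit} from~\cite{J} glued to the statement of coincident rigidity — but the entire technical burden of the theorem (establishing coincident rigidity for simplicial $k$-circuit graphs, Theorems~\ref{thm:coin_k} and~\ref{thm:coin_2}) is left as a gap.
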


The proof strategy  for Theorem~\ref{thm:vertex_splitting_k} is also an induction based on edge contraction and vertex splitting
but the details are technically more complicated. As a result, we will delay the proof until Section~\ref{sec:coincidence}.

We have seen that if $uv\in E(\cS)$ where $\cS$ is a nontrivial simplicial $k$-circuit then $u$ and $v$ have at least $k$ common neighbours in $G(\cS)$. We shall see in Section \ref{sec:coincidence} that the case of Theorem \ref{thm:vertex_splitting_k} where $u$ and $v$ have at least $k+1$ common neighbours is relatively straightforward. Thus the remaining (and most difficult) part of Theorem \ref{thm:vertex_splitting_k} is precisely the case that is a special case of Conjecture \ref{conj:vertex_splitting}.

\subsection{Global rigidity in  dimension at least four}

\paragraph{Proof of Theorem~ \ref{thm:globrigid_d_main}} We will simplify terminology by suppressing reference to the ambient space $\R^{k+1}$ throughout this proof. Necessity follows immediately from Theorem \ref{thm:hendrickson}. To prove sufficiency we suppose, for a 
contradiction that there exists a $(k+2)$-connected simplicial $k$-circuit graph $G=(V,E)$ which is not globally rigid and such that  $|E|$ is as small as possible. 

\begin{claim}\label{clm:glob1_k}
For  each $e\in E$, there exists a simplicial $k$-circuit $\cS$ such that $G=G(\cS)$ and $\cS/e$ is a $k$-simplicial circuit. In particular, $G/e$ is a simplicial $k$-circuit graph for all $e\in E$.
\end{claim}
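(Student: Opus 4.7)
Since $G$ is a simplicial $k$-circuit graph, fix any simplicial $k$-circuit $\cS_0$ with $G(\cS_0)=G$.  My plan is to use the Fogelsanger decomposition of $\cS_0$ with respect to $e=uv$, either to take $\cS=\cS_0$ directly, or else to modify $\cS_0$ by a symmetric-difference move that preserves the underlying graph while strictly reducing the number of parts in the decomposition.

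First I would apply the construction of Section~\ref{sec:fog} to produce a Fogelsanger decomposition $\{\cS_1^+,\ldots,\cS_m^+\}$ of $\cS_0$ with respect to $e$.  By Lemma~\ref{lem:3}\ref{en1:0}, each $\cS_i^+/e$ is a simplicial $k$-circuit, so in the easy case $m=1$ we have $\cS_0=\cS_1^+$ and $\cS_0/e$ is already a simplicial $k$-circuit; I would then take $\cS=\cS_0$.

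The substance of the argument is the case $m\geq 2$, which I would handle via the $(k+2)$-connectivity of $G$.  Writing $V_i:=V(\cS_i^+)$, Lemma~\ref{lem:3}\ref{en1:b} together with the requirement in the definition of $\cS_i^*$ that every member have all its vertices in $V_i$ traces each $G$-edge into at least one $\cS_i^+$, and yields that $X_i:=V_i\cap \bigcup_{j\neq i}V_j$ separates $V_i\setminus X_i$ from $V(G)\setminus V_i$ in $G$ whenever both sets are non-empty.  Lemma~\ref{lem:3}\ref{en1:d} places a non-facial $(k+1)$-clique $K\ni\{u,v\}$ inside $X_i$; combined with $(k+2)$-connectivity, the plan is to locate an index $i$ whose part can be absorbed, then use Lemma~\ref{lem:1.5} applied to a suitable subcomplex of $\cS_0$ to engineer a simplicial $k$-cycle $\cR$ whose simplices are all $(k+1)$-cliques of $G$ containing $\{u,v\}$.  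Replacing $\cS_0$ by $\cS_0\triangle\cR$ preserves the underlying graph, since only $(k+1)$-cliques of $G$ are added or removed, and strictly decreases $m$.  Iterating the move, I would reach $m=1$ and obtain the required $\cS$.

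The ``in particular'' assertion then follows at once.  Lemma~\ref{lem:1.1.4}(a) applied with $\{u,v\}=f$ and $\{x,y\}=e$ shows that every $f\in E(\cS)\setminus\{e\}$ lies in some $k$-simplex of $\cS$ not containing $e$, which is precisely the compatibility condition for $G(\cS/e)=G(\cS)/e=G/e$.  Hence $G/e$ is realised as the graph of the simplicial $k$-circuit $\cS/e$.

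The main obstacle I expect is the reduction step when $m\geq 2$.  The set $X_i$ can a priori contain strictly more than the $k+1$ vertices of the guaranteed common clique $K$, so no crude separator count immediately contradicts $(k+2)$-connectivity.  Designing the cancelling cycle $\cR$ precisely through Lemma~\ref{lem:1.5} and the explicit $\cS^*$-construction, and verifying that the resulting simplicial $k$-circuit has a strictly shorter Fogelsanger decomposition at $e$ while keeping the underlying graph equal to $G$, will be the technically delicate step of the argument.
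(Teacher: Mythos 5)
Your proposal misses the fact that Claim~\ref{clm:glob1_k} is not a free-standing result: it is stated and proved inside the proof of Theorem~\ref{thm:globrigid_d_main}, where $G$ has been fixed as a minimal counterexample (a $(k+2)$-connected simplicial $k$-circuit graph that is \emph{not} globally rigid, with $|E|$ as small as possible), and the proof leans essentially on that minimality. The paper's actual argument is: take any $\cS$ with $G(\cS)=G$ and a Fogelsanger decomposition $\{\cS_1^+,\dots,\cS_m^+\}$ with respect to $e$. If some $E(\cS_i^+)=E$, then $\cS_i^+$ works by Lemma~\ref{lem:3}\ref{en1:0}. Otherwise every $G_i=G(\cS_i^+)$ has strictly fewer edges than $G$, so by the minimality of the counterexample and Lemma~\ref{lem:mincleavage} every $(k+2)$-block of each $G_i$ is globally rigid, i.e.\ each $G_i$ has the strong $(k+1)$-cleavage property; ordering the parts via Lemma~\ref{lem:3}\ref{en1:d} and applying Corollary~\ref{cor:cleavage_globrigid} repeatedly shows that $G=\bigcup G_i$ itself has the strong $(k+1)$-cleavage property, hence (being $(k+2)$-connected) is globally rigid, contradicting the choice of $G$. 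So the case $m\geq 2$ with all parts proper is not resolved by constructing a better $\cS$, it is simply shown to be impossible under the minimal-counterexample hypothesis.

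Your reduction step is therefore the wrong mechanism and, as you acknowledge yourself, is not carried out. You never specify how to find the cancelling simplicial $k$-cycle $\cR$ made of $(k+1)$-cliques of $G$ through $\{u,v\}$, why $\cS_0\triangle\cR$ is again a simplicial $k$-circuit (symmetric difference with a cycle generally yields only a cycle, not a circuit), why it still spans all of $G$, and why the number of Fogelsanger parts at $e$ strictly drops, so there is no termination argument. Moreover, there is no reason to expect such a move exists in general: the claim is not asserted (and may well be false) for arbitrary $(k+2)$-connected simplicial $k$-circuit graphs; it is only proved for the minimal counterexample, which is why the paper can invoke the induction on $|E|$ together with the strong cleavage and gluing machinery of Section~\ref{sec:strong_cleavage}. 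Your handling of the case $m=1$ is fine, and your deduction of the ``in particular'' statement from Lemma~\ref{lem:1.1.4}(a) and the compatibility remark preceding it is correct, but the heart of the claim (the case $m\geq 2$) is a genuine gap in your argument.
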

\begin{proof}
Choose a simplicial $k$-circuit $\cS$ such that $G=G(\cS)$.  Since $G\neq K_{k+1}$, $\cS$ is non-trivial. Let $\{\cS_1,\cS_2,\ldots,\cS_m\}$ be a Fogelsanger decomposition of $\cS$ with respect to $e$ and put
$G(\cS_i)=G_i=(V_i,E_i)$ for all $1\leq i\leq m$. 
By Lemma~\ref{lem:3}\ref{en1:0}, $\cS_i/e$ is a simplicial $k$-circuit. Thus, if $E_i=E$ for some $1\leq i\leq m$, then we can put $G=G(\cS_i)$ and the claim holds 
since $\cS_i/e$ is a simplicial $k$-circuit. 
Hence, we may assume that $E_i\neq E$  and Lemma~\ref{lem:3}\ref{en1:c} now gives 
$|E_i|<|E|$ for all $1\leq i\leq m$.  By Lemma~\ref{lem:3}\ref{en1:d}, we can also assume the Fogelsanger  decomposition of $\cS$ is ordered so that $(\bigcup_{j=1}^{i-1} G_j)\cap G_i$ contains a copy of $K_{k+1}$ for all $2\leq i\leq m$.  

Since each $G_i$ is the graph of a non-trivial simplicial $k$-circuit, it has the $(k+1)$-cleavage property. The minimality  of $G$ and Lemma~\ref{lem:mincleavage} now imply that  every $(k+2)$-block of each $G_i$ is  globally rigid   in $\R^{k+1}$ 
and hence each  $G_i$ has the strong $(k+1)$-cleavage property.
An easy induction using Corollary \ref{cor:cleavage_globrigid} now implies that  $H_j:=\bigcup_{i=1}^{j} G_i$ has the strong $(k+1)$-cleavage property
for all $1\leq j\leq m$. 
Thus $H_m=G$ is a $(k+2)$-connected graph with the strong $(k+1)$-cleavage property and so is globally rigid. This contradicts the choice of $G$ and completes the proof of 
the claim.
\end{proof}

Choose an edge $e \in E$. 
If  $G/e$ is  $(k+2)$-connected, then it is a $(k+2)$-connected simplicial $k$-circuit  graph by Claim \ref{clm:glob1_k},
and we can apply induction and Theorem~\ref{thm:vertex_splitting_k} 
to deduce that $G$ is globally rigid, a contradiction.  
Hence, 
\begin{equation}\label{eq:glob2_k}
\mbox{$G/e$ is not $(k+2)$-connected for each $e\in E$.}
\end{equation} 

By (\ref{eq:glob2_k}), each edge $e\in E$ is induced by  a $(k+2)$-vertex separator $X_e$ of $G$.
We claim that there is at least one edge $e\in E$ that is not induced by a $(k+2)$-clique-separator.
To see this, choose an edge $f$ induced by a $(k+2)$-clique-separator $X_f$ 
such that some component $F$ of $G-X_f$ is as small as possible and put $H_f=G[V(F)\cup X_f]$.
Then, the minimality of $F$ implies that 
no edge $e\in E(H_f)\setminus E(X_f)$ is induced by   a $(k+2)$-clique-separator.

Thus we may choose an edge $e\in E$ such that $e$ is induced by a  $(k+2)$-vertex separator $X_e$ of $G$ which is not a clique. By Claim \ref{clm:glob1_k}, we can choose a simplicial $k$-circuit $\cS$ such that $G=G(\cS)$ and $\cS/e$ is a simplicial $k$-circuit.

Let $H$ be a component of $G-X_e$ and put $\cS_1=\{U \in \cS:U\subseteq V(H)\cup X_e\}$,
and $\cS_2=\cS\sm \cS_1$. 
Then Lemma~\ref{lem:1} and the fact that $\cS$ is a simplicial $k$-circuit give $\partial \cS_1=\cT=\partial \cS_2$ for some nonempty simplicial $(k-1)$-cycle $\cT$  with $V(\cT)\subseteq X_e$.

Suppose $|V(\cT)|={k+1}$. Then Lemma~\ref{lem:34sep_a} implies that 
$V(\cT)\not\in \cS_i$ 
and $\cS_i'=\cS_i\cup \{V(\cT)\}$ is a simplicial $k$-circuit for both $i=1,2$. 
Moreover, since $V(\cS_i)\sm X_e$ is nonempty, it follows that $|E(G_i)| < |E|$ for $i=1,2$.
By the minimal choice of $G$ we deduce that $G_i=G(\cS_i)$ has the strong $(k+1)$-cleavage property  for both $i=1,2$. 
Since $G$ is $(k+2)$-connected and $K(V(\cT))\subseteq G_1\cap G_2$, Theorem~\ref{thm:cleavage_globrigid} now implies that $G=G_1\cup G_2$ is globally rigid, a contradiction.

Hence $|V(\cT)|={k+2}$, so $V(\cT)=X_e$.  
Since $X_e$ is not a clique, Lemma~\ref{lem:34sep_b} implies that
$\cT\cong \cL_{k-1}$. By the definition of $\cL_{k-1}$,
$\cT=\cT'\triangle\cT''$ where $\cT'\cong\cK_{k-1}\cong \cT''$ and $\cT'\cap \cT''=\{T\}$ for some common $(k-1)$-simplex $T$ of  $\cT'$ and $\cT''$.   
Since $|X_e\setminus T|=2$, we can denote $X_e\sm T=\{u,v\}$.
 Then  Lemma~\ref{lem:34sep_b} 
implies that  $\cS_i'=\cS_i\triangle \{X_e-u,X_e-v\}$ is a simplicial $k$-circuit for both $i=1,2$.
We can now apply induction  
to deduce that $G_i=G(\cS_i')$ has the strong $(k+1)$-cleavage property
for both $i=1,2$. If there exists a vertex $w\in X_e\sm V(\cS_i')$ then $X_e-u$ and $X_e-v$ would be the only possible $k$-simplices in $\cS_i$ which contain $w$. This would imply that there are no edges in $G$ from $w$ to $V(\cS_i)\sm X_e$ and $X_e-w$ would be a $(k+1)$-vertex separator of $G$, contradicting the $(k+2)$-connectivity of $G$. Hence  $X_e\subseteq V(\cS_i')$ and we can use Lemma~\ref{lem:braced} to deduce that $G_i'=G_i\cup G[X_e]$ has the strong $(k+1)$-cleavage property for both $i=1,2$.

Since $k\geq 3$, $G(\cL_{k-1})$ is a complete graph on $k+2$ vertices with one edge deleted, and hence $G[X_e]=K(X_e)-uv$.\footnote{This is the only point in the proof of Theorem \ref{thm:globrigid_d_main} that we use $k\geq 3$.}  This implies that $G_1'\cap G_2'$ contains a copy of $K_{k+1}$.
Since $G$ is $(k+2)$-connected, Theorem~\ref{thm:cleavage_globrigid} now implies that $G=G_1'\cup G_2'$ is globally rigid.
This contradicts our choice of $G$  and completes the proof of the theorem.
\qed

\subsection{Global rigidity in  dimension three}

We will prove Theorem~\ref{thm:globrigid_3_main}.  We first derive two lemmas on 4-vertex separators in graphs which we need to deal with the fact that $G(\cL_1)=C_4$ is not a complete graph on four vertices with one edge deleted (cf. the last paragraph of the proof of Theorem~\ref{thm:globrigid_d_main}). 

\begin{lemma}~\label{lem:4con1} Suppose $G$ is a $4$-connected graph distinct from the octahedron,  $v$ is a vertex of degree four in $G$, and $G[N_G(v)]$ contains a $4$-cycle $C=u_1u_2u_3u_4u_1$. Then either
\begin{itemize}
    \item[(a)] $G-v+u_iu_{i+2}$ is $4$-connected for some $i=1,2$ or
    \item[(b)] $d_G(u_i)=4$ and $G[N_G(u_i)]$ does not contain a  $4$-cycle for some $1\leq i\leq 4$.
\end{itemize}
\end{lemma}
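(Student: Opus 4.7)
The plan is to argue by contradiction: suppose that neither (a) nor (b) holds, and show that $G$ must be the octahedron, contradicting the hypothesis. Since $G$ is 4-connected, $G-v$ is 3-connected, and adding any edge preserves 3-connectivity. Hence both $G-v+u_1u_3$ and $G-v+u_2u_4$ are 3-connected but not 4-connected, so each admits a 3-vertex separator, say $S_1$ and $S_2$ respectively. The first key step will be to show that $\{u_1,u_3\}\subseteq S_1$ and $\{u_2,u_4\}\subseteq S_2$. Indeed, if $u_1\notin S_1$, then since $u_1,u_2,u_3,u_4$ are pairwise connected in $(G-v+u_1u_3)-S_1$ through the cycle $C$ together with the added chord, every $u_j\notin S_1$ lies in a common component; any other component of $(G-v+u_1u_3)-S_1$ would then contain no neighbour of $v$, forcing all of its neighbours in $G$ to lie in $S_1$ and making $S_1$ a 3-vertex separator of $G$, a contradiction. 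Write $S_1=\{u_1,u_3,s\}$ and $S_2=\{u_2,u_4,t\}$; since each contains both endpoints of the relevant chord, $S_1$ and $S_2$ are already 3-vertex separators of $G-v$.

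The next step is to analyse the component structure of $G-v-S_1$ and $G-v-S_2$. The same 4-connectivity argument applied to $G-v-S_1$ forces exactly two components $A\ni u_2$ and $B\ni u_4$, with $u_2u_4\notin E(G)$; symmetrically $G-v-S_2$ has two components $A'\ni u_1$ and $B'\ni u_3$, with $u_1u_3\notin E(G)$. I then examine when the components are singletons: if $A=\{u_2\}$, then $N_G(u_2)\subseteq T_1=\{v,u_1,u_3,s\}$, and 4-connectivity forces $N_G(u_2)=\{v,u_1,u_3,s\}$, so $u_2$ has degree $4$; analogous conclusions hold for $B$, $A'$, $B'$ when they are singletons. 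If all four of $A,B,A',B'$ are singletons, then $V(G)=\{v,u_1,u_2,u_3,u_4,s\}$ and, because $u_1,u_3\notin S_2$ and $t$ must lie in this vertex set, $t=s$. The neighbourhoods computed above then show that $s$ is adjacent to each of $u_1,u_2,u_3,u_4$ and that the only non-edges among the six vertices are $\{v,s\}$, $\{u_1,u_3\}$, and $\{u_2,u_4\}$, making $G$ the octahedron and contradicting the hypothesis.

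The remaining case is where at least one of $A,B,A',B'$ has size at least two, say $|A|\geq 2$. If $B=\{u_4\}$, then $u_4$ has degree $4$ with $N_G(u_4)=\{v,u_1,u_3,s\}$, and since $u_1u_3\notin E(G)$ and $s\notin N_G(v)$ (so $vs\notin E(G)$), the only possible 4-cycle in $G[N_G(u_4)]$ is $v,u_1,s,u_3,v$; unless both $u_1s$ and $u_3s$ are edges, condition (b) holds at $u_4$. When $u_1s,u_3s\in E(G)$ I will apply the symmetric analysis to $S_2$: tracing the placement of $s$ in the components $A',B'$ of $G-v-S_2$ forces $s=t$, which together with the additional edges incident to $s$ pushes $G$ back toward the octahedron or yields a further small $u_i$ that satisfies (b). The main obstacle I anticipate is the sub-case where all of $A,B,A',B'$ have size at least two; here I plan to use the interplay of the two separators to either identify a vertex $u_i$ of degree $4$ whose neighbourhood cannot contain a 4-cycle (using $u_1u_3,u_2u_4\notin E(G)$), or to locate extra edges that generate a new 3-vertex separator of $G$ and contradict 4-connectivity.
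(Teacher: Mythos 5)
Your opening reductions are sound and parallel the paper's: negating (a) gives 3-vertex separators $S_1$, $S_2$ of $G-v+u_1u_3$ and $G-v+u_2u_4$ respectively; the argument forcing $\{u_1,u_3\}\subseteq S_1$ and $\{u_2,u_4\}\subseteq S_2$ is correct (the paper phrases the same content via 4-vertex separators of $G$ containing $v$); $u_1u_3,u_2u_4\notin E(G)$ and the octahedron conclusion when $A,B,A',B'$ are all singletons are also correct. However, the proof does not close. The subcase where all four of $A,B,A',B'$ have at least two vertices is explicitly flagged as ``the main obstacle I anticipate'' with only a tentative plan, and the intermediate subcase (some but not all singletons) ends with ``I will apply the symmetric analysis \dots pushes $G$ back toward the octahedron or yields a further small $u_i$'' --- neither of these is an argument.

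The missing device is a crossing-separators analysis. Consider the $3\times 3$ partition of $V(G)$ given by $\{A,\ S_1\cup\{v\},\ B\}$ against $\{A',\ S_2\cup\{v\},\ B'\}$. Since $v$ lies in the centre cell and all four neighbours $u_1,\dots,u_4$ of $v$ lie on the two middle cross-lines, any nonempty corner cell would be cut off from the rest of $G$ by at most three vertices, contradicting 4-connectivity. When $s=t$ this forces all four corners empty, so $|V(G)|=6$ and $G$ is the octahedron, a contradiction. When $s\neq t$ it forces enough corners empty that (after relabelling) $A=\{u_2\}$ and $B'=\{u_3\}$; in particular your ``all-big'' subcase simply never occurs. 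It follows that $d_G(u_2)=4$ with $N_G(u_2)=\{v,u_1,u_3,s\}$; since (b) fails, $G[N_G(u_2)]$ contains a 4-cycle, which (using $vs\notin E(G)$ and $u_1u_3\notin E(G)$) must be $vu_1su_3v$, giving $u_3s\in E(G)$. But $u_3$ and $s$ lie on opposite sides of the separator $S_2\cup\{v\}$, a contradiction. You identified the right local facts ($u_1u_3,u_2u_4\notin E$; singleton components give degree-4 vertices), but you are missing this crossing argument, which both rules out the ``all-big'' case and supplies the final contradiction against the second separator.
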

\begin{proof}  Suppose, for a  contradiction that neither (a) nor (b) hold. Then $G-v+u_iu_{i+2}$ is not 4-connected for both $i=1,2$. Hence there exists  a 4-vertex separator $S_i$ of $G$ and a component $H_i$ of $G-S_i$ such that $v\in S_i$ and $\{u_i,u_{i+2}\}\cap V(H_i)=\emptyset$ for both $i=1,2$. Since $G$ is 4-connected, some neighbour of $v$ belongs to $H_i$ and, relabelling if necessary, we may suppose that $u_{i+1}\in V(H_i)$.  The fact that  $u_i,u_{i+2}\in N(u_{i+1})$ implies that $\{u_i,u_{i+2}\}\subseteq S_i$. 
Since $v$ is adjacent to each connected component of $G-S_i$ by $v\in S_i$, we have $u_{i+3}\in V(G)\sm (S_i\cup V(H_i)$, and the configuration of $v, u_1,u_2,u_3, u_4$ and $S_1, S_2, H_1,H_2$ is as shown in Figure~\ref{fig:lemma63}.

\begin{figure}[ht]
\centering
\includegraphics[scale=0.6]{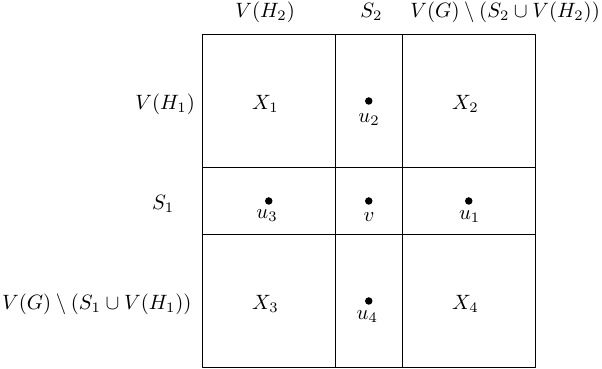}
\caption{Proof of Lemma~\ref{lem:4con1}. 
The 
rows represent a partition of $V(G)$ into $\{V(H_1), S_1, V(G)\setminus (V(H_1)\cup S_1)\}$
and the 
columns represent a partition of $V(G)$ into $\{V(H_2), S_2, V(G)\setminus (V(H_2)\cup S_2)\}$.}
\label{fig:lemma63}
\end{figure}

Let $X_1, X_2, X_3, X_4$ be subsets of $V(G)$ as in Figure~\ref{fig:lemma63}.
Since $|S_i|=4$, we have $|S_1\cap S_2|\leq 2$. 
Suppose $S_1 \cap S_2 = \{v,x\}$ for some $x \in V(G)$. If $X_1 \neq \emptyset$ then, since $N_G(v) \cap X_1 = \emptyset$, $\{x,u_2,u_3\}$ would be a $3$-vertex separator for $G(\cS)$, contradicting the fact that $G$ is 4-connected. Therefore $X_1$, and similarly $X_2$, $X_3$ and $X_4$, must be empty and so $S_1 \cup S_2 = V(\cS)$ and $G(\cS)$ is the octahedron, contradicting our hypothesis.

Hence $S_1\cap S_2=\{v\}$. 
Then, without loss of generality, we may assume $S_2\cap V(H_1)=\{u_2\}$ and $S_1\cap V(H_2)=\{u_3\}$.
The 4-connectivity of $G$ and the hypothesis that $d_{G(\cS)}(v)=4$ now imply that
$X_1=X_2=X_3=\emptyset$,
and hence $V(H_1)=\{u_2\}$ and $V(H_2)=\{u_3\}$.
This gives $N_G(u_2)=S_1$ so $d_G(u_2)=4$ and, since (b) does not hold,  $G[N_G(u_2)]=G[S_1]$ contains a $4$-cycle. This in turn implies that $u_3$ is adjacent to a vertex of $S_1-v$  and contradicts the fact that $S_2$ is a 4-vertex separator of $G$ (since $u_3$ and $S_1-v$ are contained in different components of $G-S_2$).
\end{proof}

\begin{lemma}\label{lem:4sep4con} Let $G$ be a  $4$-connected 
graph and  $S$ be a $4$-vertex separator in  $G$. Suppose that $C=u_1u_2u_3u_4u_1$ is a $4$-cycle in $G[S]$ and $H$ is a component of $G-S$. Then the graph $G'$ obtained from $G[V(H)\cup S]$ by adding a new vertex $v$  adjacent to each vertex in $S$ is $4$-connected.
In addition, if $G$ is the graph of a simplicial 2-circuit and $G'$ is not isomorphic to the octahedron, then $G[V(H)\cup S]+u_iu_{i+2}$   is $4$-connected for some $i=1,2$.
\end{lemma}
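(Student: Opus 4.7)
I will proceed in two parts, first establishing the 4-connectivity of $G'$ by ruling out small vertex cuts, then applying Lemma~\ref{lem:4con1} to $G'$ at the degree-4 vertex $v$ and using the link structure of the simplicial 2-circuit $\cS$ (with $G=G(\cS)$) to eliminate the unwanted outcome. Throughout I will use the standing observation that every $s\in S$ must have a neighbour in $V(H)$, since otherwise $S-s$ would be a 3-cut of $G$, contradicting the 4-connectivity of $G$.

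For the first part, suppose $T$ is a vertex cut of $G'$ with $|T|\leq 3$ and write $G_1 = G[V(H)\cup S]$. If $v\notin T$, then for each $x\in V(H)\setminus T$ I take a path in $G-T$ from $x$ into $V(H')$ (which exists since $|T|<4$) and truncate at its first $S$-vertex; this produces a path in $G_1-T$ from $x$ to some $s\in S\setminus T$, and the edge $sv\in E(G')$ then joins $x$ to $v$. If $v\in T$, write $T_0=T\setminus\{v\}$ with $|T_0|\leq 2$ and case on $T_0$. When $V(H)\subseteq T_0$ or $|T_0\cap S|\leq 1$, the subgraph $G[S]-(T_0\cap S)$ still contains a $P_3$ from $C$ and is connected, and the vertices of $V(H)\setminus T_0$ hook in via the same truncation argument. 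The only delicate subcase is $T_0\subseteq S$ a diagonal of $C$ whose complement $\{u_{i+1},u_{i+3}\}$ is edge-free in $G[S]$; here the connected set $V(H)$ bridges $u_{i+1}$ and $u_{i+3}$ in $G_1-T_0$ because each of them has a neighbour in $V(H)$.

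For the second part, $v$ has degree 4 in $G'$ and $G'[N_{G'}(v)] = G[S]\supseteq C$, so Lemma~\ref{lem:4con1} applied to $G'$ at $v$ produces either (a) $G_1+u_iu_{i+2}$ 4-connected for some $i\in\{1,2\}$, which is exactly the conclusion of the present lemma, or (b) some $u_i\in S$ with $d_{G'}(u_i)=4$ and $G'[N_{G'}(u_i)]$ containing no 4-cycle. It remains to rule out (b). In that case $u_i$ has $d_{G_1}(u_i)=3$, and the $C$-edges $u_iu_{i-1},u_iu_{i+1}$ together with the forced $V(H)$-neighbour give $N_{G'}(u_i)=\{u_{i-1},u_{i+1},h_i,v\}$ for a unique $h_i\in V(H)$; then the only potential 4-cycle in $G'[N_{G'}(u_i)]$ is $h_iu_{i-1}vu_{i+1}h_i$, and its absence forces $h_i$ to miss at least one of $u_{i-1},u_{i+1}$. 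The main obstacle, and the only place where the simplicial 2-circuit hypothesis is used, is to contradict this: Lemma~\ref{lem:links} applied at the 0-face $\{u_i\}$ gives that $lk_\cS(u_i)$ is a simplicial 1-cycle, and since $\cS$ is non-trivial with no repeated simplices, $G(lk_\cS(u_i))$ has all vertex degrees even. The degree of $h_i$ counts the $w$ with $\{u_i,h_i,w\}\in\cS$; as $h_i\in V(H)$ has no neighbour in $V(H')$, any such $w$ is forced into $\{u_{i-1},u_{i+1}\}$, so the degree equals $|N_G(h_i)\cap\{u_{i-1},u_{i+1}\}|$. This is positive because $u_ih_i\in E(\cS)$ lies in some 2-simplex, so by evenness it must be exactly 2, meaning $h_i$ is adjacent to both $u_{i-1}$ and $u_{i+1}$, contradicting (b).
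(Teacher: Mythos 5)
Your proof is correct and follows essentially the same route as the paper's: for the first part you verify $4$-connectivity of $G'$ directly using the $4$-connectivity of $G$ and the minimality of $S$ (the paper packages this slightly more compactly by observing that any small cut of $G'$ must already separate $S$ inside the wheel $G'[S+v]$), and for the second part you invoke Lemma~\ref{lem:4con1} at $v$ and kill case~(b) via the evenness of $lk_\cS(u_i)$, which is precisely the paper's use of the fact that $\partial\cS_{u_i}$ is an even graph. One small imprecision worth noting: the degree of $h_i$ in $lk_\cS(u_i)$ is only bounded above by $|N_G(h_i)\cap\{u_{i-1},u_{i+1}\}|$, not equal to it, but your argument only needs the chain ``$\geq 1$, $\leq 2$, even, hence $=2$'', which still yields $\{u_i,h_i,u_{i\pm 1}\}\in\cS$ and the contradicting $4$-cycle.
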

\begin{proof}  We first show that $G'$ is 4-connected.   Suppose to the contrary that $G'$ has a vertex separator $S'$ with $|S'|\leq 3$. Since $G$ is 4-connected, every component of $G'-S'$ must contain a vertex of $S$ and hence $S'$ must separate $S$ in $G'$. Since $G'[S+v]$ is a wheel centred on $v$, we have  $|S'|=3$ and $v\in S'\subseteq S+v$. 
Since $S$ is a minimal separator in $G$,  each vertex of $S\sm S'$ is adjacent to each component of $G-S$, which now contradicts the assumption that $G'-S'$ is disconnected. Hence $G'$ is 4-connected.

To prove the second part of the lemma we assume that $G$ is the graph of a simplicial 2-circuit $\cT$ and $G'$ is not isomorphic to the octahedron.
Suppose for a contradiction that  $G[V(H)\cup S]+u_iu_{i+2}$   is not $4$-connected for both $i=1,2$.
We may apply Lemma~\ref{lem:4con1} to $G'$ to deduce that,  for some $1\leq i\leq 4$, $d_{G'}(u_i)=4$ and $G'[N(u_i)]$ does not contain a 4-cycle. We may assume without loss of generality that $i=1$. Then the neighbours of $u_1$ in $G'$ are $u_2,u_4, v,w$ for some $w\in V(H)$.

By Lemma~\ref{lem:1}, $G(\partial \cS_{u_1})$ forms an even subgraph on $N_G(u_1)$. This implies that $wu_2,wu_4\in E(G)$ and $G'[N_{G'}(u_1)]$ contains the 4-cycle $wu_2vu_4w$, a contradiction.  
\end{proof}

We will also need the following lemma about simplicial $2$-circuits with the property that contracting an edge yields a 4-connected plane triangulation.

\begin{lemma}
\label{lem:planecontractions}
Let $\cT$ be a nontrivial simplicial 2-circuit such that $H =G(\cT)$ is nonplanar. Suppose that $e$ and $e'$ are distinct edges of $H$ and $H/e$ is  4-connected plane triangulation. Then $H/e'$ is not a 4-connected plane triangulation.
\end{lemma}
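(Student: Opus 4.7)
The plan is to start from an edge count. Writing $e=uv$, since $H/e$ is a plane triangulation on $n-1$ vertices we have $|E(H/e)|=3n-9$, and $|E(H)|=|E(H/e)|+1+|N_H(u)\cap N_H(v)|=3n-8+|N_H(u)\cap N_H(v)|$. Fogelsanger's theorem (Theorem~\ref{thm:fogelsanger}) gives $|E(H)|\geq 3n-6$, so $|N_H(u)\cap N_H(v)|\geq 2$; equality is excluded by Theorem~\ref{thm:plane}, which would force $H$ itself to be a plane triangulation, contradicting the non-planarity hypothesis. Hence $|N_H(u)\cap N_H(v)|\geq 3$, and the same reasoning applied to a putative second edge $e'=u'v'$ with $H/e'$ a $4$-connected plane triangulation gives $|N_H(u')\cap N_H(v')|\geq 3$, with the two common-neighbour counts both equal to $|E(H)|-(3n-8)$.

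For the main argument, suppose for contradiction that $H/e$ and $H/e'$ are both $4$-connected plane triangulations, and first consider the subcase $\{u,v\}\cap\{u',v'\}=\emptyset$. In $T=H/e$, the edge $u'v'$ survives, and since $T$ is a $4$-connected plane triangulation every $3$-clique of $T$ is facial, so $|N_T(u')\cap N_T(v')|=2$. On the other hand, every common neighbour of $u',v'$ in $H$ other than $u$ or $v$ survives to $T$, while $u$ and $v$ collapse to a single vertex of $T$; a direct bookkeeping then forces $|N_H(u')\cap N_H(v')|=3$ with $\{u,v\}\subseteq N_H(u')\cap N_H(v')$. The symmetric analysis in $T'=H/e'$ gives $|N_H(u)\cap N_H(v)|=3$ with $\{u',v'\}\subseteq N_H(u)\cap N_H(v)$. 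Write $N_H(u')\cap N_H(v')=\{u,v,z\}$ and $N_H(u)\cap N_H(v)=\{u',v',z'\}$. If $z=z'$ then the five vertices $\{u,v,u',v',z\}$ induce a $K_5$ subgraph of $H$, which contracts to a $K_4$ subgraph of $T$ on $\{u_{\mathrm{merged}},u',v',z\}$; any $K_4$ inside a plane triangulation on at least five vertices contains a non-facial $3$-clique (equivalently, a separating triangle), contradicting the $4$-connectedness of $T$.

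The remaining subcases---$z\neq z'$ in the vertex-disjoint case and the case where $e$ and $e'$ share a vertex---are more delicate and constitute the main obstacle. The plan here is to exploit the structure of $\cT/e$: since all $3$-cliques of the $4$-connected plane triangulation $T$ are facial, a parity and multiplicity analysis together with Theorem~\ref{thm:plane} identifies $\cT/e$ with the face set of the unique plane embedding of $T$ (after ruling out a degenerate subcase where every triangle of $\cT$ contains $u$ or $v$, which can be excluded using the $3$-connectivity of $H$ and the fact that every edge of $\cT$ lies in at least two triangles). This fixes specific $3$-simplices in $\cT$, for example $\{u',v',z\}$ (the face of $T$ at edge $u'v'$ not containing the contracted vertex) and symmetrically $\{u,v,z'\}$. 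The even-parity condition on the edges $uv$ and $u'v'$ in $\cT$, together with the link-cycle structure at the contracted vertex (Lemma~\ref{lem:links}) and the common-neighbour bound of Lemma~\ref{lem:1.1.4}, then pin down which of $\{u,v,u'\}, \{u,v,v'\}, \{u,u',v'\}, \{v,u',v'\}$ lies in $\cT$; comparing the constraints coming from both $T$ and $T'$ yields a non-facial $3$-clique in one of the two triangulations, again contradicting $4$-connectedness. The challenging part is to coherently and simultaneously track the parity and link constraints from both contractions and extract the contradictory non-facial $3$-clique.
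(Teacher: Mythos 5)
Your opening edge-count argument is correct and matches the paper's first step: from $|E(H)|\ge 3|V(H)|-5$ (Theorem~\ref{thm:plane}, since $H$ is nonplanar) and $|E(H/e)|=3|V(H/e)|-6$ you get $|N_H(u)\cap N_H(v)|\ge 3$, and the fact that $uv$ is still an edge of the $4$-connected plane triangulation $H/e'$ forces the count to be exactly $3$. Your treatment of the vertex-disjoint subcase with $z=z'$ (the $K_5$ collapsing to a $K_4$ inside $T$) is also correct.

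However, the proposal is incomplete by your own admission: the subcases $z\neq z'$ and ``$e,e'$ share a vertex'' are only sketched as a plan, and that plan is considerably more involved than what is actually needed. The paper's proof avoids the case split on the relative position of $e$ and $e'$ entirely. After concluding $|N_H(u)\cap N_H(v)|=3$, the hypothesis on $e'$ plays no further role. The rest is a short argument confined to $\cT/e$: by parity (Lemma~\ref{lem:links}) $\{u,v\}$ lies in exactly two $2$-simplices of $\cT$, so there is a \emph{unique} common neighbour $z$ with $\{u,v,z\}\notin\cT$; then the contraction formula of Lemma~\ref{lem_cont_k-1} gives $|(\cT/e)_{uz}|=|\cT_{uz}|+|\cT_{bz}|\ge 4$ (both terms are even and positive). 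Since $H/e$ is a $4$-connected plane triangulation, $uz$ lies in only two distinct $3$-cliques of $H/e$, each of which can occur in the multiset $\cT/e$ with multiplicity at most $2$. This forces both to be doubled, which translates back to four specific $2$-simplices of $\cT$ that, together with the two through $uv$, form a copy of $\cL_2$; hence $\cT\cong\cL_2$ and $H/e\cong K_4$, contradicting $4$-connectivity.

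One misdirection in your plan is the proposal to ``identify $\cT/e$ with the face set of the unique plane embedding of $T$.'' That cannot be the right target, because the crux of the correct argument is that $\cT/e$ is \emph{not} a set: it must contain repeated $2$-simplices, and the multiplicity bound is exactly what yields the contradiction. Trying instead to match $\cT/e$ with the face set of $T$ and then ``simultaneously track parity and link constraints from both contractions'' adds case distinctions the paper shows are unnecessary, and you have not verified that this tracking actually closes. So there is a genuine gap: the hard cases are unproved, and the intended route through them is both more complicated and at least partly misconceived.
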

\begin{proof}
Suppose, for a contradiction, that $H/e'$ is a 4-connected plane triangulation. Suppose that $e=ab$.
The hypothesis that $H/e$ is 4-connected implies that $H/e$ has at least 5 vertices and so $\cT/e$ is a nontrivial simplicial $2$-circuit.
By assumption, $H$ is not planar and so $|E(H)| \geq 3|V(H)|-5$ by Theorem \ref{thm:plane}. But $|E(H/e)| \le 3|V(H/e)|-6$ since $H/e$ is planar, so $a,b$ must have at least 3 common neighbours in $H$. Since $H/e'$ is a 4-connected plane triangulation,  $a,b$ have exactly 2 common neighbours in $H/e'$. Therefore $a,b$ must have exactly 3 common neighbours in $H$.
Since $\cT$ is a nontrivial simplicial $2$-circuit, $\{a,b\}$ must be contained in exactly two 2-simplices of $\cT$ and one 3-clique of $H$ which is not a $2$-simplex of $\cT$. Suppose that $\{a,b,c\}, \{a,b,d\} \in \cT$ and
$\{a,b,z\} \not\in \cT$ is a 3-clique of $H$. Then by Lemma~\ref{lem_cont_k-1}, 
\begin{equation}
    \label{eqn:mult1}
    |(\cT/e)_{az}| = |\cT_{az}|+|\cT_{bz}| \geq 4. 
\end{equation}
Since $\cT$ does not contain any repeated 2-simplex, a $2$-simplex $\{a,z,p\}$ of $\cT/e$ is a repeated 2-simplex if and only if $\{a,z,p\}$ and $\{b,z,p\}$ are 2-simplices of $\cT$. In particular, 
since $\{c,d,z\} = N_H(a)\cap N_H(b)$,
$p \in \{c,d\}$.
Now, since $H/e$ is a 4-connected plane triangulation, $\{a,z\}$ is contained in exactly two $3$-cliques of $H/e$. Using (\ref{eqn:mult1}) and the observations in the previous two sentences, it follows that $\{a,z,c\}, \{b,z,c\},\{a,z,d\},\{b,z,d\}$ are all 2-simplices in $\cT$ and so $\cT$ contains 
$$ \cT' = \{ \{a,b,c,\},\{a,b,d\},\{a,z,c\}, \{b,z,c\},\{a,z,d\},\{b,z,d\}\},$$
which is a copy of the simplicial $2$-circuit $\cL_2$.
Since $\cT$ is a simplicial $2$-circuit, $\cT = \cT'$, contradicting
the hypothesis that $H/ab$ is 4-connected.
\end{proof}

\begin{proof}[\bf Proof of Theorem \ref{thm:globrigid_3_main}]
Necessity follows from Hendrickson's necessary conditions for global rigidity, Theorem \ref{thm:hendrickson}, using the fact that that plane triangulations do not have enough edges to be redundantly rigid. 
To prove sufficiency we suppose, for a 
contradiction, that $G=(V,E)$ is a non-planar, $4$-connected simplicial $2$-circuit graph which is not globally rigid and such that  $|E|$ is as small as possible. 
We first show:

\begin{claim}\label{clm:glob1_2}
For  each $e\in E$, there exists a simplicial $2$-circuit $\cT$ such that $G=G(\cT)$ and $\cT/e$ is a simplicial $2$-circuit. In particular, $G/e$ is a simplicial $2$-circuit graph for all $e\in E$.
\end{claim}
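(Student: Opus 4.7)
The plan is to mirror the argument used for the higher-dimensional analogue (the proof of Claim~\ref{clm:glob1_k}), replacing the global-rigidity condition on blocks by the weaker strong $3$-cleavage condition, which in dimension three also allows plane triangulations.

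First, choose any simplicial $2$-circuit $\cT_0$ with $G=G(\cT_0)$. Since $G$ is $4$-connected and non-planar we have $G\neq K_3,K_4$, so $\cT_0$ is non-trivial. Take a Fogelsanger decomposition $\{\cT_1,\dots,\cT_m\}$ of $\cT_0$ with respect to $e$ and set $G_i=G(\cT_i)$. By Lemma~\ref{lem:3}\ref{en1:0}, each $\cT_i/e$ is a simplicial $2$-circuit, so if $G_i=G$ for some $i$ we simply set $\cT=\cT_i$ and are done. Otherwise, Lemma~\ref{lem:3}\ref{en1:c} gives $|E(G_i)|<|E|$ for every $i$, and by Lemma~\ref{lem:3}\ref{en1:d} we may order the decomposition so that $(\bigcup_{j<i}G_j)\cap G_i$ contains a copy of $K_3$ for all $2\le i\le m$.

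Next, I verify that each $G_i$ has the strong $3$-cleavage property. By Lemma~\ref{lem:1.1.2}(a), $G_i$ has the $3$-cleavage property. Let $D$ be a $4$-block of $G_i$. By Lemma~\ref{lem:mincleavage}, $D$ is itself a simplicial $2$-circuit graph, and $|E(D)|\le|E(G_i)|<|E|$. If $D=K_4$ it is globally rigid; if $D$ is $4$-connected and non-planar, the minimality of our counterexample $G$ forces $D$ to be globally rigid in $\R^3$; finally, if $D$ is $4$-connected and planar, then Fogelsanger's Rigidity Theorem (Theorem~\ref{thm:fogelsanger}) gives $|E(D)|\ge 3|V(D)|-6$, while planarity gives the reverse inequality, so Theorem~\ref{thm:plane} implies $D$ is a plane triangulation. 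In all cases, $D$ satisfies the strong $3$-cleavage condition.

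Finally, iterated application of Corollary~\ref{cor:cleavage_globrigid} along the chosen ordering (whose consecutive overlaps contain $K_3$) shows that $H_j=\bigcup_{i=1}^{j}G_i$ has the strong $3$-cleavage property for every $j$, and in particular $H_m=G$ does (here we use Lemma~\ref{lem:3}\ref{en1:c} to get $\bigcup_i G_i=G$). But $G$ is itself $4$-connected, so its unique $4$-block is $G$, which must therefore be either a plane triangulation or globally rigid in $\R^3$. Since $G$ is non-planar by hypothesis, it must be globally rigid, contradicting the choice of $G$. The main subtlety, compared to the higher-dimensional case, is the planar alternative in the definition of the strong $3$-cleavage property, which is handled cleanly by Theorem~\ref{thm:plane} as above.
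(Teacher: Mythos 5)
Your proposal is correct and follows essentially the same approach as the paper: a Fogelsanger decomposition, an appeal to minimality of the counterexample together with Lemma~\ref{lem:mincleavage} to establish the strong $3$-cleavage property for each $G_i$, and iterated use of Corollary~\ref{cor:cleavage_globrigid} to transfer this to $G$. The only difference is that you spell out in detail why a $4$-connected planar $4$-block $D$ of some $G_i$ must in fact be a plane triangulation (via Fogelsanger's rigidity bound plus Theorem~\ref{thm:plane}); the paper leaves this step implicit, so your version is a modest but harmless elaboration of the same argument.
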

\begin{proof}
Choose a simplicial $2$-circuit $\cT$ such that $G=G(\cT)$.  Since 
$G\neq K_{3}$,  $\cT$ is non-trivial. Let $\{\cT_1,\cT_2,\ldots,\cT_m\}$ be a Fogelsanger decomposition of $\cT$ with respect to $e$ and put
$G(\cT_i)=G_i=(V_i,E_i)$ for all $1\leq i\leq m$. 
By Lemma~\ref{lem:3}\ref{en1:0}, $\cT_i/e$ is a simplicial $2$-circuit. Thus, if $E_i=E$ for some $1\leq i\leq m$, then we can put $G=G(\cT_i)$ and the claim holds 
since $\cT_i/e$ is a simplicial $2$-circuit. 
Hence, we may assume that $E_i\neq E$  and Lemma~\ref{lem:3}\ref{en1:c} now gives 
$|E_i|<|E|$ for all $1\leq i\leq m$.  By Lemma~\ref{lem:3}\ref{en1:d}, we can also assume the Fogelsanger  decomposition of $\cT$ is ordered so that $(\bigcup_{j=1}^{i-1} G_j)\cap G_i$ contains a copy of $K_{3}$ for all $2\leq i\leq m$.  

Since each $G_i$ is the graph of a non-trivial simplicial $2$-circuit, it has the  $3$-cleavage property. The minimality  of $G$ and Lemma~\ref{lem:mincleavage} now imply that  every $4$-block of each $G_i$ is  either a plane triangulation or is globally rigid   in $\R^{3}$ 
and hence each  $G_i$ has the strong $3$-cleavage property.
An easy induction using Corollary \ref{cor:cleavage_globrigid} now implies that  $H_j:=\bigcup_{i=1}^{j} G_i$ has the strong $3$-cleavage property
for all $1\leq j\leq m$. 
Thus $H_m=G$ has the strong $3$-cleavage property. Since $G$ is 4-connected and non-planar,
$G$ is globally rigid. This contradicts the choice of $G$ and completes the proof of 
the claim.
\end{proof}

We next show:

\begin{claim}\label{clm:glob2}
$G/e$ is $4$-connected for all $e\in E$.
\end{claim}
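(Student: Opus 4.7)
The plan is a proof by contradiction patterned on the derivation of (\ref{eq:glob2_k}) in the proof of Theorem~\ref{thm:globrigid_d_main}, with additional care for the planarity subtleties specific to dimension three. Suppose $G/e$ is not 4-connected for some $e = uv \in E$. Since $G$ is 4-connected, $G$ must possess a 4-vertex separator $X_e$ with $\{u,v\} \subseteq X_e$. By Claim~\ref{clm:glob1_2}, choose $\cT$ with $G = G(\cT)$ and $\cT/e$ a simplicial 2-circuit. Let $H$ be a component of $G - X_e$, set $\cT_1 = \{U \in \cT : U \subseteq V(H) \cup X_e\}$ and $\cT_2 = \cT \sm \cT_1$. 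Then $\partial \cT_1 = \partial \cT_2$ is a non-empty simplicial 1-cycle with $V(\partial \cT_1) \subseteq X_e$, so $|V(\partial \cT_1)| \in \{3,4\}$.

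If $|V(\partial \cT_1)| = 3$, then Lemma~\ref{lem:34sep_a} produces simplicial 2-circuits $\cT_i^+ = \cT_i \cup \{V(\partial \cT_1)\}$ for $i = 1,2$, whose graphs $G(\cT_i^+)$ are proper subgraphs of $G$ sharing a copy of $K_3$ on $V(\partial \cT_1)$. By the minimal choice of $G$ together with Lemma~\ref{lem:mincleavage}, each $G(\cT_i^+)$ has the strong 3-cleavage property, and Theorem~\ref{thm:cleavage_globrigid} applied to $G = G(\cT_1^+) \cup G(\cT_2^+)$ then implies $G$ is globally rigid in $\R^3$, contradicting our assumption. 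If instead $|V(\partial \cT_1)| = 4$, then $V(\partial \cT_1) = X_e$ and Lemma~\ref{lem:34sep_b}(a) gives $\partial \cT_1 = \cL_1$, namely a 4-cycle $u_1u_2u_3u_4u_1$ on $X_e$. Since $\cT/uv$ is a simplicial 2-circuit, Lemma~\ref{lem:34sep_b}(b) shows $\{u,v\}$ cannot be a diagonal of this cycle, so without loss of generality $\{u,v\} = \{u_1,u_2\}$.

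For either choice of diagonal $\{w,z\} \subseteq X_e$ of the 4-cycle, Lemma~\ref{lem:34sep_b}(b) supplies a simplicial 2-circuit $\cT_i^+ := \cT_i \triangle \{X_e - w, X_e - z\}$ whose graph is essentially $G(\cT_i)$ together with the opposite diagonal. Lemma~\ref{lem:4sep4con} lets us pick the diagonal so that $G[V(H) \cup X_e]$ with this diagonal added is 4-connected, and symmetrically for the other side. Using Lemma~\ref{lem:braced} to absorb any additional $X_e$-edges, we upgrade each side to the strong 3-cleavage property, and by the minimal choice of $G$ together with Lemma~\ref{lem:mincleavage} each $G(\cT_i^+)$ is a proper simplicial 2-circuit graph with the strong 3-cleavage property. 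Since $G(\cT_1^+) \cap G(\cT_2^+)$ then contains a triangle on $X_e$ (formed by the added diagonal together with two adjacent sides of the 4-cycle), Theorem~\ref{thm:cleavage_globrigid} yields that $G$ is globally rigid, the final contradiction. The principal obstacle is the octahedral exception in Lemma~\ref{lem:4sep4con}, which can occur when one side of $X_e$ has just a single vertex; we plan to handle it separately, using the non-planarity of $G$ together with Lemma~\ref{lem:planecontractions} to rule out the corresponding small configurations. A secondary technical point is that $G(\cT_i)$ may be a strict subgraph of $G[V(H) \cup X_e]$, missing $X_e$-diagonals that are present in $G$ only via 2-simplices of the opposite side; this gap is bridged by the edge-addition allowed in Lemma~\ref{lem:braced}.
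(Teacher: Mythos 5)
Your outline is faithful to the paper's structure through the $3$-cycle case, and the identification of the $4$-cycle $C$ via Lemmas~\ref{lem:34sep_a} and~\ref{lem:34sep_b} is correct. However, the final step of your $4$-cycle argument contains a genuine gap.

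You propose to apply Theorem~\ref{thm:cleavage_globrigid} to $G(\cT_1^+)$ and $G(\cT_2^+)$, where $\cT_i^+ = \cT_i \triangle \{X_e - w, X_e - z\}$. But the two added triangles $X_e - w$ and $X_e - z$ both contain the diagonal $u_2 u_4$ of $C$, so $G(\cT_i^+) = G(\cT_i) + u_2u_4$ on each side. Consequently $G(\cT_1^+) \cup G(\cT_2^+) = G + u_2u_4$. If $u_2u_4 \in E(G)$, this equals $G$ and your argument succeeds (this is the easy subcase, and the paper dispatches it by pointing to the last two paragraphs of the proof of Theorem~\ref{thm:globrigid_d_main}). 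But the crux of the $4$-cycle case is precisely when $G[X_e] = C$, i.e.\ \emph{neither} diagonal lies in $G$; this can happen when $k=2$ because $G(\cL_1) = C_4$ is not $K_4$ minus an edge, unlike $G(\cL_{k-1})$ for $k \ge 3$. In that case Theorem~\ref{thm:cleavage_globrigid} gives global rigidity of the strictly larger graph $G + u_2u_4$, which does not imply global rigidity of $G$ — the contradiction you seek never materialises. Your ``secondary technical point'' misdiagnoses the issue: the problem is not that $G(\cT_i)$ is missing edges of $G$, but that $G(\cT_i^+)$ contains an edge absent from $G$, so Lemma~\ref{lem:braced} cannot bridge the gap.

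The paper handles $G[X_e] = C$ with a different mechanism. Neither diagonal is present, and none of the four triangles on $X_e$ belongs to $\cT$. Applying Theorem~\ref{thm:plane} and the fact that $|E(G_1')| = |E(G_1'')|$ shows that $G_1'$, $G_1''$ are either both planar or both non-planar, and likewise for $G_2'$, $G_2''$. The paper then argues that one of the two sides must have both augmentations non-planar (otherwise $G$ itself would be a plane triangulation, contradicting the hypothesis). On that side, the relevant $G_i'$ is $4$-connected (via Lemma~\ref{lem:4sep4con}) and non-planar, hence globally rigid by induction, so $G_i' - wy$ is rigid by Hendrickson's Theorem. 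The other side $G_j' + K(X_e)$ is globally rigid, and then Theorem~\ref{lem:globunion} — the rooted-minor gluing theorem, \emph{not} Theorem~\ref{thm:cleavage_globrigid} — is invoked with $H = (X_e, wy)$ to conclude that $G = (G_i' - wy) \cup (G_j' - wy)$ is globally rigid. This use of Theorem~\ref{lem:globunion} is essential and cannot be replaced by the cleavage-gluing theorem: the latter requires $G_1 \cup G_2 = G$, which fails here. You would need to incorporate this argument (or find another way around the missing-diagonal obstruction) for the proof to be complete.
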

\begin{proof}
		Suppose, for a contradiction, that $G/e$ is not $4$-connected for some $e\in E$. Then $e$ is induced by  a 4-vertex separator $X_e=\{w,x,y,z\}$ of $G$. By Claim \ref{clm:glob1_2}, we can choose a simplicial 2-circuit $\cT$ such that $G=G(\cT)$ and $\cT/e$ is a simplicial 2-circuit.
Let $H_1$ be a component of $G-X_e$, $\cT_1=\{T\in \cT: T\subseteq  V(H_1)\cup X_e\}$ and $\cT_2=\cT\sm \cT_1$. Then Lemma~\ref{lem:1} and the fact that $\cT$ is a simplicial 2-circuit give $\partial \cT_1=E(C)=\partial \cT_2$ for some cycle $C$ of $G$  of length three or four with $V(C)\subseteq X_e$.

Suppose $C$ is a 3-cycle. By symmetry we can assume $V(C)=\{x,y,z\}$. Then Lemma~\ref{lem:34sep_a} implies that $\{x,y,z\}\not\in\cT$ and $\cT_i'=\cT_i\cup \{\{x,y,z\}\}$ is a simplicial 2-circuit for both $i=1,2$. We can now apply induction to 
$G_i=G(\cT_i')$ to deduce that $G_i$ has the strong $3$-cleavage property
for both $i=1,2$. Then $K(\{x,y,z\})\subseteq G_1\cap G_2$ and hence  $G=G_1\cup G_2$ has the strong $3$-cleavage property by Corollary \ref{cor:cleavage_globrigid}. Since $G$ is 4-connected and non-planar, this implies that $G$ is globally rigid, a contradiction.

Hence $C$ is a 4-cycle.   We can assume by symmetry that $C=wxyzw$ and $e=wx$. 
Since $\cT/e$ is a simplicial 2-circuit, we can apply Lemma~\ref{lem:34sep_b} to both non-adjacent pairs of vertices in $C$ to deduce that $\cT_i'=\cT_i\triangle \{\{w,x,y\},\{w,y,z\}\}$ and $\cT_i''=\cT_i\triangle \{\{w,x,z\},\{x,y,z\}\}$\ are simplicial 2-circuits for both $i=1,2$. Let $G_i'=G(\cT_i')$ and $G_i''=G(\cT_i'')$ for $i=1,2$. 
If $G[X_e]$ contains $xy$ or $wy$, then we can show that $G$ is globally rigid as in the last two paragraphs of the proof of 
Theorem~\ref{thm:globrigid_d_main}.
This would contradict the choice of $G$ so we have $G[X_e]=K(X_e)-xz-wy= C$
and hence none of the 2-simplices $\{w,x,y\},\{w,y,z\},\{w,x,z\},\{x,y,z\}$ belong to $\cT$.

Suppose $G_1'$ and $G_1''$ are both non-planar.
Then the graph obtained from $G(\cT_1)$ by adding a vertex of 
degree $4$ that is adjacent to $w,x,y,z$ is not planar (since it has $G_1'$ as a minor), and, in particular, is not the octahedral graph.
Using Lemma~\ref{lem:4sep4con} and symmetry, we may assume that $G_1'$ is 4-connected. 
Since $G_1'$ is non-planar, it is globally rigid by induction, and hence $G_1'-wy$ is rigid by Hendrickson's theorem (Theorem~\ref{thm:hendrickson}). 
On the other hand, $G_2'+K(X_e)$ has the strong $3$-cleavage property by induction and Lemma~\ref{lem:braced}, and it is globally rigid by 4-connectivity. 
We can now apply Theorem \ref{lem:globunion} (with $G_1:=G_1'-wy$,  $G_2:=G_2'-wy$ and $H:=(X_e,wy)$) to deduce that $G$ is globally rigid. 
 
Hence at least one of  $G_1', G_1''$ is planar. Since $|E(G_1')|=|E(G_1'')|$, Theorem \ref{thm:plane} implies that both $G_1', G_1''$ are planar.
If either $G_2'$ or $G_2''$ is a plane triangulation then this would imply that $|E(G)|=3|V(G)|-6$ and we could now apply  Theorem \ref{thm:plane} to deduce that  $G$ is a plane triangulation and contradict the choice of $G$.
Hence $G_2'$ and $G_2''$ are  both non-planar and we can proceed as in the previous paragraph to deduce that $G$ is globally rigid. This again contradicts the choice of $G$ and completes the proof of the claim.
\end{proof}

We can now complete the proof of the theorem. By Claims \ref{clm:glob1_2} and \ref{clm:glob2}, $G/e$ is a 4-connected simplicial circuit graph for every $e \in E$. If $G/e$ is nonplanar then, by the minimal choice of $E$, $G/e$ is globally rigid and then, by Theorem~\ref{thm:vertex_splitting_k}, $G$ is globally rigid contradicting our choice of $G$. Therefore $G/e$ is a $4$-connected plane triangulation for every $e \in E$ and, by Lemma~\ref{lem:planecontractions}, $G$ is a plane triangulation, again contradicting our choice of $G$.

\end{proof}

\section{Coincident rigidity}
\label{sec:coincidence}

In this section, we prove Theorem~\ref{thm:vertex_splitting_k}.
A key ingredient in our proof is the concept of `coincident rigidity'.
Given a graph $G=(V,E)$ and $u,v\in V$, we say that a realisation $p:V\to \R^d$ of $G$ is {\em $uv$-coincident} if $p(u)=p(v)$, and that $G$ is {\em $uv$-coincident rigid in $\R^d$} if $G$ has an infinitesimally rigid $uv$-coincident realisation in $\R^d$. It can be seen that $G$ is {$uv$-coincident rigid  in $\R^d$} if and only if every generic $uv$-coincident realisation of $G$ in $\R^d$ is infinitesimally rigid (where a {\em  generic $uv$-coincident realisation} is one in which the coordinates of $p|_{V-v}$ are algebraically independent).   
The following theorem states that  $uv$-coincidence rigidity is a sufficient condition for the vertex splitting operation to preserve global rigidity.

\begin{theorem}[\cite{J}]\label{thm:vsplit} Let $H$ be a graph which is globally rigid  in $\R^d$ and $v$ be a vertex of $H$. Suppose that $G$ is obtained
from $H$ by a vertex splitting operation which splits $v$ into two vertices $u$ and $v$,  and that $G$ is $uv$-coincident rigid  in $\R^d$. Then
$G$ is globally rigid  in $\R^d$.
\end{theorem}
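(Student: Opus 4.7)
The plan is to invoke the stress-matrix characterisation of generic global rigidity due to Connelly~\cite{C} and Gortler, Healy and Thurston~\cite{GHT}: a generic framework in $\R^d$ with at least $d+2$ vertices is globally rigid if and only if it admits an equilibrium stress whose stress matrix has rank $|V|-d-1$. The strategy is to lift an equilibrium stress on $H$, supplied by its global rigidity, to an equilibrium stress on $G$, and then use the $uv$-coincident rigidity of $G$ to certify that the lifted stress matrix attains the required rank.

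Let $(G,p)$ be a generic realisation of $G$ in $\R^d$, and let $(H,p_H)$ be the corresponding realisation of $H$ obtained by unsplitting, i.e., setting $p_H(v)=p(u)$ and $p_H(w)=p(w)$ for $w\neq u,v$. Then $(H,p_H)$ is generic, so the global rigidity of $H$ yields an equilibrium stress $\omega_H$ on $(H,p_H)$ whose stress matrix has rank $|V(H)|-d-1$. I would lift $\omega_H$ to a function $\omega_G$ on $E(G)$ by copying $\omega_H$ on edges disjoint from $\{u,v\}$; for $w\in N_G(u)\cap N_G(v)$, splitting $\omega_H(vw)$ as $\omega_G(uw)+\omega_G(vw)$; assigning the unique value of $\omega_H$ on edges with exactly one endpoint in $\{u,v\}$; and setting $\omega_G(uv)=\alpha$ for a new parameter. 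Equilibrium at vertices outside $\{u,v\}$ is automatic, while equilibrium at $u$ (which forces equilibrium at $v$ by the equilibrium of $\omega_H$ at $p_H(v)$) becomes a linear system in the splitting parameters and $\alpha$ that, thanks to the vertex-splitting hypothesis $|N_G(u)\cap N_G(v)|\geq d-1$, admits a solution.

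To analyse the rank of the resulting stress matrix $\Omega_G$, I would first evaluate it at a generic $uv$-coincident realisation $(G,p_0)$ with $p_0(u)=p_0(v)=p_H(v)$ and $p_0=p_H$ elsewhere. Because $\omega_G$ collapses to $\omega_H$ under the identification $u=v$, the matrix $\Omega_G$ restricted to the subspace $\{x:x_u=x_v\}$ equals $\Omega_H$, giving $\rank\Omega_G\geq |V(H)|-d-1=|V(G)|-d-2$. The missing unit of rank is then supplied by the $uv$-coincident rigidity of $G$: it forces the space of infinitesimal motions of $(G,p_0)$ to have codimension exactly $\binom{d+1}{2}$ inside the $uv$-coincident motion space, and translating this via the orthogonality between infinitesimal motions and equilibrium stresses allows the splitting parameters and $\alpha$ to be chosen so that the $u$- and $v$-rows of $\Omega_G$ are linearly independent modulo the trivial affine directions. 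This boosts $\rank\Omega_G$ to the required $|V(G)|-d-1$.

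The final step is a semicontinuity argument to transfer the rank statement from $p_0$ to the generic realisation $p$: matrix rank is lower-semicontinuous and maximised at generic parameters, so $(G,p)$ also admits an equilibrium stress of rank $|V(G)|-d-1$, and the Connelly--Gortler--Healy--Thurston criterion gives global rigidity of $G$ in $\R^d$. The hardest step will be the rank-boosting in the previous paragraph, where the coincident rigidity hypothesis must be converted into linear independence of the $u$- and $v$-rows of $\Omega_G$; I expect this to require a careful computation of the kernel dimension of the coincident rigidity matrix and a direct use of the orthogonality between infinitesimal motions and equilibrium stresses at $(G,p_0)$.
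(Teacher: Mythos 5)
Your framework---lift a full-rank equilibrium stress of $(H,p_H)$ to an equilibrium stress of the coincident framework $(G,p_0)$, tune it so that its stress matrix $\Omega_G$ has rank $|V(G)|-d-1$, and transfer this to a generic $(G,p)$ by semicontinuity---is the natural line of attack, and your tuning idea for $\alpha=\omega_G(uv)$ is sound: if a lift $\omega_G$ existed, adding $\alpha L_{uv}$ to $\Omega_G$ would raise its rank to $|V(G)|-d-1$, because any nontrivial kernel vector $x$ of $\Omega_G$ must have $x_u\neq x_v$ (else its collapse would be a nontrivial kernel vector of $\Omega_H$, contradicting $\operatorname{rank}\Omega_H=|V(H)|-d-1$), so $e_u-e_v$ is not in the column space of $\Omega_G$. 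The gap is that the lift does \emph{not} exist in the critical case $|N_G(u)\cap N_G(v)|=d-1$, which is precisely the case of Conjecture~\ref{conj:vertex_splitting}. Equilibrium at $u$ in $(G,p_0)$ gives $d$ scalar equations in the $d-1$ splitting parameters $\omega_G(uw)$, $w\in N_G(u)\cap N_G(v)$ (and $\alpha$ drops out since $p_0(u)=p_0(v)$), so the system is overdetermined and generically inconsistent. Equivalently, consider the collapse map $\gamma\colon S(G,p_0)\to S(H,p_H)$ that sums stress coefficients on parallel edges. For generic $p_0$ its kernel is exactly the one-dimensional space of stresses supported on $uv$: the $uv$-row of $R(G,p_0)$ vanishes, and equilibrium at $u$ kills all other coefficients at $u,v$ because the $d-1$ vectors $p_0(u)-p_0(w)$ are independent. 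But $\dim S(G,p_0)=\dim S(H,p_H)$, since $(G,p_0)$ is infinitesimally rigid (this is the coincident rigidity hypothesis) and $|E(G)|=|E(H)|+d$; both equal $|E(H)|+d-d|V(G)|+\binom{d+1}{2}$. Hence $\gamma$ has codimension-one image and a generic full-rank $\omega_H$ has no preimage. For $H=K_{d+2}$ the image of $\gamma$ is $\{0\}$, the only stress of $(G,p_0)$ is the one supported on $uv$, and $\Omega_G$ has rank one rather than $|V(G)|-d-1$.

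The upshot is that the maximal rank of a stress matrix genuinely jumps between the coincident realisation $p_0$ and a generic $p$, so lower semicontinuity cannot close the gap: it only says the generic rank is at least what you see at $p_0$, but at $p_0$ you cannot get enough. (Your construction does go through when $|N_G(u)\cap N_G(v)|\geq d$, since then $\dim\ker\gamma=|N_G(u)\cap N_G(v)|-d+1$ and $\gamma$ is surjective; that, however, is the easy case.) A correct proof must produce a full-rank stress of the generic $(G,p)$ by other means, for instance by working directly with the family $S(G,p_t)$ along a path from $p$ to $p_0$ and analysing the limit in the Grassmannian, rather than by attempting to lift a stress of $H$ through the collapse map.
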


In light of Theorem \ref{thm:vsplit}
this section is devoted to proving the following two results on coincident rigidity for simplicial $k$-circuit graphs. Even 
though we only need the case where $u$ and $v$ are adjacent for Theorem ~\ref{thm:vertex_splitting_k}, our inductive proofs require a weaker hypothesis which allows $u$ and $v$ to be non-adjacent.

\begin{theorem}\label{thm:coin_k} 
    For $k\geq 3$ let $\cS$ be a simplicial $k$-circuit and $u,v$ a pair of  distinct vertices of $\cS$. Suppose that $G(\cS)$ is $(k+2)$-connected and $G(\cS)/uv$ is rigid in $\mathbb R^{k+1}$. Then $G(\cS)$ is $uv$-coincident rigid in $\mathbb R^{k+1}$.
\end{theorem}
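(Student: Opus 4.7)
We prove Theorem~\ref{thm:coin_k} by induction on $|V(\cS)|$. The base cases are handled by the classifications in Lemmas~\ref{lem:1.1.1} and~\ref{lem:1.1.3}: when $|V(\cS)|\leq k+3$, $\cS$ is isomorphic to $\cK_k$ or $\cL_k$, whose graphs for $k\geq 3$ contain $K_{k+2}-e$ and are directly verified to be $uv$-coincident rigid whenever $G(\cS)/uv$ is rigid.

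For the inductive step, we split according to the number $c=|N_{G(\cS)}(u)\cap N_{G(\cS)}(v)|$ of common neighbours. The easy case is $c\geq k+1$ (which covers in particular all instances with $uv\notin E(\cS)$, since the hard case below forces adjacency via Lemma~\ref{lem:1.1.4}(b)). At a generic $uv$-coincident realisation $p$ with $p(u)=p(v)$, decompose each infinitesimal velocity $\dot p$ into a symmetric part $(\dot u=\dot v)$ and an antisymmetric part $(\dot u=-\dot v$, other velocities zero$)$ parametrised by $a\in\R^{k+1}$. The constraints $(p(u)-p(w))\cdot a=0$ arising from common neighbours $w$ kill the antisymmetric part once $c\geq k+1=\dim$, while the symmetric part identifies with an infinitesimal motion of $G(\cS)/uv$, which by hypothesis is rigid; a standard accounting of the image overlap between the symmetric and antisymmetric blocks of the rigidity matrix then shows $(G(\cS),p)$ is infinitesimally rigid.

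The hard case is $c=k$, in which Lemma~\ref{lem:1.1.4}(b) gives $uv\in E(\cS)$ and $lk_{\cS}(uv)\cong \cK_{k-2}$. We take a Fogelsanger decomposition $\{\cS_1^+,\ldots,\cS_m^+\}$ of $\cS$ with respect to $uv$. By Lemma~\ref{lem:3}, each $\cS_i^+$ is a non-trivial simplicial $k$-circuit containing the edge $uv$ and each $\cS_i^+/uv$ is a simplicial $k$-circuit, so $G(\cS_i^+/uv)$ is rigid by Fogelsanger's theorem. When $m\geq 2$ each $\cS_i^+$ is strictly smaller than $\cS$; we apply the induction hypothesis to each $(k+2)$-block of $G(\cS_i^+)$ containing both $u$ and $v$ (each being a $(k+2)$-connected simplicial $k$-circuit graph by Lemma~\ref{lem:mincleavage}), and use Theorem~\ref{thm:globrigid_d_main} on smaller instances to handle those $(k+2)$-blocks not containing both. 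A coincident-rigidity adaptation of Corollary~\ref{cor:cleavage_globrigid} then combines these blocks to give $uv$-coincident rigidity of each $G(\cS_i^+)$. The remaining subcase $m=1$, where $\cS/uv$ is itself a simplicial $k$-circuit, is reduced by selecting an auxiliary edge $xy\neq uv$ whose contraction yields a strictly smaller simplicial $k$-cycle, followed by a vertex-splitting argument adapted to the coincident setting.

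The principal obstacle lies in the final gluing of the $G(\cS_i^+)$ back into $G(\cS)$: the standard infinitesimal rigidity gluing property requires the intersection's affine span at the realisation to have dimension at least $d-1=k$, but at a $uv$-coincident realisation the $(k+1)$-clique intersection guaranteed by Lemma~\ref{lem:3}(d) containing $\{u,v\}$ contributes only $k-1$ distinct affine dimensions. We overcome this by exploiting the $(k+2)$-connectivity of $G(\cS)$: any vertex separator of one piece from the union of earlier pieces must have size at least $k+2$, so there is always at least one additional common vertex beyond the guaranteed $(k+1)$-clique, restoring the required $k$-dimensional affine span and permitting the coincident gluing to proceed. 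Iterating this gluing yields $uv$-coincident rigidity of $G(\cS)$, completing the induction.
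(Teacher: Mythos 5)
Your proposal has two genuine gaps that would prevent it from becoming a proof, and also misses the key structural idea that the paper uses to handle the adjacent case.

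\paragraph{The non-adjacent case is not disposed of.} You claim that the easy case $c\geq k+1$ ``covers in particular all instances with $uv\notin E(\cS)$, since the hard case below forces adjacency via Lemma~\ref{lem:1.1.4}(b).'' This reverses the direction of that lemma: Lemma~\ref{lem:1.1.4}(b) assumes $uv\in E(\cS)$ and concludes $c\geq k$; it says nothing about non-adjacent pairs, and indeed a simplicial $k$-circuit can easily contain non-adjacent $u,v$ with $c\leq k$ (even $c=0$). The paper handles $uv\notin E(\cS)$, $c\leq k$ by a dedicated induction (Lemma~\ref{lem:b1}), which takes real work: a minimal counterexample is contracted along a \emph{normal} edge (one not incident to $u,v$ and for which $\{u,v,x,y\}$ does not become a $C_4$ or $K_4$ after adding $uv$), a Fogelsanger decomposition with respect to that edge is analysed, and only then can the existence of such a normal edge be ruled out via Lemma~\ref{lem:no-normal}. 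Omitting this step leaves an unhandled branch in the case split.

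\paragraph{Your Fogelsanger decomposition collapses in the hard case.} When $uv\in E(\cS)$ and $c=|N_G(u)\cap N_G(v)|=k$, Lemma~\ref{lem:jim} shows that $\cS/uv$ is already a simplicial $k$-circuit, so the Fogelsanger decomposition of $\cS$ with respect to $uv$ is trivial: $m=1$ and $\cS_1^+=\cS$. Your $m\geq 2$ branch is therefore vacuous, the ``coincident-rigidity adaptation of Corollary~\ref{cor:cleavage_globrigid}'' is never used, and the entire content of the hard case sits inside your one-sentence sketch of the $m=1$ subcase (``selecting an auxiliary edge \ldots followed by a vertex-splitting argument adapted to the coincident setting''). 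That sketch is where all the difficulty is, and it is not developed.

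\paragraph{What the paper actually does.} The paper's treatment of the adjacent case is quite different. Having established $c=k$ and $lk_\cS(uv)\cong\cK_{k-2}$, it sets $\cT=\cK_k$ on the vertex set $X\cup\{u,v\}$ and passes to $\cS\triangle\cT$, which is a nonempty simplicial $k$-cycle. The crucial observation, available only because $k\geq 3$, is that $G(\cS\triangle\cT)\subseteq G-uv$ and spans $V$. If $\cS\triangle\cT$ is a circuit, Lemma~\ref{lem:b1} applies directly (now $uv$ is not an edge of that circuit). If it splits as $\cU\sqcup\cV$ with $X\cup\{u\}\in\cU$, $X\cup\{v\}\in\cV$, one shows both are circuits and either $v\in V(\cU)$ (again apply Lemma~\ref{lem:b1}) or $v\notin V(\cU)$, $u\notin V(\cV)$ and the $(k+2)$-connectivity of $G$ forces $|V(\cU)\cap V(\cV)|\geq k+1$, whence Lemma~\ref{lem:rigidsubgraphs} gives the conclusion. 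Thus the entire adjacent case is reduced to the non-adjacent case via the trick of symmetric-differencing with $\cK_k$ to erase the edge $uv$. Your proposal has no analogue of this reduction, and the gluing and affine-span considerations you mention in your final paragraph would be needed only if you were gluing nontrivial Fogelsanger pieces, which, as noted, never occurs here.
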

\begin{theorem}\label{thm:coin_2}
    
    Let $\cS$ be a simplicial $2$-circuit 
    and $u,v$ a pair of distinct vertices of $\cS$ such that either $G(\cS)$ is not a plane triangulation or $uv \not\in E(\cS)$. Suppose that $G(\cS)$ is $4$-connected and $G(\cS)/uv$ is rigid in $\mathbb R^3$. Then $G(\cS)$ is $uv$-coincident rigid in $\mathbb R^3$. 
        
\end{theorem}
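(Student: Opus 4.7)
The plan is induction on $|V(\cS)|$. Let $(G,p)$ be a generic $uv$-coincident realisation of $G := G(\cS)$, so $p(u) = p(v)$ but the remaining coordinates of $p$ are algebraically independent. The aim is to show that every infinitesimal motion $\dot p$ of $(G,p)$ is trivial. The first step is a simple linear-algebraic reduction: for each common neighbour $w \in N_G(u) \cap N_G(v)$, subtracting the edge constraints arising from $uw$ and $vw$ yields $(p(w) - p(u)) \cdot (\dot p(u) - \dot p(v)) = 0$, so $\dot p(u) - \dot p(v)$ lies in the orthogonal complement of $\spa\{p(w) - p(u) : w \in N_G(u) \cap N_G(v)\}$. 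When $|N_G(u) \cap N_G(v)| \geq 3$, genericity of $p$ makes this span equal to $\mathbb R^3$, forcing $\dot p(u) = \dot p(v)$; the motion then descends to an infinitesimal motion of $(G/uv, \tilde p)$, which is trivial by the rigidity hypothesis, and hence so is $\dot p$. This resolves every case with at least three common neighbours, and in particular handles the bulk of the $uv \notin E(\cS)$ situation.

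The substantive case is $|N_G(u) \cap N_G(v)| = 2$. By Lemma~\ref{lem:1.1.4}(b) this forces $uv \in E(\cS)$ and $\cS_{\{u,v\}}$ to consist of exactly two $2$-simplices. I would then take a Fogelsanger decomposition $\{\cS_1^+, \ldots, \cS_m^+\}$ of $\cS$ with respect to $uv$. By Lemma~\ref{lem:3}(a) each $\cS_i^+/uv$ is a simplicial $2$-circuit, so Fogelsanger's Rigidity Theorem (Theorem~\ref{thm:fogelsanger}) gives that $G(\cS_i^+)/uv$ is rigid in $\mathbb R^3$. Applying the inductive hypothesis to each $\cS_i^+$---passing first to its $4$-blocks via Lemma~\ref{lem:mincleavage} when $G(\cS_i^+)$ is not itself $4$-connected---yields $uv$-coincident rigidity of each piece (or its blocks). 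The pieces are then assembled into a $uv$-coincident rigid realisation of $G(\cS)$ by iteratively gluing along the shared non-facial triangles containing $\{u,v\}$ that are guaranteed by Lemma~\ref{lem:3}(d). The case $uv\notin E(\cS)$ with $|N_G(u)\cap N_G(v)|\leq 2$ is reduced to an analogous decomposition at the non-adjacent pair, using the variant of the Fogelsanger construction for non-edges that is alluded to in Section~\ref{sec:fog}.

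The principal obstacle is the final gluing step. Since the guaranteed shared triangles contain the coincident pair $\{u,v\}$, they provide only two affinely independent geometric positions, one short of what the usual rigidity gluing property requires in $\mathbb R^3$. To bridge this gap I would use a coincident analogue of Theorem~\ref{lem:globunion}, exploiting that the accumulated union of Fogelsanger pieces generically shares vertices beyond just the guaranteed triangle, and that a rooted-minor argument can promote these extra intersections into the missing infinitesimal constraint. A secondary complication is that a Fogelsanger piece $G(\cS_i^+)$ may itself be a plane triangulation with $uv \in E$---a configuration excluded from the hypothesis of the theorem---in which case it must be absorbed into its neighbours in the decomposition using the strong $3$-cleavage framework of Section~\ref{sec:strong_cleavage} before the inductive hypothesis can be applied.
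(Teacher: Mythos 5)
Your reduction to the case $|N_G(u)\cap N_G(v)|\le 2$ is fine and matches the paper's Lemma~\ref{lem:neighbor_inclusion} (and Lemma~\ref{lem:b1} disposes of $uv\notin E$). But the central engine you propose for the remaining case --- a Fogelsanger decomposition of $\cS$ \emph{with respect to the edge $uv$ itself} --- cannot work, and this is the whole substance of the theorem.

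When $uv\in E(\cS)$ and $|N_G(u)\cap N_G(v)|=2=k$, Lemma~\ref{lem:jim} tells you that $\cS/uv$ is already a simplicial $2$-circuit. The Fogelsanger decomposition of $\cS$ with respect to $uv$ is therefore trivial: $m=1$ and $\cS_1^+=\cS$. There is nothing smaller to apply induction to, and the plan stalls at its very first step. This is not an accident. Contracting $uv$ is exactly the wrong move here, because a $uv$-coincident realisation already identifies $p(u)$ and $p(v)$: there is no information to be gained by collapsing them combinatorially. Even if the decomposition were nontrivial, your own diagnosis of the gluing deficit (the shared cliques guaranteed by Lemma~\ref{lem:3}(d) contain $\{u,v\}$, which project to only two distinct points under coincidence) stands, and your proposed fix is not available: there is no coincident analogue of Theorem~\ref{lem:globunion} in the paper, and Lemma~\ref{lem:rigidsubgraphs} requires $u\in V(H_1)\setminus V(H_2)$ and $v\in V(H_2)\setminus V(H_1)$, which is precisely \emph{not} the Fogelsanger situation where every piece contains both $u$ and $v$. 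Likewise, absorbing a planar Fogelsanger piece via the "strong $3$-cleavage framework" of Section~\ref{sec:strong_cleavage} does not help: that framework certifies \emph{global} rigidity, not coincident infinitesimal rigidity; the paper instead relies on Theorem~\ref{lem:coinbraced} for braced plane triangulations.

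The paper's actual route is different in kind. In the hard case it picks a \emph{normal} edge $f=xy$ (not incident to $u$ or $v$, and such that $\{u,v,x,y\}$ does not induce $C_4$ or $K_4$ in $G+uv$), so that contracting $f$ preserves $|N(u)\cap N(v)|$ and, by Lemma~\ref{lem:normal2}, $f$ is $uv$-admissible. The Fogelsanger decomposition is taken with respect to $f$, not $uv$, and Lemma~\ref{lem:split_uv} is used to ascend from $G/f$ back to $G$. The bulk of the proof (Claims~\ref{clm:circuitchange} and~\ref{clm:nonormal2}, together with Lemma~\ref{lem:planecontractions}) is devoted to showing there is at least one normal edge $f$ with $G/f$ a $4$-connected non-planar simplicial circuit graph, after which a minimal-counterexample argument closes out. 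None of that machinery appears in your sketch, and the one decomposition you do invoke degenerates. You would need to rebuild the argument around contractions of edges other than $uv$.
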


The proofs of Theorems~\ref{thm:coin_k} and~\ref{thm:coin_2} are given in 
Section~\ref{sec:coin_2_k}.

\begin{proof}[\bf Proof of Theorem \ref{thm:vertex_splitting_k}]
Let $\cS$ be a simplicial $k$-complex satisfying $G(\cS) = G$ and suppose that $e=uv$. Since $G/uv$ is globally rigid in $\mathbb R^{k+1}$, $G/uv$ is rigid in $\mathbb R^{k+1}$. 

In the case that $k \geq 3$, Theorem \ref{thm:coin_k} implies that $G$ is $uv$-coincident rigid in $\mathbb R^{k+1}$ and then Theorem \ref{thm:vsplit} implies that $G$ is globally rigid in $\mathbb R^{k+1}$. 

Now suppose $k=2$. Suppose, for a contradiction, that $G$ is a plane triangulation. Since $G$ is 4-connected, $G/e$ is a plane triangulation and the hypothesis that $G/e$ is globally rigid in $\mathbb R^3$ implies that $G/e = K_4$. Therefore $|V(G)| = 5$ which is a contradiction since there is no 4-connected plane triangulation with 5 vertices. So $G$ is  not a plane triangulation and by Theorem \ref{thm:coin_2}, $G$ is $uv$-coincident rigid in $\mathbb R^3$ and Theorem \ref{thm:vsplit} implies that $G$ is globally rigid in $\mathbb R^3$.
\end{proof}

\subsection{Sufficient conditions for coincident rigidity}
\label{subsec:pre_coin}

We will derive several sufficient conditions for coincident rigidity in $\R^d$ which we will use in Section \ref{sec:coin_2_k}.

Let $G$ be a graph with $u, v\in V(G)$ and $f=xy\in E(G)$.
We will use the vertex splitting lemma,  Lemma~\ref{lem:split}, as a key tool to verify the $uv$-coincident rigidity of $G$ from the $uv$-coincident rigidity of $G/f$. Since $uv$-coincident realisations are not generic, we need to be careful to choose $f$ in such a way that we can apply  Lemma~\ref{lem:split}. 
This motivates our next definition.
We say that the edge $f=xy$ is {\em $uv$-admissible (in $\R^d$)} if
$x$ and $y$ have a set $N$ of  $d-1$ common neighbours  in $G$ with $\{u,v\}\not\subseteq N\cup \{x,y\}$.
\begin{lemma}\label{lem:split_uv}
Let $G$ be a graph, $u, v\in V(G)$, and $f=xy\in E(G)$.
Suppose that $f$ is $uv$-admissible and $G/f$ is $uv$-coincident rigid in $\R^d$.
Then $G$ is $uv$-coincident rigid in $\R^d$.
\end{lemma}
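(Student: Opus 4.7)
The plan is to lift an infinitesimally rigid $uv$-coincident realisation of $G/f$ to one of $G$ via the vertex splitting lemma (Lemma~\ref{lem:split}). By $uv$-admissibility, not both of $u,v$ lie in $\{x,y\}$, so by choosing the direction of contraction (equivalently, swapping the roles of $x$ and $y$) we may assume $y \notin \{u,v\}$; this ensures $u,v\in V(G/f)=V(G)-y$ so that the hypothesis ``$G/f$ is $uv$-coincident rigid'' is meaningful, and supplies a generic $uv$-coincident infinitesimally rigid realisation $q$ of $G/f$.

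I would then define $p:V(G)\to\R^d$ by $p(w)=q(w)$ for $w\neq y$ and choose $p(y)$ via Lemma~\ref{lem:split}. Automatically $p(u)=q(u)=q(v)=p(v)$ since $y\notin\{u,v\}$, so $p$ is $uv$-coincident. The graph $G$ is obtained from $G/f$ by splitting the merged vertex $x$ into $x$ and $y$, and the $d-1$ common neighbours of $x,y$ in $G$ required by the splitting lemma are supplied by the set $N$ from the admissibility hypothesis.

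The main obstacle is verifying the affine independence hypothesis of Lemma~\ref{lem:split}, namely that the $d$ points $\{q(w):w\in N+x\}$ are affinely independent in $\R^d$, at a realisation which is non-generic due to the coincidence $q(u)=q(v)$. This is precisely what the second clause of the $uv$-admissibility definition, $\{u,v\}\not\subseteq N\cup\{x,y\}$, is designed to overcome: by swapping $u$ and $v$ if necessary we may assume $v\notin N+x$, so every $w\in N+x$ lies in $V(G/f)-v$. By the definition of a generic $uv$-coincident realisation, the coordinates of $q|_{V(G/f)-v}$ are algebraically independent over $\mathbb{Q}$, and hence so are the coordinates of $\{q(w):w\in N+x\}$ (even if $u$ happens to lie in $N+x$, the values $q(u)=q(v)$ are still algebraically independent from the remaining coordinates because $u\in V(G/f)-v$). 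Algebraic independence of $d$ points in $\R^d$ implies affine independence.

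With this hypothesis verified, Lemma~\ref{lem:split} yields an infinitesimally rigid realisation $p'$ of $G$ agreeing with $q$ on $V(G/f)-x$ and with $p'(x)=q(x)$. Since $u,v\neq y$, $p'(u)=q(u)=q(v)=p'(v)$, so $p'$ is a $uv$-coincident infinitesimally rigid realisation of $G$, completing the proof.
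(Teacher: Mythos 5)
Your proposal is correct and takes essentially the same route as the paper: apply Whiteley's vertex-splitting lemma (Lemma~\ref{lem:split}) to a generic $uv$-coincident realisation of $G/f$, using the $uv$-admissibility condition $\{u,v\}\not\subseteq N\cup\{x,y\}$ to ensure the affine-independence hypothesis of that lemma is satisfied at the (non-generic) coincident realisation. The paper's proof is much terser---it merely notes that $\{p(w):w\in N\cup\{x\}\}$ is in general position for any generic $uv$-coincident realisation and cites Lemma~\ref{lem:split}---but your explicit bookkeeping (picking the direction of contraction so $y\notin\{u,v\}$ and relabelling $u,v$ so $v\notin N+x$) is exactly the correct expansion of what is left implicit there.
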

\begin{proof}
By the definition of $uv$-admissibility, $x$ and $y$ have a set $N$ of  $d-1$ common neighbours  in $G$ with $\{u,v\}\not\subseteq N\cup \{x,y\}$.
If $y$ is contracted to $x$ in $G/f$,  then $\{p(w): w\in N\cup \{x\}\}$ is in general position for any generic $uv$-coincident realisation of $G/f$, and we can apply Lemma~\ref{lem:split} to conclude that $G$ is $uv$-coincident rigid.
\end{proof}

Given two vertices $u,v$ in a graph $G=(V,E)$, 
the {\em $uv$-coincident closure of $G$ in $\mathbb R^d$}
is the graph ${\rm cl}_d^{uv}(G)$ obtained from $G$ by adding all edges $wz\in K(V)$ satisfying $\rank R(G+wz,p)=\rank R(G,p)$ for some generic $uv$-coincident realisation $p$ of $G$ in $\R^d$. Note that the spaces of infinitesimal motions of $(G,p)$ and $({\rm cl}_d^{uv}(G),p)$ will be the same for any generic $uv$-coincident realisation $p:V\to \R^d$.

\begin{lemma}\label{lem:uvrigidsubgraph}
    Let $G=(V,E)$ be a graph and $u, v$ be distinct vertices in $V$ such that $G/uv$ is rigid in $\R^d$.
    Suppose that 
    $u$ and $v$ are contained in a   subgraph $H$ of ${\rm cl}_d^{uv}(G)$ which is $uv$-coincident rigid in $\R^d$. 
    Then $G$ is $uv$-coincident rigid in $\R^d$.
\end{lemma}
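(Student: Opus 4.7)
The plan is to take a generic $uv$-coincident realisation $p$ of $G$ and show that $(G,p)$ is infinitesimally rigid. By the remark immediately following the definition of ${\rm cl}_d^{uv}(G)$, the frameworks $(G,p)$ and $({\rm cl}_d^{uv}(G),p)$ have the same space of infinitesimal motions, so it will suffice to verify that every infinitesimal motion $\dot p$ of $({\rm cl}_d^{uv}(G),p)$ is trivial.

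First I would look at the restriction $\dot p|_{V(H)}$. Since $H\subseteq {\rm cl}_d^{uv}(G)$ this is an infinitesimal motion of $(H,p|_{V(H)})$, and $p|_{V(H)}$ is a generic $uv$-coincident realisation of $H$, so the hypothesis of $uv$-coincident rigidity of $H$ forces $\dot p|_{V(H)}$ to be the restriction of an infinitesimal isometry $\phi$ of $\R^d$. After replacing $\dot p$ by $\dot p-\phi$ (still a motion of ${\rm cl}_d^{uv}(G)$) I may assume $\dot p\equiv 0$ on $V(H)$; in particular $\dot p(u)=\dot p(v)=0$.

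Next I would push $\dot p$ down to the contraction $G/uv$. Define $p':V(G/uv)\to\R^d$ by $p'(x)=p(x)$ for $x\neq v$; this is well defined because $p(u)=p(v)$, and it is a generic realisation of $G/uv$ since the coordinates of $p|_{V\sm\{v\}}$ are algebraically independent. Define $\dot p'$ analogously. A direct check, splitting into the case of an edge of $G/uv$ coming from an edge of $G$ not incident to $v$ versus an edge of the form $ux\in E(G/uv)$ arising from $vx\in E(G)$, uses $\dot p(u)=\dot p(v)=0$ together with $p(u)=p(v)$ to show that $\dot p'$ is an infinitesimal motion of $(G/uv,p')$. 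Since $G/uv$ is rigid in $\R^d$ and $p'$ is generic, $\dot p'$ is the restriction of some infinitesimal isometry $\psi$ of $\R^d$.

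The final step is to show that $\psi=0$, which will give $\dot p\equiv 0$ on $V\sm\{v\}$ and, together with $\dot p(v)=0$, the desired conclusion $\dot p=0$. The isometry $\psi$ vanishes on $V(H)\sm\{v\}$, which has size at least $d$ because any $uv$-coincident rigid graph satisfies $|V(H)|\geq d+1$; by the genericity of $p$ these points are in general position and so affinely span a subspace of dimension at least $d-1$. Since any skew-symmetric $d\times d$ matrix has even rank, its kernel cannot have dimension exactly $d-1$, and hence any infinitesimal isometry that vanishes on a set affinely spanning a $(d-1)$-dimensional subspace must be identically zero. The main technical points to be careful about are the edge-by-edge compatibility verification in the projection step, and the small-case observation that $uv$-coincident rigidity of $H$ forces $|V(H)|\geq d+1$, which is precisely what allows the final affine-span argument to close the proof.
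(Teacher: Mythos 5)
Your proof is correct and follows essentially the same approach as the paper's: use $uv$-coincident rigidity of $H$ to constrain the motion at $u$ and $v$, project down to $G/uv$, and invoke the rigidity of $G/uv$. The paper phrases the argument as a contradiction and is more compact (it observes directly that $q(u)=q(v)$ forces the restriction $q'$ to be a non-trivial infinitesimal motion of $(G/uv,p')$, and this already contradicts rigidity of $G/uv$); your version normalises by subtracting the isometry $\phi$ and then argues explicitly that the residual isometry $\psi$ vanishes, which adds an extra step (and the $|V(H)|\geq d+1$ observation plus the skew-symmetric rank discussion) that the contradiction formulation avoids, but the underlying mathematical content is identical.
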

\begin{proof}
Suppose, for a contradiction,  that $p$ is a generic $uv$-coincident realisation of $G$ and that $(G,p)$ has a non-trivial infinitesimal motion $q$.
Since $H$  is $uv$-coincident rigid in $\R^d$, we have $q(u)=q(v)$.
We may assume that  $\{u,v\}$ is contracted onto $u$ in $G/uv$.
Let $p'$ and $q'$ be the restrictions of $p$ and $q$ to $V-v$.
Since $p(u)=p(v)$ and $q(u)=q(v)$, $q'$ is a non-trivial infinitesimal motion of $(G/uv,p')$.
Since $p'$ is generic, this contradicts the rigidity of $G/uv$ in $\R^d$.
\end{proof}

\begin{lemma}\label{lem:neighbor_inclusion}
    Let $G = (V,E)$ be a graph and  $u,v$ be distinct vertices in $G$ such that $|N_G(u) \cap N_G(v)| \geq d$. Then $G$ is $uv$-coincident rigid in 
    $\mathbb R^d$ if and only if $G/uv$ is rigid in $\mathbb R^d$.
\end{lemma}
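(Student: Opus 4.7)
The plan is to verify both directions directly from the definition of infinitesimal rigidity. The ``only if'' direction does not in fact require the common neighbour hypothesis: starting from an infinitesimally rigid $uv$-coincident generic realisation $(G,p)$ with $p(u)=p(v)$, I will set $p' := p|_{V-v}$ and show that $(G/uv, p')$ is infinitesimally rigid. Given any infinitesimal motion $q'$ of $(G/uv, p')$, extend it to $q \colon V \to \R^d$ by $q(v) := q'(u)$; then the edge constraint for $uv \in E(G)$ (if present) vanishes since $p(u) = p(v)$, and each constraint on an edge $vw \in E(G)$ with $w \neq u$ coincides with the constraint on the corresponding edge $uw \in E(G/uv)$, so $q$ is an infinitesimal motion of $(G,p)$. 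Infinitesimal rigidity of $(G,p)$ forces $q$ (and hence $q'$) to be trivial. Since $p'$ is a generic realisation of $G/uv$, Gluck's theorem then gives that $G/uv$ is rigid in $\R^d$.

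For the ``if'' direction, I will fix a generic realisation $p'$ of $G/uv$ at which $(G/uv, p')$ is infinitesimally rigid, and extend it to a $uv$-coincident generic realisation $p$ of $G$ by setting $p(v) := p(u)$. The key step is to show that any infinitesimal motion $q$ of $(G,p)$ satisfies $q(u) = q(v)$. For each common neighbour $w \in N_G(u) \cap N_G(v)$, the edge constraints for $uw$ and $vw$ combined with $p(u) = p(v)$ yield
\[
\langle p(u) - p(w),\, q(u) - q(v) \rangle = 0.
\]
Because $p'$ is generic and $|N_G(u) \cap N_G(v)| \geq d$, the vectors $\{p(u) - p(w) : w \in N_G(u) \cap N_G(v)\}$ span $\R^d$, which forces $q(u) = q(v)$.

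Once this equality is in hand, the same matching of edge constraints used in the forward direction shows that $q|_{V-v}$ is an infinitesimal motion of $(G/uv, p')$, hence trivial: it is the restriction of an infinitesimal isometry $\iota$ of $\R^d$. Since $p(v) = p(u)$, the value $q(v) = q(u) = \iota(p(u)) = \iota(p(v))$ is consistent with evaluating $\iota$ at $p(v)$, so the full motion $q$ is also the restriction of $\iota$ and is therefore trivial. Hence $(G,p)$ is infinitesimally rigid, proving that $G$ is $uv$-coincident rigid. The argument is essentially routine; the only substantive observation is the orthogonality calculation that exploits the $d$ common neighbours to force $q(u)=q(v)$, and no serious obstacle is anticipated.
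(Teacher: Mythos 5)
Your proof is correct and uses essentially the same key idea as the paper's: the orthogonality relations coming from the edges $uw$ and $vw$ for $d$ common neighbours $w$, combined with genericity of $p|_{V-v}$, force $q(u)=q(v)$ for any infinitesimal motion of a generic $uv$-coincident framework $(G,p)$. The paper phrases this more compactly by noting that $q \mapsto q|_{V-v}$ gives a linear isomorphism between the motion spaces of $(G,p)$ and $(G/uv, p|_{V-v})$, which yields both directions at once; your splitting into two directions (with the observation that the ``only if'' direction needs no common-neighbour hypothesis) is a minor expository variant of the same argument.
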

\begin{proof}
    
    Let $w_1,\cdots,w_d$ be $d$ common neighbours of $u,v$ in $G$.
    Suppose that $(G,p)$ is a generic $uv$-coincident framework and let $q:V \to \mathbb R^d$ be an 
    infinitesimal motion of $(G,p)$. For $i =1,\cdots,d$ we have 
    \begin{eqnarray*}
        q(u)\cdot(p(u) - p(w_i)) &= & q(w_i)\cdot(p(u)-p(w_i)) \\
        &= & q(w_i)\cdot(p(v)-p(w_i)) \\
        &= & q(v)\cdot(p(v)-p(w_i)) \\
        &= & q(v)\cdot(p(u) - p(w_i) \\
    \end{eqnarray*}
    Since $p$ is a generic $uv$-coincident realisation it follows that $\{p(u) - p(w_i):i = 1,\cdots, d\}$ is a basis for 
    $\mathbb R^d$. Therefore $q(u) = q(v)$ for any motion of $(G,p)$. This implies that the space of infinitesimal motions of $(G/uv,p|_{V\setminus \{v\}})$ is linearly isomorphic to the space of infinitesimal motions of $(G,p)$. The lemma follows immediately from this.
    
\end{proof}

\begin{lemma}\label{lem:rigidsubgraphs}
Let $G=(V,E)$ be a graph and $u, v$ be distinct vertices in $V$ such that $G/uv$ is rigid in $\R^d$. Suppose that $H_1,H_2\subseteq G$ are rigid in $\R^d$  with $u\in V(H_1)\sm V(H_2)$ and $v\in V(H_2)\sm V(H_1)$. Suppose further that $|V(H_1)\cap V(H_2)|\geq d-1$ and that, if equality holds, then there exists an edge $wz\neq uv$ in ${\rm cl}_d^{uv}(G)$ with $w\in V(H_1)\sm V(H_2)$ and $z\in V(H_2)\sm V(H_1)$.
Then $G$ is $uv$-coincident rigid in $\R^d$.
\end{lemma}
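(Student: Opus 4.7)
The plan is to build a $uv$-coincident rigid subgraph $H$ of ${\rm cl}_d^{uv}(G)$ that contains both $u$ and $v$, and then to invoke Lemma~\ref{lem:uvrigidsubgraph} (which uses the hypothesis that $G/uv$ is rigid in $\R^d$) to conclude that $G$ itself is $uv$-coincident rigid. Fix a generic $uv$-coincident realisation $(G,p)$. Since $u\in V(H_1)\sm V(H_2)$ and $v\in V(H_2)\sm V(H_1)$, each $H_i$ contains at most one of $\{u,v\}$, so the constraint $p(u)=p(v)$ forces no coincidence on the vertices of $H_i$. Thus $p|_{V(H_i)}$ is generic and, since $H_i$ is rigid in $\R^d$, $(H_i,p|_{V(H_i)})$ is infinitesimally rigid.

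First I would dispatch the easy case $|V(H_1)\cap V(H_2)|\geq d$. Then $V(H_1)\cap V(H_2)$ contains $d$ algebraically independent points, which are affinely independent and therefore span a $(d-1)$-flat. The infinitesimal rigidity gluing property then yields that $(H_1\cup H_2,p)$ is infinitesimally rigid, so setting $H=H_1\cup H_2\subseteq G\subseteq {\rm cl}_d^{uv}(G)$ works.

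The substantive case is $|V(H_1)\cap V(H_2)|=d-1$. Here the $d-1$ shared generic points span only a $(d-2)$-flat $L$, and gluing by itself no longer forces infinitesimal rigidity. A direct calculation with the Lie algebra of the Euclidean group, matching the infinitesimal isometries of $H_1$ and $H_2$ on $V(H_1)\cap V(H_2)$, shows that modulo trivial motions the infinitesimal motion space of $(H_1\cup H_2,p)$ is one-dimensional, spanned by an infinitesimal rotation of $H_2$ relative to $H_1$ about $L$. Let $H=H_1\cup H_2+wz$, where $wz\neq uv$ is the assumed edge in ${\rm cl}_d^{uv}(G)$ with $w\in V(H_1)\sm V(H_2)$ and $z\in V(H_2)\sm V(H_1)$. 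For generic $p$ the points $p(w)$ and $p(z)$ lie off $L$, and the infinitesimal rotation about $L$ changes $\|p(w)-p(z)\|$ to first order, so the row of the rigidity matrix of $(H,p)$ indexed by $wz$ is independent of the rows coming from $H_1\cup H_2$. Hence $(H,p)$ is infinitesimally rigid, $H\subseteq {\rm cl}_d^{uv}(G)$ contains $u$ and $v$, and Lemma~\ref{lem:uvrigidsubgraph} again finishes the argument.

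The delicate point I expect to require the most care is the last step of the second case: one must check that the rotational motion really does alter $\|p(w)-p(z)\|$ infinitesimally, especially in the subcase where $w=u$ or $z=v$ (so that $p(w)$ or $p(z)$ is pinned to the coincident value $p(u)=p(v)$). Here the assumption $wz\neq uv$ is what guarantees that $p(w)\neq p(z)$ generically, so the bar has positive length; combined with the fact that generically $p(w)\notin L$ and $p(z)\notin L$ (since $w\in V(H_1)\sm V(H_2)$ and $z\in V(H_2)\sm V(H_1)$, and $H_1,H_2$ are rigid on generic point sets), this ensures that the single remaining rotational degree of freedom is indeed killed by the edge $wz$.
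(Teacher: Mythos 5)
Your proof is correct and follows essentially the same route as the paper: restrict $p$ to each $H_i$ (observing it is generic there since $H_i$ contains at most one of $u,v$), glue when $|V(H_1)\cap V(H_2)|\geq d$, and when $|V(H_1)\cap V(H_2)|=d-1$ note the one remaining motion is a rotation about the $(d-2)$-flat, which the edge $wz\in\cl_d^{uv}(G)$ kills, then invoke Lemma~\ref{lem:uvrigidsubgraph}. The only difference is that you spell out the genericity checks (why $p(w)\neq p(z)$ and $p(w),p(z)\notin L$, including the subcases $w=u$ or $z=v$) which the paper treats as immediate; this is sound supplementary detail, not a new idea.
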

\begin{proof}
Choose a generic $uv$-coincident realisation $p$ of $H_1\cup H_2$  in $\R^d$. Then $(H_1,p|_{H_1})$ and $(H_2,p|_{H_2})$ are infinitesimally rigid. If
$|V(H_1)\cap V(H_2)|\geq d$ then we can apply the 
infinitesimal rigidity gluing property for frameworks
to $(H_1,p|_{H_1})$ and $(H_2,p|_{H_2})$ to deduce that $(H_1\cup H_2,p)$ is infinitesimally rigid. Then $G$ is $uv$-coincident rigid in $\R^d$ by Lemma~\ref{lem:uvrigidsubgraph}. Hence we may assume that $|V(H_1)\cap V(H_2)|= d-1$. Then the only infinitesimal motion of $(H_1\cup H_2,p)$ which fixes $H_1$ is a rotation of $H_2$ about the affine subspace spanned by $p(V(H_1)\cap V(H_2))$. Since this rotation would change the length of $wz$, $((H_1\cup H_2)+wz,p)$ is infinitesimally rigid in $\R^d$. Then $G$ is $uv$-coincident rigid in $\R^d$ by Lemma~\ref{lem:uvrigidsubgraph}.
\end{proof}

\subsection{Proofs of Theorems \ref{thm:coin_k} and Theorem \ref{thm:coin_2}}
\label{sec:coin_2_k}
%
Lemma~\ref{lem:neighbor_inclusion} will allow us to easily reduce to the case that $u,v$ have at most $k$ common neighbours. 
In that case our strategy will be to consider a Fogelsanger decomposition $\cS_1^+,\cdots, \cS_m^+$ of $\cS$ with respect to a suitably chosen edge $f$, apply induction to $\cS_i^+/f$ and then use Lemma~\ref{lem:split_uv} to return to $\cS_i^+$ and ultimately to $\cS$.

Given that the number of common neighbours of $u$ and $v$ plays an important role in the proofs, it is natural to consider edges whose contraction does not change that number. Let $G$ be a graph and $u,v\in V(G)$. We say that an edge $f=xy$ of $G$ is {\em normal with respect to $\{u,v\}$} if $f$ is not incident to $u$ or $v$ and  $\{u,v,x,y\}$ does not induce $C_4$ or $K_4$ in $G+uv$. Note that, if $f$ is normal with respect to $\{u,v\}$, then
\begin{equation}
    \label{eqn:normal1}
    \mbox{$|N_G(u)\cap N_G(v)| = |N_{G/f}(u) \cap N_{G/f}(v)|$.}
\end{equation}
We also note that if $G$ is the graph of a simplicial $k$-circuit and $f$ is normal with respect to $uv$ then it is also $uv$-admissible in $\mathbb R^{k+1}$. Indeed, by Lemma~\ref{lem:1.1.4} we have the following stronger implication.

\begin{lemma}
    \label{lem:normal2}
    Suppose that $\cS$ is a nontrivial simplicial $k$-circuit for some $k \geq 2$, $u,v$ are distinct vertices of $\cS$ and $xy \in E(\cS)$ is not incident to $\{u,v\}$.
    If $\{u,v,x,y\}$ does not induce $K_4$ in $G(\cS)+uv$ then $xy$ is  $uv$-admissible in $\mathbb R^{k+1}$.
\end{lemma}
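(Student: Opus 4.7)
The plan is to apply Lemma~\ref{lem:1.1.4}(b) to the edge $xy$ and then argue that the hypothesis on $\{u,v,x,y\}$ ensures we can exhibit a common neighbourhood set $N$ of the required size that omits at least one of $u,v$. Recall that $uv$-admissibility in $\R^{k+1}$ requires $x$ and $y$ to have a common neighbour set $N$ in $G(\cS)$ with $|N|=k$ and $\{u,v\}\not\subseteq N\cup\{x,y\}$. Since $xy$ is not incident to $\{u,v\}$, the latter simplifies to $\{u,v\}\not\subseteq N$.

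First I would apply Lemma~\ref{lem:1.1.4}(b) to the edge $xy\in E(\cS)$, which gives $|N_{G(\cS)}(x)\cap N_{G(\cS)}(y)|\geq k$. This is the crucial input that produces enough candidate common neighbours to form $N$. The remainder of the argument is a short case analysis on whether this intersection has size exactly $k$ or strictly larger.

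In the case $|N_{G(\cS)}(x)\cap N_{G(\cS)}(y)|\geq k+1$, I would simply choose any $k$-element subset $N$ of this intersection that omits $u$ (or $v$); such a subset exists for cardinality reasons, so $\{u,v\}\not\subseteq N$. In the case $|N_{G(\cS)}(x)\cap N_{G(\cS)}(y)|=k$, one is forced to take $N=N_{G(\cS)}(x)\cap N_{G(\cS)}(y)$, and it remains to verify that $\{u,v\}\not\subseteq N$. But if we had $\{u,v\}\subseteq N$, then $ux,uy,vx,vy$ would all be edges of $G(\cS)$; together with the edge $xy\in E(\cS)$ and the added edge $uv$, this would make $\{u,v,x,y\}$ induce a $K_4$ in $G(\cS)+uv$, contradicting the hypothesis.

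There is no serious obstacle here: the lemma is essentially a bookkeeping consequence of Lemma~\ref{lem:1.1.4}(b) together with the forbidden-$K_4$ condition, and the only subtlety is handling the extremal case where $x$ and $y$ share exactly $k$ common neighbours, which is precisely where the hypothesis on $\{u,v,x,y\}$ is needed.
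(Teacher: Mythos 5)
Your proof is correct and follows essentially the same route as the paper: use Lemma~\ref{lem:1.1.4}(b) to get $|N_{G(\cS)}(x)\cap N_{G(\cS)}(y)|\geq k$, then use the forbidden-$K_4$ hypothesis to extract a $k$-element common neighbour set $N$ with $\{u,v\}\not\subseteq N$. The paper handles it in one stroke by noting that the $K_4$ hypothesis directly gives $\{u,v\}\not\subseteq N_{G(\cS)}(x)\cap N_{G(\cS)}(y)$ (so any $k$-subset works), whereas you split into the cases $|N_{G(\cS)}(x)\cap N_{G(\cS)}(y)|\geq k+1$ and $=k$; this is a harmless stylistic difference, and in particular your first case does not even need the $K_4$ hypothesis.
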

\begin{proof}
    Let $X = N_{G(\cS)}(x) \cap N_{G(\cS)}(y)$. Lemma~\ref{lem:1.1.4} implies that $|X| \geq k$ and the hypothesis that $\{u,v,x,y\}$ does not induce $K_4$ in $G(\cS)+uv$ implies that $\{u,v\} \not\subseteq X$ which proves the lemma.
\end{proof}

\begin{lemma}
    \label{lem:no-normal}
    Suppose that $k \geq 2$, $\cS$ is a nontrivial simplicial $k$-circuit, $u,v \in V(\cS)$,  and there is no edge of $\cS$ that is normal with respect to $\{u,v\}$. Then $V(\cS) = \{u,v\} \cup (N_G(u) \cap N_G(v))$.
\end{lemma}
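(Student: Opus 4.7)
The plan is to partition $V(\cS)$ based on adjacency to $\{u,v\}$, use the hypothesis to restrict which edges can exist, and then derive a contradiction from the $(k+1)$-clique structure of $k$-simplices. Specifically, set
\[
A = N_G(u)\cap N_G(v),\quad B = N_G(u)\setminus(N_G(v)\cup\{v\}),\quad C = N_G(v)\setminus(N_G(u)\cup\{u\}),
\]
and let $D = V(\cS)\setminus(\{u,v\}\cup A\cup B\cup C)$ be the vertices adjacent to neither $u$ nor $v$. The goal is to show $B=C=D=\emptyset$, which gives $V(\cS)=\{u,v\}\cup(N_G(u)\cap N_G(v))$.

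First, I would carry out the routine case analysis to classify which edges $xy\in E(G(\cS))$ with $\{x,y\}\cap\{u,v\}=\emptyset$ fail to be normal. Since $uv\in E(G+uv)$ and $xy\in E$ are two vertex-disjoint edges of the induced subgraph on $\{u,v,x,y\}$, the only possibilities for this induced subgraph to be $K_4$ or $C_4$ are: (i) both $x,y\in A$ (giving $K_4$); or (ii) $\{x,y\}$ has one vertex in $B$ and the other in $C$ (giving $C_4$, since the resulting 4-cycle is $u\text{-}v\text{-}x\text{-}y\text{-}u$ or its relabelling). All other edges among $V(\cS)\setminus\{u,v\}$ would be normal, so by hypothesis they do not exist. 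In particular, there are no edges within $B$, within $C$, within $D$, between $A$ and $B\cup C\cup D$, or between $\{B,C\}$ and $D$.

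Next, the fact that $G(\cS)$ is $(k+1)$-connected (Lemma~\ref{lem:1.1.2}) implies $G(\cS)$ has no isolated vertices. Since a vertex of $D$ has no neighbours in $\{u,v\}$ by definition of $D$, and no neighbours in $A\cup B\cup C\cup D$ by the previous paragraph, we conclude $D=\emptyset$.

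Finally, I would show $B=\emptyset$ (so $C=\emptyset$ follows by symmetry). Suppose for contradiction that $b\in B$. The edge restrictions above imply $N_G(b)\subseteq\{u\}\cup C$. Pick any $k$-simplex $S\in\cS$ containing $b$; such an $S$ exists because $b\in V(\cS)$. Every other vertex of $S$ is adjacent to $b$ in $G(\cS)$, so $S\subseteq\{b,u\}\cup C$. Furthermore, $S$ is a $(k+1)$-clique of $G(\cS)$, but there are no edges within $C$ and $u$ has no neighbours in $C$. Hence $S$ can contain at most one vertex of $C$ together with $b$ and possibly $u$, so $|S|\le 3$. Since $|S|=k+1\geq 3$, equality must hold at $k=2$, giving a $2$-simplex $S=\{b,u,c\}$ with $c\in C$; but then $u\sim c$, contradicting $c\in C$. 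For $k\geq 3$ the bound $|S|\le 3$ already contradicts $|S|=k+1$. In either case we obtain a contradiction, so $B=\emptyset$, and the lemma follows.

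I do not anticipate a serious obstacle here: the edge-classification is mechanical, and the key leverage is that a $k$-simplex forces a $(k+1)$-clique, which the sparse bipartite-like structure between $B$ and $C$ simply cannot accommodate for $k\geq 2$.
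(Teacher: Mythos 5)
Your proof is correct, and it takes a genuinely more direct route than the paper's. Both proofs begin with the same edge classification (edges not incident to $u$ or $v$ can only lie within $A = N_G(u)\cap N_G(v)$ or between $B = N_G(u)\setminus N_G(v)$ and $C = N_G(v)\setminus N_G(u)$) and the same observation that vertices adjacent to neither $u$ nor $v$ would be isolated. From there the paper takes a two-step detour: it first invokes the $(k+1)$-connectivity of $G(\cS)$ (via Lemma~\ref{lem:1.1.2}, or rigidity in $\R^{k+1}$) to argue that $\{u,v\}$ would be a $2$-vertex separator if both $A$ and $B\cup C$ were nonempty, and therefore one of them vanishes; it then rules out $A=\emptyset$ by examining an arbitrary $2$-face. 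You instead anchor directly on a hypothetical vertex $b\in B$: any $k$-simplex $S\ni b$ is a $(k+1)$-clique contained in $N_G(b)\cup\{b\}\subseteq\{b,u\}\cup C$, and since $E[C]=\emptyset$ and $u$ has no neighbour in $C$, such a clique has at most $2$ vertices --- an immediate contradiction with $|S|=k+1\geq 3$. This bypasses the connectivity lemma entirely and avoids the case split, so your argument is both self-contained and shorter. One minor stylistic point: your intermediate bound $|S|\leq 3$ is weaker than necessary --- the fact that $u$ has no neighbours in $C$ already forces $|S|\leq 2$, which would let you skip the separate treatment of $k=2$ versus $k\geq 3$ --- but you recover this correctly in the next sentence, so there is no gap.
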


\begin{proof}
    Let $G = (V,E) = G(\cS)$, $X = N_G(u) \cap N_G(v)$, $Y = N_G(u)\setminus N_G(v)$, $Z = N_G(v)
    \setminus N_G(v)$. Since $G$ has no normal edge it follows that 
    \begin{itemize}
        \item $V = X \cup Y \cup Z \cup \{u,v\}$.
        \item $E[Y] = E[Z] = E[X,Y] = E[X,Z] = \emptyset$.
    \end{itemize}
    If both $X$ and $Y \cup Z$ are nonempty then $\{u,v\}$ is 
    a 2-vertex separator for $G$, a contradiction since $G$ is rigid in $\mathbb R^{k+1}$ and $k \geq 2$. Hence either $X=\emptyset$ or $Y\cup Z=\emptyset$. 
    Suppose, for a contradiction, that $X = \emptyset $. Let 
    $F$ be a 2-face of $\cS$. Then $F$ cannot contain both of $u,v$ since
    $X$ is empty. On the other hand $F$ cannot contain 2 elements of $Y$ or two elements of $Z$. So $F = \{w,y,z\}$ where $w \in \{u,v\}$, 
    $y \in Y$ and $z \in Z$, contradicting the fact that $yv , zu \not\in E$.
    Therefore $Y \cup Z = \emptyset$ as required.
\end{proof}

The next two lemmas provide some structural information about pairs of vertices  in a simplicial $k$-circuit that have at most $k$ common neighbours.

\begin{lemma}\label{lem:jim}
Let $\cS$ be a  simplicial $k$-circuit for some $k\geq 2$ and let $u,v$ be distinct vertices in $\cS$. Suppose that $|N_{G(\cS)}(u)\cap N_{G(\cS)}(v)|\leq k$.  Then $G(\cS)/uv$ is rigid in $\mathbb R^{k+1}$. Moreover if $|N_{G(\cS)}(u)\cap N_{G(\cS)}(v)|\leq k-1$, or, $uv\in E(\cS)$ and $|N_{G(\cS)}(u)\cap N_{G(\cS)}(v)|=k$, then ${\cal S}/uv$ is a  simplicial $k$-circuit.
\end{lemma}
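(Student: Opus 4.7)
The plan is to prove Part 2 first via a boundary analysis of a putative proper sub-cycle of $\cS/uv$, and then deduce Part 1 by combining this with Fogelsanger's Rigidity Theorem and a gluing argument. Since the trivial case $\cS=\{S,S\}$ gives $G(\cS)=K_{k+1}$ and $G(\cS)/uv=K_k$ (which is rigid in $\R^{k+1}$) directly, we may assume throughout that $\cS$ is non-trivial, so that $\gamma_{uv}$ provides a bijection between $\{S\in\cS:\{u,v\}\not\subseteq S\}$ and the multiset $\cS/uv$.

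For Part 2, suppose for contradiction that $\cS/uv$ is not a simplicial $k$-circuit. By Corollary~\ref{cor:cont_preserve}, $\cS/uv$ is a simplicial $k$-cycle, so there exists a proper nonempty simplicial $k$-cycle $\cT_1\subsetneq\cS/uv$. Let $\cS_1:=\gamma_{uv}^{-1}(\cT_1)\subseteq\cS\sm\cS_{\{u,v\}}$; since $\cS$ is a circuit, $\partial\cS_1\neq\emptyset$. A direct calculation, exploiting the bijection $\gamma_{uv}$ and matching counts of $k$-simplices of $\cS_1$ to counts of $k$-simplices of $\cT_1$, shows that $\partial\cS_1$ consists of pairs $\{F,F-u+v\}$ where $F$ is a $(k-1)$-face of $\cS$ containing exactly one of $u,v$, and that such an $F$ (with $u\in F$, say) forces $F\sm\{u\}$ to be a $(k-1)$-subset of $N:=N_{G(\cS)}(u)\cap N_{G(\cS)}(v)$. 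Three cases arise: (i) if $|N|\leq k-2$, no such $F$ exists, so $\partial\cS_1=\emptyset$, contradiction; (ii) if $|N|=k-1$, then $\partial\cS_1\subseteq\{N+u,N+v\}$, but any $(k-2)$-subface of $N+u$ containing $u$ appears only in $N+u$, contradicting that $\partial\cS_1$ is a $(k-1)$-cycle by Lemma~\ref{lem:1}; (iii) if $|N|=k$ and $uv\in E(\cS)$, the cycle condition on $\partial\cS_1$ reduces to requiring $\cX:=\{X\in\binom{N}{k-1}:X+u\in\partial\cS_1\}$ to be a nonempty $(k-2)$-cycle inside $\binom{N}{k-1}\cong\cK_{k-2}$, whose only nonempty sub-cycle is itself, so $\partial\cS_1=\{X+u,X+v:X\in\binom{N}{k-1}\}$. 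By Lemma~\ref{lem:1.1.4}(b), $lk_{\cS}(uv)\cong\cK_{k-2}$ on $N$, so $\cS_{\{u,v\}}=\{X+u+v:X\in\binom{N}{k-1}\}$, and a direct computation yields $\partial\cS_{\{u,v\}}=\partial\cS_1$; hence $\cS_1\cup\cS_{\{u,v\}}$ is a proper nonempty sub-cycle of $\cS$, again contradicting the circuit property.

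For Part 1, if $\cS/uv$ is a simplicial $k$-circuit then $G(\cS)/uv=G(\cS/uv)$ (edge-contraction compatibility holds for non-trivial circuits via Lemma~\ref{lem:1.1.4}(a)), and Fogelsanger's Rigidity Theorem (Theorem~\ref{thm:fogelsanger}) applied to $\cS/uv$ gives the rigidity. Otherwise Part 2 forces $|N|=k$ and $uv\notin E(\cS)$. Partition $\cS/uv$ into simplicial $k$-circuits $\cT_1,\ldots,\cT_m$ and let $\cS_i:=\gamma_{uv}^{-1}(\cT_i)$. The same boundary analysis, applied to each $\cS_i$, shows that either $\partial\cS_i=\emptyset$ (which would make $\cS_i$ a proper sub-cycle of $\cS$, impossible) or $\partial\cS_i=\{X+u,X+v:X\in\binom{N}{k-1}\}$; so the latter holds for every $i$. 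This forces $V(\cT_i)\supseteq\{u\}\cup N$, a set of $k+1$ vertices. Each $G(\cT_i)$ is rigid in $\R^{k+1}$ by Fogelsanger's theorem, and an iterative application of the rigidity gluing property, using $|V(\cT_i)\cap V(\cT_j)|\geq k+1$, yields that $G(\cS)/uv=\bigcup_iG(\cT_i)$ is rigid.

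The technical core is the boundary calculation for $\cS_1$; the main obstacle is carefully managing the multiplicities that arise when both $S$ and $S-v+u$ belong to $\cS$, producing repeated simplices in $\cS/uv$ and forcing a multiset interpretation of $\gamma_{uv}^{-1}$. This is why the paired structure $\{F,F-u+v\}\subseteq\partial\cS_1$ emerges naturally from the bijection $\gamma_{uv}$ on the set of $k$-simplices not containing $\{u,v\}$, and why the argument is cleanest when phrased in terms of this bijection rather than via a direct formula for $|(\cS/uv)_F|$.
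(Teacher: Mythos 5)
Your proof is correct, and it takes a genuinely different technical route from the paper's. The paper packages the key boundary computations inside the Fogelsanger machinery of Section 3.2--3.3: it pulls the circuit decomposition of $\cS/uv$ back to $\cS_i=\gamma_{uv}^{-1}(\cS_i')$, passes to the completions $\cS_i^+ = \cS_i\triangle\cS_i^*$ (which are circuits by Lemma~\ref{lem:1.5}(c)), and then uses $uv\in E(\cS_i^+)$ together with $lk_{\cS_i^+}(uv)\cong\cK_{k-2}$ (via Lemma~\ref{lem:1.1.4}(b)) to show $(\cS_1^+)_{\{u,v\}}=(\cS_2^+)_{\{u,v\}}$, hence $\cS_1^+\triangle\cS_2^+$ is a nonempty cycle contained in $\cS$, forcing $m=2$, $uv\notin E(\cS)$, and $|N|=k$, after which the gluing finishes the argument. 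You instead unroll those computations: you establish by hand, from the bijection $\gamma_{uv}$ and the relation $|(\cT_1)_F| = |(\cS_1)_F|+|(\cS_1)_{F-u+v}|$, that $\partial\cS_1$ is an exact union of pairs $\{F,F-u+v\}$ with $F\setminus\{u\}\subseteq N$, then invoke the cycle condition $\partial\partial\cS_1=\emptyset$ (Lemma~\ref{lem:1}) and the fact that the only nonempty sub-cycle of $\cK_{k-2}$ is itself (Lemma~\ref{lem:1.1.1}) to pin down $\partial\cS_1$ and extract the contradiction or the gluing condition. Both approaches rest on the same two structural facts (link is $\cK_{k-2}$; sub-boundaries live on pairs $\{F,F-u+v\}$), but yours replaces the $\cS^*$/$\cS^+$ apparatus with an explicit chain-level double count, which is more self-contained; the paper's version is shorter because it can cite Lemma~\ref{lem:1.5} and Lemma~\ref{lem:3} that it needs elsewhere anyway. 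One small caveat you share with the paper's formulation: when $\cS$ is the trivial circuit $\{S,S\}$ with $u,v\in S$, one has $|N|=k-1$ yet $\cS/uv=\emptyset$, so the \textquotedblleft moreover\textquotedblright\ clause as literally stated degenerates; your explicit dispatch of the trivial case only covers the rigidity of $G(\cS)/uv=K_k$, not the circuit assertion. This is harmless in all the paper's applications (where $\cS$ is non-trivial), but is worth being aware of.
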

\begin{proof}
    Let $G = (V,E) = G(\cS)$ and let $X = N_G(u) \cap N_G(v)$. If $\cS/uv$ is a simplicial
    $k$-circuit then $G/uv$ is rigid and the lemma holds. Therefore we may assume $\cS/uv$ is not a simplicial $k$-circuit. We will show that $|X| = k$, $uv \not\in E$ and $G/uv$ is rigid. 
    
    We proceed as in the definition of a Fogelsanger decomposition in Section \ref{sec:fog}. Since $\cS/uv$ is a simplicial $k$-cycle, we may choose  a decomposition $\cS'_1,\cdots, \cS'_m$ of $\cS/uv$ into simplicial $k$-circuits. Since $\cS/uv$ is not a simplicial $k$-circuit $m \geq 2$. 
   For all $1\leq i\leq m$, let 
$\cS_i=\gamma_{uv}^{-1}(\cS_i')$. 
Then $\{u,v\} \subseteq V(\cS_i)$ since, if not, $\cS_i$ would be a simplicial $k$-circuit properly contained in $\cS$, contradicting the fact that $\cS$ is a simplicial $k$-circuit.
Let 
\begin{equation}
\cS_{i}^*=\{ K\subseteq V(\cS_i): \{u,v\}\subseteq K \mbox{ and }K-u,K-v\in  \partial \cS_i\} \mbox{ and }\cS_i^+=\cS_i\triangle \cS_{i}^*.
\end{equation}
Then $\cS_i^+=\cS_i\cup \cS_{i}^*  $, since $\cS_i\cap \cS_{i}^*=\emptyset$, and  $\cS_i^+$ is a 
simplicial $k$-circuit  by Lemma~\ref{lem:1.5}(c). Let $G_i=(V_i,E_i)=G(\cS_i)$.

     For $1\leq i\leq m$, $uv \in E_i$ and so 
    $|N_{G_i}(u)\cap N_{G_i}(v)| \geq k$.  Since $G_i\subseteq G+uv$, it follows that $N_{G_i}(u) \cap 
    N_{G_i}(v) = X$ and $|X|=k$. 
    Since $lk_{\cS_i^+}(uv)$ is a nonempty simplicial $(k-2)$-cycle without repeated $(k-2)$-simplices, it follows from Lemma~\ref{lem:1.1.1}
    that, for each $i = 1,\cdots,m$, $lk_{\cS_i^+}(uv) \cong \cK_{k-2}$
    and $V(lk_{\cS_i^+}(uv)) = X$. In particular $(\cS_1^+)_{\{u,v\}} = 
    (\cS_2^+)_{\{u,v\}}$.
    This gives $\cS_1^*=\cS_2^*$ and hence
    $\cS_1^+ \triangle \cS_2^+=\cS_1\cup \cS_2$ is a 
    nonempty simplicial $k$-cycle that is contained in $\cS$. Hence  $m=2$,
    $\cS = \cS_1^+ \triangle \cS_2^+$ and $uv \not\in E(\cS)$. 
    Recall that $\cS_i = \cS_i^+/uv$ is 
    a simplicial circuit and so is rigid in $\mathbb R^{k+1}$. Now
    $\cS/uv = \cS_1 \cup \cS_2$ and 
    $X \cup \{u\} \subseteq V(\cS_1) \cap V(\cS_2)$. By the gluing property
    for rigidity $G(\cS)/uv$ is rigid in $\mathbb R^{k+1}$. 
\end{proof}

\begin{lemma}
    \label{lem:uvblock}
    For $k\geq 2$ let $\cS$ be a nontrivial simplicial $k$-circuit and $G = (V,E) = G(\cS)$. Let $uv \in E$ such that $|N_G(u)\cap N_G(v)| = k$. Then 
    \begin{itemize}
        \item[(a)] $\{u,v\}$ is not contained in any $(k+1)$-vertex separator of $G$.
        \item[(b)] If $\cS_1^+,\cdots, \cS_m^+$ is a Fogelsanger decomposition of $\cS$ with respect to some 
        edge $xy$ of $\cS$
        then $uv \in E(\cS_j^+)$ for exactly one $j \in \{1,\cdots,m\}$.
    \end{itemize}
\end{lemma}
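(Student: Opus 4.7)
The plan is to derive both parts from Lemma~\ref{lem:1.1.4}(b), which gives a sharp lower bound of $k$ on the number of common neighbours of the endpoints of an edge in a nontrivial simplicial $k$-circuit; the extremal hypothesis $|N_G(u)\cap N_G(v)|=k$ will force its equality case in various sub-complexes.

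For (a), I will suppose for a contradiction that $\{u,v\}\subseteq X$ for some $(k+1)$-vertex separator $X$ of $G$. Lemma~\ref{lem:1.1.2}(b) then gives a decomposition $\cS=\cS'\triangle\cS''$ where $\cS',\cS''$ are simplicial $k$-circuits meeting only in the non-facial $(k+1)$-clique $X$, so $V(\cS')\cap V(\cS'')=X$. Both $\cS'$ and $\cS''$ contain $X$ and hence the edge $uv$, so Lemma~\ref{lem:1.1.4}(b) applied to each yields at least $k$ common neighbours of $u,v$ in each of $G(\cS')$ and $G(\cS'')$. The intersection of these two neighbour sets lies in $X\setminus\{u,v\}$, which has size $k-1$, so inclusion-exclusion gives $|N_G(u)\cap N_G(v)|\geq k+k-(k-1)=k+1$, a contradiction.

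For (b), let $\cJ=\{i:uv\in E(\cS_i^+)\}$; Lemma~\ref{lem:3}(c) gives $|\cJ|\geq 1$, so I only need to rule out $|\cJ|\geq 2$. For each $i\in\cJ$, $\cS_i^+$ is a nontrivial simplicial $k$-circuit containing the edge $uv$ (Lemma~\ref{lem:3}(a)), so Lemma~\ref{lem:1.1.4}(b) gives $|N_{G(\cS_i^+)}(u)\cap N_{G(\cS_i^+)}(v)|\geq k$. Since these sets are all contained in $W:=N_G(u)\cap N_G(v)$ of size exactly $k$, each must equal $W$ and $lk_{\cS_i^+}(uv)\cong\cK_{k-2}$ with vertex set $W$. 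Hence the set of $k$-simplices of $\cS_i^+$ containing $\{u,v\}$ is exactly $\cM:=\{\{u,v\}\cup(W\setminus\{w\}):w\in W\}$, and crucially $\cM$ is the same for every $i\in\cJ$.

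The contradiction will now follow from the fact that in the Fogelsanger decomposition of $\cS$ with respect to $xy$, the families $\cS_1,\dots,\cS_m$ are pairwise disjoint (each is the $\gamma_{xy}$-pullback of a distinct part of the partition of $\cS/xy$). When $\{u,v\}\neq\{x,y\}$, a short inspection of $\cM$ produces an element not containing $\{x,y\}$ (one picks $w\in W\cap(\{x,y\}\setminus\{u,v\})$ when this set is nonempty, and otherwise no element of $\cM$ can contain $\{x,y\}$ at all). Such an element must then lie in $\cS_i$ for every $i\in\cJ$, contradicting disjointness. The remaining case $\{u,v\}=\{x,y\}$ is handled by Lemma~\ref{lem:jim}, which tells us that $\cS/uv=\cS/xy$ is itself a simplicial $k$-circuit and hence $m=1$, forcing $|\cJ|=1$ immediately. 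The main obstacle is the identification of $(\cS_i^+)_{\{u,v\}}$ with the same combinatorial object $\cM$ for every $i\in\cJ$; once this rigidity (a consequence of the extremal hypothesis being inherited by every piece of the Fogelsanger decomposition containing $uv$) is recognised, the disjointness of the $\cS_i$'s closes the argument with only routine case analysis.
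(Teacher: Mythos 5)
Your proof is correct. Part (b) follows essentially the same route as the paper: both arguments use the equality case of Lemma~\ref{lem:1.1.4}(b) to force $lk_{\cS_i^+}(uv)\cong\cK_{k-2}$ on the common vertex set $W=N_G(u)\cap N_G(v)$ for every piece of the decomposition containing $uv$, then derive a contradiction from the uniqueness in Lemma~\ref{lem:3}\ref{en1:b} (disjointness of the $\cS_i$); the paper cites Lemma~\ref{lem:1.1.4}(a) to produce a $k$-simplex containing $\{u,v\}$ but not $\{x,y\}$, while you exhibit one concretely by picking an element of $\cM$. Those are cosmetic differences. (Incidentally, your citations to the numbered parts of Lemma~\ref{lem:3} are off by one relative to the paper's labelling.)

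Part (a) is where you genuinely diverge. The paper invokes Lemma~\ref{lem:jim}: since $|N_G(u)\cap N_G(v)|=k$ and $uv\in E$, $\cS/uv$ is a simplicial $k$-circuit, so $G/uv$ is $(k+1)$-connected by Lemma~\ref{lem:1.1.2}, which is exactly the statement that $\{u,v\}$ lies in no $(k+1)$-separator of $G$. You instead take the putative separator $X\ni u,v$, split $\cS$ via Lemma~\ref{lem:1.1.2}(b) into circuits $\cS',\cS''$ meeting only in $X$, apply Lemma~\ref{lem:1.1.4}(b) to each to extract $k$ common neighbours of $u,v$ on each side, and note the two sets meet only inside $X\setminus\{u,v\}$ of size $k-1$, giving $|N_G(u)\cap N_G(v)|\geq k+1$. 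This works (you do need, and correctly use, the facts that $V(\cS')\cap V(\cS'')=X$ and that $G(\cS'),G(\cS'')\subseteq G$ because $X$ is already a clique of $G$). The paper's argument is shorter because it has Lemma~\ref{lem:jim} on hand; yours is more self-contained in the sense that it draws directly on the separator structure (Lemma~\ref{lem:1.1.2}) rather than routing through the contraction result.
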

\begin{proof}
    By Lemma~\ref{lem:jim} $\cS/uv$ is a simplicial $k$-circuit. Therefore $G/uv$ is $(k+1)$-connected which implies (a).
    
    If $\{x,y\} = \{u,v\}$ then $\cS/xy = \cS/uv$ which is a simplicial $k$-circuit by Lemma~\ref{lem:jim}. Hence
    $m=1$ in the Fogelsanger decomposition of the statement of (b). 
    So we may assume that $\{x,y\} \neq \{u,v\}$. 
    For $1\leq i \leq m$ let $G_i = (V_i,E_i) = G(\cS_i^+)$ and suppose,
    for a contradiction, that $uv \in E_i \cap E_j$ for some $i \neq j$. Then $u,v$ have at least $k$ common neighbours in both $G_i$ and $G_j$. Therefore $N_{G_i}(u) \cap N_{G_i}(v) = N_{G_j}(u) \cap N_{G_j}(v) = N_{G}(u) \cap N_{G}(v)$ and, by Lemma~\ref{lem:1.1.4},  
    $lk_{\cS_i^+}(uv) = lk_{\cS_j^+}(uv)$ and so $(\cS_i^+)_{uv} = (\cS_j^+)_{uv}$. By Lemma~\ref{lem:3}(c) this implies that every $k$-simplex of $\cS_i^+$ that contains $\{u,v\}$ must also contain $\{x,y\}$. Since $\cS_i^+$ is a nontrivial 
    simplicial $k$-circuit and $\{u,v\} \neq \{x,y\}$, this contradicts Lemma~\ref{lem:1.1.4}(a) and completes the proof of (b).
\end{proof}

\begin{lemma}\label{lem:b1}
    For $k \geq 2$ let $\cS$ be a  simplicial $k$-circuit and $u,v\in V(\cS)$ with $uv\not\in E(\cS)$. Suppose that 
    $G=G(\cS)$ 
    and $|N_G(u)\cap N_G(v)|\leq k$.
    Then $G$ is $uv$-coincident rigid in $\mathbb{R}^{k+1}$.
\end{lemma}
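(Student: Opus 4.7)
The plan is to proceed by induction on $|V(\cS)|$. Set $G = G(\cS)$ and $N = N_G(u) \cap N_G(v)$. By Lemma~\ref{lem:jim}, $G/uv$ is rigid in $\mathbb{R}^{k+1}$. The combination of $uv \notin E(\cS)$, $|N| \leq k$, and Lemmas~\ref{lem:1.1.1} and~\ref{lem:1.1.3} forces $|V(\cS)| \geq k+4$, so Lemma~\ref{lem:no-normal} produces an edge $f = xy$ of $\cS$ that is normal with respect to $\{u,v\}$. By Lemma~\ref{lem:normal2}, $f$ is $uv$-admissible in $\mathbb{R}^{k+1}$; moreover normality implies $|\{x,y\} \cap N| \leq 1$ and rules out both $u$ and $v$ being common neighbours of $x$ and $y$.

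Consider a Fogelsanger decomposition $\cS_1^+, \ldots, \cS_m^+$ of $\cS$ with respect to $f$. By Lemma~\ref{lem:3}, each $\cS_i^+$ is a non-trivial simplicial $k$-circuit containing $f$, each $\cS_i^+/f$ is a simplicial $k$-circuit, and $G(\cS_i^+) \subseteq G$. If $m=1$, then $\cS/f$ is a simplicial $k$-circuit on $|V(\cS)|-1$ vertices and, since $(\ref{eqn:normal1})$ preserves both $uv$-non-adjacency and $|N|$, the induction hypothesis gives that $G/f$ is $uv$-coincident rigid, from which Lemma~\ref{lem:split_uv} concludes. If $m \geq 2$ and some $\cS_i^+$ contains both $u$ and $v$, then $\cS_i^+/f$ has $|V(\cS_i^+)|-1 \leq |V(\cS)|-1$ vertices and inherits the lemma's hypotheses (note $uv \notin E(G(\cS_i^+))$ because no simplex of $\cS_i \subseteq \cS$ or $\cS_i^*$ can contain $\{u,v\}$, and $f$ remains $uv$-admissible in $G(\cS_i^+)$ since $x, y$ have at least $k$ common neighbours in $G(\cS_i^+)$ by Lemma~\ref{lem:1.1.4} while normality rules out both $u, v$ being among them); induction combined with Lemma~\ref{lem:split_uv} applied to $G(\cS_i^+)$ and then Lemma~\ref{lem:uvrigidsubgraph} applied to $G(\cS_i^+) \subseteq G$ concludes.

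The main obstacle is the residual case $m \geq 2$ in which no $\cS_i^+$ contains both $u$ and $v$. The intended strategy is to partition the pieces into $I_u = \{i : u \in V(\cS_i^+)\}$ and $I_v = \{i : v \in V(\cS_i^+)\}$, set $H_1 = \bigcup_{i \in I_u} G(\cS_i^+)$ and $H_2 = \bigcup_{i \in I_v} G(\cS_i^+)$, and apply Lemma~\ref{lem:rigidsubgraphs}. Since $\{x,y\} \subseteq V(\cS_i^+)$ for every $i$ by Lemma~\ref{lem:3}\ref{en1:c}, and each $w \in N$ lies in a piece from $I_u$ (via the edge $uw$) and a piece from $I_v$ (via $vw$), we have $V(H_1) \cap V(H_2) \supseteq N \cup \{x,y\}$, of size at least $|N|+1$. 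The delicate point is establishing rigidity of $H_1$ and $H_2$: although Theorem~\ref{thm:fogelsanger} yields rigidity of each $G(\cS_i^+)$, the rigidity gluing property requires $(k+1)$-clique intersections between consecutive pieces in some ordering of $I_u$ (and of $I_v$), whereas Lemma~\ref{lem:3}\ref{en1:d} only supplies such cliques across arbitrary subsets of the full decomposition. Producing orderings of $I_u$ and $I_v$ compatible with the gluing property---and verifying the extra-edge condition of Lemma~\ref{lem:rigidsubgraphs} in ${\rm cl}_{k+1}^{uv}(G)$ when $|V(H_1) \cap V(H_2)|$ equals $k$---is the technically most involved step of the argument.
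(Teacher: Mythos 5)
Your overall plan — induction via a Fogelsanger decomposition with respect to a normal edge $f = xy$, then casing on whether some $\cS_i^+$ contains both $u$ and $v$ — is the same as the paper's, and the parts you complete are essentially correct. (Minor quibbles: you can cite Lemma~\ref{lem:3}\ref{en1:c} directly for $E(\cS_i^+) \subseteq E(\cS)$, hence $uv \notin E(\cS_i^+)$, instead of reasoning about $\cS_i$ and $\cS_i^*$ separately; and the bound $|N_{G(\cS_i^+)/f}(u) \cap N_{G(\cS_i^+)/f}(v)| \leq k$ follows from the inclusion $G(\cS_i^+)/f \subseteq G/f$ together with (\ref{eqn:normal1}) applied to $G$ — normality is a property of $f$ \emph{in $G$} and does not automatically descend to the subgraph $G(\cS_i^+)$.)

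The genuine gap, which you flag honestly, is the residual case where no $\cS_i^+$ contains both $u$ and $v$. Your choice $H_1 = \bigcup_{i \in I_u} G(\cS_i^+)$, $H_2 = \bigcup_{i \in I_v} G(\cS_i^+)$ stalls for exactly the reason you state: Lemma~\ref{lem:3}\ref{en1:d} supplies $(k+1)$-clique overlaps for a nested ordering of the \emph{entire} decomposition, not separately within $I_u$ and within $I_v$, so the rigidity of $H_1$ and $H_2$ is not forced; and $|N \cup \{x,y\}|$ can fall well short of $k$ when $|N|$ is small.

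The paper's resolution is to not partition into $I_u$ and $I_v$ at all. Relabel so $u \in V(\cS_1^+)$ (possible since $\bigcup_i V(\cS_i^+) = V(\cS)$), and use Lemma~\ref{lem:3}\ref{en1:d} to order $\cS_2^+, \ldots, \cS_m^+$ so that each new piece meets the cumulative union $\bigcup_{i\leq h} V(\cS_i^+)$ in at least $k+1$ vertices. By the rigidity gluing property every prefix union is then rigid. Now let $j$ be the least index with $v \in V(\cS_j^+)$ (so $j \geq 2$, since no piece contains both $u$ and $v$) and take $H_1 = \bigcup_{i<j} G(\cS_i^+)$, $H_2 = G(\cS_j^+)$: both are rigid, $u \in V(H_1)\setminus V(H_2)$, $v \in V(H_2)\setminus V(H_1)$, and $|V(H_1)\cap V(H_2)|\geq k+1$, so Lemma~\ref{lem:rigidsubgraphs} applies without ever invoking its extra-edge clause and concludes that $G$ is $uv$-coincident rigid. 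The key is to feed Lemma~\ref{lem:rigidsubgraphs} the accumulated prefix and the next single piece, not the union of $u$-pieces versus the union of $v$-pieces.

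Incidentally, the paper frames the whole argument as a minimal counterexample by $|E(\cS)|$, derives the non-existence of any normal edge as a claim, and then invokes Lemma~\ref{lem:no-normal} together with $|N|\leq k$ to force $|V(\cS)|\leq k+2$, hence $\cS\cong\cK_k$, contradicting $uv\notin E(\cS)$. Your front-loaded observation that $|V(\cS)|\geq k+4$ is equally valid, though it needs the extra check that the unique non-edge of $G(\cL_k)$ has exactly $k+1$ common neighbours.
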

\begin{proof}
We will simplify terminology throughout the proof by suppressing reference to the ambient space $\R^{k+1}$.

We proceed by contradiction. Suppose the lemma is false and that $({\cal S},u,v)$ is a counterexample  with $|E({\cal S})|$ as small as possible. 
Since $|N_G(u)\cap N_G(v)|\leq k$, Lemma~\ref{lem:jim} implies that $G({\cal S})/uv$ is rigid. Lemma~\ref{lem:uvrigidsubgraph} and the fact that $({\cal S},u,v)$ is a counterexample to the lemma now imply
\begin{equation}\label{eq:subgraph_k_b}
\mbox{$\{u,v\}$ is not contained in any $uv$-coincident rigid  subgraph of $\cl^{uv}(G({\cal S})$).}
\end{equation}

The following claim is the most important step in our proof of the lemma.

\begin{claim}\label{claim:0_k_b_1}
No edge of $G$ is normal.
\end{claim}
\begin{proof}
Suppose, for a contradiction, that $f$ is a normal edge of $G$.
Choose  a  Fogelsanger decomposition $\{\cS_1^+,\dots, \cS_m^+\}$ of $\cS$  with respect to $f$. 
Denote $G(\cS^+_i)$ by $G_i=(V_i, E_i)$ for simplicity. 
We  show:
 \begin{equation}
 \label{eq:coinclaim14_k_b_1}
\mbox{$\{u, v\}\not\subseteq V_i$ for all $1\leq i\leq m$.}
\end{equation}
Suppose that $\{u, v\} \subseteq V_i$.
Then $\cS_i^+/f$ is a  simplicial $k$-circuit with fewer edges than $\cS$. Also $uv \not\in E(\cS_i^+)$, since $E(\cS_i^+) \subseteq E(\cS)$ by Lemma \ref{lem:3}(d), and (\ref{eqn:normal1}) implies that $|N_{G(\cS_i^+)/f}(u) \cap N_{G(\cS_i^+)/f}(v)| \leq k $. By the minimal choice of $\cS$ we deduce that $G(\cS_i^+/f)=G(\cS_i^+)/f$ is $uv$-coincident rigid.  By Lemma~\ref{lem:normal2} $f$ is $uv$-admissible and Lemma~\ref{lem:split_uv} implies that $G(\cS_i^+)$ is $uv$-coincident rigid, contradicting (\ref{eq:subgraph_k_b}). Thus (\ref{eq:coinclaim14_k_b_1}) holds.

Since $V({\cal S})=\bigcup_{i=1}^m V_i$ by Lemma~\ref{lem:3}(d),  we have $u\in V_i$ and $v\in V_j$ for some $1\leq i\neq j\leq m$. 
Relabelling if necessary, we may assume that $u\in V_1$ and $v\notin V_1$ by (\ref{eq:coinclaim14_k_b_1}). Then Lemma~\ref{lem:3}(e) implies that we can reorder $\cS^+_2,\cS_3^+,\ldots,\cS_m^+$ so that $|(\bigcup_{i=1}^h V_i)\cap V_{h+1}|\geq k+1$ for all $1\leq h<m$. Since each $G_i$ is rigid, a simple induction using the gluing lemma tells us that $G_h':=\bigcup_{i=1}^h G_i$ is rigid for all $1\leq h<m$. We have $v\in V_j$ for some $2\leq j\leq m$ and we may suppose that $j$ has been chosen to be as small as possible. Then  $G_{j-1}',G_j$ are both rigid, $u\in V(G_{j-1}')\sm V(G_j)$, and $v\in  V(G_j)\sm V(G_{j-1}')$.  
 Since $|V(G_{j-1}')\cap V(G_j)|\geq k+1$, $G_{j-1}'\cup G_j$ is $uv$-coincident rigid by Lemma~\ref{lem:rigidsubgraphs}. This contradicts (\ref{eq:subgraph_k_b}) and completes the proof of the claim. 
 \end{proof}
Now, Lemma~\ref{lem:no-normal}, Claim \ref{claim:0_k_b_1} and the fact that $|N_G(u)\cap N_G(v)| \leq k$ imply  that $|V(\cS)| \leq k+2$ and so, since $\cS$ must be a nontrivial simplicial $k$-circuit, $\cS = \cK_k$ contradicting the fact that $uv \not\in E(\cS)$. This completes the 
proof of Lemma~\ref{lem:b1}
\end{proof}

\begin{proof}[\bf Proof of Theorem \ref{thm:coin_k}]
    Let $\cS$ be a simplicial circuit for some $k \geq 3$ such that $G= (V,E) = G(\cS)$ is $(k+2)$-connected, and let $u,v$ be distinct vertices in $V$ such that $G/uv$ is globally rigid in $\mathbb R^{k+1}$. 
    
    Let $X = N_G(u)\cap N_G(v)$. If $|X| \geq k+1$ then the theorem follows immediately from Lemma~\ref{lem:neighbor_inclusion}. So we may assume that  $|X| \leq k$. 
    We can now use Lemma~\ref{lem:b1} to deduce that the theorem holds when $uv\not\in E$ and so  we can assume that $uv \in E$.   

    By Lemma~\ref{lem:1.1.4} $|X| = k$ and $lk_\cS(uv) \cong \cK_{k-2}$.
    Therefore $\cS_{\{u,v\}} = \{U \subseteq X\cup \{u,v\}: u,v \in U, |U| = k+1\}$. Now let $\cT$ be the copy of $\cK_k$ with vertex set $X \cup \{u,v\}$. Then $\cS \triangle \cT = (\cS \setminus \cS_{\{u,v\}}) \triangle \{ X\cup \{u\}, X \cup \{v\}\})$ is a simplicial $k$-cycle. If $\cS \triangle \cT = \emptyset$ then $\cS = \cT$, a contradiction since $G(\cT)$ is not $(k+2)$-connected. Therefore $\cS \triangle \cT$ is nonempty and, since $k \geq 3$,
    $G(\cS \triangle \cT)$ is a subgraph of $ G-uv$. Note that the degree of every vertex in the $(k+2)$-connected graph $G$ is at least $k+2$, so it follows that  $V(\cS \triangle \cT) = V(\cS)=V$.
    If $\cS \triangle \cT$ is a simplicial
    $k$-circuit then by Lemma~\ref{lem:b1}, $G-uv$ is $uv$-coincident rigid in 
    $\mathbb R^{k+1}$. So we can assume that $\cS \triangle \cT =
    \cU \sqcup \cV$ where $\cU$ and $\cV$ are both nonempty simplicial 
    $k$-cycles. Since $\cS$ is a simplicial $k$-circuit, $\cU, \cV \not\subseteq \cS$ and so it follows, after relabelling if necessary, that
    $\cU \setminus \cS = \{ X \cup \{u\}\}$
    and $\cV \setminus \cS = \{ X \cup \{v\}\}$. Moreover,
    $\cU$ must be a simplicial $k$-circuit, since if $\cU'$ is a proper subcomplex of $\cU$ that is a simplicial $k$-cycle, then either 
    $\cU'$ or $\cU\triangle \cU'$ is a proper subcomplex of $\cS$, contradicting the fact that $\cS$ is a simplicial $k$-circuit. 
    Similarly $\cV$ is a simplicial $k$-circuit. Also $G(\cU) \cup G(\cV) = G-uv$ since $X \cup \{u\} \in \cU$ and $X \cup \{v\} \in \cV$.

    Now, if $v \in V(\cU)$ then, since $uv \not\in E(\cU)$, Lemma~\ref{lem:b1} implies that $G(\cU)$ is a $uv$-coincident rigid subgraph 
    of $G$ and, using Lemma~\ref{lem:jim} and Lemma~\ref{lem:uvrigidsubgraph}, that $G$ is $uv$-coincident rigid. Thus we can assume that $v \not\in V(\cU)$ and
    similarly that $u \not\in V(\cV)$. Since $G$ is $(k+2)$-connected,
    $X$ cannot be a vertex separator for $G-uv$ and so 
    $|V(\cU) \cap V(\cV)| \geq k+1$. Now by Lemma~\ref{lem:rigidsubgraphs}, $G-uv$ is 
    $uv$-coincident rigid and so $G$ is $uv$-coincident rigid.
    \end{proof}
\medskip


We now turn to the case $k=2$ and the proof of Theorem \ref{thm:coin_2}. 
We will simplify terminology by frequently suppressing reference to the ambient space $\mathbb R^3$ in the remainder of this section.
So the reader should henceforth interpret all references to (coincident) rigidity as (coincident) rigidity in $\mathbb R^3$.

The overall strategy is the same as for the proof of Theorem \ref{thm:coin_k}.
The main difference 
is that the argument used in
the second paragraph of the proof of 
Theorem \ref{thm:coin_k}
fails since $G(\cS\triangle \cT)$ is not necessarily 
a subgraph of $G - uv$ when $k=2$. 

Before proving that theorem we will need to establish some further results on simplicial 2-circuits.
The first result from \cite{J} concerns the coincident rigidity of
graphs that have a spanning subgraph that is a plane triangulation (in that paper we refer to such graphs as `braced plane triangulations').

\begin{theorem}[{\cite[Theorem 1.4]{J}}] \label{lem:coinbraced}
Suppose that $G = (V,E)$ is a plane triangulation, 
$B$ is a nonempty subset of $K(V)\setminus E$ such that $G+B$ is 4-connected, and $uv\in E$ is an edge of $G$ which does not belong to a separating 3-cycle of $G$. Then $G + B$ is $uv$-coincident rigid.
\end{theorem}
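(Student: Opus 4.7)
The plan is to proceed by induction on $|V(G)|+|B|$, combining the vertex-splitting reduction of Lemma~\ref{lem:split_uv} with classical results on contractible edges in plane triangulations. Since a plane triangulation is only minimally rigid in $\R^3$, at least one brace is genuinely required; the roles of the $4$-connectivity and the separating-$3$-cycle hypothesis are to give enough room to iterate the reduction while preserving all hypotheses.

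For the inductive step we first try to remove a brace: if some $b\in B$ can be deleted with $G+(B\setminus\{b\})$ still $4$-connected and $B\setminus\{b\}$ still nonempty, then by induction $G+(B\setminus\{b\})$ is already $uv$-coincident rigid, and hence so is $G+B$. Otherwise every brace is essential for $4$-connectivity. In that case we look for an edge $f\neq uv$ of $G$ such that (i) $f$ lies on no separating $3$-cycle of $G$, so $G/f$ is again a plane triangulation; (ii) $f$ is $uv$-admissible in $\R^3$, which by the analogue of Lemma~\ref{lem:normal2} reduces to checking that $\{u,v\}\not\subseteq$ the common neighbours of the endpoints of $f$; and (iii) after contraction the braces project to a nonempty $B'$, the graph $(G+B)/f=G/f+B'$ is still $4$-connected, and the image $\overline{uv}$ of $uv$ lies on no separating $3$-cycle of $G/f$. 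Applying the inductive hypothesis to $(G+B)/f$ and then Lemma~\ref{lem:split_uv} yields the desired $uv$-coincident rigidity of $G+B$.

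The main obstacle will be ensuring that such an edge $f$ always exists once every brace is critical. Standard results on contractible edges in $4$-connected planar graphs produce many candidates, but the choice has to respect the plane-triangulation structure of $G$, the nonemptiness of the projected brace set, and the separating-$3$-cycle condition near $\overline{uv}$. Exceptional configurations where critical braces and forbidden $3$-cycles conspire to block every contraction should be small enough to serve as base cases, handled either by direct analysis of the rigidity matrix at a generic $uv$-coincident realisation (using that two common neighbours of $u$ and $v$ in $G$ pin $q(v)-q(u)$ to a single line, which any properly placed brace then kills), or by combining Theorem~\ref{thm:globplane} with Lemma~\ref{lem:uvrigidsubgraph} to exhibit a $uv$-coincident rigid subgraph of $G+B$ containing both $u$ and $v$.
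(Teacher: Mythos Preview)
This theorem is not proved in the present paper: it is quoted verbatim as \cite[Theorem 1.4]{J} and used as a black box in the proof of Theorem~\ref{thm:coin_2}. There is therefore no in-paper argument to compare your proposal against.

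As to the proposal itself, what you have written is a reasonable-sounding strategy rather than a proof. The reductions you describe (drop a redundant brace; otherwise contract a $uv$-admissible edge $f$ with $G/f$ still a plane triangulation and $(G+B)/f$ still $4$-connected) are exactly the moves one would want, but you do not establish that such an $f$ exists once every brace is critical. Your appeal to ``standard results on contractible edges in $4$-connected planar graphs'' is doing all the work: those results say nothing about simultaneously preserving (i) plane-triangulation structure of $G/f$, (ii) $4$-connectivity of $(G+B)/f$, (iii) nonemptiness of the projected brace set, and (iv) the separating-$3$-cycle condition at the image of $uv$. You acknowledge this as ``the main obstacle'' and then defer it to unspecified base cases ``small enough'' to handle directly; that is precisely the gap. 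Until you either produce the edge $f$ or give a concrete, finite list of exceptional configurations together with their direct verifications, the induction does not close.
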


We will also need the following technical result about contractions of simplicial $2$-circuits. 
\begin{lemma}
    \label{lem:2circuit1}
    Let $\cS$ be a simplicial 2-circuit such that $G =(V,E)= G(\cS)$ is 4-connected and $uv \in E$ such that $|N_G(u) \cap N_G(v)| = 3$. Then
    $G/uv$ is rigid in $\mathbb R^3$.
\end{lemma}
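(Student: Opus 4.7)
The plan is to apply a Fogelsanger decomposition of $\cS$ with respect to $uv$ and glue the resulting rigid contractions using the $4$-connectivity of $G$. Since $\cS$ is a simplicial $2$-cycle and $uv\in E(\cS)$, Lemma~\ref{lem:links} shows that $lk_\cS(\{u,v\})$ is a nonempty simplicial $0$-cycle contained in $N_G(u)\cap N_G(v)$, so it has even positive size at most $3$, hence size exactly $2$. Writing $N_G(u)\cap N_G(v)=\{a,b,c\}$, we may assume $\{u,v,a\},\{u,v,b\}\in\cS$ and $\{u,v,c\}\notin\cS$.

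Take a Fogelsanger decomposition $\{\cS_1^+,\ldots,\cS_m^+\}$ of $\cS$ with respect to $uv$. By Lemma~\ref{lem:3}\ref{en1:0}, each $\cS_i^+/uv$ is a simplicial $2$-circuit, so $G(\cS_i^+)/uv=G(\cS_i^+/uv)$ is rigid in $\R^3$ by Theorem~\ref{thm:fogelsanger}; and by Lemma~\ref{lem:3}\ref{en1:c}, $G/uv=\bigcup_i G(\cS_i^+)/uv$. If $m=1$ then $\cS/uv$ is itself a simplicial $2$-circuit and we are done. Assuming $m\geq 2$, each $\cS_i^+$ contains $uv$ as an edge, and applying Lemma~\ref{lem:links} inside each $\cS_i^+$ yields $|lk_{\cS_i^+}(\{u,v\})|=2$, so $V(\cS_i^+)\supseteq \{u,v\}\cup X_i$ for some two-element subset $X_i\subseteq\{a,b,c\}$.

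Partition the pieces by $X_i\in\{\{a,b\},\{a,c\},\{b,c\}\}$ into at most three ``super-pieces'' $H_{\{a,b\}},H_{\{a,c\}},H_{\{b,c\}}$: pieces sharing the same $X_i$ have vertex intersection containing $\{u,v\}\cup X_i$ of size $4$, so their contractions can be merged into a single rigid subgraph of $G/uv$ via the rigidity gluing property in $\R^3$. To glue the non-empty super-pieces, observe that any bipartition $V_1\cup V_2=V(G)$ arising from sub-unions of super-pieces satisfies the following: every edge of $G$ comes from a triangle lying in some $\cS_k^+$, hence in a single super-piece, so no edge of $G$ can cross from $V_1\sm V_2$ to $V_2\sm V_1$. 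Thus $V_1\cap V_2$ is a vertex separator of $G$ whenever both sides are non-empty, and $4$-connectivity forces $|V_1\cap V_2|\geq 4$. This disposes of the two-super-piece case directly. In the three-super-piece case, one applies this with the bipartition $V_1=V(H_{\{a,b\}})\cup V(H_{\{a,c\}}),\,V_2=V(H_{\{b,c\}})$, where the intersection already contains $\{u,v,b,c\}$; so it suffices to first glue $H_{\{a,b\}}$ with $H_{\{a,c\}}$ and then glue the result with $H_{\{b,c\}}$. The main obstacle is this inner gluing step: the intersection $V(H_{\{a,b\}})\cap V(H_{\{a,c\}})$ may only equal $\{u,v,a\}$ and the naive separator argument can fail because $H_{\{b,c\}}$ may still connect the two sides through $a$ and $b$. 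We expect to resolve this via an auxiliary edge-addition reduction analogous to Lemma~\ref{lem:braced}, using that $G/uv$ inherits enough additional edges from $H_{\{b,c\}}$ to enforce the missing rigidity constraint, or by iteratively refining the grouping so that $\geq 4$ vertices are always shared at each step.
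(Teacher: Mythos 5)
You have the right starting framework (a Fogelsanger decomposition of $\cS$ with respect to $uv$, followed by gluing), and your observation that each $\mathrm{lk}_{\cS_i^+}(uv)$ is a $2$-element subset of $X=N_G(u)\cap N_G(v)$ is a correct and important step. However, your argument has a genuine gap precisely where you concede it: the three-super-piece case cannot be resolved by the separator/gluing machinery you set up, and the auxiliary edge-addition idea you gesture at is not supplied. More to the point, you overlook the fact that the decomposition cannot have that many pieces in the first place.

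The paper's proof proceeds differently and avoids your difficulty entirely by bounding $m$. The key argument you are missing is this: if $\mathrm{lk}_{\cS_i^+}(uv)=\mathrm{lk}_{\cS_j^+}(uv)$ for $i\neq j$, then $(\cS_i^+)_{uv}=(\cS_j^+)_{uv}$, so $\cS_i^+\triangle\cS_j^+$ is a nonempty simplicial $2$-cycle which (using Lemma~\ref{lem:3}) is contained in $\cS$ but does not contain the edge $uv$; this contradicts $\cS$ being a simplicial $2$-circuit. Hence the links are pairwise distinct, giving $m\leq 3$. A further parity argument rules out $m=3$: if $m=3$ then $(\cS_1^+)_{uv}\triangle(\cS_2^+)_{uv}\triangle(\cS_3^+)_{uv}=\emptyset$, so $\cS_1^+\triangle\cS_2^+\triangle\cS_3^+$ would be a nonempty simplicial $2$-cycle without the edge $uv$, again properly contained in $\cS$, a contradiction. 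Thus $m\leq 2$, and then $\cS/uv$ is either a single circuit or a disjoint union of two circuits whose graphs intersect in at least $3$ vertices by $3$-connectivity of $G/uv$, so the gluing property for rigidity finishes the proof. Your partition into "super-pieces" is vacuous once one knows there is at most one piece per link value, and the troublesome three-way gluing never arises. Without the circuit argument that forces $m\leq 2$, your approach does not close.
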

\begin{proof}
    If $\cS/uv$ is a circuit then $G/uv = G(\cS/uv)$ is rigid by Theorem \ref{thm:fogelsanger} (Fogelsanger's rigidity theorem). So we may suppose that $\cS/uv$ is not a 
    simplicial $2$-circuit. Let $X = N_G(u)\cap N_G(v)$ and suppose $\cS_1^+,\cdots,\cS_m^+$ is a Fogelsanger 
    decomposition of $\cS$ with respect to $uv$. Then $m\geq 2$ and for 
    each $i=1,\cdots, m$ $lk_{\cS_i^+}(uv)$ is a simplicial $0$-cycle  contained in $X$ that contains no repeated 0-faces. In other words
    $lk_{\cS_i^+}(uv)$ is a 2-element subset of the 3-set $X$. If 
    $lk_{\cS_i^+}(uv) = lk_{\cS_j^+}(uv)$ for $i\neq j$ then $(\cS_i^+)_{uv}
    = (\cS_j^+)_{uv}$ and,  by Lemma~\ref{lem:3}(c), $\cS_i^+ \triangle \cS_j^+$ is a nonempty simplicial 
    $2$-cycle contained in $\cS$. Furthermore $uv \not\in E(\cS_i^+\triangle
    \cS_j^+)$ so $\cS_i^+\triangle \cS_j^+$ is properly contained in $\cS$ contradicting the fact that $\cS$ is a simplicial $2$-circuit. Therefore 
    $lk_{\cS_i^+}(uv) \neq lk_{\cS_j^+}(uv)$ for $i\neq j$. Since $|X| = 3$ it follows that $m\leq 3$ (since there are only 3 different 2-element subsets of $X$). If $m = 3$ then $(\cS_1^+)_{uv} \triangle (\cS_2^+)_{uv} \triangle (\cS_3^+)_{uv} =\emptyset$. 
    Then, using Lemma~\ref{lem:3}(c), $\cS_1^+ \triangle \cS_2^+ \triangle \cS_3^+$ would be a nonempty simplicial $2$-cycle properly contained in $\cS$, since it would not contain the edge $uv$, a contradiction. Therefore $m=2$ and $\cS/uv = \cS_1 \sqcup \cS_2$ where 
    $\cS_1, \cS_2$ are both simplicial $2$-circuits. Since $G$ is 4-connected, 
    $G/uv$ is 3-connected and it follows that $|V(\cS_1) \cap V(\cS_2)| \geq 3$. Now $G/uv$ is rigid in $\mathbb R^3$ by Fogelsanger's Theorem and the gluing property for rigidity.
\end{proof}

\begin{proof}[\bf Proof of Theorem \ref{thm:coin_2}]
Suppose, for a contradiction,  that $(\cS, u, v)$ is a counterexample with $|E(\cS)|$ minimal. Let $G = (V,E) = G(\cS)$ and $X = N_G(u) \cap N_G(v)$. 
Then $\cS$ is a simplicial $2$-circuit, $G$ is $4$-connected and not planar and $G$ is not $uv$-coincident rigid.
If $|X| \geq 3$ then the $uv$-coincident rigidity of $G$ would follow from Lemma~\ref{lem:neighbor_inclusion},  so  $|X| \leq 2$. If $uv\not\in E$ then the $uv$-coincident rigidity of $G$ would follow  from Lemma~\ref{lem:b1}. Hence  $uv \in E$ and $|X| = 2$.

In particular, by Lemma~\ref{lem:jim},
$\cS/uv$ is a simplicial $2$-circuit and so $G/uv$ is rigid in $\mathbb R^2$ by Theorem \ref{thm:fogelsanger}. Since $\cS$ is a counterexample, Lemma
    \ref{lem:uvrigidsubgraph} implies that
    \begin{equation}
        \label{eqn:nosub}
        \textrm{no subgraph of $\cl^{uv}(G)$ containing $u,v$ is $uv$-coincident rigid in $\mathbb R^3$.}
    \end{equation}

    Our first claim shows that the inductive hypothesis extends automatically
    to a somewhat larger class of graphs.
    \begin{claim}
        \label{clm:inductive}
        Let $\cU$ be a simplicial $2$-circuit such that $|E(\cU)| 
        < |E(\cS)| $. Let $H = (W,F) = G(\cU)$. Suppose that  $B \subseteq E(K(W))\setminus F$ such 
        that $ H+B$ is 4-connected and nonplanar. Then $H+B$ is 
        $ab$-coincident rigid in $\mathbb R^3$ for every $ab\in F$ such that
        $H/ab$ is rigid.
    \end{claim}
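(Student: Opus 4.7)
The plan is to proceed by induction on $|E(\cU)|$. If $H$ itself is $4$-connected and nonplanar then, since $|E(\cU)|<|E(\cS)|$ and $\cS$ is a minimum counterexample to Theorem~\ref{thm:coin_2}, that theorem will apply directly to $(\cU, a, b)$, yielding that $H$ is $ab$-coincident rigid in $\mathbb R^{3}$; coincident rigidity is preserved by edge addition (the rigidity matrix only acquires extra rows), so $H+B$ is $ab$-coincident rigid. If $H$ is $4$-connected and planar then, by Theorem~\ref{thm:plane}, $H$ is a $4$-connected plane triangulation, so it contains no separating $3$-cycle; since $H+B$ is nonplanar we have $B\neq\emptyset$, and Theorem~\ref{lem:coinbraced} will give the $ab$-coincident rigidity of $H+B$.

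Suppose instead that $H$ has a $3$-vertex separator $X$. By Lemma~\ref{lem:1.1.2}, $X$ is a $3$-clique and $\cU = \cU_1 \triangle \cU_2$ for simplicial $2$-circuits $\cU_i$ with $\cU_1 \cap \cU_2 = \{X\}$, so setting $H_i = G(\cU_i)$ gives $|E(\cU_i)|<|E(\cU)|$ and the inductive hypothesis will apply to each $\cU_i$. Without loss of generality $a,b \in V(H_1)$. Fogelsanger's Rigidity Theorem (Theorem~\ref{thm:fogelsanger}) applied to $\cU_2$ shows that $H_2$ is rigid in $\mathbb R^{3}$, and since every generic $ab$-coincident realization of $H+B$ restricts to a generic (hence infinitesimally rigid) realization on $V(H_2)$, we obtain $K(X)\subseteq\cl_3^{ab}(H+B)$. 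The plan is then to apply the inductive hypothesis to $\cU_1$ with an augmented brace set $B_1^{\star}\supseteq (B\cap K(V(H_1)))\cup(K(X)\setminus E(H_1))$, chosen so that $H_1+B_1^{\star}$ is $4$-connected and nonplanar, obtaining an $ab$-coincident rigid subgraph of $\cl_3^{ab}(H+B)$ containing $a$ and $b$. Finally, Lemma~\ref{lem:uvrigidsubgraph} (applied with the hypothesis that $H/ab$, and hence $(H+B)/ab$, is rigid) will conclude that $H+B$ is $ab$-coincident rigid.

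The principal obstacle is constructing the augmentation $B_1^{\star}$ so that $H_1 + B_1^{\star}$ is simultaneously $4$-connected and nonplanar. $4$-connectivity is relatively easy to arrange by adding $K(X)$ together with internal edges drawn from the rigidity closure of $H+B$, but nonplanarity is delicate when all the non-planarity of $H+B$ is localized on the $H_2$-side or is carried by cross-braces in $B\setminus(K(V(H_1))\cup K(V(H_2)))$. Resolving this case will likely require a secondary induction across the $4$-block structure of $H+B$ relative to $X$, or iterated splitting along the remaining $3$-vertex separators of $H$ until the nonplanarity becomes accessible on the $H_1$-side, together with careful bookkeeping of which auxiliary edges lie in $\cl_3^{ab}(H+B)$.
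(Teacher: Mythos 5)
Your proposal correctly handles the two base cases ($H$ itself 4-connected, planar or not), and it correctly identifies that the nontrivial case is when $H$ has a $3$-vertex separator. But you then attempt to recurse into $H_1$ with an augmented brace set $B_1^{\star}$ and, as you yourself flag, you cannot guarantee that $H_1+B_1^{\star}$ is nonplanar. This is a genuine gap, not a detail to be patched: if all of the nonplanarity of $H+B$ lives on the $H_2$-side or is carried entirely by braces leaving $V(H_1)$, nothing in $\cl_3^{ab}(H+B)$ forces a nonplanar subgraph on $V(H_1)$. Your speculative fix (``secondary induction across the $4$-block structure... until the nonplanarity becomes accessible on the $H_1$-side'') does not resolve this; there is no reason the nonplanarity ever \emph{becomes} accessible on that side.

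The paper avoids the problem by not trying to propagate nonplanarity at all. It first observes that, since $H/ab$ is rigid, $\{a,b\}$ lies in no $3$-vertex separator of $H$, and if $a,b$ were separated by one then Lemma~\ref{lem:rigidsubgraphs} finishes immediately; so there is a unique $4$-block $C$ of $H$ containing both $a$ and $b$, and by the minimality of $\cS$ one may assume $C$ is a plane triangulation. Because $H$ is not $4$-connected but $H+B$ is, there is a brace $cd\in B$ whose endpoints lie on opposite ends of a path $D_1X_1\cdots X_{s-1}D_s$ in the $4$-block tree of $H$ with $C=D_i$ for some $i$. The subtrees on either side of $C$ are rigid and avoid $\{a,b\}$ (using again that $H/ab$ is rigid), so the edges from $c$ to $X_{i-1}$ and from $d$ to $X_i$ all lie in $\cl_3^{ab}(H)$. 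One then applies Theorem~\ref{lem:coinbraced} not to any augmented $H_1$, but to the small auxiliary graph $C+c+d+cd+\{cw:w\in X_{i-1}\}+\{dz:z\in X_i\}$, which is a braced plane triangulation, and closes with Lemma~\ref{lem:uvrigidsubgraph}. In short: the terminal case of your induction (planar triangulation plus a brace) is reached directly by a local construction inside the block tree, rather than by descending into the cleavage pieces — and this is precisely what lets the argument avoid ever having to produce a nonplanar $4$-connected subgraph. You should abandon the $B_1^{\star}$ route and adopt this localization to $C$. A smaller point: your ``without loss of generality $a,b\in V(H_1)$'' step also silently omits the case where $a,b$ are separated by $X$, which the paper dispatches via Lemma~\ref{lem:rigidsubgraphs} before introducing $C$.
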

    \begin{proof}
        If $H$ is $4$-connected and not planar then this follows from the minimal choice of 
        $\cS$. If $H$ is $4$-connected and planar then $B\neq \emptyset$ since $H+B$ is not planar, and the claim 
        follows from Theorem \ref{lem:coinbraced}. 
        So we can assume that $H$ is not $4$-connected. 
        Since $H/ab$ is rigid, $\{a,b\}$ is not contained in any 3-vertex separator of $H$. If $Y$ is a $3$-vertex separator of $H$ and $a$ and $b$ lie in distinct components of $H-Y$, then, since the cleavage graphs of $H$ at $Y$ are rigid, Lemma~\ref{lem:rigidsubgraphs} implies that $H$ is $ab$-coincident rigid. Hence 
        there is a unique $4$-block, $C$, of $H$ that contains both $a$ and $b$. By the minimal choice of $\cS$ and 
        Lemma~\ref{lem:uvrigidsubgraph}, we can assume that $C$ is a plane triangulation. 

        Since $H$ is not 4-connected but $H+B$ is, there are some edge $cd\in B$ and a path $D_1,X_1,\dots, X_{s-1}, D_s$ in the 4-block tree of $H$ such that $c \in V(D_1)\setminus X_1$, $d \in V(D_s)\setminus X_{s-1}$ and $C = D_i$ for some $1 \leq i \leq s-1$.
        Furthermore $\{a,b\} \not\subseteq X_{i-1}$  and $\{a,b\} \not\subseteq X_i$, since $H/ab$ is rigid. 
        
        Suppose 
        $2 \leq i $. Then $\bigcup_{j=1}^{i-1}D_j$ and 
        $\bigcup_{j = i+1}^s D_j$ are both rigid and neither of them 
        contain $\{a,b\}$. It follows that $\{cw: w \in X_{i-1}\} \subseteq \cl^{ab}(H)$ and $\{dz: z \in X_{i}\} \subseteq \cl^{ab}(H)$.
        Now $C+c+d+cd+\{cw: w\in X_{i-1}\}+\{dz: z \in X_{i}\}$ is an 
        $ab$-coincident rigid graph by Theorem \ref{lem:coinbraced}.
        Since $H/ab$ is rigid, Lemma~\ref{lem:uvrigidsubgraph} implies
        that $H+B$ is $ab$-coincident rigid. 

        If $i=1$ then, as in the previous paragraph $\{dz:z \in X_1\} \subseteq \cl^{ab}(H)$. By Theorem \ref{lem:coinbraced},
        $C+d+cd+\{dz:z \in X_1\}$ is $ab$-coincident rigid, and 
        Lemma~\ref{lem:uvrigidsubgraph}, together with the fact that $H/ab$ is rigid, imply that $H+B$
        is $ab$-coincident rigid. 
        %
        %
    \end{proof}

    Next we turn to the analysis of normal edges in our minimal counterexample $\cS$.
    
    \begin{claim}
        \label{clm:circuitchange}
        Suppose that $f$ is an edge of $G$ that is normal with respect to $\{u,v\}$. Then there is some simplicial $2$-circuit $\cS^f$ such that $G(\cS^f) = G(\cS)$ 
        and such that $\cS^f/f$ is a simplicial $2$-circuit.
    \end{claim}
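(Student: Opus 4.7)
The plan is to take a Fogelsanger decomposition $\{\cS_1^+,\ldots,\cS_m^+\}$ of $\cS$ with respect to $f$ (Section~\ref{sec:fog}). By Lemma~\ref{lem:3}(a) each $\cS_i^+/f$ is a simplicial $2$-circuit. In the easy case $m=1$, Lemma~\ref{lem:3}(b) gives $\cS=\cS_1^+$, so $\cS/f$ is itself a simplicial $2$-circuit and we can take $\cS^f:=\cS$.

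The hard case is $m\geq 2$. No single $\cS_i^+$ can serve as $\cS^f$ directly, since its graph is a proper subgraph of $G$. The strategy is instead to modify $\cS$ by a sequence of symmetric differences with tetrahedral $2$-cycles of the form
\[
\cT_{abcd}=\{\{a,b,c\},\{a,b,d\},\{a,c,d\},\{b,c,d\}\}
\]
on $K_4$-subgraphs $\{a,b,c,d\}$ of $G$, in such a way that each modification keeps the complex a simplicial $2$-circuit with graph $G$ while strictly reducing the number of parts in the Fogelsanger decomposition with respect to $f$. Iterating then reaches $m=1$.

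For a single reduction step, I would use Lemma~\ref{lem:3}(e) with $I=\{1\}$ to locate a non-facial $3$-clique $\{x,y,z\}$ of $G$ lying in both $\cS_1^+$ and some $\cS_j^+$ with $j\neq 1$. The $4$-connectivity of $G$, combined with the $3$-cleavage structure from Section~\ref{sec:cleavage} and Lemma~\ref{lem:1.1.4}, is used to extend $\{x,y,z\}$ to a $K_4$-subgraph $\{x,y,z,w\}$ of $G$; the normality of $f$ (so that $\{u,v,x,y\}$ does not induce $C_4$ or $K_4$ in $G+uv$) is used to ensure that $w$ can be chosen disjoint from $\{u,v\}$ and so that the modification does not affect the relevant structure at $\{u,v\}$.

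The main obstacle is to verify simultaneously three properties of $\cS^f:=\cS\triangle \cT_{xyzw}$: (i) every edge of $G$ is still in some $2$-simplex of $\cS^f$, so that $G(\cS^f)=G$; (ii) $\cS^f$ is a simplicial $2$-circuit, not merely a $2$-cycle, which requires that no proper sub-$2$-cycle arises after the XOR; and (iii) the Fogelsanger decomposition of $\cS^f$ with respect to $f$ has strictly fewer parts than that of $\cS$. I expect property (ii) to be the most delicate and to require using the $4$-connectivity of $G$ to preclude any splitting. The verification of (iii) will proceed by tracking, via Lemma~\ref{lem:3}(b), how the four $2$-simplices of $\cT_{xyzw}$ are distributed among the original $\cS_i^+$ and how XORing them amalgamates at least two parts into one.
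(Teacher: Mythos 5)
Your proposal diverges fundamentally from the paper's argument, and as written it contains genuine gaps that prevent it from constituting a proof.

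The paper's proof of Claim~\ref{clm:circuitchange} is \emph{not} a constructive, standalone argument; it is embedded in the minimal-counterexample framework of Theorem~\ref{thm:coin_2} and leans heavily on the standing hypothesis~(\ref{eqn:nosub}), which says that no subgraph of $\cl^{uv}(G)$ containing $u,v$ is $uv$-coincident rigid. Concretely, the paper takes a Fogelsanger decomposition $\{\cS_1^+,\ldots,\cS_m^+\}$ with respect to $f$, uses Lemma~\ref{lem:uvblock}(b) and~(\ref{eqn:nosub}) (via Lemma~\ref{lem:b1}) to force $\{u,v\}\subseteq V_i$ to hold for exactly one index, uses Lemma~\ref{lem:uvblock}(a) to locate the unique $4$-block $D$ of $G_1$ containing $\{u,v\}$, and then shows that $D$ must in fact be all of $G$: every alternative (a plane-triangulation block $D$ with $|E(D)|<|E|$, together with various gluings along the branches of the $4$-block tree) would produce a $uv$-coincident rigid subgraph of $\cl^{uv}(G)$ containing $u,v$, contradicting~(\ref{eqn:nosub}). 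So the paper never builds a new complex; it shows the existing $\cS_1^+$ already has $G(\cS_1^+)=G$, and then sets $\cS^f=\cS_1^+$ and invokes Lemma~\ref{lem:3}(a). Your proposal, by contrast, attempts to prove the claim as a universal statement about simplicial $2$-circuits and normal edges, by iteratively replacing $\cS$ with $\cS\triangle\cT_{xyzw}$ for a sequence of tetrahedra. That is a different and strictly stronger goal, and you do not establish that it is achievable.

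The concrete gaps: (1)~You need the non-facial $3$-clique $K$ supplied by Lemma~\ref{lem:3}(e) to extend to a $K_4$-subgraph $\{x,y,z,w\}$ of $G$ with $w\notin\{u,v\}$. There is no obvious reason such a $w$ must exist; $4$-connectivity alone does not force a $K_4$ over $K$, and the $3$-cleavage structure only tells you about $3$-vertex separators, not about $4$-cliques. (2)~Even granting a $K_4$, you have given no argument that $\cS\triangle\cT_{xyzw}$ is a simplicial $2$-circuit rather than a proper $2$-cycle that splits into smaller circuits; you flag this as ``delicate'', but it is the crux, and the natural tool (that a nonempty $2$-cycle properly contained in a circuit cannot exist) applies to subcomplexes of $\cS$, not to $\cS\triangle\cT_{xyzw}$. (3)~You have not shown that $G(\cS\triangle\cT_{xyzw})=G$: XORing can delete triangles, and if an edge of $G$ is covered only by triangles removed in the XOR, the graph would shrink. (4)~Most importantly, the claim you are attempting to prove without the minimal-counterexample hypothesis may simply be false in that generality; the paper's proof uses~(\ref{eqn:nosub}) in an essential way to rule out the case $D\neq G$, and you have no substitute for that step. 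A proof along your lines would in effect be a new structural theorem about simplicial $2$-circuits, which is well beyond what Claim~\ref{clm:circuitchange} (read in context) requires.

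Your $m=1$ base case is fine. But the $m\geq 2$ case needs to be redone following the paper's route: take the Fogelsanger decomposition, isolate the unique part $\cS_1^+$ with $uv\in E_1$ using Lemma~\ref{lem:uvblock}(b) together with Lemma~\ref{lem:b1} and~(\ref{eqn:nosub}), identify the $4$-block $D\ni\{u,v\}$ in $G_1$, and derive a contradiction from~(\ref{eqn:nosub}) whenever $D\neq G$ by analysing the intersection graph of $\cT_0,\ldots,\cT_{r+m-1}$ and applying Theorem~\ref{lem:coinbraced} and Lemma~\ref{lem:rigidsubgraphs}.
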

    \begin{proof}
        Let $\cS_1^+,\cdots, \cS_m^+$ be a Fogelsanger decomposition of $\cS$ with respect to $f = xy$. 
        Let $G_i = (V_i,E_i) = G(\cS_i^+)$. 
        By Lemma~\ref{lem:uvblock}(b), 
        $uv$ belongs to exactly one of $E_1,\cdots,E_m$. Relabelling if necessary, we can assume that $uv \in E_1$ and $uv \not\in E_i$ for $i \geq 2$. Suppose that $\{u,v\}
        \subseteq V_i$ for some $i \geq 2$. 
        Since $uv \not\in E_i$ and $|N_{G_i}(u)\cap N_{G_i}(v)|\leq |X| = 2$,
        Lemma~\ref{lem:b1} implies that $G_i$ is $uv$-coincident rigid, contradicting (\ref{eqn:nosub}). Therefore $\{u,v\} \not\subseteq V_i$ for all $i \geq 2$. 

        By Lemma~\ref{lem:uvblock}(a),
        $\{u,v\}$ is contained in 
        a unique $4$-block $D$ of $G_1$. 
        If $|E(D)|=|E|$ then $G(\cS_1^+)=G$ and, by Lemma~\ref{lem:3}(a), $\cS_1^+/f$ is a simplicial circuit which proves the claim. So we may assume that $|E(D)|<|E|$. We will show that this assumption leads to a contradiction.
        
        If $D$ is not a plane triangulation then by the minimal choice of $\cS$, and the fact that $D/uv$ is rigid by Lemma \ref{lem:jim}, $D$ is $uv$-coincident rigid, contradicting (\ref{eqn:nosub}). Therefore $D$ is a plane triangulation. 
        Let $\cT_0$ be a simplicial $2$-circuit whose graph is $D$.
        Since $D$ is either a copy of $K_4$ or a 4-connected plane triangulation, there are no 3-vertex separators in $D$, and so every edge of $D$ belongs to exactly two 3-cliques of $D$. It follows that 
        $W$ is a $2$-simplex of $\cT_0$ if and only if $W$ is a $3$-clique in $D$. 

        Since $D = G(\cT_0)$ is a $4$-block of $G_1$, by repeated applications of Lemma~\ref{lem:1.1.2} and Lemma~\ref{lem:34sep_a}, there exist simplicial $2$-circuits $\cT_1,\cdots,\cT_r$ such that 
        $\cS_1^+ = \cT_0 \triangle \cT_1 \triangle \cdots \triangle \cT_r$, $\cT_i \cap \cT_j = \emptyset$ for $1\leq i \neq j \leq r$ and $\cT_i \cap \cT_0$ consists of a single $2$-simplex for $i=1,\cdots,r$. ($\cT_1,\cdots,\cT_r$ correspond to the branches of the 4-block tree of $G_1$ which are incident to $D$.)
        As a notational convenience, for $i=r+1,\cdots,r+m-1$, let $\cT_i = \cS_{i-r+1}^+$.
        Now let $F$ be the intersection graph of 
        $\cT_0, \cT_1,\cdots, \cT_{r+m-1}$. So $V(F) = \{t_0,\cdots, t_{r+m-1}\}$ and $t_it_j \in E(F)$ if and only if the simplicial $2$-circuits  $\cT_i,\cT_j$ have a $2$-simplex in common. 
        Then $\triangle_{i=0}^{r+m-1} \cT_i = \triangle_{j=1}^m \cS_j^+ = \cS$, using Lemma~\ref{lem:3}(c), and so $\bigcup_{i=0}^{r+m-1}G(\cT_i) = G$. Since $|E(\cT_0)| = |E(D)| <  |E|$, $F$ has at least 2 vertices. By Lemma~\ref{lem:3}(e) $F$ is connected. 
        
        Let $J\subseteq \{t_1,\cdots,t_{r+m-1}\}$ be the vertex set of a component of $F-t_0$ and put $H_J = \bigcup_{t_j \in J}G(\cT_j)$. Then
        \begin{equation}
            \label{eqn:HJ}
            \text{$H_J$ is rigid in $\mathbb R^3$}
        \end{equation}
        by the rigidity gluing property. 
        We next show that 
        \begin{equation}
            \label{eqn:nouv}
            \text{$\{u,v\}\not\subseteq V(H_J)$.} 
        \end{equation}
        Suppose that $\{u,v\} \subseteq V(H_J)$. Let $t_{i_1}t_{i_2}\ldots t_{i_s}$ be a shortest path in $F[J]$ joining two vertices $t_{i_1},t_{i_s}$ with $u\in V(\cT_{i_1})$ and $v\in V(\cT_{i_s})$. Since $\{u,v\} \not\subseteq V(\cT_i)$ for all $i \geq 1$, $s\geq 2$. Then Lemma~\ref{lem:rigidsubgraphs} implies that $G(\cT_{i_1}) \cup \cdots \cup  G(\cT_{i_s})$ is $uv$-coincident rigid, contradicting (\ref{eqn:nosub}). This proves (\ref{eqn:nouv}).
        Now let $X_J = V(\cT_0) \cap V(H_J)$. 
        Since $t_0$ is adjacent in $F$ to a vertex in $J$ we have $|X_J| \geq 3$. Suppose that $|X_J|\geq 4$. Then, since $\{u,v\} \not\subseteq X_J$ by (\ref{eqn:nouv}), $D$ has at least 5 vertices and hence $D$ is a 4-connected plane triangulation. Moreover, using (\ref{eqn:HJ}) and (\ref{eqn:nouv}),
        it follows that $K(X_J) \subseteq \cl^{uv}(G)$.
        Now $K(X_J) \not\subseteq E(D)$ since a 4-connected plane triangulation cannot contain a copy of $K_4$. Therefore 
        $D+K(X_J)$ is $uv$-coincident rigid by Theorem \ref{lem:coinbraced} contradicting (\ref{eqn:nosub}). 
        
        Hence $|X_J| = 3$. Then
        $X_J$ is  a 3-vertex separator for $D \cup H_J$ and $X_j\in \cT_0\cap \cT_j$ 
        for some $t_j \in J$.  
        Since $G$ is $4$-connected, $X_J$ is not a vertex separator for $G$. So there must 
        be another subset $J' \subseteq \{t_1,\cdots,t_{r+m-1}\}$, disjoint from $J$, that is the vertex set of a
        component of $F-t_0$ such that $(V(H_{J'}) \cap V(H_J))\setminus V(D) \neq \emptyset$ and $X_J\neq X_{J'}$. Let $w \in (V(H_{J'}) \cap V(H_J))\setminus V(D)$. Then 
        since $H_J$ and $H_{J'}$ are both rigid, it follows that $xw \in \cl^{uv}(G)$ for all $x \in X_J \cup X_{J'}$. Now, it follows easily from Theorem \ref{lem:coinbraced} and the facts that
        $\{u,v\} \not\subseteq X_J$ and $X_J \neq X_{J'}$ that $D + x +\{xw: x \in X_J \cup X_{J'}\}$ is $uv$-coincident rigid, contradicting (\ref{eqn:nosub}).

        This final contradiction completes the proof of Claim \ref{clm:circuitchange}.
    \end{proof}

    \begin{claim}
        \label{clm:nonormal2}
        Suppose that $f=xy$ is an edge of $G$ that is normal with respect to $\{u,v\}$. Then $G/f$ is 4-connected.
    \end{claim}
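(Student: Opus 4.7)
The plan is to argue by contradiction. Suppose $G/f$ is not $4$-connected. By Claim \ref{clm:circuitchange}, $G/f$ is the graph of the nontrivial simplicial 2-circuit $\cS^f/f$, so Lemma \ref{lem:1.1.2}(b) supplies a non-facial 3-clique separator $S$ of $G/f$ together with a cleavage $\cS^f/f = \cC_1 \triangle \cC_2$ where $\cC_1, \cC_2$ are simplicial 2-circuits and $\cC_1 \cap \cC_2 = \{S\}$. I would first observe that, since $G$ is 4-connected, the contracted vertex $z$ (the image of $\{x,y\}$) must belong to $S$; writing $S = \{z,a,b\}$ then yields a 4-vertex separator $T = \{x,y,a,b\}$ of $G$ with $ab \in E(G)$.

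Next I would lift the cleavage $\{\cC_1,\cC_2\}$ back to $\cS^f$. Using the mechanism of Lemma \ref{lem:1.5}, I would construct simplicial 2-circuits $\cS^f_1, \cS^f_2$ in $\cS^f$ (by taking the pre-images $\gamma_{xy}^{-1}(\cC_i)$ and adjoining the appropriate $\{x,y,\cdot\}$-simplices via the $\cS_i^*$-construction used in a Fogelsanger decomposition) such that $\cS^f_i/f = \cC_i$ and $\cS^f = \cS^f_1 \triangle \cS^f_2$. Setting $G_i = G(\cS^f_i)$, Theorem \ref{thm:fogelsanger} tells us each $G_i$ is rigid in $\mathbb R^3$; moreover $G_1 \cup G_2 = G$ and $T \subseteq V(G_1) \cap V(G_2)$.

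Since $f$ is normal, $u,v \notin T$, and the argument then splits into two cases. If $u \in V(G_1) \setminus V(G_2)$ and $v \in V(G_2) \setminus V(G_1)$ (or vice versa), then, using $|V(G_1) \cap V(G_2)| \geq |T| = 4 > d-1 = 2$ and the rigidity of $G_1, G_2$, Lemma \ref{lem:rigidsubgraphs} yields that $G = G_1 \cup G_2$ is $uv$-coincident rigid, contradicting (\ref{eqn:nosub}). If instead $\{u,v\} \subseteq V(G_i)$ for some $i$, then $G_i$ is a smaller simplicial 2-circuit graph and $G_i/uv$ is rigid by Lemma \ref{lem:jim} (applied to $\cS^f_i$, since $|N_{G_i}(u) \cap N_{G_i}(v)| \leq |X| = 2$). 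I would then invoke Claim \ref{clm:inductive} on $G_i$ to deduce its $uv$-coincident rigidity and transfer this to $G$ via Lemma \ref{lem:uvrigidsubgraph}, again contradicting (\ref{eqn:nosub}).

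The hard part will be the last step in the second case, since $G_i$ need not itself be 4-connected or non-planar, and Claim \ref{clm:inductive} does not apply directly in that situation. The resolution, mirroring the template used at the end of the proof of Claim \ref{clm:circuitchange}, is to analyse $G_i$ via its 4-block tree (Theorem \ref{thm:cleavagedec}) and locate the unique 4-block $D$ of $G_i$ containing both $u$ and $v$, which exists because $G_i/uv$ is rigid and so $\{u,v\}$ avoids every 3-vertex separator of $G_i$. I would then enrich $D$ with bracing edges lying in $\cl^{uv}(G)$, produced by gathering the rigid subgraphs attached to $D$ through its 3-clique separators in $G_i$, and apply either Claim \ref{clm:inductive} (when $D$ is non-planar) or Theorem \ref{lem:coinbraced} (when $D$ is a plane triangulation) to the resulting braced triangulation to conclude the argument.
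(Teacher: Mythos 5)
Your opening moves match the paper: assume $\cS=\cS^f$ so that $\cS/f$ is a simplicial $2$-circuit, observe that the contracted vertex must lie in any $3$-vertex separator $S$ of $G/f$ (otherwise $S$ would separate the $4$-connected graph $G$), and thereby obtain a $4$-vertex separator $Z=\{x,y,a,b\}$ of $G$ with $ab\in E$. But your lift step has a genuine gap, and it is exactly where the paper has to do real work. Any simplicial $2$-circuit $\cS^f_i$ with $\cS^f_i/f=\cC_i$ must contain precisely one of the two fill-simplices $\{x,a,b\}$, $\{y,a,b\}$ (these are the only preimages of $S$ under $\gamma_{xy}$, and $S$ has multiplicity one in $\cC_i$). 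Neither of them lies in $\cS^f$ since $S$ is a non-facial clique of $G/f$, so whichever you pick contributes a new edge $xa$ or $yb$. The paper shows in (\ref{eqn:nodiags}) that $xa$ and $yb$ are \emph{both} absent from $E(G)$, so $G(\cS^f_i)\not\subseteq G$ and Lemma \ref{lem:rigidsubgraphs} is not applicable. Worse, (\ref{eqn:noKZ}) shows $K(Z)\not\subseteq \cl^{uv}(G)$, so the missing edges are not even available in the coincident closure, and no combination of Lemmas \ref{lem:uvrigidsubgraph} and \ref{lem:rigidsubgraphs} can rescue the route you propose. (A secondary point: since $uv\in E$ and $Z$ is a separator of $G$, having $u$ and $v$ on opposite sides of $Z$ is impossible, so your first case is vacuous; and the paper only gets $\{u,v\}\not\subseteq Z$, not $u,v\notin Z$ as you assert.)

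The actual proof confronts the regime $K(Z)\not\subseteq\cl^{uv}(G)$ head on. It establishes that $\partial\cS_1\cong\cL_1$ (the triangle case is dispatched quickly via Lemma \ref{lem:34sep_a}), chooses $f$ and $Z$ so that the side $Y_2$ avoiding $\{u,v\}$ is minimal, proves $f\in E(\partial\cS_1)$, shows that the fill circuits $\cS_i\triangle\{A,C\}$ and $\cS_i\triangle\{B,D\}$ are simplicial $2$-circuits, deduces (\ref{eqn:noKZ}) and (\ref{eqn:nodiags}), and then runs a delicate analysis of \emph{suitable} edges (edges inside $Y_2$): each contraction $G/g$ of a suitable edge is forced to be a $4$-connected plane triangulation, so Lemma \ref{lem:planecontractions} caps the number of suitable edges at one, and the proof closes with a two-case analysis according to whether $|Y_2|=1$ or $|Y_2|=2$. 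None of this is anticipated in your sketch, and the template you borrow from Claim \ref{clm:circuitchange} for the final step is not an adequate substitute: the obstruction is structural, namely that the diagonals of the separating $4$-cycle are absent from $G$ and from $\cl^{uv}(G)$, not merely a matter of the lifted subgraphs failing to be $4$-connected or nonplanar.
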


    \begin{proof}
        Since the statement of the claim is only concerned with $G$, we can assume that  $\cS = \cS^f$ is the simplicial 2-circuit given by Claim~\ref{clm:circuitchange} and hence  $\cS/f$ is a simplicial $2$-circuit.
        Suppose, for a contradiction, that $G/f$ is not $4$-connected. Then there exists a 4-vertex separator $Z$  of $G$ such that $Z$ contains $\{x,y\}$.
        Since $\cS/f$ is a simplicial $2$-circuit and $G-Z = G/f-(Z-y)$, $G-Z$ has exactly two components, and we may apply (\ref{eqn:normal1}) and Lemma~\ref{lem:uvblock}(a) to deduce that $\{u,v\} \not\subseteq Z$.

        Let $Y_1, Y_2$ be the vertex sets of the components of $G-Z$, labelled so that $\{u,v\} \cap Y_2 = \emptyset$. We may assume  that $f$ and $Z$ have been chosen so that $|Y_2|$ is minimal.
        Let $\cS_1 = \{U \in \cS:U \subseteq Y_1 \cup Z\}$ and
        $\cS_2 = \cS\setminus \cS_1$. Then $\cS = \cS_1 \triangle \cS_2$
        and so $\partial \cS_1 = \partial \cS_2 \neq \emptyset$ and $V(\partial \cS_1) \subseteq Z$.
        We also recall that, since $f$ is normal, $|N_{G/f}(u) \cap N_{G/f}(v)| = 2$.

        We first show that 
        \begin{equation}
            \label{eqn:bdS1}
            \text{$\partial \cS_1$ is isomorphic to $\cL_1$.}
        \end{equation}
        Suppose not. Then, since  $\partial \cS_1$ is a nonempty simplicial $1$-cycle with no repeated $1$-simplices, it must be isomorphic to $\cK_1$ by Lemma~\ref{lem:1.1.1} and Lemma~\ref{lem:1.1.3}. Let 
        $F = V(\partial \cS_1)$ and suppose that $\{z\} = Z\setminus F$. By Lemma~\ref{lem:34sep_a}, both $\cS_1 \cup\{ F\}$ and $\cS_2 \cup \{F\}$ are simplicial $2$-circuits.
        Therefore $G(\cS_2) = G(\cS_2 \cup \{F\})$ is rigid by Theorem \ref{thm:fogelsanger} and, since $\{u,v\} \not\subseteq V(\cS_2)$, it follows that $K(Z) \subseteq \cl^{uv}(G)$. But 
        $G(\cS_1)\cup K(Z)$ is $4$-connected and therefore is $uv$-coincident rigid by Claim \ref{clm:inductive}, contradicting (\ref{eqn:nosub}). This proves (\ref{eqn:bdS1}) and implies, in particular that $Z=V(\partial \cS_i) = \{a,b,c,d\}$, say, where $\partial \cS_1 = \{\{a,b\},\{b,c\},\{c,d\},\{d,a\}\}$.

        Let $A= \{d,a,b\}$, $B = \{a,b,c\}$, $C = \{b,c,d\}$ and $D = \{c,d,a\}$. We next show that 
        \begin{equation}
            \label{eqn:xyinbd}
            f \in E(\partial \cS_1).
        \end{equation}
        Suppose not. Then, relabelling if necessary, we may assume that $f=ac$. For $i=1,2$ let $\cS'_i = \cS_i \triangle \{B,D\}$. Then $\cS'_i$ is a nonempty simplicial 2-cycle without repeated $2$-faces and so it follows that $\cS'_i/ac$ is a 
        nonempty simplicial $2$-cycle. But $\cS_i/ac = \cS'_i/ac$, 
        since $\{x,y\} \subseteq B,D$, and so $\cS/f$ is not a simplicial $2$-circuit, contradicting our assumption that $\cS = \cS^f$. This proves (\ref{eqn:xyinbd}).

        Henceforth we assume that the labels have been chosen so that $c=x$ and  $d = y$ and hence $\{x,y\} = C\cap D$. We next prove that, for $i=1,2$,
        \begin{equation}
            \label{eqn:2circuits}
            \text{$\cS_i \triangle\{A,C\}$ and
            $\cS_i\triangle \{B,D\}$ are both simplicial $2$-circuits.}
        \end{equation}
        We will prove that $\cS_1 \triangle \{A,C\}$ is a simplicial $2$-circuit (all other cases are proved by analogous arguments). Since $\partial \cS_1 = \partial \{A,C\}$, $\cS_1 \triangle \{A,C\}$ is a simplicial $2$-cycle. Suppose it is not a simplicial $2$-circuit. Then $\cS_1\triangle \{A,C\} = \cA \sqcup \cC$ where $\cA, \cC$ are nonempty simplicial $2$-cycles. Since 
        $\cS$ is a simplicial $2$-circuit and $\cS_1\subsetneq\cS$ it follows that $\cA \not\subseteq \cS_1 $ and $\cC \not\subseteq \cS_1$. Therefore, after relabelling, we can assume that $\{A\} = \cA\setminus \cS_1$ and $\{C\} = \cC \setminus \cS_1$. Now $\cC/f$ is a nonempty simplicial $2$-cycle that is properly contained in $\cS/f$, contradicting the fact that $\cS/f = \cS^f/f$ is a simplicial $2$-circuit and proving (one of the cases of) (\ref{eqn:2circuits}).

        We can now combine the facts that  $\{u,v\} \subseteq V(\cS_1)$ and $G(\cS_1) +K(Z)$ is 4-connected and  nonplanar (since a 
        4-connected plane triangulation cannot contain a 4-clique) with 
        Claim~\ref{clm:inductive} and (\ref{eqn:2circuits}) to deduce that $G(\cS_1)+K(Z)$ is $uv$-coincident rigid. By (\ref{eqn:nosub}), it follows that 
        \begin{equation}
            \label{eqn:noKZ}
            K(Z) \not\subseteq \cl^{uv}(G).
        \end{equation}
        
        Observe that if $ac \in E$, respectively $bd \in E$, then 
        $G(\cS_2 \triangle \{B,D\}) \subseteq G$,
        respectively 
        $G(\cS_2 \triangle \{A,C\}) \subseteq G$. Since $\{u,v\} \not\subseteq G(\cS_2)$ and both $G(\cS_2 \triangle \{B,D\})$
        and $G(\cS_2 \triangle \{A,D\})$ are rigid by (\ref{eqn:2circuits}), this contradicts 
        (\ref{eqn:noKZ}). Therefore 
        \begin{equation}
            \label{eqn:nodiags}
            ac, bd \not\in E.
        \end{equation}

        We say that an edge $g = pq \in E$ is {\em suitable} if $p,q \in Y_2$. Note that every suitable 
        edge is normal with respect to $\{u,v\}$ since $\{u,v\} \not\subseteq V(\cS_2)$. 
        We next show that 
        \begin{equation}
            \label{eqn:suitable1}
            \text{$G/g$ is 4-connected for each suitable edge $g \in E$.}
        \end{equation}
        
        Suppose, for a contradiction, that $G/g$ is not 4-connected. 
        Since $g$ is normal with respect to $\{u,v\}$, we can apply the same argument we used for $f$ to deduce that $G$ has a 4-vertex separator $Z'$ such that $G-Z'$ has exactly two components with vertex sets $Y_1'$ and $Y_2'$ where  $\{u,v\}\subseteq Y_1'\cup Z'$. Since $G[Y_1\cup Z]$ is 3-connected by Lemma~\ref{lem:4sep4con} and $|Z'\cap (Y_1\cup Z)|\leq 2$, we have $Y_1\cup Z\subsetneq Y_1'\cup Z'$. This contradicts the choice of $(f,Z)$ to minimise $Y_2$, and completes the proof of (\ref{eqn:suitable1}).  
        
        We next show that
        \begin{equation}\label{eqn:bill}
        \text{$G/g$ is planar for every suitable edge $g$.}
        \end{equation}
        Suppose for a contradiction that $G/g$ is nonplanar. 
        Then $G/g$ is a simplicial $2$-circuit graph by Claim~\ref{clm:circuitchange} and $G/g$ is 4-connected by (\ref{eqn:suitable1}). Also,
        since $g$ is normal, $|N_{G/g}(u)\cap N_{G/g}(v)|\leq 2$, so  Lemma \ref{lem:jim} implies that $(G/g)/uv$ is rigid.
        Now the minimal choice of $\cS$
        implies that $G/g$ is $uv$-coincident rigid.
        Since $G$ is suitable,  Lemma~\ref{lem:normal2} implies that $g$ is $uv$-admissible and Lemma~\ref{lem:split_uv} now implies that $G$ is
        $uv$-coincident rigid, a contradiction. 
        This completes the proof of (\ref{eqn:bill}).

        Statements (\ref{eqn:suitable1}) and (\ref{eqn:bill}) imply that $G/g$ is a 4-connected plane triangulation for every suitable edge $g$. Lemma~\ref{lem:planecontractions} now tells us that $G$ has at most one suitable edge. We complete the proof by considering the following two cases.

\medskip
\noindent
        {\bf Case 1:} 
        $G$ has no suitable edge.
        We first show that $|Y_2|=1$. Choose $w \in Y_2$ and let $U$ be a $2$-simplex of $\cS_2$ that contains $w$. Since $G$ has no suitable edges, (\ref{eqn:nodiags}) implies that $U-w$
        is a $1$-simplex of $\partial \cS_2$. Therefore $lk_\cS(w)$
        is a simplicial 1-cycle contained in $\partial \cS_2$. Since
        $\partial \cS_2 \cong \cL_1$ is a simplicial 1-circuit it follows that $lk_\cS(w) = \partial \cS_2$. Thus 
        $\{\{a,b,w\},\{b,c,w\},\{c,d,w\},\{d,a,w\}\} \subseteq \cS_2$ for any 
        $w \in V(\cS_2)-Z$. 
        
        Hence if $w_1,w_2$ are distinct vertices in $V(\cS_2) -Z$ then $G[Z\cup \{w_1,w_2\}]$ is an 
        octahedral graph which is rigid. This would imply that $K(Z) \subseteq 
        \cl^{uv}(G)$ and contradict (\ref{eqn:noKZ}). 
        Thus $V(\cS_2) = Z \cup \{w\}$ and
        $wz \in E(\cS_2)$ for each $z \in Z$.
        
        Using Lemma~\ref{lem:4sep4con}, 
        and after relabelling $a,b,c,d$ if necessary, we may assume that
        $a \notin \{u,v\}$ and that
        $G/aw = G(\cS_1)+ac = G(\cS_1 \triangle\{B,D\})$ is a 4-connected simplicial $2$-circuit graph. Also since $G$ is nonplanar, Theorem \ref{thm:plane} implies that $|E| \geq 3|V|-5$. Now since $a,w$ have exactly 2 common neighbours in $G$, $|E(G/aw)| = |E|-3$, $|V(G/aw)| = |V|-1$ and so $G/aw$ is also nonplanar.
        
        Now $N_G(u) \cap N_G(v) \subseteq N_{G/aw}(u) \cap N_{G/aw}(v) \subseteq 
        (N_G(u) \cap N_G(v)) \cup \{a\}$. Hence $|N_{G/aw}(u)
        \cap N_{G/aw}(v)| \leq 3$ and we may apply Lemma~\ref{lem:jim}, Lemma~\ref{lem:2circuit1} and Claim \ref{clm:inductive} to deduce that 
        $G/aw$ is $uv$-coincident rigid. Also $aw$ is $uv$-admissible since 
        $b,d$ are common neighbours of $a$ and $w$ and $\{b,d\} \neq \{u,v\}$. Therefore, by Lemma~\ref{lem:split_uv}, $G$ is $uv$-coincident rigid, a contradiction.

\medskip
\noindent
{\bf Case 2:} 
        $G$ has exactly one suitable edge $g=pq$.
        Using Lemma~\ref{lem:4sep4con} and relabelling if necessary, we may 
        assume that $G(\cS_2)+ac$ is $4$-connected. Also $G(\cS_2)+ac = G(\cS_2 \triangle \{B,D\})$ is a simplicial $2$-circuit graph by (\ref{eqn:2circuits}). 
        
        We first show that $G(\cS_2)+ac$ is nonplanar. Suppose, for a contradiction, that $G(\cS_2)+ac$ is planar. Then $p,q$ must have exactly two common neighbours in $G$. Since $G$ is a
        nonplanar simplicial $2$-circuit graph, and using 
        Theorem \ref{thm:plane}, we can deduce that $|E(G/g)| \geq 3|V(G/g)| - 5$ and so $G/g$ is also 
        nonplanar, contradicting (\ref{eqn:bill}).
        Therefore $G(\cS_2)+ac$ is nonplanar. 
        Also since $G/g$ is a plane triangulation, it follows, using (\ref{eqn:nodiags}) and the fact that $G(\cS_1) =(G/g)[V(\cS_1)]$, that both $G(\cS_1)+ac$ and $G(\cS_1)+bd$ are plane triangulations.

        We next show that $Y_2 = \{p,q\}$. Suppose for a contradiction that
        $ r\in Y_2 \setminus\{p,q\}$. Then, since $pq$ is the unique suitable edge in $G$, $r$ is only adjacent to vertices in $\partial \cS_2$. We may now deduce, as in Case 1, that $\cW = \{\{a,b,r\},\{b,c,r\},\{c,d,r\},\{d,a,r\}\} \subseteq \cS_2$. This contradicts the fact that $\cS$ is a simplicial $2$-circuit since $\cW \cup \cS_1$ is a simplicial $2$-cycle. Therefore $Y_2 = \{p,q\}$ and so $V(\cS_2) = \{a,b,c,d,p,q\}$. 
        
        Now since $G(\cS_2)+ac = G(\cS_2 \triangle \{B,D\})$ is a nonplanar simplicial $2$-circuit graph by (\ref{eqn:2circuits}), Theorem \ref{thm:plane} implies that $G(\cS_2)+ac$ has at least 13 edges. So $G(\cS_2)$ has at least 12 edges.
        It follows that $G(\cS_2) = K-\{ac,bd\}$ or $G(\cS_2) = K - \{ac,bd, zw\}$ where $K$ is the complete graph on $\{a,b,c,d,p,q\}$, and  
        $z \in \{a,b,c,d\}$,  $w \in \{p,q\}$. Both of these graphs are rigid in $\mathbb R^3$, contradicting (\ref{eqn:noKZ}).
        
        This final contradiction completes the proof of Claim \ref{clm:nonormal2}.
    \end{proof}

    Now suppose that $f$ is a normal edge of $G$. If $G/f$ is 
    not a plane triangulation then by Claim \ref{clm:nonormal2}, Claim \ref{clm:circuitchange} and the minimal choice of $\cS$, $G/f$ is $uv$-coincident rigid. 
    Now, using Lemma~\ref{lem:normal2} and Lemma~\ref{lem:split_uv} it follows that $G$ is $uv$-coincident rigid, a contradiction. 
    Therefore for every normal edge $f$ in $G$, $G/f$ is a 4-connected  plane triangulation.
    Lemma~\ref{lem:planecontractions} now implies that 
    \begin{equation}
    \label{eqn:5}
    \text{there is at most one normal edge in $G$.}
    \end{equation}
    
    Since every vertex in the 
    4-connected graph $G$ has degree at least 4, 
    (\ref{eqn:5}) implies that every vertex in $V\setminus \{u,v\}$ is adjacent to at least one of $u,v$.
    Thus $V\setminus \{u,v\}$ can be partitioned into three sets:
    $X=N_G(u)\cap N_G(v)$,
    $Y=N_G(u)\setminus N_G(v)$,
    and $Z=N_G(v)\setminus N_G(u)$.
    Now (\ref{eqn:5}) further implies that there is at most one edge between $X$ and $Y\cup Z$. The 4-connectivity of $G$  and the fact that $|X| = 2$ now gives $Y \cup Z = \emptyset$. Therefore $|V| = 4$ which contradicts the hypothesis that $G$ is 4-connected.
    
    This completes the proof of Theorem~\ref{thm:coin_2}.
    \end{proof}

\section{Applications and conjectures}

\subsection{Rigidity and global rigidity of \texorpdfstring{$M$}{M}-connected simplicial complexes}
Let ${\cal S}$ be a simplicial $k$-complex.
Recall that the $k$-simplicial matroid ${\cal M}({\cal S})$ is a linear matroid on ${\cal S}$ represented by the $k$-th boundary operator $\partial_k$ over $\mathbb{Z}_2$ of the simplicial chain complex of ${\cal S}$. 
We  say that ${\cal S}$ is {\em $M$-connected} if  ${\cal M}({\cal S})$ is a connected matroid i.e.~${\cal M}({\cal S})$ has a circuit that contains $S_1$ and $S_2$ for any $S_1, S_2\in {\cal S}$. We can use the fact that simplicial $k$-circuits have the (strong) $(k+1)$-cleavage property to extend Theorems \ref{thm:fogelsanger}, \ref{thm:globrigid_3_main} and \ref{thm:globrigid_d_main} to $M$-connected simplicial complexes.

\begin{theorem}\label{thm:M_connected}
Let ${\cal S}$ be an $M$-connected simplicial $k$-complex
and $k\geq 2$. Then $G({\cal S})$ is rigid in $\mathbb{R}^{k+1}$.
In addition, if $B \subseteq E(K(V(\cS)))$ then $G({\cal S})+B$ is globally rigid in $\mathbb{R}^{k+1}$ if and only if it is $(k+2)$-connected and is not a plane triangulation when $k=2$.
\end{theorem}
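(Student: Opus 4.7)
The plan is to write $\cS$ as a union of simplicial $k$-circuits glued along common $k$-simplices, then invoke Fogelsanger's Rigidity Theorem and the rigidity gluing property for the first assertion, and our main global rigidity theorems together with Corollary~\ref{cor:cleavage_globrigid} and Lemma~\ref{lem:braced} for the second. The nonempty minimal dependent sets of $\cM(\cS)$ are precisely the simplicial $k$-circuits contained in $\cS$, so $M$-connectedness means that for any two $k$-simplices of $\cS$ there is a simplicial $k$-circuit inside $\cS$ that contains both. A standard matroid argument then lets us enumerate simplicial $k$-circuits $\cC_1,\ldots,\cC_m \subseteq \cS$ with $\cS = \bigcup_i \cC_i$ such that for each $i\geq 2$ the complex $\cC_i$ shares a $k$-simplex with $\cC_1\cup\cdots\cup \cC_{i-1}$ (and is not already contained in this union).

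For the rigidity assertion, each $G(\cC_i)$ is rigid in $\R^{k+1}$ by Fogelsanger's Theorem~\ref{thm:fogelsanger}. The shared $k$-simplex at step $i$ provides $k+1\geq 3$ common vertices between $G(\cC_i)$ and $G(\cC_1\cup\cdots\cup\cC_{i-1})$, so the rigidity gluing property shows inductively that $G(\cC_1\cup\cdots\cup\cC_i)$ is rigid in $\R^{k+1}$, whence so is $G(\cS)$.

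For the global rigidity assertion, necessity follows from Hendrickson's Theorem~\ref{thm:hendrickson} together with the count $|E|\geq (k+1)|V|-\binom{k+2}{2}+1$ required for redundant rigidity, which fails for plane triangulations in dimension~$3$. For sufficiency, the key step is to verify that each $G(\cC_i)$ has the strong $(k+1)$-cleavage property. Lemma~\ref{lem:1.1.2} gives the $(k+1)$-cleavage property, and Lemma~\ref{lem:mincleavage} tells us that every $(k+2)$-block $D$ of $G(\cC_i)$ is either $K_{k+1}$ (trivially globally rigid) or the graph of a smaller simplicial $k$-circuit. In the second case $D$ is $(k+2)$-connected by definition of a $(k+2)$-block, so Theorems~\ref{thm:globrigid_3_main} and~\ref{thm:globrigid_d_main} imply that $D$ is globally rigid unless $k=2$ and $D$ is a plane triangulation, both of which are permitted by the definition of the strong $(k+1)$-cleavage property.

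A shared $k$-simplex induces a $K_{k+1}$ in the intersection, so Corollary~\ref{cor:cleavage_globrigid} applies at each step (skipping trivial cases where $G(\cC_i)\subseteq G(\bigcup_{j<i}\cC_j)$) and shows inductively that $G(\cS)$ itself has the strong $(k+1)$-cleavage property. Lemma~\ref{lem:braced} preserves this property under the addition of the edges in $B$, so $G(\cS)+B$ has the strong $(k+1)$-cleavage property. If $G(\cS)+B$ is $(k+2)$-connected, then it is its own unique $(k+2)$-block, so by definition of strong cleavage it is either globally rigid or, when $k=2$, a plane triangulation; the latter is excluded by hypothesis, completing the proof. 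The main work is the reduction to the strong $(k+1)$-cleavage property for each $G(\cC_i)$, which essentially uses the full force of Theorems~\ref{thm:globrigid_3_main} and~\ref{thm:globrigid_d_main}; everything else is straightforward gluing.
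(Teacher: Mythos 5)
Your proof is correct and follows essentially the same route as the paper. The only cosmetic difference is in the circuit enumeration: the paper fixes a single $k$-simplex $S_0$ and takes, for each $S\in\cS$, a circuit $\cC_S$ containing both $S_0$ and $S$, so that all circuits pairwise share a $K_{k+1}$ through $S_0$, whereas you build the chain greedily from $M$-connectivity; both feed into Corollary~\ref{cor:cleavage_globrigid} in the same way, and your explicit appeal to Lemma~\ref{lem:mincleavage} to verify the strong $(k+1)$-cleavage property for each $G(\cC_i)$ is a detail the paper leaves implicit.
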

\begin{proof}
Rigidity follows immediately from Theorem \ref{thm:fogelsanger} since it implies that every pair of vertices of $G({\cal S})$ is contained in a rigid subgraph. The necessity of the conditions for global rigidity follows immediately from Theorem \ref{thm:hendrickson}. To see sufficiency,   
 choose a simplex $S_0\in {\cal S}$.
Since ${\cal S}$ is $M$-connected,
for each $S\in {\cal S}\setminus\{S_0\}$, ${\cal M}({\cal S})$ has a circuit ${\cal C}_S$ (or equivalently, a $k$-simplicial circuit) that contains $S_0$ and $S$.
By Theorems~\ref{thm:globrigid_3_main} and~\ref{thm:globrigid_d_main}, $G({\cal C}_S)$ has the strong $(k+1)$-cleavage property.
Note that, for any $S_1, S_2\in {\cal S}\setminus \{S_0\}$,
both ${\cal C}_{S_1}$ and ${\cal C}_{S_2}$ contain $S_0$,
so $G({\cal C}_{S_1})$ and $G({\cal C}_{S_2})$ share at least $k+1$ vertices.
Hence, by Corollary~\ref{cor:cleavage_globrigid}, 
$\bigcup_{S\in {\cal S}\setminus \{S_0\}} G({\cal C}_{S})$ has the strong $(k+1)$-cleavage property.
Moreover, $\bigcup_{S\in {\cal S}\setminus \{S_0\}} G({\cal C}_{S})$ is a spanning subgraph of $G({\cal S})$.
So $G({\cal S})+B$ has the strong $(k+1)$-cleavage property by Lemma~\ref{lem:braced}.
This and the $(k+2)$-connectivity of $G({\cal S})+B$ imply sufficiency.
\end{proof}


Theorem~\ref{thm:M_connected} yields an algorithm that can verify if a graph $G$ is rigid or globally rigid in $\R^{k+1}$. 

\begin{enumerate}
    \item Compute the set ${\cal A}_G$ of  all $(k+1)$-cliques in $G$.
    \item If $V(\cA_G) \neq V(G)$ then stop.
    \item If $V(\cA_G) = V(G)$ compute the $(k-1)$-face-to-$k$-face incidence matrix of $\cA_G$ (with rows indexed by $k$-faces). Let $\cal{M}$ be the row matroid of this matrix over $\mathbb Z/2\mathbb Z$.
    \item Compute the connected components of the matroid $\cal{M}$. If none of these components is spanning (i.e. has vertex set equal to $V(G)$) then stop. If some component of $\cal{M}$ is spanning then
    \begin{enumerate}
        \item $G$ has the strong cleavage property.
        \item $G$ is rigid in $\mathbb R^{k+1}$.
        \item $G$ is globally rigid in $\mathbb R^{k+1}$ if and only if it is $(k+2)$-connected and, in the case $k=2$, non-planar.  
    \end{enumerate}
\end{enumerate}
In general the algorithm above cannot be used to decide if a given graph is globally rigid in $\mathbb R^{k+1}$. For example $K_{6,6}$ is known to be globally rigid in $\mathbb R^3$, but $\cA_{K_{6,6}}$ is empty (with $k=2$). 

However, for any simplicial $k$-complex $\cS$,
$G=G(\cS)$ implies $\cS\subseteq \cA_G$.
This fact and Theorem~\ref{thm:M_connected} imply that the above
algorithm (combined with tests for $(d+1)$-connectivity  and  planarity) correctly verifies the rigidity and the global rigidity of $G$ in $\mathbb{R}^{k+1}$ if $G$ contains a spanning subgraph that is the graph of an $M$-connected simplicial $k$-complex.
In particular, it correctly verifies if a simplicial $k$-circuit graph $G$ is globally rigid  in polynomial-time without necessarily computing a simplicial $k$-circuit $\cS$ with $G=G(\cS)$. Developing an efficient recognition algorithm for the class of simplicial $k$-circuit graphs is an interesting, but perhaps difficult, open problem. 

We also note that Theorem \ref{thm:M_connected} and the algorithm above illustrate an advantage of the class of simplicial $k$-circuits over the class of simplicial $k$-manifolds. It does not seem easy to find an analogous certifying algorithm that relies only on the global rigidity characterisation for simplicial $k$-manifolds.

\subsection{Rigidity of abstract simplicial complexes} 
Theorem~\ref{thm:M_connected}
 gives a sufficient condition for the graph of an abstract simplicial complex to be rigid in $\R^d$.
The body and hinge theorems of Tay \cite{T89} and Katoh and Tanigawa \cite{KT} give other such conditions.
It would be of interest to find yet more sufficient conditions for the graph  of a simplicial complex to be  rigid in $\R^d$. For example: 

\begin{conjecture}
The graph of an abstract simplicial $(d+1)$-complex is rigid in $\R^d$ whenever it is $(2d-1)$-connected.
\end{conjecture}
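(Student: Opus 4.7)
The plan is to adapt the Fogelsanger-style decomposition strategy developed in this paper. Let $\cS$ be an abstract simplicial $(d+1)$-complex and $G = G(\cS)$. The starting observation is that every facet $F$ of $\cS$ has $|F| = d+2$, so $G[F] = K_{d+2}$ is rigid in $\R^d$ (in fact with exactly one redundant edge, since $\binom{d+2}{2} - d(d+2) + \binom{d+1}{2} = 1$). Hence $G$ is covered by rigid subgraphs, and the task reduces to showing that $(2d-1)$-connectivity allows us to glue these together using the rigidity gluing property, which requires only $d$ shared vertices.

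The proof would proceed by induction on $|V(G)|$, with base case $G = K_{d+2}$ (a single facet). For the inductive step, pick an edge $uv \in E(G)$ lying in some facet $F \in \cS$, so that $|N_G(u) \cap N_G(v)| \geq d$. If the contracted graph $G/uv$ is still $(2d-1)$-connected, then $\cS/uv$ is an abstract simplicial $(d+1)$-complex (with multiplicities allowed, as in Section \ref{sec:contractions}) whose graph is $G/uv$. By induction $G/uv$ is rigid in $\R^d$, and Whiteley's vertex splitting lemma (Lemma~\ref{lem:split}) lifts this rigidity back to $G$ since $u$ and $v$ have at least $d-1$ common neighbours. Otherwise, every such contraction destroys $(2d-1)$-connectivity, and one must find a $(2d-1)$-vertex separator $X$ of $G$, decompose $\cS$ along $X$ into sub-complexes $\cS_1, \cS_2$ with $V(\cS_1)\cap V(\cS_2) = X$, apply the induction hypothesis to each after augmenting with a clique on $X$ to restore connectivity, and glue via the rigidity gluing property, which applies since $|X| = 2d-1 \geq d$.

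The main obstacle I foresee lies in this decomposition/augmentation step: unlike the simplicial circuit setting, where Lemma~\ref{lem:1.1.2} exploits a closed boundary structure to produce clean sub-circuits, a general abstract simplicial $(d+1)$-complex may split across a separator in much less structured ways, and the combinatorial augmentation needed to include $K(X)$ may neither preserve the $(d+1)$-complex structure nor maintain $(2d-1)$-connectivity. A natural remedy is to phrase the argument entirely in terms of the rigidity matroid or the simplicial matroid closure, in the spirit of Theorem~\ref{thm:M_connected} and Corollary~\ref{cor:cleavage_globrigid}: one would show that $(2d-1)$-connectivity of $G$ forces the matroidal closure of the facet-covering family to be $M$-connected, from which rigidity follows by a cleavage-type gluing result that does not require the intermediate pieces themselves to be graphs of simplicial $(d+1)$-complexes.

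A secondary consideration, which may guide the choice of contracted edge in the induction step, is the structure of low-connectivity vertices. The hypothesis forces every vertex to have degree at least $2d-1$, which is strictly larger than the facet-degree bound of $d+1$ when $d \geq 3$; this suggests that, at each stage, many contraction candidates $uv$ are available, and the obstruction (existence of a $(2d-1)$-vertex separator containing every such candidate) is correspondingly restrictive. Verifying that this restriction yields an exploitable decomposition---or, alternatively, a direct proof of rigidity via a 0-extension-type argument on a vertex of degree exactly $2d-1$---is the step I expect to require the most new ideas, and where I would focus the bulk of the technical effort.
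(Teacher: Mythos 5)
The statement you set out to prove is labeled a \emph{conjecture} in the paper; there is no proof for your proposal to be compared against. The authors only remark that it holds trivially for $d=1$, follows for $d=2$ from the technique of \cite[Theorem 3.2]{JJ}, and that \cite[Theorem 7.1]{CJT} gives a $C^1_2$-cofactor analogue for $d=3$; they also note that a ring of copies of $K_{2d-2}$ glued consecutively along $K_{d-1}$'s shows the bound $2d-1$ would be best possible. For $d\geq 3$ the conjecture is, as far as this paper is concerned, open.

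Your outline is a sensible framework and you correctly flag the decomposition step as the place where new ideas are required; that gap is genuine and is precisely why the statement is a conjecture rather than a theorem. Two concrete reasons the circuit machinery will not transplant. First, Fogelsanger's technique lives in the world of simplicial \emph{cycles and circuits}: Lemma~\ref{lem:1.1.2} (every minimum separator is a non-facial clique that splits $\cS$ into two smaller circuits), Lemma~\ref{lem:1.1.4} (which in particular guarantees $G(\cS/uv)=G(\cS)/uv$), and the Fogelsanger decomposition of Lemma~\ref{lem:3} all rest on $\partial\cS=\emptyset$. A general pure abstract $(d+1)$-complex carries no such constraint: separators need not be cliques, $G(\cS/uv)$ can be a proper subgraph of $G(\cS)/uv$ (so your contraction step does not obviously keep you inside the inductive class), and there is nothing like a Fogelsanger decomposition to drive the induction. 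Second, there is a dimension shift: Fogelsanger's theorem and Theorem~\ref{thm:M_connected} concern simplicial $(d-1)$-complexes and rigidity in $\R^d$, with minimally rigid $K_d$ facets, whereas this conjecture concerns $(d+1)$-complexes whose facets give redundantly rigid $K_{d+2}$'s; invoking $M$-connectivity of the facet cover is therefore a new claim rather than an instance of Theorem~\ref{thm:M_connected}. The tightness examples --- where consecutive blocks share only $d-1$ vertices, one short of the $d$ required by the rigidity gluing property --- pinpoint exactly how fragile the gluing step is. In sum, the gaps you identify are real, and nothing in the paper closes them.
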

A suitably large ring of copies of $K_{2d-2}$ in which consecutive pairs have a $K_{d-1}$ in common and non-consecutive pairs are disjoint shows that the connectivity bound of $2d-1$ in this conjecture would be best possible. The conjecture holds when $d=1$ since every connected graph is rigid in $\R^1$. We can use the proof technique of \cite[Theorem 3.2]{JJ} to verify the conjecture when $d=2$, and  \cite[Theorem 7.1]{CJT} tells us that the hypotheses of the  conjecture imply $C^1_2$-cofactor rigidity (which is conjectured to be the same as rigidity in $\R^3$).

\subsection{A lower bound conjecture for simplicial circuits}
As mentioned in the introduction, Fogelsanger's Rigidity Theorem implies that the bound for the number of edges (1-faces) in Barnette's Lower Bound Theorem extends from connected triangulated $(d-1)$-manifolds to simplicial $(d-1)$-circuits. 
We conjecture that this extension applies to faces of other dimensions as well.
More precisely let $f_j(\cS)$ be the number of $j$-dimensional faces of a simplicial complex $\cS$ and, for $k\geq 1$, let 
\[\varphi_j(n,k) = \left\{ \begin{array}{ll} 
            n\binom{k+1}{j} - j\binom{k+2}{j+1} & \text{ for }1 \leq j \leq k-1 \\
kn - (k+2)(k-1)& \text{ for }j=k \end{array}\right. \]

\begin{conjecture}
    Suppose that $\cS$ is a non-trivial simplicial $k$-circuit with $n$ vertices. Then, for $1\leq j \leq k$, 
    \begin{equation}
        \label{eqn:lbtconj} 
        f_j(\cS) \geq \varphi_j(n,k).
    \end{equation}
    Moreover, equality holds in (\ref{eqn:lbtconj}) for any $1 \leq j \leq k$ if and only 
    if either $k=1$, or $k=2$ and $\cS$ is  a triangulated 2-sphere, or $k\geq 3$ and $\cS$ is a stacked $k$-sphere.
\end{conjecture}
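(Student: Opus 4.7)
The plan is to proceed by double induction on $k$ (with $k=1$ trivial since simplicial $1$-circuits are cycle graphs and $\varphi_1(n,1)=n=f_1$) and on $|V(\cS)|$. The base case $|V(\cS)|=k+2$ reduces via Lemma~\ref{lem:1.1.1} to $\cS\cong\cK_k$, and direct computation gives $f_j(\cK_k)=\binom{k+2}{j+1}=\varphi_j(k+2,k)$. For the inductive step, I would first reduce to the case where $G(\cS)$ is $(k+2)$-connected: if there is a $(k+1)$-vertex separator, Lemma~\ref{lem:1.1.2} writes $\cS=\cS_1\triangle\cS_2$ with $\cS_1\cap\cS_2=\{X\}$ and each $\cS_i$ a simplicial $k$-circuit on $n_i$ vertices (where $n_1+n_2=n+k+1$). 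A direct counting argument (using that every $j$-subset of $X$ for $j<k$ remains a face of $\cS_i \sm \{X\}$ since $\cS_i$ is a non-trivial circuit) gives
\begin{equation*}
f_j(\cS)=f_j(\cS_1)+f_j(\cS_2)-\binom{k+1}{j+1}\text{ for }1\leq j<k,\qquad f_k(\cS)=f_k(\cS_1)+f_k(\cS_2)-2,
\end{equation*}
and the binomial identity $j\binom{k+2}{j+1}=(k+1)\binom{k+1}{j}-\binom{k+1}{j+1}$ (together with a short calculation for $j=k$) shows that $\varphi_j$ satisfies the same additivity. Hence the inductive hypothesis on $\cS_1,\cS_2$ yields (\ref{eqn:lbtconj}) for $\cS$, and the equality characterisation propagates via Lemma~\ref{lem:mincleavage} since both the stacked-sphere and (when $k=2$) planar triangulation classes are closed under gluing along a common $K_{k+1}$.

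It then remains to show strict inequality in the $(k+2)$-connected case with $|V(\cS)|>k+2$, except when $k=2$ and $G(\cS)$ is planar (where Theorem~\ref{thm:plane} shows $\cS$ is a plane triangulation and Euler's formula gives $f_1=3n-6$, $f_2=2n-4$). Otherwise Theorem~\ref{thm:globrigid_3_main} (for $k=2$) or Theorem~\ref{thm:globrigid_d_main} (for $k\geq 3$) gives that $G(\cS)$ is globally rigid in $\R^{k+1}$, so Hendrickson's theorem (Theorem~\ref{thm:hendrickson}) forces $f_1(\cS)\geq\varphi_1(n,k)+1$. To propagate this strict inequality to $f_j$ for $j\geq 2$, my plan is to use the link recursion
\begin{equation*}
(j+1)f_j(\cS)=\sum_{v\in V(\cS)}f_{j-1}(lk_\cS(v)),
\end{equation*}
where by Lemma~\ref{lem:links} each $lk_\cS(v)$ is a simplicial $(k-1)$-cycle on $d_{G(\cS)}(v)$ vertices. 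The binomial identities $(k+1)\binom{k}{j-1}=j\binom{k+1}{j}$ and $j(j+1)\binom{k+2}{j+1}=2\binom{k+2}{2}\binom{k}{j-1}$ make all coefficients line up exactly, so that assuming the link-level bound $f_{j-1}(lk_\cS(v))\geq\varphi_{j-1}(d_{G(\cS)}(v),k-1)$, summing via $\sum_v d_{G(\cS)}(v)=2f_1(\cS)$ yields $f_j(\cS)\geq\varphi_j(n,k)+\tfrac{2}{j+1}\binom{k}{j-1}>\varphi_j(n,k)$. The case $k=2$, $j=2$ with $G(\cS)$ non-planar can be handled separately using the parity bound $3f_2\geq 2f_1$ (each edge lies in an even and hence at least two number of triangles) together with a case split on whether $\cS$ is a $2$-pseudomanifold, where non-spherical topology forces $\chi(\cS)\leq 1$ and hence $f_2\geq 2n-2$.

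The main obstacle is the link-level bound itself: $lk_\cS(v)$ is only guaranteed to be a simplicial $(k-1)$-cycle rather than a $(k-1)$-circuit, and a naive decomposition into circuits gives a bound strictly weaker than $\varphi_{j-1}(d(v),k-1)$ by $(j-1)\binom{k+1}{j}$ per extra circuit in the decomposition of $lk_\cS(v)$. Closing this gap will require either (a) extending the conjecture to simplicial $k$-cycles with an appropriate correction term counting the number of circuits in the decomposition, together with a global argument that ``extra'' circuits in some link $lk_\cS(v)$ force compensating face counts elsewhere in $\cS$ --- perhaps by analysing a Fogelsanger decomposition of $\cS$ with respect to an edge incident to $v$ --- or (b) an algebraic approach via the Stanley--Reisner ring of $\cS$, in which the redundant rigidity supplied by Theorems~\ref{thm:globrigid_3_main}/\ref{thm:globrigid_d_main} translates into bounds on the generic initial ideal, analogous to Kalai's algebraic proof of the classical Lower Bound Theorem.
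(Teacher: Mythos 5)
The statement you set out to prove is presented in the paper as a \emph{conjecture}, not a theorem: the authors offer no proof, and immediately after stating it they flag exactly the obstruction you run into, writing that ``the standard proof technique of reducing the Lower Bound Theorem to the case $j=1$ does not translate easily to this setting since the class of simplicial circuits is not link closed. In general the link of a vertex in a simplicial $k$-circuit will be a simplicial $(k-1)$-cycle but not necessarily a simplicial $(k-1)$-circuit.'' So there is no proof in the paper to compare yours against; your proposal is best read as a reconstruction of the classical McMullen--Perles--Walkup reduction together with a correct diagnosis of why it fails here.

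That said, several pieces of your sketch do line up with results the paper actually proves. The $j=1$ case of the conjecture, including its equality characterisation, is precisely Theorem~\ref{thm:LBT}, which is proved in Section~\ref{sec:cleavage} from Theorems~\ref{thm:globrigid_3_main} and~\ref{thm:globrigid_d_main}; your invocation of Hendrickson's condition is the same idea. Your reduction along a $(k+1)$-clique separator via Lemma~\ref{lem:1.1.2}, the additivity identities for $\varphi_j$, and the link recursion $(j+1)f_j(\cS)=\sum_{v}f_{j-1}(lk_\cS(v))$ all check out arithmetically. Two small cautions beyond the central gap: your proposed ``correction term $(j-1)\binom{k+1}{j}$ per extra circuit'' is only heuristic, since the circuits in a decomposition of $lk_\cS(v)$ may share vertices and faces, so one cannot simply subtract a fixed deficit per extra circuit; and in the case $k=2$, $j=2$ the parity bound $3f_2\geq 2f_1$ combined with $f_1\geq 3n-5$ already gives $f_2\geq 2n-3>\varphi_2(n,2)$ without any Euler-characteristic case split. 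But the substantive missing ingredient remains the link-level inequality $f_{j-1}(lk_\cS(v))\geq\varphi_{j-1}(d(v),k-1)$, which fails for general $(k-1)$-cycles, and neither of your proposed remedies (a Fogelsanger-style compensation argument, or a Kalai-style algebraic shifting argument) is developed in the paper. The conjecture is open.
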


Note that the standard proof technique of reducing the Lower Bound Theorem to the case $j=1$ does not translate easily to this setting since the class of simplicial circuits is not link closed. In general the link of a vertex in a simplicial $k$-circuit will be a simplicial $(k-1)$-cycle but not necessarily a simplicial $(k-1)$-circuit.

\subsection{Polytopes and stresses}
For a framework $(G,p)$, an element of the  left kernel of its rigidity matrix $R(G,p)$ is called a {\em stress} of $(G,p)$.
Let $S(G,p)$ be the space of all stresses of $(G,p)$.
In the context of the reconstruction problem for polytopes, Kalai, see ~\cite[Conjecture 5.7]{NZ},  conjectured that a simplicial 
$d$-polytope $P$ is uniquely determined up to affine equivalence by its underlying graph and its space of stresses of its underlying framework whenever $d\geq 4$ and $G(P)$ is $(d+1)$-connected.
Since the boundary complex of a simplicial 
$d$-polytope is a triangulated $(d-1)$-manifold,  our characterisation of global rigidity for graphs of simplicial circuits solves this conjecture whenever $p$ is generic. We will need the following results on stresses to see this.

For a  stress $\omega$ of a framework $(G,p)$, the corresponding {\em stress matrix} $\Omega$ is  the Laplacian matrix of $G$ weighted by $\omega$.
Gortler, Healy, and Thurston~\cite{GHT} proved that  a generic $d$-dimensional framework $(G,p)$ with $n$ vertices is globally rigid if and only if it has a {\em full rank stress} i.e., there is a stress $\omega\in S(G,p)$ such that $\rank \Omega=n-(d+1)$.
Also, a well-known observation by Connelly (see, e.g., \cite{C,GHT}) says that, if $\omega$ is a full rank stress of $(G,p)$ and $(G,q)$ is another realisation of $G$ which has $w$ as a stress, then $q$ is an affine image of $p$.

\begin{theorem}\label{thm:affine}
Let $d\geq 3$, and $(G,p)$ be a generic framework in $\mathbb{R}^{d}$ such that $G$ is the graph of a simplicial $(d-1)$-circuit, $G$ is $(d+1)$-connected and $G$ is non-planar when $d=3$.
Suppose that $(G,q)$ is a realisation of $G$ in $\mathbb{R}^d$ such that $S(G,p)=S(G,q)$.
Then $q$ is an affine image of $p$.
\end{theorem}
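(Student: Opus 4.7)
The plan is to combine the characterisation of global rigidity for graphs of simplicial circuits (Theorems \ref{thm:globrigid_3_main} and \ref{thm:globrigid_d_main}) with the stress-matrix criterion of Gortler--Healy--Thurston and Connelly's affine-image observation that were recalled immediately before the statement.

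First I would note that the hypotheses on $G$ are precisely those appearing in the ``if'' directions of Theorems \ref{thm:globrigid_3_main} and \ref{thm:globrigid_d_main}: $G$ is the graph of a simplicial $(d-1)$-circuit, $G$ is $(d+1)$-connected, and, when $d=3$, $G$ is non-planar. Together with the genericity of $p$, these imply that $(G,p)$ is a generic globally rigid framework in $\mathbb{R}^d$. (We may assume $G$ is non-complete, since otherwise $q$ must be congruent to $p$ and the statement is immediate.)

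Next, by the Gortler--Healy--Thurston theorem, the global rigidity of a generic framework $(G,p)$ is equivalent to the existence of a \emph{full rank stress} $\omega\in S(G,p)$, i.e.\ one whose stress matrix $\Omega$ has rank $|V(G)|-(d+1)$. Pick such an $\omega$. The hypothesis $S(G,p)=S(G,q)$ guarantees that $\omega$ is also a stress of $(G,q)$. Connelly's observation (stated in the paragraph preceding the theorem) then yields that $q$ is an affine image of $p$, completing the proof.

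There is no real obstacle here: the entire content is packaged in the earlier theorems. The only minor point that is worth making explicit is that $\omega$ being a stress of $(G,q)$ means $\omega \Omega$ (viewed via the rigidity matrix) gives $\sum_{uv\in E(G)} \omega_{uv}(q(u)-q(v))=0$ at each vertex, which is exactly the hypothesis of Connelly's affine-image lemma for the full-rank stress $\omega$, so the conclusion $q=Ap+t$ for some affine map $x\mapsto Ax+t$ follows directly.
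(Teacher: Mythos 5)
Your proposal is correct and follows essentially the same route as the paper's proof: invoke Theorems \ref{thm:globrigid_3_main} and \ref{thm:globrigid_d_main} to get global rigidity of $(G,p)$, apply the Gortler--Healy--Thurston criterion to obtain a full-rank stress, and conclude via Connelly's affine-image observation using $S(G,p)=S(G,q)$. The extra remark unpacking what it means for $\omega$ to be a stress of $(G,q)$ is harmless elaboration, not a different argument.
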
 
\begin{proof}
Since $G$ is globally rigid in $\R^d$ by Theorems \ref{thm:globrigid_3_main} and \ref{thm:globrigid_d_main}, and $p$ is generic, the above mentioned result of Gortler, Healy, and Thurston implies that $(G,p)$ has a full rank stress $\omega$. We can now use Connelly's observation and the hypothesis that  
$S(G,p)=S(G,q)$ to deduce that $q$ is an affine image of $p$.
\end{proof}
Specializing Theorem~\ref{thm:affine} to simplicial $d$-polytopes, we obtain Theorem~\ref{thm:kalai}. We note that, after this paper was first submitted, Murai, Novik and Zheng proved Kalai’s conjecture for all convex simplicial $d$-polytopes where $d \geq 5$ using different methods \cite{MNZ}. The case of non-generic convex $4$-polytopes remains open.

\section*{Acknowledgements}
We thank the anonymous referee for detailed and constructive feedback and for several suggestions that improved the paper.

We thank the American Institute of Mathematics for hosting the 2019 workshop on the rigidity and flexibility of microstructures during which this work was started. We also thank the Heilbronn Institute and the Fields Institute  for hosting further rigidity workshops.
The discussions we had during these workshops played a crucial role in the writing of this paper.

This work was supported by JST PRESTO Grant Number JPMJPR2126,  JST ERATO Grant Number JPMJER1903, JSPS KAKENHI Grant Number 18K11155, 20H05961, and EPSRC overseas travel grant EP/T030461/1.

\printbibliography

@article {B71,
    AUTHOR = {Barnette, David W.},
     TITLE = {The minimum number of vertices of a simple polytope},
   JOURNAL = {Israel J. Math.},
  FJOURNAL = {Israel Journal of Mathematics},
    VOLUME = {10},
      YEAR = {1971},
     PAGES = {121--125},
      ISSN = {0021-2172},
   MRCLASS = {52A40},
  MRNUMBER = {298553},
MRREVIEWER = {P. McMullen},
       DOI = {10.1007/BF02771522},
}

@article {B73,
    AUTHOR = {Barnette, David},
     TITLE = {A proof of the lower bound conjecture for convex polytopes},
   JOURNAL = {Pacific J. Math.},
  FJOURNAL = {Pacific Journal of Mathematics},
    VOLUME = {46},
      YEAR = {1973},
     PAGES = {349--354},
      ISSN = {0030-8730},
   MRCLASS = {52A25},
  MRNUMBER = {328773},
MRREVIEWER = {E. Jucovi\v{c}},
       URL = {http://projecteuclid.org/euclid.pjm/1102946311},
}

@misc{CW08,
    AUTHOR = {M.~Cheung and W.~Whiteley},
    TITLE = {Transfer of global rigidity results among dimensions: graph powers and coning},
    note = {Preprint, York University, Revised 2008} 
}

@article {CJT,
    AUTHOR = {Clinch, Katie and Jackson, Bill and Tanigawa, Shin-ichi},
     TITLE = {Abstract 3-rigidity and bivariate {$C^1_2$}-splines {II}:
              {C}ombinatorial characterization},
   JOURNAL = {Discrete Anal.},
  FJOURNAL = {Discrete Analysis},
      YEAR = {2022},
     PAGES = {Paper No. 3, 32},
   MRCLASS = {05B35 (52C25)},
  MRNUMBER = {4450654},
MRREVIEWER = {Brigitte Servatius},
       DOI = {10.19086/da},
}

@article {C77,
    AUTHOR = {Connelly, Robert},
     TITLE = {A counterexample to the rigidity conjecture for polyhedra},
   JOURNAL = {Inst. Hautes \'{E}tudes Sci. Publ. Math.},
  FJOURNAL = {Institut des Hautes \'{E}tudes Scientifiques. Publications
              Math\'{e}matiques},
    NUMBER = {47},
      YEAR = {1977},
     PAGES = {333--338},
      ISSN = {0073-8301},
   MRCLASS = {57C35 (53C45)},
  MRNUMBER = {488071},
MRREVIEWER = {L. S. Husch},
       URL = {http://www.numdam.org/item?id=PMIHES_1977__47__333_0},
}

@article {C82,
    AUTHOR = {Connelly, Robert},
     TITLE = {A flexible sphere},
   JOURNAL = {Math. Intelligencer},
  FJOURNAL = {The Mathematical Intelligencer},
    VOLUME = {1},
      YEAR = {1978},
    NUMBER = {3},
     PAGES = {130--131},
      ISSN = {0343-6993},
   MRCLASS = {57C05 (50B05)},
  MRNUMBER = {494125},
MRREVIEWER = {Jack Weinstein},
       DOI = {10.1007/BF03023258},
}

@article {C,
    AUTHOR = {Connelly, Robert},
     TITLE = {Generic global rigidity},
   JOURNAL = {Discrete Comput. Geom.},
  FJOURNAL = {Discrete \& Computational Geometry. An International Journal
              of Mathematics and Computer Science},
    VOLUME = {33},
      YEAR = {2005},
    NUMBER = {4},
     PAGES = {549--563},
      ISSN = {0179-5376},
   MRCLASS = {05C62 (52C25)},
  MRNUMBER = {2132290},
MRREVIEWER = {Tiong Seng Tay},
       DOI = {10.1007/s00454-004-1124-4},
}

@misc{C14,
    AUTHOR = {Connelly, R.},
    TITLE = {Flavors of Rigidity. Flavor II - Global Rigidity (following the generic religion)},
    note = {Research seminar,  University of Pittsburgh},
    YEAR = {2014},
    URL = {http://pi.math.cornell.edu/~connelly/Pitt-II.pdf}
}

@article {CW10,
    AUTHOR = {Connelly, R. and Whiteley, W. J.},
     TITLE = {Global rigidity: the effect of coning},
   JOURNAL = {Discrete Comput. Geom.},
  FJOURNAL = {Discrete \& Computational Geometry. An International Journal
              of Mathematics and Computer Science},
    VOLUME = {43},
      YEAR = {2010},
    NUMBER = {4},
     PAGES = {717--735},
      ISSN = {0179-5376},
   MRCLASS = {52C25},
  MRNUMBER = {2610468},
       DOI = {10.1007/s00454-009-9220-0},
}

@article {J,
    AUTHOR = {Cruickshank, James and Jackson, Bill and Tanigawa, Shin-ichi},
     TITLE = {Vertex splitting, coincident realisations, and global rigidity
              of braced triangulations},
   JOURNAL = {Discrete Comput. Geom.},
  FJOURNAL = {Discrete \& Computational Geometry. An International Journal
              of Mathematics and Computer Science},
    VOLUME = {69},
      YEAR = {2023},
    NUMBER = {1},
     PAGES = {192--208},
      ISSN = {0179-5376},
   MRCLASS = {52C25 (05C10 05C75)},
  MRNUMBER = {4527539},
       DOI = {10.1007/s00454-022-00459-9},
}

@phdthesis{F,
    author = {A. L. Fogelsanger},
    title = {The generic rigidity of minimal cycles},
    school = {Cornell University},
    year = {1988},
    URL = {http://www.armadillodanceproject.com/AF/Cornell/rigidity.htm}
}

@article{DGTJ,
author = {Garamv\"{o}lgyi, D\'{a}niel and Jord\'{a}n, Tibor},
title = {Partial Reflections and Globally Linked Pairs in Rigid Graphs},
journal = {SIAM Journal on Discrete Mathematics},
volume = {38},
number = {3},
pages = {2005-2040},
year = {2024},
doi = {10.1137/23M157065X},
}

@inproceedings {G,
    AUTHOR = {Gluck, Herman},
     TITLE = {Almost all simply connected closed surfaces are rigid},
 BOOKTITLE = {Geometric topology ({P}roc. {C}onf., {P}ark {C}ity, {U}tah,
              1974)},
    SERIES = {Lecture Notes in Math., Vol. 438},
     PAGES = {225--239},
 PUBLISHER = {Springer, Berlin-New York},
      YEAR = {1975},
   MRCLASS = {57C05 (53C45 57A05)},
  MRNUMBER = {400239},
MRREVIEWER = {Robert Connelly},
}

@article {GHT,
    AUTHOR = {Gortler, Steven J. and Healy, Alexander D. and Thurston, Dylan
              P.},
     TITLE = {Characterizing generic global rigidity},
   JOURNAL = {Amer. J. Math.},
  FJOURNAL = {American Journal of Mathematics},
    VOLUME = {132},
      YEAR = {2010},
    NUMBER = {4},
     PAGES = {897--939},
      ISSN = {0002-9327},
   MRCLASS = {52C25 (05C10)},
  MRNUMBER = {2663644},
MRREVIEWER = {Tiong Seng Tay},
       DOI = {10.1353/ajm.0.0132},
}

@article {GTT,
    AUTHOR = {Gortler, Steven J. and Theran, Louis and Thurston, Dylan P.},
     TITLE = {Generic unlabeled global rigidity},
   JOURNAL = {Forum Math. Sigma},
  FJOURNAL = {Forum of Mathematics. Sigma},
    VOLUME = {7},
      YEAR = {2019},
     PAGES = {Paper No. e21, 34},
   MRCLASS = {52C25 (51K05)},
  MRNUMBER = {3987306},
MRREVIEWER = {Derek Kitson},
       DOI = {10.1017/fms.2019.16},
}

@article {H,
    AUTHOR = {Hendrickson, Bruce},
     TITLE = {Conditions for unique graph realizations},
   JOURNAL = {SIAM J. Comput.},
  FJOURNAL = {SIAM Journal on Computing},
    VOLUME = {21},
      YEAR = {1992},
    NUMBER = {1},
     PAGES = {65--84},
      ISSN = {0097-5397},
   MRCLASS = {05C85},
  MRNUMBER = {1148818},
       DOI = {10.1137/0221008},
}

@article {JJ,
    AUTHOR = {Jackson, Bill and Jord\'{a}n, Tibor},
     TITLE = {Connected rigidity matroids and unique realizations of graphs},
   JOURNAL = {J. Combin. Theory Ser. B},
  FJOURNAL = {Journal of Combinatorial Theory. Series B},
    VOLUME = {94},
      YEAR = {2005},
    NUMBER = {1},
     PAGES = {1--29},
      ISSN = {0095-8956},
   MRCLASS = {05B35 (05C62 52C25)},
  MRNUMBER = {2130278},
MRREVIEWER = {Tiong Seng Tay},
       DOI = {10.1016/j.jctb.2004.11.002},
}

@article {JT,
    AUTHOR = {Jord\'{a}n, Tibor and Tanigawa, Shin-ichi},
     TITLE = {Global rigidity of triangulations with braces},
   JOURNAL = {J. Combin. Theory Ser. B},
  FJOURNAL = {Journal of Combinatorial Theory. Series B},
    VOLUME = {136},
      YEAR = {2019},
     PAGES = {249--288},
      ISSN = {0095-8956},
   MRCLASS = {52C25 (05C62)},
  MRNUMBER = {3926288},
MRREVIEWER = {Bernd Schulze},
       DOI = {10.1016/j.jctb.2018.11.003},
}

@article {K,
    AUTHOR = {Kalai, Gil},
     TITLE = {Rigidity and the lower bound theorem. {I}},
   JOURNAL = {Invent. Math.},
  FJOURNAL = {Inventiones Mathematicae},
    VOLUME = {88},
      YEAR = {1987},
    NUMBER = {1},
     PAGES = {125--151},
      ISSN = {0020-9910},
   MRCLASS = {52A25 (57Q15)},
  MRNUMBER = {877009},
MRREVIEWER = {P. McMullen},
       DOI = {10.1007/BF01405094},
}

@article {KT,
    AUTHOR = {Katoh, Naoki and Tanigawa, Shin-ichi},
     TITLE = {A proof of the molecular conjecture},
   JOURNAL = {Discrete Comput. Geom.},
  FJOURNAL = {Discrete \& Computational Geometry. An International Journal
              of Mathematics and Computer Science},
    VOLUME = {45},
      YEAR = {2011},
    NUMBER = {4},
     PAGES = {647--700},
      ISSN = {0179-5376},
   MRCLASS = {52C25 (05C90 05C99)},
  MRNUMBER = {2787564},
MRREVIEWER = {Tiong Seng Tay},
       DOI = {10.1007/s00454-011-9348-6},
}

@article {NN,
    AUTHOR = {Nevo, Eran and Novinsky, Eyal},
     TITLE = {A characterization of simplicial polytopes with {$g_2=1$}},
   JOURNAL = {J. Combin. Theory Ser. A},
  FJOURNAL = {Journal of Combinatorial Theory. Series A},
    VOLUME = {118},
      YEAR = {2011},
    NUMBER = {2},
     PAGES = {387--395},
      ISSN = {0097-3165},
   MRCLASS = {52B05 (05E45)},
  MRNUMBER = {2739491},
       DOI = {10.1016/j.jcta.2009.12.011},
}

@article {NZ,
    AUTHOR = {Novik, Isabella and Zheng, Hailun},
     TITLE = {Reconstructing simplicial polytopes from their graphs and
              affine 2-stresses},
   JOURNAL = {Israel J. Math.},
  FJOURNAL = {Israel Journal of Mathematics},
    VOLUME = {255},
      YEAR = {2023},
    NUMBER = {2},
     PAGES = {891--910},
      ISSN = {0021-2172},
   MRCLASS = {52B05 (05E45)},
  MRNUMBER = {4619560},
       DOI = {10.1007/s11856-022-2459-3},
}

@misc{S,
    AUTHOR = {Shinoki, H.},
    TITLE = {Generating irreducible 4-connected triangulations of closed surfaces (in Japanese)},
    note = {Bachelor thesis},
    YEAR = {2020},
    SCHOOL = {University of Tokyo}
}

@article {ST,
    AUTHOR = {Tanigawa, Shin-ichi},
     TITLE = {Sufficient conditions for the global rigidity of graphs},
   JOURNAL = {J. Combin. Theory Ser. B},
  FJOURNAL = {Journal of Combinatorial Theory. Series B},
    VOLUME = {113},
      YEAR = {2015},
     PAGES = {123--140},
      ISSN = {0095-8956},
   MRCLASS = {05B35 (05C62 52C25)},
  MRNUMBER = {3343750},
MRREVIEWER = {Brigitte Servatius},
       DOI = {10.1016/j.jctb.2015.01.003},
}

@article {T89,
    AUTHOR = {Tay, Tiong-Seng},
     TITLE = {Linking {$(n-2)$}-dimensional panels in {$n$}-space. {II}.
              {$(n-2,2)$}-frameworks and body and hinge structures},
   JOURNAL = {Graphs Combin.},
  FJOURNAL = {Graphs and Combinatorics},
    VOLUME = {5},
      YEAR = {1989},
    NUMBER = {3},
     PAGES = {245--273},
      ISSN = {0911-0119},
   MRCLASS = {05C10},
  MRNUMBER = {1027706},
       DOI = {10.1007/BF01788678},
}

@article {T,
    AUTHOR = {Tay, Tiong-Seng},
     TITLE = {Lower-bound theorems for pseudomanifolds},
   JOURNAL = {Discrete Comput. Geom.},
  FJOURNAL = {Discrete \& Computational Geometry. An International Journal
              of Mathematics and Computer Science},
    VOLUME = {13},
      YEAR = {1995},
    NUMBER = {2},
     PAGES = {203--216},
      ISSN = {0179-5376},
   MRCLASS = {52B05 (05C10 52C25)},
  MRNUMBER = {1314963},
MRREVIEWER = {Neil L. White},
       DOI = {10.1007/BF02574038},
}

@book {welsh,
    AUTHOR = {Welsh, D. J. A.},
     TITLE = {Matroid theory},
    SERIES = {L. M. S. Monographs, No. 8},
 PUBLISHER = {Academic Press [Harcourt Brace Jovanovich, Publishers],
              London-New York},
      YEAR = {1976},
     PAGES = {xi+433},
   MRCLASS = {05B35},
  MRNUMBER = {427112},
MRREVIEWER = {W. T. Tutte},
}

@article {W84,
    AUTHOR = {Whiteley, Walter},
     TITLE = {Infinitesimally rigid polyhedra. {I}. {S}tatics of frameworks},
   JOURNAL = {Trans. Amer. Math. Soc.},
  FJOURNAL = {Transactions of the American Mathematical Society},
    VOLUME = {285},
      YEAR = {1984},
    NUMBER = {2},
     PAGES = {431--465},
      ISSN = {0002-9947},
   MRCLASS = {52A25 (51K99 70C99 73K99)},
  MRNUMBER = {752486},
MRREVIEWER = {Ethan D. Bolker},
       DOI = {10.2307/1999446},
}

@article {Wsplit,
    AUTHOR = {Whiteley, Walter},
     TITLE = {Vertex splitting in isostatic frameworks},
      NOTE = {Dual French-English text},
   JOURNAL = {Structural Topology},
  FJOURNAL = {Structural Topology. Topologie Structurale},
    NUMBER = {16},
      YEAR = {1990},
     PAGES = {23--30},
      ISSN = {0226-9171},
   MRCLASS = {52C25},
  MRNUMBER = {1102001},
MRREVIEWER = {J. E. Graver},
}

@article {Z,
    AUTHOR = {Zheng, Hailun},
     TITLE = {A characterization of homology manifolds with {$g_2\le2$}},
   JOURNAL = {J. Combin. Theory Ser. A},
  FJOURNAL = {Journal of Combinatorial Theory. Series A},
    VOLUME = {153},
      YEAR = {2018},
     PAGES = {31--45},
      ISSN = {0097-3165},
   MRCLASS = {57Q25 (05E45 55U10 57M05)},
  MRNUMBER = {3692293},
MRREVIEWER = {J. P. E. Hodgson},
       DOI = {10.1016/j.jcta.2017.07.002},
}

@article {Z20,
    AUTHOR = {Zheng, Hailun},
     TITLE = {The rigidity of the graphs of homology spheres minus one edge},
   JOURNAL = {Discrete Math.},
  FJOURNAL = {Discrete Mathematics},
    VOLUME = {343},
      YEAR = {2020},
    NUMBER = {12},
     PAGES = {112135, 6},
      ISSN = {0012-365X},
   MRCLASS = {05E45 (57M15)},
  MRNUMBER = {4150779},
MRREVIEWER = {Eduardo S\'{a}enz-de-Cabez\'{o}n},
       DOI = {10.1016/j.disc.2020.112135},
}

@article {MNZ,
    AUTHOR = {Murai, Satoshi and Novik, Isabella and Zheng, Hailun},
     TITLE = {Affine stresses, inverse systems, and reconstruction problems},
   JOURNAL = {Int. Math. Res. Not. IMRN},
  FJOURNAL = {International Mathematics Research Notices. IMRN},
      YEAR = {2024},
    NUMBER = {10},
     PAGES = {8540--8556},
      ISSN = {1073-7928},
   MRCLASS = {52C25 (13F55 57Q05)},
  MRNUMBER = {4749172},
       DOI = {10.1093/imrn/rnad294},
}

\end{document}